\documentclass[11pt]{amsart}
\usepackage{amsmath,amsfonts,amscd,amsthm}
\usepackage{pgfplots}
\pgfplotsset{compat=1.17}
\usepackage{mathtools}
\usepackage{bm}
\usepackage{amssymb}
\usepackage[flushmargin]{footmisc}
\usepackage{color} 
\usepackage{ragged2e}
\usepackage{esvect}
\usepackage{mathrsfs}
\usepackage{typearea}
\usepackage[a4paper, left=1in, right=1in, top=1.5in, bottom=1in]{geometry}

\usepackage{xcolor}
\usepackage[colorlinks,
linkcolor=blue,
anchorcolor=blue,
citecolor=blue]{hyperref} %for show the ref

\newcommand{\dbar}{\ensuremath{\overline\partial}}
\newcommand{\dbarstar}{\ensuremath{\overline\partial^*}}
\newcommand{\C}{\ensuremath{\mathbb{C}}}

\newcommand{\R}{\ensuremath{\mathbb{R}}}

\newcommand{\reg}{\mathrm{reg}}
\newcommand{\sing}{\mathrm{sing}}

\newcommand{\norm}[1]{\left\Vert#1\right\Vert}
\newcommand{\abs}[1]{\left\vert#1\right\vert}

\newcommand{\set}[1]{\left\{#1\right\}}
\DeclareMathOperator{\Ker}{Ker}

\newcommand{\cali}[1]{\mathscr{#1}}

\DeclareFontFamily{U}{mathx}{\hyphenchar\font45}
\DeclareFontShape{U}{mathx}{m}{{n-1}}{
      <5> <6> <7> <8> <9> <10>
      <10.95> <12> <14.4> <17.28> <20.74> <24.88>
      mathx10
      }{}
\DeclareSymbolFont{mathx}{U}{mathx}{m}{{n-1}}
\DeclareFontSubstitution{U}{mathx}{m}{{n-1}}
\DeclareMathAccent{\widecheck}{0}{mathx}{"71}
\DeclareMathAccent{\wideparen}{0}{mathx}{"75}

\def\dom{\operatorname{Dom}\,}
\def\re{\operatorname{Re}\,}
\def\eps{\varepsilon}

\def\omz{\Omega}

\makeatletter
\newcommand{\sumprime}{\if@display\sideset{}{'}\sum%
	\else\sum'\fi}
\makeatother

\newtheorem{thm}{Theorem}[section]
\newtheorem{prop}[thm]{Proposition}
\newtheorem{lem}[thm]{Lemma}
\newtheorem{cor}[thm]{Corollary}

\theoremstyle{definition}
\newtheorem{defin}[thm]{Definition}

\theoremstyle{remark}
\newtheorem{rem}[thm]{Remark}
\numberwithin{equation}{section}

\providecommand\ufootnote[1]{{\let\thefootnote\relax\footnote[0]{#1}}}

\newcommand{\N}{\mathbb{N}}

\newcommand{\ol}{\overline}

\newcommand{\pa}{\partial}

 \DeclareMathOperator{\im}{Im}
\DeclareMathOperator{\ke}{Ker}

 \DeclareMathOperator{\Dom}{Dom}
  
\DeclareMathOperator{\Tr}{Tr} 

\begin{document}

\title[Heat kernel asymptotics]
{Semi-classical heat kernel asymptotics on complex manifolds with boundary}

\author[]{Chin-Yu Hsiao}
	\address{Department of Mathematics, National Taiwan University, 
	Taipei, Taiwan, {\bf ORCID iD:} 0000-0002-1781-0013}
	\email{chinyuhsiao@ntu.edu.tw or chinyu.hsiao@gmail.com}
	\thanks{Chin-Yu Hsiao was partially supported by National Science and Technology Council project 113-2115-M-002-011-MY3.}
    \author[]{George Marinescu}
\address{Department of Mathematics and Computer Science, 
Universit{\"a}t zu K{\"o}ln\\ 
Weyertal 86-90, \newline
    \mbox{\quad}\,50931 K{\"o}ln, Germany\\
    \newline
\mbox{\quad}\, Institute of Mathematics `Simion Stoilow', Romanian Academy,
Bucharest, Romania}
\thanks{George Marinescu was partially supported 
by the DFG-funded projects SFB TRR 191 `Symplectic Structures in Geometry, 
Algebra and Dynamics' (Project-ID 281071066\,--\,TRR 191), 
the ANR-DFG project `QuaSiDy\,--\,Quantization, Singularities, 
and Holomorphic Dynamics' (Project-ID 490843120)}
\email{gmarines@math.uni-koeln.de}
\author[]{Weixia Zhu}
\address{Fakult\"{a}t f\"{u}r Mathematik, Universit\"{a}t 
Wien, Oskar-Morgenstern-Platz 1, 1090 Wien, Austria}
\thanks{Weixia Zhu was partially supported by FWF projects 
10.55776/ESP367 and 10.55776/PAT1879425.} 
\email{weixia.zhu@univie.ac.at; zhuvixia@gmail.com}

\begin{abstract} 
Let $M$ be a relatively compact open subset of a complex manifold $M'$ 
with smooth boundary $X$ and let $L$ be a holomorphic line bundle over $M'$.
Assuming that condition $Z(q)$ holds, we establish the semi-classical asymptotic behavior 
of $e^{-\frac{t}{k}\Box^{q}_k}$ near the boundary $X$ as $k\to\infty$, 
where $\Box^{q}_k$ is the $\bar{\partial}$-Neumann Laplacian acting on 
$(0,q)$-forms on $M$ with values in $L^k$. 
Our results extend the seminal work of Bismut to complex manifolds with boundary. 
As applications of our results, we provide a heat kernel-based proof 
of the holomorphic Morse inequalities for complex manifolds with boundary 
and derive a semi-classical Weyl law for the $\dbar$-Neumann Laplacian.
	
\bigskip

\noindent{{\sc Mathematics Subject Classification}: 32W05, 32G05, 35J25, 35P15.}	

	\smallskip
	
	\noindent{{\sc Keywords}:  holomorphic line bundles, 
	$\dbar$-Neumann Laplacian, heat kernel asymptotics}
\end{abstract}

\maketitle

\tableofcontents

\section{Introduction}\label{sec:intro}

Heat kernel asymptotics provide a fundamental link between analysis, 
geometry, and spectral theory. Beginning with the work of 
Minakshisundaram and Pleijel \cite{MP49} for the scalar Laplacian, 
and later developments by McKean and Singer \cite{MS67}, 
Gilkey \cite{Gi75} and Patodi \cite{Pa71} have played a central role 
in index theory, spectral asymptotics, and global analysis. 
In several complex variables, Beals and Stanton \cite{BS87,BS88} 
studied the heat equation for the $\dbar$-Neumann problem and 
established a Weyl-type theorem for the $\dbar$-Neumann Laplacian under 
the condition $Z(q)$.

A different but closely related direction concerns heat kernel asymptotics 
for high tensor powers $L^k$ of a holomorphic line bundle over a 
complex manifold without boundary. Such asymptotics form one of the standard 
approaches to holomorphic Morse inequalities \cite{De85}. 
Important contributions were made by Bismut \cite{Bi87}, 
who used probabilistic methods, and later by Demailly \cite{De91} 
and Bouche \cite{Bo96}, who developed classical heat kernel arguments. 
In \cite[\S 1.6--1.7]{MM07}, Ma and Marinescu gave a new proof of heat kernel 
asymptotics and holomorphic Morse inequalities inspired by the work of Bismut and Lebeau. 
More recently, Hsiao--Zhu \cite{HZ23} established analogous asymptotic 
expansions for the Kohn Laplacian on CR manifolds with values in high tensor 
powers of a line bundle, with applications to CR and $\R$-equivariant Morse inequalities.

The purpose of this paper is to develop the corresponding heat kernel theory 
for complex manifolds with boundary. More precisely, we study the asymptotic 
behavior, as $k\to\infty$, of the heat kernel of the $\dbar$-Neumann Laplacian acting 
on $(0,q)$-forms with values in $L^k$ on a compact complex manifold with boundary. 
Our main emphasis is on the boundary scaling regime, where the non-coercive nature 
of the $\dbar$-Neumann boundary condition introduces substantial new difficulties 
compared with the elliptic or CR settings.

The starting point of our analysis is the model domain
\begin{equation*}
\ol{M}_0=\Big\{{\bf z}=(z,z_n)\in \C^n:\ \im z_n+ 
\sum_{j=1}^{n-1}\lambda_j|z_j|^2\le 0\Big\},
\end{equation*}
equipped with a weighted $\dbar$-Neumann Laplacian $\Box_{\phi_0}^q$. 
Building on ideas from Stanton \cite{S80}, Tie \cite{T97}, 
and Hsiao-Zhu \cite{HZ23}, we derive an explicit formula for the heat kernel 
of $\Box_{\phi_0}^q$. In contrast to the classical works \cite{S80, T97}, 
where one assumes either $\lambda_j=1$ or $\lambda_j>0$ for all $j$, 
we allow the coefficients $\lambda_j$ to have arbitrary signs. 
Thus, the model domain is no longer necessarily strongly pseudoconvex. 
Moreover, we incorporate a nontrivial quadratic weight $\phi_0$, 
which is essential for applications to high powers of a line bundle.

The second main ingredient is a new scaling procedure adapted to the 
$\dbar$-Neumann problem near the boundary. By combining anisotropic scaling 
with a suitable $k$-dependent Hermitian metric, we show that the rescaled 
heat kernel converges to the heat kernel of the model operator on $\ol M_0$. 
This is the boundary counterpart of the classical scaling picture for heat kernels 
in the interior, but the argument is considerably subtler here because the 
$\dbar$-Neumann boundary condition is non-coercive, and the standard CR scaling 
methods do not directly apply. The specific choice of metric 
is tailored to capture the boundary contribution in the asymptotic 
regime relevant for Morse inequalities.

Our first main result identifies the boundary scaling limit of the heat kernel. 
Let $M$ be a relatively compact domain with smooth boundary $X$ in an 
$n$-dimensional complex manifold $M'$, and let $(L,h^L)$ be a holomorphic 
Hermitian line bundle over $M'$. For each $k\in\N$, we consider the 
$\dbar$-Neumann Laplacian $\Box_k^q$ acting on $(0,q)$-forms with 
values in $L^k$, with respect to a suitable $k$-dependent Hermitian metric 
near the boundary. 

We recall that condition $Z(q)$ means that at every boundary point the 
Levi form has either at least $n-q$ positive eigenvalues or at least $q+1$ negative eigenvalues; 
see Definition~\ref{zq}.
Assuming condition $Z(q)$, the heat operator
$$
e^{-t\Box_k^q}:L^2_{(0,q)}(M,L^k)\to \Dom\Box_k^q
$$
has a smooth kernel on $\R_+\times \ol M\times \ol M$. 
Near a boundary point $p\in X$, after choosing suitable 
local holomorphic coordinates and performing anisotropic scaling, 
we obtain a family of rescaled kernels $A_{(k)}(t,{\bf z},{\bf w})$. 
Our first theorem asserts that these kernels converge in the 
$\mathscr C^\infty$ topology on compact subsets to the heat kernel of the model operator.

\begin{thm}[Boundary scaling limit]
Assume that condition $Z(q)$ holds. Then, in local boundary coordinates 
and after the anisotropic scaling introduced in Section~\ref{heatasymptotic}, 
the rescaled heat kernel $A_{(k)}(t,{\bf z},{\bf w})$ converges in the 
$\mathscr{C} ^\infty$ topology on compact subsets to the heat kernel 
$e^{-t\Box_{\phi_0}^q}({\bf z},{\bf w})$ of the model weighted 
$\dbar$-Neumann Laplacian on $\ol M_0$.
In particular, the diagonal heat kernel of $e^{-\frac{t}{k}\Box_k^q}$ 
admits an explicit boundary asymptotic formula involving the curvature 
of $L$ and the Levi form of $X$; see Theorem~\ref{main1} for the precise statement.
\end{thm}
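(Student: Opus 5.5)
The proof I would give has three stages: set up the scaled operators, get $k$-uniform estimates, and identify the limit by uniqueness.

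\textbf{Setup and scaling.} Fix $p\in X$ and choose holomorphic coordinates ${\bf z}=(z,z_n)$ centered at $p$. In these coordinates the defining function should be $\rho=\im z_n+\sum_{j=1}^{n-1}\lambda_j|z_j|^2+O(|{\bf z}|^3)$, and the local weight of $h^L$ should be $\phi=\phi_0+O(|{\bf z}|^3)$ with $\phi_0$ quadratic. We then take the $k$-dependent metric near the boundary, chosen so that after the anisotropic dilation
\[
F_k({\bf z})=(z/\sqrt{k},\,z_n/k)
\]
the pulled-back metric converges to the model metric on $\ol M_0$. Under this dilation the domain $F_k^{-1}(M)$ converges to $\ol M_0$ in $\mathscr C^\infty_{loc}$, and the boundary is flattened by a $k$-dependent diffeomorphism that tends to the identity. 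The scaled operator $\Box_{(k)}:=\frac1k F_k^*\Box_k^q$, written in the trivialization twisted by $e^{-k\phi}$, is a family of second order operators whose coefficients converge in $\mathscr C^\infty_{loc}$ to those of $\Box_{\phi_0}^q$. Its $\dbar$-Neumann boundary conditions converge to the model ones. By construction $A_{(k)}(t,\cdot,\cdot)$ is the kernel of $e^{-t\Box_{(k)}}$, up to the Jacobian normalization.

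\textbf{Uniform estimates.} The core of the argument is to bound $A_{(k)}$ and all its derivatives on compact sets, uniformly in $k$. First, the semigroup property and spectral calculus give $\|e^{-t\Box_{(k)}}\|\le 1$ on $L^2$. Next, condition $Z(q)$ gives the subelliptic $1/2$-estimate for the $\dbar$-Neumann problem. The constants in that estimate come from the Levi form, which is scale invariant under $F_k$ because it converges to $\mathrm{diag}(\lambda_j)$, and from the curvature term $\partial\dbar\phi$, which stays bounded after scaling. So the localized estimate
\[
\|\chi u\|_{s+1/2}\le C_s\big(\|\chi'\Box_{(k)}u\|_s+\|\chi' u\|\big),\qquad u\in\Dom\Box_{(k)},
\]
should hold with $C_s$ independent of $k$. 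The second point is the one I expect to be the main obstacle. The non-coercive boundary condition means Gårding's inequality is not available, so one must redo the Kohn--Morrey--Hörmander estimate in the scaled coordinates and track every commutator with the anisotropic vector fields. The $\partial/\partial z_n$ direction scales like $k$ while the tangential directions scale like $\sqrt k$, and the needed fact is that error terms carry nonnegative powers of $k^{-1/2}$. I would verify this by writing $\Box_{(k)}$ in a frame $\{\sqrt k\,\partial_{z_j}, k\,\partial_{z_n}\}$, where the error terms visibly carry such factors. Iterating the estimate in $s$, and also applying it to $\partial_t^m e^{-t\Box_{(k)}}=(-\Box_{(k)})^m e^{-t\Box_{(k)}}$, gives uniform $\mathscr C^\infty$ bounds for $A_{(k)}$ on compact subsets of $\R_+\times\ol M_0\times\ol M_0$ via Sobolev embedding in both variables.

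\textbf{Identification of the limit.} By Arzelà--Ascoli, every subsequence of $A_{(k)}$ has a further subsequence converging in $\mathscr C^\infty_{loc}$ to some $A(t,{\bf z},{\bf w})$. Passing to the limit in the heat equation, the boundary conditions, and the initial condition, tested against compactly supported forms in the model domain, shows that $A$ is a kernel solving the heat problem for $\Box_{\phi_0}^q$ with $A(t)\to\mathrm{Id}$ as $t\to0$. It also inherits the $L^2$ bound $\|A(t)\|\le1$. Uniqueness of bounded solutions of the $\dbar$-Neumann heat equation on $\ol M_0$ then forces $A=e^{-t\Box_{\phi_0}^q}$. I would prove that uniqueness by energy estimates using the self-adjointness of $\Box_{\phi_0}^q$, or compare directly with the explicit formula obtained in the model section. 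Since every subsequence has the same limit, the full family converges. Finally, setting ${\bf z}={\bf w}$ and undoing the scaling gives the diagonal asymptotics stated in Theorem~\ref{main1}.
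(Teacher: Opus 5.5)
Your proposal is correct and follows essentially the same route as the paper: rescale with the $k$-dependent normal metric so that the scaled operator converges to $\Box^q_{\phi_0}$ (Proposition~\ref{boxsk}), combine $k$-uniform subelliptic estimates under $Z(q)$ with spectral bounds on powers of the scaled Laplacian times the heat operator to get uniform $\mathscr C^\infty$ bounds (Proposition~\ref{uniformly}), then extract limits by Arzel\`a--Ascoli and identify them through uniqueness of the heat semigroup. The paper makes two steps explicit that you leave implicit. First, the $t\to0$ limit is carried through $k\to\infty$ via the uniform Duhamel bound $\|A_{(k)}(t)f_k-f_k\|\le t\|\Box^q_{(k)}f_k\|\le Ct$. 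Second, membership of the limit $A(t)f$ in $\Dom\Box^q_{\phi_0}$, which your energy-estimate uniqueness argument needs on the non-compact $\ol M_0$, is obtained by passing the bounds $\|(\Box^q_{(k)})^\ell A_{(k)}(t)f_k\|\le C_\ell t^{-\ell}\|f_k\|$ to the limit rather than from $\|A(t)\|\le 1$ alone.
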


Away from the boundary, where the metric is independent of $k$, 
one recovers the familiar interior asymptotics governed by the curvature of $L$. 
Thus, the leading behavior of the heat kernel is described by two different model geometries: the standard complex-geometric model in the interior and the weighted $\dbar$-Neumann model near the boundary.

To make the boundary asymptotics effective, it is necessary to compute explicitly the heat kernel of the model operator $\Box_{\phi_0}^q$. This turns out to be a nontrivial problem even in the unweighted case, and explicit formulas were previously known only in special situations \cite{S80,T97}. Our second main result gives such a formula in full generality for the diagonal; in fact, we also obtain the off-diagonal kernel; see Theorem~\ref{heatkernelmodel}. The diagonal expression takes the following form.

\begin{thm}[Model heat kernel]
Let 
$$
\ol M_0=\Big\{(z,z_n)\in \C^{n-1}\times \C:\ \im z_n+
\sum_{j=1}^{n-1}\lambda_j|z_j|^2\le 0\Big\},
$$
and let $\Box_{\phi_0}^q$ be the weighted $\dbar$-Neumann Laplacian on 
$\ol M_0$ defined in Section~\ref{bdd}. Then the heat kernel of $\Box_{\phi_0}^q$ 
admits an explicit formula in terms of the curvature of the weight and the Levi form 
of the boundary; in particular, for the diagonal kernel, one obtains the integral 
expression \eqref{modelasymoptptic}. Moreover, an explicit formula for 
the off-diagonal kernel is given in Theorem~\ref{heatkernelmodel}; 
see also Theorem~\ref{hmd}.
\end{thm}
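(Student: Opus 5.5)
We compute the heat kernel of $\Box^q_{\phi_0}$ by exploiting the symmetries of $\ol M_0$. The domain and the weight $\phi_0$ (quadratic, and by the choice in Section~\ref{bdd} we may assume $\phi_0$ depends on $z$ and $\im z_n$ compatibly with the Heisenberg-type structure) are invariant under translations $z_n\mapsto z_n+s$, $s\in\R$. The first step is to change variables to $(z,\re z_n,\rho)$ with $\rho=-(\im z_n+\sum\lambda_j|z_j|^2)\ge 0$, so that $\ol M_0\cong \C^{n-1}\times\R\times[0,\infty)$. We then take the partial Fourier transform in $x_{2n-1}=\re z_n$, with dual variable $\eta$. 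For each fixed $\eta$, $\Box^q_{\phi_0}$ becomes an operator $\Box_\eta$ acting on forms on $\C^{n-1}\times[0,\infty)$. In a suitable unitary frame adapted to $\dbar\rho$, $\Box_\eta$ splits into two commuting pieces. The first is a magnetic (harmonic-oscillator type) operator in $z$ with frequencies $|\lambda_j\eta+\mu_j|$, where the $\mu_j$ are the eigenvalues of $\partial\dbar\phi_0$ restricted to $T^{1,0}X$. The second is a one-dimensional operator $-\partial_\rho^2+c(\eta)^2$ on the half-line. The $\dbar$-Neumann conditions (the normal component of $u$ vanishes, and the normal component of $\dbar u$ vanishes) become, componentwise, a Dirichlet condition for forms containing $d\ol z_n$ and a Robin condition $(\partial_\rho - a(\eta))u=0$ for the others, where $a(\eta)$ is linear in $\eta$.

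The second step computes each factor explicitly. The $z$-part is the Mehler-formula heat kernel for the twisted Laplacian in each $z_j$, with the frequency $\lambda_j\eta+\mu_j$ and a diagonal term that depends on whether $d\ol z_j$ is present in the multi-index; this is exactly as in \cite{HZ23}, since for fixed $\eta$ the computation is the CR one. The $\rho$-part with Dirichlet condition is the reflected Gaussian. With the Robin condition, we write the half-line heat kernel $e^{-t(-\partial_\rho^2)}$ with Robin parameter $a$ as the sum of the free Gaussian, the reflected Gaussian, and an error-function correction term
\[
-2a\int_0^\infty e^{-a s}\,\frac{e^{-(\rho+\rho'+s)^2/4t}}{\sqrt{4\pi t}}\,ds ,
\]
which is valid for either sign of $a$. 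When $a<0$ this kernel has a bound state $\sqrt{2|a|}\,e^{a\rho}$, and this is where the sign of $\lambda_j$ enters. Multiplying the two factors and inverting the Fourier transform in $\eta$ gives the off-diagonal kernel, which is the content of Theorem~\ref{heatkernelmodel}. Setting ${\bf z}={\bf w}$ leaves an integral over $\eta$ of products of $\frac{\abs{\lambda_j\eta+\mu_j}}{\sinh}$-type Mehler factors with the boundary factor. This is \eqref{modelasymoptptic}.

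The main obstacle is justification rather than formal computation. We must show that the Fourier-decomposed operators $\Box_\eta$ with their boundary conditions really are the fibers of the self-adjoint Friedrichs extension $\Box^q_{\phi_0}$, and that the resulting integral converges and defines the semigroup. Since the $\lambda_j$ have arbitrary signs, the Robin parameter can produce bound states, and the $\eta$-integral has to be controlled uniformly. To handle this, we check that the candidate kernel satisfies the heat equation and both boundary conditions, has initial value $\delta$, and has Gaussian bounds. Uniqueness then follows from the $L^2$ semigroup theory for the nonnegative self-adjoint operator: the Duhamel difference of two solutions lies in $\Dom\Box^q_{\phi_0}$ and satisfies $\frac{d}{dt}\|u\|^2\le 0$.
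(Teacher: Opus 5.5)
This is essentially the paper's construction. In the Heisenberg coordinates $[\xi,\theta,r]$ (your $\rho=-r$), the paper writes $\Box^q_{\phi_0}=\Box^q_{b,\phi_0}-\frac12(\partial_\theta^2+\partial_r^2)$ and takes the mode kernels $e^{-t\Box_\eta}$ from \cite{HZ23}. It multiplies them by $e^{-t\eta^2/2}$ and by the Gaussian in $r$ to obtain $K$. It then uses $K(r,s)-K(r,-s)$ on components containing $\ol\omega_0^n$, and $K(r,s)+K(r,-s)+h^\tau$ on the others, where $h^\tau$ is obtained from $(\partial_r+u)\hat h^\tau=-2u\hat K(\cdot,-s)$. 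That is your Robin correction with $a=u$, up to the sign issue below. Your identification step (show $A(t)f\in\Dom\Box^q_{\phi_0}$, then use $\frac{d}{dt}\norm{u}^2=-2\operatorname{Re}(\Box^q_{\phi_0}u,u)\le0$) is more than the paper writes out for the model, and it is a sound way to finish.

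Three corrections.

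(1) Your sign $-2a$ is the correct one, so you will not reproduce \eqref{modelasymoptptic} verbatim. When the paper integrates $\partial_r(e^{ur}\hat h^\tau)=-2ue^{ur}\hat K$, it drops the minus sign. The middle terms of \eqref{qker} and \eqref{modelasymoptptic} should therefore carry the prefactor $-\frac{2}{(2\pi)^n\sqrt\pi}$. A check: for $\eta<0$, the printed boundary factor $\frac{2\eta}{\sqrt\pi}e^{-2r\eta}\int_{-\infty}^{(2r-t\eta)/\sqrt{2t}}e^{-\gamma^2}\,d\gamma$ tends to $2\eta e^{-2r\eta}<0$ as $t\to\infty$. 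Projecting onto your normalized bound state instead gives $2\abs{\eta}e^{-2r\eta}$. The paper's Morse-inequality computation compensates by evaluating $\int_{-\infty}^0e^{-2\eta r}\,dr$ as $\frac1{2\eta}$ for $\eta<0$, so the final Morse inequalities come out right.

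(2) The tangential frequencies are the eigenvalues of $\dot R^\eta$, the Hermitian matrix of $-2\eta\sum_j\lambda_j\abs{\xi_j}^2+\phi_0$, and the zeroth-order term is $\varTheta^\eta$. These reduce to $\abs{-2\eta\lambda_j+\mu_j}$, with an action diagonal in $d\ol\xi_J$, only when $(\mu_{j,l})$ commutes with $\mathrm{diag}(\lambda_j)$. This is why the answer is written through $\det\dot R^\eta/\det(1-e^{-t\dot R^\eta})$ and $e^{-t\varTheta^\eta}$.

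(3) This is a genuine gap in your justification step. The Robin parameter is $\eta$ itself (up to the Fourier sign convention), so the bound state exists for every $\eta<0$, whatever the signs of the $\lambda_j$. The Levi signature enters elsewhere: at the end $\eta\to-\infty$ of the middle term. There the term has no $e^{-t\eta^2/2}$ damping, the error-function factor tends to $\sqrt\pi$, and $e^{-2r\eta}=1$ on $r=0$. Convergence of the $\eta$-integral, and hence your Gaussian bounds up to $X_0$, holds only because the Mehler factor decays as $\eta\to-\infty$. That decay is exactly condition $Z(q)$ (Remark~\ref{rem:zq} and \eqref{e-unip}). Without $Z(q)$ the kernel is infinite on the boundary diagonal, for example for $q=0$ with all $\lambda_j>0$. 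Your argument must invoke $Z(q)$ at precisely this point.
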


Let us denote by $H^\bullet(\ol M,L^k)$ the cohomology
of the $\overline\partial$-complex $(\Omega^{0,\bullet}(\ol M,L^k),\overline\partial)$
of forms with values in $L^k$ smooth up to the boundary of $M$.
If condition $Z(q)$ holds, by Kohn's subelliptic theory \cite{FK72,K65}, the 
$\dbar$-Neumann Laplacian has a discrete spectrum in bidegree $(0,q)$, 
and there exists a canonical isomorphism
$H^q(\ol M,L^k)\cong \Ker \Box_k^q$\,.

Combining the boundary asymptotics of Theorem~\ref{main1}, 
the interior asymptotics, and the standard heat-kernel trace inequalities 
as in \cite[\S1.6]{MM07}, we obtain a heat-kernel proof of 
holomorphic Morse inequalities for complex manifolds with boundary.
To state the resulting asymptotic formula, let $(L,h^L)$ be a semipositive holomorphic 
Hermitian line bundle over $M$, let $\dot{\mathcal{R}}^L$ and
$\dot{\mathcal L}$ denote 
the Hermitian endomorphisms induced by the curvature of $L$ and by the Levi form of
$X=\partial M$. Set
\begin{equation}\label{rjz}
M(q)=\{\mathbf z\in M \mid \dot{\mathcal R}^L_{\mathbf z}\,
\text{has exactly $q$ negative and $n-q$ positive eigenvalues}\}.
\end{equation}
For the Hermitian form $\dot{\mathcal R}^L_b-2\eta\dot{\mathcal L}$
on the holomorphic tangent space of $X$, we set,
\begin{equation}\label{rj}
\begin{aligned}
\R_x(q)=\{\eta<0 \mid &\dot{\mathcal R}^L_{b,x}-2\eta\dot{\mathcal L}_x\,
\text{has exactly $q$ negative and $n-q-1$ positive eigenvalues}\}.
\end{aligned}
\end{equation}

\begin{thm}[Holomorphic Morse inequalities]
Suppose that condition $Z(q)$ holds. Then, as $k\to\infty$, the dimensions 
$\dim H^q(\ol M,L^k)$ satisfy weak holomorphic Morse inequalities with both interior and boundary contributions, %given explicitly by \eqref{weak}. 
\begin{equation}\label{weak0}
\begin{aligned}
\dim H^{q}(\ol M,L^k) &\le\frac{k^{n}}{(2\pi)^{n}}\Big(\int_{M(q)}
\big|\det(\dot{\mathcal{R}}^L_{\mathbf z})\big|dv_{M'}(\mathbf z)+
\int_X\int_{\R_{x}(q)}\big|\det(\dot{\mathcal{R}}^L_{b,x}-2\eta\dot{\mathcal L}_x)\big|
\,d\eta\,dv_X(x)\Big)\\
&\quad+o(k^{n}),
\end{aligned}
\end{equation}
If, in addition, $Z(j)$ holds for all $j=0,\ldots,q$, then the strong Morse inequalities \eqref{strong} 
also hold; see Theorem~\ref{thmmorse}.
\end{thm}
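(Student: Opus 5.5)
The weak inequality \eqref{weak0} will follow from the heat-kernel trace argument of \cite[\S1.6]{MM07}, using the local asymptotics (Theorem~\ref{main1} at the boundary and the classical Bismut/Demailly formula in the interior).

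\textbf{Trace bound.} Since $Z(q)$ holds, $\Box^q_k$ has discrete spectrum and $H^q(\ol M,L^k)\cong\Ker\Box^q_k$. Hence for every fixed $t>0$,
$$\dim H^q(\ol M,L^k)\le \Tr e^{-\frac tk\Box^q_k}=\int_M \Tr e^{-\frac tk\Box^q_k}(\mathbf z,\mathbf z)\,dv_M(\mathbf z).$$
The $k$-dependent metric near $X$ changes the $L^2$ structure but not the kernel of $\Box^q_k$, so this bound is legitimate. Moreover its volume form converges, after rescaling, to the model one.

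\textbf{Splitting $M$.} I split $M$ into an interior part $\{\rho<-\varepsilon\}$ and a collar $\{-\varepsilon\le\rho\le0\}$ of the defining function $\rho$.

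On the interior part the metric is fixed. Interior scaling, as in \cite[Thm.~1.6.1]{MM07}, gives
$$k^{-n}\Tr e^{-\frac tk\Box_k^q}(\mathbf z,\mathbf z)\to (2\pi)^{-n}\det(\dot{\mathcal R}^L)\,\Tr\big[e^{t\omega_d}\big]\prod_j(1-e^{-t a_j})^{-1},$$
uniformly on compact subsets. The limit is dominated by a constant times $t^{-n}$ uniformly, so dominated convergence applies.

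In the collar I use Theorem~\ref{main1}. After the anisotropic scaling, the normal variable is $\sqrt{k}\rho$ in the CR directions and $k\,\rho$ in the $\im z_n$ direction. The collar integral becomes $k^n\int_X\int_{-\infty}^0(\cdots)$ of the model diagonal kernel \eqref{modelasymoptptic} in the rescaled normal variable. I would write the model kernel via Theorem~\ref{hmd} as a Fourier integral in the dual variable $\eta$ of $\re z_n$. Integrating the normal variable over $(-\infty,0]$ then yields an expression
$$(2\pi)^{-n}\int_X\int_{\R} F_t(x,\eta)\,d\eta\,dv_X,$$
where $F_t(x,\eta)$ involves $\det(\dot{\mathcal R}^L_b-2\eta\dot{\mathcal L})$ and heat factors $e^{-t\mu_j(\eta)}$.

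\textbf{Passing $k\to\infty$, then $t\to\infty$.} For the collar I need a uniform domination of the rescaled kernels in order to exchange $\lim_k$ with the integral over the unbounded rescaled normal variable. I expect this to be the main obstacle. I would first try an $L^2$-to-$\mathscr C^0$ bound for the rescaled heat kernel that is uniform in $k$ and exponentially decaying in the rescaled distance to $X$. This would come from an Agmon-type estimate plus the uniform subelliptic estimate under $Z(q)$ (the same estimates used in proving Theorem~\ref{main1}). Gluing the interior and collar regions with $\varepsilon\to0$ matches the two regimes, since the interior limit integrated near $X$ is $O(\varepsilon)$.

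This gives, for each fixed $t$,
$$\limsup_k k^{-n}\dim H^q\le(2\pi)^{-n}\Big(\int_M \Phi_t+\int_X\int F_t\Big).$$
Letting $t\to\infty$, monotone and dominated convergence kill every contribution except points where the relevant form has exactly $q$ negative eigenvalues, with $\Tr$ picking the lowest eigenvector. In the boundary term, $Z(q)$ together with semipositivity ensures only $\eta<0$ survives. This yields the integrals over $M(q)$ and $\R_x(q)$ with $|\det|$, which is \eqref{weak0}.

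For the strong inequalities, under $Z(j)$ for $j\le q$, I would apply the same limit to
$$\sum_{j\le q}(-1)^{q-j}\Tr e^{-\frac tk\Box^j_k}.$$
This alternating sum dominates $\sum_{j\le q}(-1)^{q-j}\dim H^j$ by the standard supersymmetry argument: the nonzero spectra intertwine through $\dbar$, and $\dbar$ preserves the Neumann domains.
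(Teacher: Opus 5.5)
\textbf{The gap is in the collar, at exactly the step you flag as the main obstacle, and the fix you propose cannot work.}

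A $k$-uniform majorant of the rescaled diagonal density that decays exponentially in $r'=k\rho$ does not exist. At a fixed distance $\rho$ from $X$ the metric is the fixed one, so $k^{-(n+1)}\Tr_q e^{-\frac tk\Box^q_k}(\mathbf z,\mathbf z)\approx k^{-1}H^M_q(t,\mathbf z)>0$. This is much larger than $e^{-ck|\rho|}$. Agmon-type arguments give off-diagonal decay of the kernel; they do not make the diagonal small.

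More importantly, your two-region split has nothing to handle the zone $R/k\lesssim|\rho|\lesssim k^{-1/2}$. There the metric \eqref{newmetric} interpolates between $1/k$ and $1$, and equals $k\rho^2$ on most of the zone. Consequently:
\begin{itemize}
\item neither the boundary model of Theorem~\ref{main1} nor the interior model \eqref{htin} describes the kernel there;
\item the uniform subelliptic estimates of the boundary scaling do not apply, since they use the constant scaled normal weight $1/k$.
\end{itemize}

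This zone is precisely the ingredient the paper supplies and your proposal is missing. The paper splits $\{r\le 0\}$ into five strips adapted to $\chi_k^{R,\varepsilon}$. It uses Theorem~\ref{main1} only on $\{r\ge -R/k\}$, a bounded range of $r'$ where the uniform bounds of Proposition~\ref{uniformly} suffice. It uses \eqref{htin} only on $\{r\le -1/\sqrt k\}$. On the three transition strips it repeats the scaling argument with the variable normal scale $z_n\mapsto z_n/\sqrt{k/\chi_k(r)}$, interpolating between \eqref{scalinging} and \eqref{eq_scaling}. This yields the pointwise bound \eqref{e-gue260205yydb}, $\Tr_q e^{-\frac tk\Box^q_k}(\mathbf z,\mathbf z)\le Ck^n/\chi_k^{R,\varepsilon}(r)$, with $C$ independent of $R,\varepsilon,k$. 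Integrating it gives the contributions $C\varepsilon$, $\hat C/R$ and $o(1)$ in \eqref{e-gue260205yydI}; here $\varepsilon$ is the interpolation width in \eqref{newmetric}, not your collar width. In rescaled units the decay in the middle strip is only like $r'^{-2}$, and it comes from the growth of the metric, not from any Agmon mechanism.

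\textbf{Second gap: the normal integral.} Even on the boundary layer you cannot integrate \eqref{modelasymoptptic} over $r'\in(-\infty,0]$ at fixed $t$ and obtain a finite $\int_X\int_\R F_t\,d\eta\,dv_X$. The first and third terms tend, as $r'\to-\infty$, to the positive (traced) free value \eqref{ker0}, so that integral diverges. The paper's display \eqref{weakmorsez} is also written with $\int_{-\infty}^0$ and has to be read with the following order of limits.
\begin{itemize}
\item These terms disappear only as $t\to\infty$: on $[-R,0]$ they are $O(R/t)$.
\item So $t\to\infty$ must be taken while the layer is still truncated, and only afterwards $R\to\infty$ and $\varepsilon\to 0$.
\item This is legitimate because the transition bounds are uniform for $t\ge t_0$, since the traced heat-kernel diagonal is nonincreasing in $t$.
\end{itemize}

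\textbf{Why only $\eta<0$ survives.} In that limit the restriction to $\eta<0$ comes from the factor $\int_{-\infty}^{(2r-t\eta)/\sqrt{2t}}e^{-\gamma^2}\,d\gamma\to\sqrt\pi\,\mathbf 1_{\{\eta<0\}}$ in the second term. It does not come from semipositivity: Theorem~\ref{thmmorse} assumes no positivity of $L$. In this step $Z(q)$ is used only to make $\R_x(q)$ bounded and the $\eta\to-\infty$ tails integrable.

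\textbf{What is fine.} The trace inequalities, the supersymmetry argument for the strong inequalities, and the interior analysis are sound. For the interior, though, dominated convergence needs a $k$-uniform bound on the prelimit densities, as in \eqref{eq_ub}, not merely on their limit.
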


These inequalities were previously obtained by Berman \cite{Be05} 
using Bergman kernel methods. Our result provides a new proof based on heat kernel asymptotics. In this sense, the present work may be viewed as a boundary analog of Bismut's heat-kernel proof of Demailly's holomorphic Morse inequalities.

A main application of these results is obtained in Section~\ref{5.2}, 
where we pass from manifolds with boundary to $q$-convex and 
$q$-concave manifolds via Andreotti-Grauert theory \cite{AG:62},
see Theorems \ref{miqconv} and \ref{miqconc}. As immediate consequences, we obtain asymptotic vanishing results for semipositive line bundles on $q$-convex manifolds and for seminegative line bundles on $q$-concave manifolds, namely \(\dim H^j(M,L^k)=o(k^n)\), $k\to\infty$,   in the Andreotti--Grauert range; see Corollaries~\ref{miqconvsp} and \ref{miqconcsn}. 
In particular, for a $1$-convex manifold endowed with a semipositive line bundle $L$
in the neighborhood of the exceptional analytic set, we have 
\(\dim H^j(M,L^k)=o(k^n)\), $k\to\infty$, 
for all $j\geq1$ (Corollary \ref{mi1convsp}).

We further apply the holomorphic Morse inequalities on manifolds with boundary 
to obtain asymptotic lower bounds for the
dimension of the spaces of holomorphic sections
of a semipositive line bundle over a $1$-concave manifold. 
These lower bounds generalize the well-known estimates obtained by 
Siu \cite{Si1:84} and Demailly \cite{De85} in their work on the Grauert–Riemen-schneider conjecture, which provides a characterization of Moishezon manifolds in terms of semipositive line bundles.
The corresponding problem for $1$-concave manifolds was considered in
\cite{Be05,HLM22,LMW25,Mar96,Mar16}.
Recall that a compact or $1$-concave complex manifold $M$ is 
called \textit{Moishezon} if the transcendence degree of the field 
of meromorphic functions on $M$ equals its dimension (see 
\cite{De85,Mar96}). 
We state here the result for 
a strictly pseudoconcave domain, that is, a relatively compact smooth
domain whose Levi form is negative definite, and refer to 
Corollary~\ref{cor:one-concave-positive} for the general case.
%===
\begin{cor}\label{cor:one-concave-positive0}
Let $M$ be a strictly pseudoconcave domain in a complex manifold $M'$
of dimension $n\geq 3$. 
Let $(L,h^L)$ be a semipositive holomorphic Hermitian line bundle 
on $M'$.
Assume that along $X$, the curvature of $L$ and the Levi form of $X$
are conformally equivalent in the sense that there exists a smooth 
positive function $g\in C^\infty(X)$ such that
$\dot{\mathcal R}^L_{b,x}=-g(x)\dot{\mathcal L}_{x}$ 
and $x\in X$ are Hermitian forms on \(T^{1,0}X\cap T^{1,0}M'\). 
Then, as $k\to\infty$,
\begin{equation}\label{eq:h0-one-concave0}
\begin{split}
\dim H^0(M,L^k)
&=\frac{k^n}{n!}\int_{M}c_1(L,h^L)^n
+\frac{k^n}{(2\pi)^n}
\int_{X}\int_{\mathbb R_x(0)}
\big|\det(\dot{\mathcal R}^L_{b,x}
-2\eta\dot{\mathcal L}_{x})\big|\,d\eta\,dv_{X}(x)\\
&\qquad+o(k^n).
\end{split}
\end{equation}
Furthermore, the manifold $M$ can be compactified to
a Moishezon manifold; that is, there exists a compact Moishezon 
$\widehat{M}$ such that $M$ is biholomorphic to an open set of $\widehat{M}$.
\end{cor}
%===
%Our asymptotic formula is a heat-kernel counterpart of the boundary-curvature formulas \cite[Corollary~6.7]{Be05}, \cite[Theorem 1.8]{LMW25}, while the conformal boundary assumption above is precisely the situation in which the upper and lower bounds match and yield an exact leading term. The novelty here is that the boundary contribution is obtained from the explicit model heat kernel and from the boundary scaling analysis developed in this paper. Furthermore, the asymptotic growth rate $\dim H^0(M,L^k)\sim k^n$ implies that $M$ admits a compactification to a Moishezon manifold; see Corollary~\ref{cor:one-concave-positive} for details. The assertion concerning the existence of a Moishezon compactification constitutes one of the main geometric applications of this work.
%
Our asymptotic formula is a heat-kernel counterpart of the 
boundary-curvature formulas in \cite[Corollary~6.7]{Be05} and 
\cite[Theorem~1.8]{LMW25}, while the conformal boundary assumption above is precisely the situation in which the upper and lower bounds match and yield an exact leading term. The novelty here is that the boundary contribution is obtained from the explicit model heat kernel and from the boundary scaling analysis developed in this paper.
We obtain, in addition to \eqref{eq:h0-one-concave0}, that the higher cohomology groups satisfy 
$\dim H^j(M,L^k)=o(k^n)$ for $1\le j\le n-2$. As a consequence, one obtains 
an explicit formula for the volume of $L$, consisting of the usual interior term 
together with a boundary contribution; in particular, $L$ is big.
This implies that $M$ admits a compactification to a Moishezon manifold; 
see Corollary~\ref{cor:one-concave-positive}. 
This compactification result is one of the main geometric applications of the paper.
We also discuss two further geometric applications: examples of \(1\)-concave manifolds arising from isolated singularities, and a deformation result showing that the positivity of the leading coefficient
in the Morse inequality, and hence the Moishezon property, 
persists under sufficiently small perturbations of the complex structure.
 
In addition to these applications to the holomorphic Morse inequalities, our heat trace asymptotic expansions also lead to a semi-classical Weyl law for the $\dbar$-Neumann Laplacian. More precisely, let $N_k^q(\lambda)$, $\lambda\ge0$, denote the spectral counting function of
$\frac{1}{k}\Box_k^q$. We show that there exists a positive Borel measure $\mu^q$ on $[0,\infty)$ whose Laplace transform coincides with the limiting heat trace $\lim_{k\to\infty}k^{-n}\Tr_q\big(e^{-\frac{t}{k}\Box^q_k}\big)$, and such that the spectral counting functions satisfy
\begin{equation}
N^q_k(\lambda)=k^n\mu^q([0,\lambda])+o(k^n)   
\end{equation}
at every continuity point of the distribution function $\lambda\mapsto\mu^q([0,\lambda])$, see Theorem~\ref{thm:scwl}. This is closely connected to the holomorphic Morse inequalities. Demailly \cite[Th\'eor\`eme 0.6]{De85} (see also \cite[\S3.2.2]{MM07}) computed the limit $\lim_{k\to\infty}k^{-n}N^q_{k,M}(\lambda)$ of the spectral counting function $N^q_{k,M}(\lambda)$ for the  Kodaira Laplacian with Dirichlet boundary conditions on $M$, defined with respect to a fixed Hermitian metric on $M'$. He then applied this computation to establish the holomorphic Morse inequalities on a compact manifold without boundary. Our method adopts the heat kernel based proof of Demailly's formula given in \cite[\S3.2.2]{MM07}.

% Let $\Box^q_{k,M}$ be the Kodaira Laplacian with
% Dirichlet boundary conditions on $M$, constructed
% with respect to a fixed Hermitian metric on $M'$, and let
% $N^q_{k,M}(\lambda)$ be its spectral counting function. 
% Demailly established the 
% semiclassical Weyl law $\lim_{k\to\infty}k^{-n}N^q_{k,M}(\lambda)$.

% This is in the spirit
% of the asymptotic distribution of eigenvalues in \cite[\S3.2.2]{MM07}.

Let us briefly comment on the proofs of our results. 
The first step is to analyze the model operator on $\ol M_0$ and derive an 
explicit formula for its heat kernel. Although the works \cite{S80,T97} 
provide important starting points, the passage to an arbitrary Levi signature 
and a nontrivial weight requires additional ideas. The second step is to 
introduce a boundary scaling adapted to the $\dbar$-Neumann condition 
and to prove uniform estimates for the corresponding rescaled heat kernels. 
We then show that these kernels converge to the model heat kernel. Finally, we combine the local boundary analysis with the standard interior asymptotics to derive global heat trace estimates, from which the Morse-type applications and the semi-classical Weyl law follow.
%Finally, we combine the local boundary analysis with  the standard interior asymptotics to derive global trace estimates,  first on manifolds with boundary and then, by exhaustion, on $q$-convex and $q$-concave manifolds.

We now explain the organization of the paper. In Section~\ref{sec:prelim}, we collect the standard notation, terminology, and background material used throughout the paper. In Section~\ref{bdd}, we analyze the model domain and derive explicit formulas for the heat kernel of the weighted $\dbar$-Neumann Laplacian, proving Theorem~\ref{hmd} and the more precise off-diagonal formula in Theorem~\ref{heatkernelmodel}. In Section~\ref{heatasymptotic}, we develop the boundary scaling procedure, establish uniform estimates for the scaled heat kernels, and prove the asymptotic convergence to the model kernel, leading to Theorem~\ref{main1}. 
In Section~\ref{morseinequality}, we apply these heat kernel asymptotics to prove the holomorphic Morse inequalities of Theorem~\ref{thmmorse}. Finally, in Section~\ref{5.2}, we use these inequalities to deduce Morse inequalities for $q$-convex and $q$-concave manifolds and to derive the $1$-concave Moishezon compactification criterion. In Section~\ref{5.3}, we show that the heat trace asymptotics also yield a semi-classical Weyl law for the $\dbar$-Neumann Laplacian.
%Finally, in Section~\ref{5.2}, we deduce Morse inequalities for $q$-convex and  $q$-concave manifolds and derive the $1$-concave Moishezon compactification criterion. 
%semipositive application culminating in the Moishezon compactification criterion.

\section{Preliminaries}\label{sec:prelim}

\subsection{Standard notations} \label{s-ssna}
We shall use the following notations: $\mathbb N=\{1,2,\ldots\}$
is the set of natural numbers, $\mathbb N_0=\mathbb N\cup\set{0}$, $\mathbb R$ 
is the set of real numbers, $\mathbb R_+:=\{x\in\mathbb R\mid x>0\}$,
$\overline{\mathbb R}_+:=\{x\in\mathbb R\mid x\geq0\}$. 
Let $\alpha = (\alpha_1,\ldots,\alpha_n) \in \mathbb{N}_0^{n}$ be a multiindex. We define its order by $\lvert \alpha \rvert := \alpha_1 + \cdots + \alpha_n$ and its length by $l(\alpha) := n$.
For $m\in\mathbb N$, write $\alpha\in\set{0,1,\ldots,m}^{n}$ if $\alpha_j\in\set{0,1,\ldots,m}$, 
$j=1,\ldots,{n}$. $\alpha$ is strictly increasing if $\alpha_1<\alpha_2<\ldots<\alpha_n$. 
Let $z = (z_1, \ldots, z_n) \in \mathbb{C}^n$ be the standard complex coordinate system on $\mathbb{C}^n$, where each component is given by
$z_j = x_{2j-1} + i x_{2j}$, $j = 1, \ldots, n,$
with $(x_1, \ldots, x_{2n})$ denoting the corresponding real coordinates on $\mathbb{R}^{2n}$.
We write
\[
\begin{split}
&z^\alpha=z_1^{\alpha_1}\ldots z^{\alpha_n}_n\,,\quad\ol z^\alpha=\ol z_1^{\alpha_1}\ldots\ol z^{\alpha_n}_n\,,\\
&\partial_{z_j}=\frac{\partial}{\partial z_j}=
\frac{1}{2}\Big(\frac{\partial}{\partial x_{2j-1}}-i\frac{\partial}{\partial x_{2j}}\Big)\,,\quad\partial_{\overline z_j}=
\frac{\partial}{\partial\overline z_j}=\frac{1}{2}\Big(\frac{\partial}{\partial x_{2j-1}}+i\frac{\partial}{\partial x_{2j}}\Big),\\
&\partial^\alpha_z=\partial^{\alpha_1}_{z_1}\ldots\partial^{\alpha_n}_{z_n}=\frac{\partial^{\abs{\alpha}}}{\partial z^\alpha}\,,\quad
\partial^\alpha_{\overline z}=\partial^{\alpha_1}_{\overline z_1}\ldots\partial^{\alpha_n}_{\overline z_n}=
\frac{\partial^{\abs{\alpha}}}{\partial\overline z^\alpha}\,.
\end{split}
\]
For $j, s\in\mathbb Z$, set $\delta_{j,s}=1$ if $j=s$, $\delta_{j,s}=0$ if $j\neq s$.

Let $W$ be a $\cali{C}^\infty$ paracompact manifold.
We let $TW$ and $T^*W$ denote the tangent bundle of $W$
and the cotangent bundle of $W$ respectively.
The complexified tangent bundle of $W$ and the complexified cotangent bundle of $W$ are denoted by $\mathbb C TW$
and $\mathbb C T^*W$, respectively. Write $\langle\,\cdot\,,\cdot\,\rangle$ to denote the pointwise
duality between $TW$ and $T^*W$.
We extend $\langle\,\cdot\,,\cdot\,\rangle$ bilinearly to $\mathbb C TW\times\mathbb C T^*W$.
Let $G$ be a $\cali{C}^\infty$ vector bundle over $W$. The fiber of $G$ at $x\in W$ will be denoted by $G_x$.
Let $E$ be another vector bundle over $W$. We write $E\boxtimes G^*$ for the vector bundle over $W\times W$ whose fiber at $(x,y)\in W\times W$ is the space of linear maps from $G_y$ to $E_x$.   Let $Y\subset W$ be an open set. 
From now on, the spaces of distribution sections of $G$ over $Y$ and
smooth sections of $G$ over $Y$ will be denoted by $\mathscr D'(Y, G)$ and $\cali{C}^\infty(Y, G)$ respectively.
Let $\mathscr{E}'(Y, G)$ be the subspace of $\mathscr D'(Y, G)$ 
whose elements have compact support in $Y$ and 
let $\cali{C}^\infty_c(Y, G)$ be the subspace of $\cali{C}^\infty(Y, G)$ whose elements have compact support in $Y$.

\subsection{\texorpdfstring{$\dbar$}{dbar}-Neumann Laplacian}\label{s-neumann}

In this section, we recall the definition of the $\dbar$-Neumann Laplacian (with values in $L^k$) and study some of its properties. 

Let $M$ be an $n$-dimensional compact complex manifold with a smooth boundary $X$. We may assume that $\ol M$ is the closure of a relatively compact open subset $M$ of a smooth complex manifold $M'$ in which $\ol M$ has a smooth boundary $X$. Fix a  Hermitian metric $\langle\,\cdot\mid\cdot\,\rangle$ on $\mathbb CTM'$ such that $T^{1,0}M'\perp T^{0,1}M'$. Let $dv_{M'}$ be the volume form on $M'$ induced by $\langle\,\cdot\,|\,\cdot\,\rangle$. The Hermitian metric $\langle\,\cdot\mid\cdot\,\rangle$ induces Hermitian metrics on $\mathbb CT^*M'$ 
and $\oplus^n_{q=1}T^{*0,q}M'$. We also denote all these Hermitian metrics by $\langle\,\cdot\,|\,\cdot\,\rangle$ and let $|\cdot|$ be the corresponding norm.

Fix a defining function $r\in\mathscr{C}^\infty(M',\mathbb R)$ so that $M=\{{\bf z}\in M'\,:\, r({\bf z})<0\}$ and $|dr|=1$ on $X$. The boundary $X$ is a CR manifold with CR structure $T^{1,0}X:=T^{1,0}M'\cap\mathbb CTX$. Let $\omega_0:=J(dr)|_X$, 
where $J$ is the complex structure map on $M'$. 
For $x\in X$, the Levi form $\mathcal{L}_x$ of $X$ at $x$ is the Hermitian form on $T^{1,0}_xX$ given by 
\begin{equation}\label{e-gue260131yyda}
\mathcal{L}_x(U,\ol V):=-\frac{1}{2i}d\omega_0(U\wedge\ol V),\ \ U, V\in T^{1,0}_xX.
\end{equation}
\begin{defin}[Condition $Z(q)$]\label{zq}
Let $q\in\{0,\ldots,n\}$. We say that condition $Z(q)$ holds for $M$ if  the 
Levi form has either at least $n-q$ positive eigenvalues or at least $q+1$ negative eigenvalues at every boundary point.
\end{defin}

Let $A$ be a $\mathscr{C}^\infty$ vector bundle over $M'$. 
Let $U$ be an open set in $M'$. Let 
\[
\begin{split}
&\mathscr{C}^\infty(U\cap \ol M,A),\ \ \mathscr D'(U\cap \ol M,A),\ \ \mathscr{C}^\infty_c(U\cap \ol M,A),\ \ 
\mathscr E'(U\cap \ol M,A),
\end{split}
\]
denote the spaces of restrictions to $U\cap\ol M$ of elements in 
\[
\begin{split}
\mathscr{C}^\infty&(M',A),\ \ \mathscr D'(M',A),\ \ \mathscr{C}^\infty_c(M',A),\ \ 
\mathscr E'(M',A),\\  
\end{split}
\]
respectively. When $U=M$, write $\mathscr{C}^\infty(\ol M,A):=\mathscr{C}^\infty(U\cap \ol M,A)$, $\mathscr D'(\ol M,A):=\mathscr D'(U\cap \ol M,A)$, $\mathscr{C}^\infty_c(\ol M,A):=\mathscr{C}^\infty_c(U\cap \ol M,A)$,
$\mathscr E'(\ol M,A):=\mathscr E'(U\cap \ol M,A)$. 

%Suppose that $D\subset M$ is an open set. We define $\omz^{0,q}(D)$ as the space of smooth sections of $T^{*0,q}M$ over $D$, $\omz_c^{0,q}(D)$ the subspace of $\omz^{0,q}(D)$ whose elements have compact support in $D$. %$\omz_c^{0,q}(\ol D)$ the subspace of $\omz^{0,q}(D)$ whose elements can be extended to compactly supported forms in $\ol M$. 

 For $0\le q\le n$, we define $\omz^{0,q}(M')$ as the space of smooth sections of $T^{*0,q}M'$ and $\omz_c^{0,q}(M')$ as the subspace of $\omz^{0,q}(M')$ whose elements have compact support in $M'$. Let $U\subset M'$ be an open set, we also define $\omz^{0,q}(U\cap\ol M):=\mathscr C^\infty(U\cap\ol M,T^{*0,q}M')$, $\omz^{0,q}_c(U\cap\ol M):=\mathscr C^\infty_c(U\cap\ol M,T^{*0,q}M')$, $\omz^{0,q}(\ol M):=\mathscr C^\infty(\ol M,T^{*0,q}M')$, and $\omz^{0,q}_c(M):=\mathscr C^\infty_c(M,T^{*0,q}M')$.
 
 %$\omz_c^{0,q}(\ol D)$ the subspace of $\omz^{0,q}(D)$ whose elements can be extended to compactly supported forms in $\ol M$. 
Let 
$$
\dbar=\dbar_q: \omz^{0,q}(M')\to \omz^{0,q+1}(M')
$$ 
be the Cauchy-Riemann operator. 
The Hermitian metric on $\C TM'$ induces a volume form $dv_{M'}$ on $M$, which allows us to define natural global $L^2$ inner products $(\,\cdot\, ,\,\cdot\,)_{\ol{M}}$ and $(\,\cdot\, ,\,\cdot\,)_{M'}$ on $\omz^{0,q}(\ol M)$ and $\omz^{0,q}_c(M')$. These inner products are given by
$$
(f|g)_{\ol M}=\int_{\ol M} \langle f\mid g\rangle dv_{M'},\quad f,g\in \omz^{0,q}(\ol M),
$$
and
$$
(f|g)_{M'}=\int_{M'} \langle f\mid g\rangle dv_{M'},\quad f,g\in \omz_c^{0,q}(M').
$$
We then define $L^2_{(0,q)}(M)$ as the completion of $\omz^{0,q}(\ol M)$ with respect to $(\,\cdot\, ,\,\cdot\,)_{\ol M}$ and $\|\cdot\|_{\ol M}$ as the associated norm.
Extend $\dbar_q$ to $L^2_{(0,q)}(M)$ by
\begin{equation}\label{e-gue260130yyd}
\dbar_q: {\rm Dom\,}\dbar_{q}\subset L^2_{(0,q)}(M)\to L^2_{(0,q+1)}(M),\end{equation}
where $u\in {\rm Dom\,}\dbar_{q}$ if we can find $u_j\in\Omega^{0,q}(\ol M)$, $j=1,2,\ldots$, such that 
$\lim_{j\to+\infty}u_j=u$ in $L^2_{(0,q)}(M)$, and $\lim_{j\to+\infty}\dbar_qu_j=v$ in $L^2_{(0,q+1)}(M)$ for some $v\in L^2_{(0,q+1)}(M)$. We set $\dbar_qu:=v$. 
Let $\dbarstar_q$ be its adjoint operator, whose domain is given by
\begin{equation}
{\rm Dom\,}\dbarstar_q=\set{f\in L^2_{(0, q+1)}(M) \mid \exists \,C>0,\,
|( f, \dbar_q g)_{\ol M}|\le C\|g\|_{\ol M},\ \forall g\in\Dom \dbar_{q} }.
\end{equation} 
%Let $\omL_{(0,q)}(\ol M)$ denote the space of $(0,q)$-forms on $M$ smooth up to the boundary. 
The condition for $f\in \omz^{0,q}(\ol M)$ to be in $\Dom\dbarstar_q$ gives rise, by integration by parts, to a boundary condition. In fact, if $M$ is a bounded domain in $\C^n$ with a $\mathscr C^1$ defining function $r$ such that $|d r|=1$ on $\partial M$, the boundary of $M$, and write
$$
f=\sumprime_{|J|=q} f_J \,d\bar z_J \in\Omega^{0,q}(\ol M).
$$
Then $f\in\Dom \dbarstar_{q-1}$ if and only if 
$$
(\dbar r\wedge)^* f:=(\dbar r)^* \lrcorner f=\sumprime_{|K|=q-1}\Big(\sum_{j=1}^{n} f_{jK}\frac{\partial r}{\partial z_j}\Big) d\bar z_K=0
$$
on $\partial M$, where
$$
(\dbar r)^*=\sum_{j=1}^{n} \frac{\partial r}{\partial z_j}\frac{\partial}{\partial \bar z_j}
$$
is the dual $(0, 1)$-vector field of $\dbar r$ and $\lrcorner$ denotes the contraction operator. 
Let 
$$
Q_q(f, g)=(\dbar_q f, \dbar_q
g)_M+(\dbarstar_{q-1} f,
\dbarstar_{q-1} g)_M
$$
with
$\Dom Q_{q}=\Dom \dbar_q\cap \Dom \dbarstar_{q-1}$. Then $Q_q$ defines a closed sesquilinear form  on $L^{2}_{(0,q)}(M)$.
By standard theory there is a unique self-adjoint operator $\Box_q$, called {Gaffney extension of \it $\dbar$-Neumann Laplacian}, corresponding to $Q_q$ that we can write as 
\begin{equation}\label{neumann}
\begin{split}
&\Box^q: {\rm Dom\,}\Box^q\subset L^2_{(0,q)}(M)\to L^2_{(0,q)}(M),\\
& \Box^q=\dbar_{q-1}\dbarstar_{q-1}+\dbarstar_q\dbar_{q},\\
&{\rm Dom\,}\Box^q=\set{f\in{\rm Dom\,}\dbar_{q}\cap{\rm Dom\,}\dbarstar_{q-1} \,\big|\, \dbar_{q}f\in{\rm Dom\,}\dbarstar_{q},\, \dbarstar_{q-1} f\in{\rm Dom\,}\dbar_{q-1}}. 
\end{split}
\end{equation}
The operator $\Box^q$ is elliptic, but its natural boundary conditions, the $\dbar$-Neumann boundary conditions, are non-coercive. We will provide a detailed representation of them in Section \ref{bdd} using local coordinates. In what follows, for simplicity of notation, we will suppress the subscript $q$.

Let $(L,h^L)$ be a holomorphic line bundle over $M'$, where $h^L$ is the Hermitian fiber metric on $L$. Assume that the local weight of $L$ on $U$ is $\phi$. More precisely, let $s$ be a local holomorphic trivializing section of $L$ on $U$, then locally,
\begin{equation}\label{trivia}
|s(\mathbf z)|^2_{h^L}=e^{-\phi(\mathbf z)},\quad \mathbf z\in U.
\end{equation}
For $k>0$, let $L^k$ be the $k$-th tensor power of the line bundle $L$ over $M'$, then $h^L$ induces a Hermitian fiber metric $h^{L^k}$ on $L^k$ and $s^k$ is a local holomorphic trivializing section of $L^k$.
Fix $R\gg1$, $0<\varepsilon\ll1$. For every $k\in\mathbb N$, let $\langle\,\cdot\,|\,\cdot\,\rangle_k$ be the Hermitian metric on $T^{*0,1}_{\bf z}M'$ for ${\bf z}\in\ol M$, defined as follows: %On the region $\set{{\bf z}\in M': 0\leq r({\bf z})\leq\frac{1}{\sqrt{k}}}$, we define it by
\begin{equation}\label{e-gue260125yyd}
\begin{split}
&\langle\,\dbar r\,|\,\dbar r\,\rangle_k=\chi_k^{R,\eps}(r)\langle\,\dbar r\,|\,\dbar r\,\rangle,\\
& \langle\,u\,|\,v\,\rangle_k=\langle\,u\,|\,v\,\rangle, \ \ \text{for } u,v \in T^{*0,1}_{\bf z}M', \text{ $\langle\,u\,|\,\dbar r\,\rangle=0$,} \text{ and $\langle\,v\,|\,\dbar r\,\rangle=0$,}\\
&\langle\,u\,|\,\dbar r\,\rangle_k=0, \ \ \text{for } u\in T^{*0,1}_{\bf z}M' \text{ and $\langle\,u\,|\,\dbar r\,\rangle=0$}.
\end{split}
\end{equation}
Here, $\chi_k^{R,\varepsilon}(r)$ is given by
\begin{equation}\label{newmetric}
\begin{aligned}
\chi_k^{R,\varepsilon}(r)=&
\frac{1}{k}\chi\left(\frac{k}{\eps}\Big(-r-\frac{R}{k}\Big)\right)+\chi\left(\frac{\sqrt k}{\eps}\Big(\frac{1}{\sqrt k}+r\Big)\right)\\
&+kr^2\left[1-\chi\left(\frac{k}{\eps}\Big(-r-\frac{R}{k}\Big)\right)\right]\left[1-\chi\left(\frac{\sqrt k}{\eps}\Big(\frac{1}{\sqrt k}+r\Big)\right)\right],
\end{aligned}
\end{equation}
where $\chi\in C^\infty(\mathbb{R})$ satisfies that
\begin{equation*}
\chi=1 \text{ on } (-\infty,0],\text{  and }
\chi=0 \text{ on } [1, +\infty].
\end{equation*}
For clarity, a schematic graph of $\chi_k^{R,\varepsilon}(r)$ is shown in the figure below.
%%%%%%%%%%
\begin{figure}[htbp]
  \centering
\begin{tikzpicture}
\begin{axis}[
    axis x line=bottom,
    axis y line=middle,
    ytick pos=right,
    yticklabel pos=right,
    xlabel={\footnotesize $r$},
    xmin=-9, xmax=1,
    ymin=0, ymax=4.2,
    clip=false,
    xtick={-6,-4.5,-2,-1,0},
    xticklabels={
        $-\frac{1}{\sqrt{k}}$,
        $-\frac{1-\varepsilon}{\sqrt{k}}$,
        $-\frac{R+\varepsilon}{k}$,
        $-\frac{R}{k}$,
        0
    },
    ytick={0.5,0.75,3.19,3.7},
    yticklabel style={font=\tiny},
    xticklabel style={font=\footnotesize},
    yticklabels={
        $1/k$,
        $(R+\varepsilon)^2/k$,
        $(1-\eps)^2$,
        $1$
    },
    width=14cm,
    height=5.4cm,
    x label style={
        at={(axis description cs:1,0.02)}, 
        anchor=west,
    },
    y label style={
        at={(axis description cs:0,1.02)}, 
        anchor=west,
        rotate=-90,
    },
]

% constant 1 region
\addplot[
    thick,
    domain=-9:-6
] {3.7};

% middle dashed region
\addplot[
    dashed, 
    thick,
    domain=-6:-4.5,
    samples=200
] {0.12593*(-x)^3 - 2.31333*(-x)^2 + 14.16*(-x) - 25.18};

% quadratic region
\addplot[
    thick,
    domain=-4.5:-2
] {0.2125*exp(1.8*ln(-x))};

\node[above, font=\footnotesize] at (axis cs:-3.25, {0.2125*exp(1.8*ln(3.25))+0.15}) {$kr^2$};

% middle dashed region
\addplot[
    dashed, 
    thick,
    domain=-2:-1,
    samples=200
] {0.3*(-x)^3 - 0.95*(-x)^2 + (-x) + 0.15};

% constant 1/k region
\addplot[
    thick,
    domain=-1:0
] {0.5};

% points
\addplot[only marks] coordinates {(-1,0.5)};
\addplot[only marks] coordinates {(-2,0.75)};
\addplot[only marks] coordinates {(-4.5,3.19)};
\addplot[only marks] coordinates {(-6,3.7)};

% dotted guide lines
\addplot[thin, densely dotted, samples=100] coordinates {(-1,0) (-1,0.5)};
\addplot[thin, densely dotted, samples=100] coordinates {(-2,0) (-2,0.75)};
\addplot[thin, densely dotted, samples=100] coordinates {(-2,0.75) (0,0.75)};
\addplot[thin, densely dotted, samples=100] coordinates {(-4.5,0) (-4.5,3.19)};
\addplot[thin, densely dotted, samples=100] coordinates {(-4.5,3.19) (0,3.19)};
\addplot[thin, densely dotted, samples=100] coordinates {(-6,0) (-6,3.7)};
\addplot[thin, densely dotted, samples=100] coordinates {(-6,3.7) (0,3.7)};
\node[font=\footnotesize, anchor=south east] at (axis cs:0,3.8)
{$\chi_k^{R,\varepsilon}(r)$};
\end{axis}
\end{tikzpicture}
\end{figure}
%%%%%%%%%
%On $\set{z\in M': 0\leq r(z)\leq\frac{2R}{\sqrt{k}}}$, we define $\langle\,\cdot\,|\,\cdot\,\rangle_k:=\langle\,\cdot\,|\,\cdot\,\rangle$. 

\noindent The idea behind the construction of \eqref{newmetric} is that the metric has to be compatible with the anisotropic scale \eqref{scalinging} near the boundary: tangential variables have a scale of $1/{\sqrt k}$, while the normal variable $r$ has a scale of $1/k$. To keep the normal and tangential parts of the rescaled $\dbar$-Neumann Laplacian balanced, we need to ensure
$$
\langle\,\dbar r\,|\,\dbar r\,\rangle_k
\sim
\frac{1}{k}\langle\,\dbar r\,|\,\dbar r\,\rangle
$$
on $\set{{\bf z}\in \ol M: -1/k\lesssim r({\bf z})\leq 0}$ and
$$
\langle\,\dbar r\,|\,\dbar r\,\rangle_k
\sim
\langle\,\dbar r\,|\,\dbar r\,\rangle\quad\mbox{on}\quad 
\set{{\bf z}\in \ol M: r({\bf z})\lesssim -1/\sqrt{k}}.
$$
Thus, the $k$-dependent metric is not merely a technical convenience; 
it encodes the natural boundary scale and leads to a stable model limit after rescaling.
In addition, in the figure, the region between these two regimes is further divided 
into three parts. This decomposition is needed both to construct a smooth metric 
that interpolates between these two regimes and to later prove the Morse inequalities.
The Hermitian metric $\langle\,\cdot\,|\,\cdot\,\rangle_k$ 
induces a Hermitian metric on $\oplus^n_{q=1}T^{*0,q}M'$ on $\ol M$, 
which we still denote by $\langle\,\cdot\,|\,\cdot\,\rangle_k$. 

Let $\omz^{0,q}(\ol M,L^k):=\cali{C}^\infty(\ol M,T^{*0,q}M'\otimes L^k)$.
Let $$\dbar_{k} \colon \omz^{0,q}(M',L^k)\to \omz^{0,q+1}(M',L^k)$$
be the Cauchy-Riemann operator acting on forms with values in $L^k$ such that
\begin{equation}\label{dbarbk}
\dbar_{k}(s^kf)=s^k\dbar f,
\end{equation}
where $s$ is a local holomorphic trivializing section of $L$ on an open set $U\subset M'$ and $f\in\omz^{0,q}(U)$. Then the $q-$th $\dbar_{k}$-cohomology on $\omz^{0,q}(\ol M,L^k)$ is given by
\begin{equation}
H^{q}(\ol M,L^k):=\frac{\ke \dbar_{k}|_{\omz^{0,q}(\ol M,L^k)}}{\im \dbar_{k}|_{\omz^{0,q-1}(\ol M,L^k)}},\quad 0\le q\le {n-1}.
\end{equation}

Fix $R\gg1$, $0<\varepsilon\ll1$. 
For every $k\in\mathbb N$, let $(\,\cdot\, ,\,\cdot\,)_{k,\ol M}$ be the $L^2$ inner product on $\Omega^{0,q}(\ol M,L^k)$ induced by the Hermitian metric $\langle\,\cdot\,|\,\cdot\,\rangle_k$ on $T^{*0,q}M'$, the volume form $dv_{M'}$, and the Hermitian metric $h^{L^k}$ on $L^k$. Let $\norm{\,\cdot\,}_{k,\ol M}$ be the corresponding $L^2$ norm. 
We denote by $L^2_{(0,q)}(M,L^k)$ the completion of $\omz^{0,q}(\ol M,L^k)$ with respect to $(\,\cdot\, ,\,\cdot\,)_{k,\ol M}$.
As in \eqref{e-gue260130yyd}, we can extend $\dbar_{k}$ to $L^2_{(0,q)}(M,L^k)$: 
$$
\dbar_{k}: {\rm Dom\,}\dbar_{k}\subset L^2_{(0,q)}(M,L^k)\to L^2_{(0,q+1)}(M,L^k).
$$
Let 
$$\dbarstar_{k}:{\rm Dom\,}\dbarstar_{k}\subset L^2_{(0,q+1)}(M,L^k)\to L^2_{(0,q)}(M,L^k)$$
be the adjoint of $\dbar_{k}$ with respect to $(\,\cdot\, ,\,\cdot\,)_{k,\ol M}$. 
The Gaffney extension of $\dbar$-Neumann Laplacian on $\ol M$ with values in $L^k$ is given by
\begin{equation}\label{be-gue210228yyd}
\begin{split}
&\Box^q_{k}: {\rm Dom\,}\Box^q_{k}\subset L^2_{(0,q)}(M,L^k)\to L^2_{(0,q)}(M,L^k),\\
&\Box^q_{k}=\dbarstar_{k}\dbar_{k}+\dbar_{k}\dbarstar_{k}\ \ \mbox{on ${\rm Dom\,}\Box^q_{k}$},\\
& {\rm Dom\,}\Box^q_{k}=\set{f\in{\rm Dom\,}\dbar_{k}\cap{\rm Dom\,}\dbarstar_{k} \,\big|\, \dbar_{k}f\in{\rm Dom\,}\dbarstar_{k}, \dbarstar_{k} f\in{\rm Dom\,}\dbar_{k}}. 
\end{split}
\end{equation}
Define the space of harmonic $(0,q)$-forms as  
\begin{equation}  
\mathscr{H}^q(\ol M,L^k) := \ker \Box_k^q,  
\end{equation}  
consisting of harmonic forms smooth up to the boundary that satisfies the $\dbar$-Neumann boundary conditions. 
According to Kohn's estimates \cite{K65} (see also \cite{FK72}), 
the subellipticity of $\Box^q$ and $\Box_k^q$ is ensured under condition $Z(q)$.

We now fix the convention for Sobolev norms. For each $k$, let $\langle \,\cdot\,,\,\cdot\,\rangle_k$
denote the Hermitian metric on
$\Lambda^{0,q}T^*M' \otimes L^k$. Notice that this metric may depend on $k$ not only through the tensor power $L^k$, but also through Hermitian metric on $T^{*0,q}M'$. The volume form $dv_{M'}$ is fixed throughout. Choose a smooth connection $\nabla^{\Lambda}$ on
$\Lambda^{0,q}T^*M'$, and a smooth connection $\nabla^L$ on $L$. They induce, for each $k$, a smooth connection $\nabla_k$ on $\Lambda^{0,q}T^*M' \otimes L^k$. For $m\in\mathbb N_0$, we define
\begin{equation}\label{sobolevnorm}
\|f\|^2_{m,k,\overline M}
:=\sum_{\ell=0}^m
\int_{\overline M}
\left|\nabla_k^\ell f(x)\right|_k^2\,dv_{M'}({\bf z}),
\qquad f\in\Omega^{0,q}(\overline M,L^k).    
\end{equation}
When $m=0$, we simply write
$\|f\|_{k,\overline M}$. 
The following subelliptic estimates hold.
\begin{thm}[\cite{K65, FK72}]\label{t-gue210228yyd}  
Let $m\in\mathbb N_0$. If condition $Z(q)$ holds on $M$, then for every
$k$ there exists a constant $C_{m,k}>0$ such that
\begin{equation}\label{be-gue210228yydI}  
\|f\|_{m+1,k,\overline M}
\leq C_{m,k}\left(\|\Box_k^q f\|_{m,k,\overline M}
+\|f\|_{k,\overline M}\right),
\quad
\forall f\in\Omega^{0,q}(\overline M,L^k).
\end{equation}  
\end{thm}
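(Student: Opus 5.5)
Since $k$ is fixed in \eqref{be-gue210228yydI}, I would treat $k$ as a parameter and reduce to Kohn's classical theory. For fixed $k$, $\langle\,\cdot\,|\,\cdot\,\rangle_k$ is a smooth Hermitian metric on $T^{*0,\bullet}M'$ over the compact set $\ol M$, since $\chi^{R,\eps}_k(r)\ge c_k>0$. In a local holomorphic frame $s^k$, the operator $\dbar_k$ is just $\dbar$ acting on scalar forms with the smooth weight $e^{-k\phi}$. So $\Box^q_k$ is the $\dbar$-Neumann Laplacian of a fixed smooth Hermitian metric and a fixed smooth weight. Condition $Z(q)$ concerns only the Levi form of $X$, so it is insensitive to the metric and the weight. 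All constants below may depend on $k$, which is permitted by the statement.

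I read \eqref{be-gue210228yydI} as applying to $f\in\Omega^{0,q}(\ol M,L^k)\cap\Dom\Box^q_k$, i.e.\ to smooth $f$ satisfying both $\dbar$-Neumann conditions: $(\dbar r)^{*}\lrcorner f=0$ and $(\dbar r)^{*}\lrcorner\dbar_k f=0$ on $X$. Without these conditions the estimate is false, so this is the intended reading.

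\emph{Step 1: basic subelliptic estimate.} Using a partition of unity and special boundary frames in which $\dbar r$ is one of the orthonormal $(0,1)$-forms, I would prove the Morrey--Kohn--H\"ormander identity for $f\in\Dom\dbar_k\cap\Dom\dbarstar_k$ smooth up to $X$. The boundary term is the Levi form evaluated on $f$. Under $Z(q)$, combining this term with the usual ``twisting'' between $\bar L_j$ and $L_j$ derivatives gives
$$\|f\|^2_{1/2,k,\ol M}\le C_k\big(Q_{k}(f,f)+\|f\|^2_{k,\ol M}\big),$$
where $Q_k(f,f)=\|\dbar_k f\|^2+\|\dbarstar_k f\|^2$. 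Zero-order terms coming from the weight and the metric are absorbed into $\|f\|^2$.

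\emph{Step 2: tangential regularity.} Near $X$, I would apply tangential pseudodifferential operators $\Lambda_t^{s}$ of order $s$ in coordinates $(x',r)$. These preserve $\Dom\dbarstar_k$ up to a zeroth-order error, because the boundary condition is a zeroth-order tangential condition. Applying Step 1 to $\Lambda^s_t\zeta f$ and commuting, with commutators $[\dbar_k,\Lambda_t^s]$ of tangential order $s$, gives
$$|||f|||^2_{s+1/2}\lesssim Q_k(\Lambda_t^s f,\Lambda_t^s f)+\dots=(\Lambda_t^s\Box_k f,\Lambda_t^s f)+\text{commutators}.$$
By Cauchy--Schwarz and small/large constants this yields
$$|||f|||_{s+1/2}+|||\dbar_k f|||_{s}+|||\dbarstar_k f|||_{s}\lesssim |||\Box_k f|||_{s-1/2}+\|f\|,$$
where $|||\cdot|||_s$ is the tangential Sobolev norm. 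For fixed smooth $f$ the norms involved are finite, so no elliptic regularization is needed at this stage.

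\emph{Step 3: normal derivatives and the full gain.} This is the main obstacle, since Step 2 only produces tangential half-derivative gains. The key fact is that $\dbar_k\oplus\dbarstar_k$ is a first-order elliptic system, so $D_r$ is non-characteristic. Hence $D_rf$ is expressed through $\dbar_k f$, $\dbarstar_k f$, tangential derivatives of $f$, and $f$. This gives
$$|||D_r f|||_{s-1/2}\lesssim |||\dbar_k f|||_{s-1/2}+|||\dbarstar_k f|||_{s-1/2}+|||f|||_{s+1/2}.$$
Higher normal derivatives come from the equation: $\Box_k$ is second-order elliptic with $D_r^2$ coefficient invertible, so $D_r^2 f=A(\Box_k f)+(\text{terms with at most one }D_r)$. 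Inducting on the number of normal derivatives, as in Folland--Kohn, converts tangential control into $\|f\|_{s+1/2}\lesssim\|\Box_k f\|_{s-1/2}+\|f\|$ for the full norm \eqref{sobolevnorm}.

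To upgrade to a full derivative, I would apply Step 2 to $\dbar_k f$ and $\dbarstar_k f$. Both lie in the right domains because $f\in\Dom\Box_k^q$, and $\Box_k$ commutes with $\dbar_k$ and $\dbarstar_k$ up to lower order. Combining $|||\dbar_k f|||_{s+1/2}+|||\dbarstar_k f|||_{s+1/2}\lesssim |||\Box_k f|||_{s}+\dots$ with the non-characteristic expression for $D_rf$ yields one full tangential-plus-normal derivative. This is the standard step $\|Nu\|_{s+1}\lesssim\|u\|_s$ in \cite{FK72}. Taking $s=m$, interpolating the intermediate norms against $\|f\|_{k,\ol M}$, and handling interior patches by ordinary elliptic estimates for $\Box_k$ gives \eqref{be-gue210228yydI} with a constant $C_{m,k}$.
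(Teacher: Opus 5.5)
Your reduction to a fixed smooth metric and weight, the $1/2$-estimate from $Z(q)$ (Step~1), the tangential machinery of Step~2 and the non-characteristic recovery of $D_r f$ in Step~3 together form the standard Folland--Kohn argument, which the paper only cites. Your reading $f\in\Omega^{0,q}(\ol M,L^k)\cap\Dom\Box^q_k$ is the correct one.

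The gap is the paragraph ``To upgrade to a full derivative'', on which your final assembly (``taking $s=m$'') depends. Steps~1--2 are estimates for $(0,q)$-forms and use $Z(q)$ in that degree. Applying them to $\dbar_k f\in\Omega^{0,q+1}$ and $\dbarstar_k f\in\Omega^{0,q-1}$ needs the basic $1/2$-estimate in degrees $q+1$ and $q-1$, i.e.\ $Z(q+1)$ and $Z(q-1)$. Neither is assumed.

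For example, take a strictly pseudoconvex domain in $\C^n$ and $q=1$. Then $Z(1)$ holds but $Z(0)$ fails. Here $\dbarstar_k f$ is a function, and $Q_0(u,u)=\|\dbar u\|^2$ vanishes on the infinite-dimensional space of holomorphic polynomials. So no estimate $\|u\|_{1/2}\lesssim Q_0(u,u)^{1/2}+\|u\|$ can hold.

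Your domain claim also fails. $\dbarstar_k f\in\Dom\Box^{q-1}_k$ would require $\dbar_k\dbarstar_k f\in\Dom\dbarstar_k$, i.e.\ $\Box^q_k f\in\Dom\dbarstar_k$. This does not follow from $f\in\Dom\Box^q_k$: in the flat model, $f=r^2g\,\ol\omega^J$ with $n\in J$ lies in $\Dom\Box^q$, but the $\ol\omega^J$-component of $\Box f$ is $-g\neq0$ on $X$.

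The upgrade is also unnecessary, because you misjudged what Step~2 gives. The estimate $|||f|||_{s+1/2}\lesssim|||\Box_kf|||_{s-1/2}+\|f\|$ already gains one full derivative over $\Box_kf$. The half-derivative gain is only relative to $\dbar_kf$ and $\dbarstar_kf$, i.e.\ to $Q_k$.

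So take $s=m+\tfrac12$; tangential operators of half-integer order cause no difficulty. Step~2 then gives
\[
|||f|||_{m+1}+|||\dbar_kf|||_{m+1/2}+|||\dbarstar_kf|||_{m+1/2}\lesssim|||\Box_kf|||_{m}+\|f\|,
\]
which is exactly the inequality your upgrade was aiming for, obtained without leaving degree $q$.

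The first half of your Step~3 then finishes the proof. The non-characteristic bound gives $|||D_rf|||_m\lesssim|||\dbar_kf|||_m+|||\dbarstar_kf|||_m+|||f|||_{m+1}$. The induction through $D_r^2f=A\Box_kf+(\text{terms with at most one }D_r)$ handles higher normal derivatives. Together these give $\|f\|_{m+1,k,\ol M}\le C_{m,k}\bigl(\|\Box^q_kf\|_{m,k,\ol M}+\|f\|_{k,\ol M}\bigr)$ directly, with the integer norms of \eqref{sobolevnorm}. This is Folland--Kohn's gain of $2\epsilon$ with $\epsilon=\tfrac12$.

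With this index shift and the upgrade paragraph deleted, your argument is complete.
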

It follows that under condition $Z(q)$, $\Box_k^q$ has compact resolvent and the strong Hodge decomposition holds. In particular, we have  
$$
\dim \mathscr{H}^q(\ol M,L^k) < \infty \quad \text{and} \quad \mathscr{H}^q(\ol M,L^k) \cong H^q(\ol M,L^k).
$$

\subsection{Heat kernel distribution}\label{s-gue230213yyd}

Since $\Box^q_{k}$ is non-negative and self-adjoint, the heat operator 
$$
e^{-t\Box^q_{k}}: L^2_{(0,q)}(M,L^k)\to{\rm Dom\,}\Box^q_{k}
$$ 
for $t>0$ exists. Let 
$$A_k(t,\mathbf z, \mathbf w):=e^{-t\Box^q_{k}}(\mathbf z,\mathbf w)\in\mathscr D'(\mathbb R_+\times M\times  M, (T^{*0,q}M'\otimes L^k)\boxtimes(T^{*0,q}M'\otimes L^k)^*)$$
be the distribution kernel of $e^{-t\Box^q_{k}}$ with respect to $dv_{M'}$, i.e.,
$$
e^{-t\Box^q_{k}}f(\mathbf z)=\int_{M}A_k(t,\mathbf z, \mathbf w)f(\mathbf w)dv_{M'}(\mathbf w)\quad \text{for $f\in \omz^{0,q}_c(M,L^k)$}.
$$
We also write $A_k(t):=e^{-t\Box^q_{k}}$.
For $f\in \omz^{0,q}_c(M,L^k)$, $A_k(t)$ satisfies the following 
\begin{equation}\label{be-gue210531ycda}
\begin{dcases}
A_k(t)f\in \Dom(\Box^q_{k}),\\
A_k(t)f \mbox{ is differentiable in $t$},\\
\left(\dfrac{\pa}{\pa t}+\Box^q_{k}\right)A_k(t)f=0,\\
\lim_{t\to0}A_k(t)f=f\ \ \mbox{in $L^2_{(0,q)}(M,L^k)$}.\\
\end{dcases}
\end{equation}
Now, assume that $Z(q)$ holds. On the diagonal, by the canonical identification ${\rm End\,}(L^k_{\mathbf z})=\mathbb C$, we have
\begin{equation}\label{be-gue210303yyd}
A_k(t,\mathbf z, \mathbf z)\in\cali{C}^\infty(\mathbb R_+\times \ol M, {\rm End\,}(T^{*0,q}M')).
\end{equation}
 For any $p\in \ol M$, let $s$ be a local holomorphic trivializing section of $L$ defined on $U$, a small open neighborhood of $p$ in $M'$, such that $\abs{s}^2_{h^L}=e^{-\phi}$. Set $D:=U\cap M$. When $p\in X$, let $\partial D:=U\cap X$ and $\ol D:=U\cap \ol M$.
Notice that we have the following unitary identification:
\[\begin{split}
U: L^2_{(0,q)}(D,L^k)&\longleftrightarrow L^2_{(0,q)}(D),\\
s^k\otimes f&\longleftrightarrow e^{-\frac{k\phi}{2}} f.
\end{split}\]
Hence there exists $A_{k,s}(t,\mathbf z,\mathbf w)\in\cali{C}^\infty(\mathbb R_+\times \ol D\times \ol D, T^{*0,q}M'\boxtimes(T^{*0,q}M')^*)$ such that for every $\hat f=s^k f\in\Omega^{0,q}_c(D,L^k)$ with $f\in\Omega^{0,q}_c(D)$, we have 
\begin{equation}\label{be-gue210303yydI}
(e^{-t \Box^q_{k}}\hat f)(\mathbf z)=s^k(\mathbf z)  e^{\frac{k\phi(\mathbf z)}{2}}\int_DA_{k,s}(t,\mathbf z, \mathbf w)e^{-\frac{k\phi(\mathbf w)}{2}} f(\mathbf w)dv_{M'}(\mathbf w),
\end{equation}
which implies 
\begin{equation}\label{be-gue210303yydII}
A_k(t,\mathbf z, \mathbf w)=s^k(\mathbf z)  A_{k,s}(t,\mathbf z, \mathbf w)e^{\frac{k\phi(\mathbf z)-k\phi(\mathbf w)}{2}} s^k(\mathbf w)^*. 
\end{equation}
In particular, 
\begin{equation}\label{be-gue210303yydIII}
\mbox{$e^{-t \Box^q_{k}}(\mathbf z,\mathbf z)=A_k(t,\mathbf z,\mathbf z)=A_{k,s}(t,\mathbf z,\mathbf z)$ on $\ol D$}.
\end{equation}
We also notice that 
\begin{equation}\label{be-gue210303yydIV}
\lim_{t\to0}\int A_{k,s}(t,\mathbf z, \mathbf w)e^{\frac{k\phi(\mathbf z)-k\phi(\mathbf w)}{2}} f(\mathbf w)dv_{M'}(\mathbf w)= f(\mathbf z)
\end{equation}
in $L^2_{(0,q)}(D)$, for every $ f\in\Omega^{0,q}_c( D)$.
 Assume that $\ol\omega^1,\cdots,\ol\omega^{n}$ is an orthogonal basis for $T^{*0,1}M'$ on $U$.
For a fixed $0\le q\le n$, note that the set
$\{\overline\omega^J(\mathbf z) ,  J=(j_1,\ldots,j_q), 1\leq j_1<\cdots<j_q\leq n\}$ 
forms an orthogonal frame for $T^{*0,q}_{\mathbf z}M$, for every $\mathbf z\in D$. Then locally, we write 
\begin{equation}\label{be-gue230306yydII}
A_{k,s}(t,\mathbf z,\mathbf w):=\sumprime_{|I|=|J|=q}A_{k,s,I,J}(t,\mathbf z,\mathbf w)\,\ol\omega^I(\mathbf z)\otimes\ol\omega^J(\mathbf w)    
\end{equation}
in the sense that for $f=\sumprime_{|J|=q}f_J\ol\omega^J\in\omz^{0,q}_c(D)$,
\begin{equation*}
(A_{k,s}(t)f)(\mathbf z)=\sumprime_{|I|=|J|=q}\int A_{k,s,I,J}(t,\mathbf z,\mathbf w)f_J(\mathbf w)\langle\,\ol\omega^J(\mathbf w)\mid\ol\omega^J(\mathbf w)\,\rangle_k dv_{M'}(\mathbf w)\,\ol\omega^I(\mathbf z).
\end{equation*}

Since the $\dbar$-Neumann Laplacian $\Box^q_{k}$ is self-adjoint and non-negative, we can apply the spectral theorem (see \cite{Da95}) to identify $L^2_{(0,q)}(M,L^k)$ with $L^2(\mathbb{S} \times \mathbb{N}, d\mu)$, where $\mathbb{S}$ is the spectrum of $\Box^q_{k}$ and $\mu$ is a regular Borel measure on $\mathbb{S} \times \mathbb{N}$. Under this identification, $\Box^q_{k}$ corresponds to the multiplication operator $M_s$, and the heat operator $e^{-t \Box^q_{k}}$ acts as  
\[
\begin{split}  
e^{-t \Box^q_{k}}: L^2(\mathbb{S} \times \mathbb{N}, d\mu) &\to L^2(\mathbb{S} \times \mathbb{N}, d\mu), \\  
\varphi(s, {n}) &\mapsto e^{-ts} \varphi(s, {n}).  
\end{split}  
\] 
For the reader's convenience, we record the following standard consequence of the spectral theorem, which will later play a key role in obtaining uniform estimates in Proposition \ref{uniformly} in Section \ref{scaledL}: 
\begin{lem}\label{l-gue210331yyd}  
For every $N \in \mathbb{N}_0$ and $f \in \Omega^{0,q}(\ol{M}, L^k) \cap \Dom((\Box^q_k)^N)$, we have
\begin{equation}\label{e-gue210331yyd}  
\norm{e^{-\frac{t}{k} \Box^q_{k}} ( \Box^q_{k})^N f}_{k,\ol M}  
= \norm{( \Box^q_{k})^N e^{-\frac{t}{k} \Box^q_{k}} f}_{k,\ol M}  
\leq \left(1 + \frac{k^N}{t^N}\right) C_N \norm{f}_{k,\ol M},  
\end{equation}  
for all $t>0$ and $k\geq1$, 
where $C_N > 0$ is a constant independent of $k$ and $t$.  
\end{lem}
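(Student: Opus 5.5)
The plan is to use the spectral theorem as set up just before the statement. Under the unitary identification of $L^2_{(0,q)}(M,L^k)$ with $L^2(\mathbb S\times\mathbb N,d\mu)$, the operator $\Box^q_k$ becomes multiplication by $s\ge0$. Then $e^{-\frac{t}{k}\Box^q_k}$ becomes multiplication by $e^{-\frac{t}{k}s}$, and $(\Box^q_k)^N$ becomes multiplication by $s^N$ on its domain.

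The first step is the equality of the two norms. For $f\in\Dom((\Box^q_k)^N)$, the function $s^N\varphi$ lies in $L^2$, where $\varphi$ corresponds to $f$. Both $e^{-\frac tk\Box^q_k}(\Box^q_k)^Nf$ and $(\Box^q_k)^Ne^{-\frac tk\Box^q_k}f$ correspond to the same function $e^{-\frac tk s}s^N\varphi$. For the second one we need $e^{-\frac tk\Box^q_k}f\in\Dom((\Box^q_k)^N)$. This holds because $s^Ne^{-ts/k}$ is bounded on $[0,\infty)$, so the domain condition $s^Ne^{-ts/k}\varphi\in L^2$ is automatic. Hence the two operators agree on $\Dom((\Box^q_k)^N)$, and the norms coincide.

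The second step is the bound. We have
\[
\norm{(\Box^q_k)^Ne^{-\frac tk\Box^q_k}f}_{k,\ol M}\le \sup_{s\ge0}\big(s^Ne^{-\frac tk s}\big)\,\norm{f}_{k,\ol M}.
\]
Substitute $u=ts/k$, so that $s^Ne^{-ts/k}=(k/t)^N u^Ne^{-u}$. This gives
\[
\sup_{s\ge0}s^Ne^{-\frac tk s}=\frac{k^N}{t^N}\sup_{u\ge0}u^Ne^{-u}=\frac{k^N}{t^N}N^Ne^{-N}.
\]
For $N\ge1$ set $C_N:=N^Ne^{-N}$. For $N=0$ the bound is simply $\norm{f}_{k,\ol M}$. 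In both cases the estimate holds with $(1+k^N/t^N)C_N$ for a suitable $C_N$ independent of $k$ and $t$, for example $C_N=\max(1,N^Ne^{-N})$.

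There is no real obstacle here. The constant is universal because it depends only on the scalar function $u^Ne^{-u}$, not on $\Box^q_k$, the $k$-dependent metric, or condition $Z(q)$. The only point needing care is the domain bookkeeping in the first step, and that is handled by the functional calculus as described.
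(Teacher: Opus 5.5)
Your proposal is correct and follows the paper's route: pass to the spectral representation of $\Box^q_k$ and bound the scalar multiplier $s^N e^{-ts/k}$ uniformly. The only cosmetic difference is that the paper splits the spectrum at $s=1$, bounding the multiplier by $1$ on $\{s<1\}$ and by $\hat C_N (k/t)^N$ on $\{s\ge 1\}$. You instead take the global supremum $N^N e^{-N}(k/t)^N$ directly, which gives the slightly sharper bound without the additive term, and you also spell out the domain bookkeeping that the paper leaves implicit.
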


\begin{proof}  
Let $N \in \mathbb{N}_0$. For $f \in \Omega^{0,q}(\ol{M}, L^k) \cap \Dom(\Box^q_k)$, we identify it with $\varphi(s, {n}) \in L^2(\mathbb{S} \times \mathbb{N}, d\mu)$. Then
\[
\begin{split}  
\|e^{-\frac{t}{k} \Box^q_{k}} ( \Box^q_{k})^N f\|^2_{k}  
&= \|( \Box^q_{k})^N e^{-\frac{t}{k} \Box^q_{k}} f\|^2_{k}= \int \abs{e^{-\frac{t}{k}s} s^N \varphi(s, {n})}^2 d\mu \\  
&\leq \int_{\{0 \leq s < 1\}} \abs{\varphi(s, {n})}^2 d\mu  
+ \hat{C}_N \int_{\{s \geq 1\}} \abs{\frac{k^N}{t^N s^N} s^N \varphi(s, {n})}^2 d\mu \\  
&\leq \left(1 + \hat{C}_N \frac{k^{2N}}{t^{2N}}\right) \int \abs{\varphi(s, {n})}^2 d\mu,  
\end{split}  
\] 
where $\hat{C}_N > 0$ is a constant independent of $k$ and $t$. The lemma follows.  
\end{proof}

%Our main goal of this work is to study the asymptotic behavior of $A_k(\frac{t}{k},\mathbf z,\mathbf z)$ as $k\to\infty$ when condition $Z(q)$ holds. 

%We will divide the manifold $M$ into two regions: the interior region $\dot{M}$ and the boundary region. In the interior region, $\Box$ is elliptic and we have the G\aa{}rding's inequality. The asymptotic behavior of $A_k(\frac{t}{k},z,z)$ as $k\to\infty$ is well-established and can be found in \cite[Theorem 1.5]{B87} or \cite[Theorem 1.6.1]{MM07}. In this paper, we will provide a version of proof using the scaling technique in section \ref{estimate}. For the boundary region, we will use 

\section{The heat equation of the weighted \texorpdfstring{$\dbar$}{bar}-Neumann 
Laplacian on model domains}\label{bdd}

In this section, we analyze the model operators that govern the heat kernel asymptotics. Two model geometries appear naturally in our problem. The first one is the flat model on $\C^n$, which describes the interior regime away from the boundary. The second one is the boundary model on a Siegel-type domain $M_0$, which arises after anisotropic rescaling near a boundary point. Our goal is to compute the corresponding heat kernels explicitly, with particular emphasis on the boundary model, since this is the new ingredient needed in the proof of the main asymptotic theorem.

We begin with the flat model on $\C^n$. In this case the weighted Laplacian is given by
\begin{equation}\label{modelLa}
\mathring{\Box}_{\phi_0}^{q}=\sum_{j=1}^{n}\Big(-\frac{\pa}{\pa z_j}+\sum_{l=1}^{n}\mu_{j,l}\ol  z_l\Big)\frac{\pa}{\pa \ol z_j}+\sum_{j,l=1}^{n}\mu_{j,l}d\ol z_j\wedge (d\ol z_l\wedge)^*,
\end{equation}
where $\phi_0$ is as defined in \eqref{mphi0}. Its heat kernel is classical. For later comparison with the boundary model, we record the diagonal formula
\begin{equation}\label{model0}
e^{-t\mathring{\Box}_{\phi_0}^q}({\bf z},{\bf z})=\frac{1}{(2\pi)^{n}}\frac{\det(\dot{R}^0)e^{-t\varTheta^0}}{\det(1-e^{-t\dot{R}^0})},
\end{equation}
where $\dot{R}^{0}$ and $\varTheta^{0}$ are given by \eqref{Reta} and \eqref{omega} with $\eta=0$. The corresponding off-diagonal expression follows from Mehler's formula; see \cite[Theorem 1.5]{Bi87} and \cite[Appendix~E.2]{MM07}.

We now pass to the boundary model. In contrast with the interior case, the appropriate scaling near the boundary leads to the domain
\begin{equation}
 \ol M_0 = \big\{{\bf z}=(z, z_{n}) \in \C^{n} : \im z_{n}+\sum_{j=1}^{n-1} \lambda_j |z_j|^2\le 0 \big\}.
\end{equation}
Here the heat kernel reflects both the Levi geometry of the boundary and the weighted potential. The next theorem gives the explicit diagonal formula that will be used later in the proof of the boundary scaling limit. Together with the off-diagonal formula (Theorem \ref{heatkernelmodel}) established later in this section, it provides the model kernel appearing in Theorem~\ref{main1}.

\begin{thm}\label{hmd}
Let 
$$
\ol M_0=\Big\{(z,z_{n})\in \C^{n-1}\times \C: \im z_{n}+\sum_{j=1}^{{n-1}}\lambda_j|z_j|^2\le 0\Big\}
$$
with boundary 
$$
X_0=\big\{(z,z_{n})\in \C^{n}: \im z_{n}+\sum_{j=1}^{{n-1}}\lambda_j|z_j|^2=0\big\}.
$$
Then the heat kernel of the weighted $\dbar$-Neumann Laplacian $\Box_{\phi_0}^q$ as defined in \eqref{mboxrhok} is given on the diagonal by
\begin{equation}\label{modelasymoptptic}
\begin{aligned}
e^{-t\Box_{\phi_0}^q}([x,r],[x,r])
&=\frac{1+e^{-2r^2/t}}{(2\pi)^{n}\sqrt{2\pi t}}\lim_{\delta\to \infty}\int_{-\delta}^\delta e^{-t\eta^2/2}\dfrac{\det\dot R^\eta}{\det\big(1-e^{-t \dot R^{\eta}}\big)}
e^{-t\varTheta^{\eta,\nu}}d\eta\\
&\quad  +\frac{2}{(2\pi)^{n} \sqrt{\pi}}\lim_{\delta\to \infty}\int_{-\delta}^\delta \eta e^{-2r\eta} \dfrac{\det\dot R^\eta}{\det\big(1-e^{-t \dot R^{\eta}}\big)}e^{-t\varTheta^{\eta,\tau}}\Big(\int^{\frac{2r-t\eta}{\sqrt{2t}}}_{-\infty}e^{-\gamma^2}d\gamma\Big) d\eta\\
&\quad  +\frac{1-e^{-2r^2/t}}{(2\pi)^{n}\sqrt{2\pi t}}\lim_{\delta\to \infty}\int_{-\delta}^\delta e^ {-t\eta^2/2}\dfrac{\det\dot R^\eta}{\det\big(1-e^{-t \dot R^{\eta}}\big)}
e^{-t\varTheta^{\eta,\nu}}d\eta,
\end{aligned}
\end{equation}
where $x=(z,\re z_{n})$, $r=\im z_{n}+\sum_{j=1}^{{n-1}}\lambda_j|z_j|^2$, $\dot{R}^{\eta}$ and $\varTheta^{\eta}$ are given by \eqref{Reta} and \eqref{omega}, respectively.
\end{thm}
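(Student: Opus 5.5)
Our approach is to reduce the heat equation for $\Box_{\phi_0}^q$ on $\ol M_0$ to a family of one-dimensional problems on a half-line. We use the adapted coordinates $x=(z,\re z_n)\in\C^{n-1}\times\R$ and $r=\im z_n+\sum_j\lambda_j|z_j|^2\le 0$. In these coordinates the boundary $X_0=\{r=0\}$ is a Heisenberg-type group acting by translations, and $\Box_{\phi_0}^q$ commutes with this action.

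\textbf{Step 1: Fourier transform in $\re z_n$.} We write $\Box_{\phi_0}^q$ in the frame $\ol Z_j=\partial_{\ol z_j}-i\lambda_j z_j\partial_{\ol z_n}$-type vector fields together with the normal field $\partial_r$. We then take the partial Fourier transform in $\re z_n$, with dual variable $\eta$. For each fixed $\eta$ the operator splits as
\[
\Box^{q}_{\phi_0,\eta}=\Box_{b,\eta}\otimes I+I\otimes\Big(-\tfrac12\partial_r^2+\tfrac{\eta^2}{2}\Big)
\]
plus a zeroth-order twist. Here $\Box_{b,\eta}$ is a weighted Kohn-type (Landau/harmonic-oscillator) operator on $\C^{n-1}$ whose curvature matrix is $\dot R^\eta$. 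The $\dbar$-Neumann condition at $r=0$ becomes a Robin-type condition for the normal factor, whose form depends on whether $d\ol z_n\in J$ or $d\ol z_n\notin J$. This is the source of the two twisting terms $\varTheta^{\eta,\tau}$ and $\varTheta^{\eta,\nu}$ in \eqref{modelasymoptptic}.

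\textbf{Step 2: Mehler formula on the tangential factor.} For each $\eta$ we apply Mehler's formula, exactly as in the flat model \eqref{model0}. On the diagonal this gives
\[
(2\pi)^{-(n-1)}\det\dot R^\eta\,\det(1-e^{-t\dot R^\eta})^{-1}e^{-t\varTheta^{\eta,\cdot}}.
\]

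\textbf{Step 3: the normal half-line problem.} We solve explicitly the heat equation for $-\frac12\partial_r^2$ on $r\le0$.
\begin{itemize}
\item For the normal components the boundary condition is Dirichlet-like, and the method of images gives the factor $(2\pi t)^{-1/2}(1-e^{-2r^2/t})$ on the diagonal.
\item For the tangential components we get a Robin condition $\partial_r u=\pm\eta u$ (up to normalization). Its heat kernel is the Neumann image kernel $(2\pi t)^{-1/2}(1+e^{-2r^2/t})$ plus a correction term. By the classical formula for the Robin heat kernel, this correction is $\eta e^{-2r\eta}e^{t\eta^2/2}\int_{-\infty}^{(2r-t\eta)/\sqrt{2t}}e^{-\gamma^2}d\gamma$ up to constants.
\end{itemize}
The factor $e^{-t\eta^2/2}$ coming from the $\eta^2/2$ term cancels the $e^{t\eta^2/2}$ in the correction, which explains why the middle integral in \eqref{modelasymoptptic} carries no Gaussian factor.

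\textbf{Step 4: assembly.} We invert the Fourier transform by integrating in $\eta$ with weight $(2\pi)^{-1}$. This integral is taken as a symmetric limit $\delta\to\infty$, since convergence is only conditional where $\det(1-e^{-t\dot R^\eta})$ degenerates. We then sum over the tangential and normal parts of the form degree.

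Finally, we verify that the kernel so constructed is the heat kernel. It satisfies the heat equation and the $\dbar$-Neumann conditions, and it tends to $\delta$ as $t\to0$. Uniqueness follows from self-adjointness of $\Box_{\phi_0}^q$, using that the spectral heat semigroup is the unique $L^2$ solution.

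The main obstacle is Step 3 combined with Step 1. One has to correctly track how the non-coercive $\dbar$-Neumann condition, after Fourier transform, mixes the $\eta$-dependent first-order term into the boundary condition. The Robin constant must come out with the right sign so that the $e^{-2r\eta}$ term arises; note that its growth for $\eta<0$ is tamed by the error function. The other delicate point is justifying the interchange of the $\eta$-integral with the limits when the $\lambda_j$ have mixed signs, where $\dot R^\eta$ may be singular for some $\eta$. We would handle this by working with the symmetric truncations $\int_{-\delta}^\delta$ and dominated convergence in $L^2$ before passing to pointwise diagonal values.
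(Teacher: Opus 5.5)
Your route is essentially the paper's proof. The paper also Fourier-transforms in $\theta=\re z_n$, uses the Mehler kernel $e^{-t\Box_\eta}$ for the Heisenberg factor, gets $e^{-t\eta^2/2}$ from $-\frac12\partial_\theta^2$, and takes Dirichlet images for the normal components. For the tangential components it adds to the Neumann images a term $h^\tau$, found by solving $(\partial_r-i\partial_\theta)h^\tau=2i\partial_\theta K(t,(x,r),(y,-s))$ in Fourier variables. That is exactly the classical Robin correction for $(\partial_r+\eta)\hat u|_{r=0}=0$ that you invoke, so your ``transform first, then quote the half-line Robin kernel'' packaging is the same computation.

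Two points need fixing when you write it out.

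\textbf{(i) The sign you leave open.} It is determined, and it does not match the printed formula. With $g(a)=(2\pi t)^{-1/2}e^{-a^2/(2t)}$, the kernel of $\partial_t-\frac12\partial_r^2$ on $\{r<0\}$ with $(\partial_r+\eta)u=0$ at $r=0$ is $g(r-s)+g(r+s)-2\eta\int_0^\infty e^{-\eta\zeta}g(r+s-\zeta)\,d\zeta$. The last term equals $-\frac{2\eta}{\sqrt\pi}e^{-\eta(r+s)+t\eta^2/2}\int_{-\infty}^{(r+s-t\eta)/\sqrt{2t}}e^{-\gamma^2}\,d\gamma$. Hence the middle line of \eqref{modelasymoptptic} comes with the coefficient $-\frac{2}{(2\pi)^n\sqrt\pi}$, not $+$.

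Positivity confirms this. For $n=1$, $q=0$, the printed $+$ gives the $t\to\infty$ limit $-\frac{1}{4\pi r^2}$, whereas the limit must be the half-plane Bergman kernel $\frac{1}{4\pi r^2}$.

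The paper drops the minus sign when integrating $\partial_r(e^{ur}\hat h^\tau)=-2ue^{ur}\hat K$. This is cancelled in Section~\ref{morseinequality} by writing $\int_{-\infty}^0e^{-2\eta r}\,dr=\frac1{2\eta}$, when it equals $-\frac1{2\eta}$ for $\eta<0$. So the Morse inequalities survive, but you should not force your computation to match the printed sign.

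Likewise, your Neumann images multiply $e^{-t\varTheta^{\eta,\tau}}$, so the first line should carry $\varTheta^{\eta,\tau}$, as in Theorem~\ref{main1}. The superscripts $\tau,\nu$ only record the restriction of $\varTheta^\eta$ to tangential and normal forms; they are not produced by the boundary condition.

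\textbf{(ii) Your reason for the symmetric $\delta$-limit is wrong.} The function $\mu/(1-e^{-t\mu})$ extends smoothly through $\mu=0$, so a singular $\dot R^\eta$ is harmless.

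The first and third integrals converge absolutely because of $e^{-t\eta^2/2}$, since the rest of the integrand grows only polynomially in $\eta$. The middle integral has no Gaussian as $\eta\to-\infty$, where the error-function factor tends to $\sqrt\pi$. It is rescued by $e^{-2r\eta}$ only for $r<0$.

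At $r=0$, and for the dominated-convergence step you propose, you need exponential decay as $\eta\to-\infty$, which condition $Z(q)$ provides (Remark~\ref{rem:zq}, \eqref{e-unip}). Without $Z(q)$ no symmetric truncation helps, because the $\eta>0$ side is Gaussian-damped and cannot cancel the divergence. The $\lim_{\delta\to\infty}$ in the paper is simply inherited from \eqref{modelheat}.
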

\begin{rem}\label{rem:modelasym}
Assume that condition $Z(q)$ holds. Then the above integrals in $\eta$ converge on $\R$; see Remark~\ref{rem:zq} and the proof of Theorem~\ref{main1}.
\end{rem}

The proof occupies the remainder of this section. We first derive explicit formulas for the fundamental solutions of $\Box_{\phi_0}^q$ on $M_0$, and then add suitable correction terms in order to enforce the $\dbar$-Neumann boundary condition. The construction combines ideas from \cite{S80,T97,HZ23}, but the present setting is more general: we allow mixed Levi signature and a nontrivial weight $\phi_0$, so the model is no longer confined to the strongly pseudoconvex or unweighted cases.

\subsection{Heisenberg coordinates}\label{M0} 

Consider $\C^{n}$ with coordinates $\mathbf z=(z,z_{n})$, where $z\in\C^{n-1}$ and $z_{n}\in\C$.
Let 
$$
X_0=\big\{(z,z_{n})\in \C^{n}: \text{Im}z_{n}=-\sum_{j=1}^{{n-1}}\lambda_j|z_j|^2\big\}
$$
be the boundary of $M_0$. We can identify $X_0$ with the Heisenberg group $\mathbb H_{n-1}=\C^{n-1}\times\R$ equipped with the group law 
$$
(z,z_{n})(z',z'_{n})=(z+z',z_{n}+z'_{n}-2i\sum_{j=1}^{{n-1}}\lambda_j z_j\ol z'_j).
$$
More precisely, we associate each $[\xi,\theta]\in\mathbb H_{n-1}$  a holomorphic affine self-mapping of $M_0$:
$$
T_{[\xi,\theta]}:(z,z_{n})\to\big(z+\xi,z_{n}+\theta-i\sum_{j=1}^{n-1}\lambda_j(|\xi_j|^2+2z_j\ol \xi_j)\big).
$$
Observe that $T_{[\xi,\theta]}$ preserves the defining function 
$
r_0(z,z_{n})=\text{Im}z_{n}+\sum_{j=1}^{{n-1}}\lambda_j|z_j|^2,
$
and hence preserves the boundary $X_0$. %Moreover, the mappings are simply transitive on $X_0$.
Now we set
$$
\xi=z,\quad \theta=\operatorname{Re} z_n,\quad r=\operatorname{Im} z_n+\sum_{j=1}^{n-1}\lambda_j|z_j|^2.
$$
It is convenient to use the Heisenberg coordinates $[\xi,\theta,r]$ instead of the ambient coordinates $(z,z_n)$. Here, $r$ can be viewed as the signed height of $(z,z_n)\in M_0$ relative to the boundary, and $[\xi,\theta]$ represents its projection onto $X_0$. Via the identification
$$
\mathbb H_{n-1}\ni [\xi,\theta]\longmapsto \Big(\xi,\theta-i\sum_{j=1}^{n-1}\lambda_j|\xi_j|^2\Big)\in X_0,
$$
we identify $X_0$ with $\mathbb H_{n-1}$. In these coordinates, $M_0$ and $X_0$ take the form
$$
M_0 = \{ [\xi, \theta, r] \in \mathbb{H}_{n-1} \times \mathbb{R} \mid r < 0 \} \quad \text{and} \quad X_0 = \{ [\xi, \theta, r] \in \mathbb{H}_{n-1} \times \mathbb{R} \mid r = 0 \}.
$$
The vector fields
\begin{equation*}
\begin{aligned}
&\overline{Z}_j = \frac{\partial}{\partial \overline{\xi}_j} + i\lambda_j \xi_j \frac{\partial}{\partial \theta}, \quad j = 1, \ldots, {n-1}, \\
&\overline{Z}_{n} = \frac{1}{\sqrt 2} \left( \frac{\partial}{\partial r} - i\frac{\partial}{\partial \theta} \right)
\end{aligned}
\end{equation*}
form a basis for $ T^{0,1}\C^n $, where $\lambda_j \in \mathbb{R}$, $j = 1, \ldots, {n-1}$. Noting that $\frac{\partial}{\partial \overline{\xi}_j} = \frac{\partial}{\partial \overline{z}_j} - \lambda_j z_j \frac{\partial}{\partial r}$, we have $\frac{\partial r_0}{\partial \overline{\xi}_j} = 0$. Hence $\overline{Z}_1, \ldots, \overline{Z}_{n-1}$ are tangent to the level surface of $ r_0 $, while $\overline{Z}_{n}$ is a complex normal vector field. Furthermore, setting $\lambda_{n}:=0$, we have
\begin{equation}\label{mlocal2}
\begin{aligned}
[Z_j,\ol Z_l]=2i\lambda_j\delta_{jl}\frac{\pa}{\pa\theta},\quad
[Z_j,Z_l]=[\ol Z_j,\ol Z_l]=0,
 \end{aligned}
\end{equation}
for $j,l=1,\cdots,n$. The $(1,0)$-forms basis dual to these vector fields are then given by
\begin{equation}\label{mlocal3}
\begin{aligned}
&\ol\omega_0^j=d\ol \xi_j, \quad j=1,\cdots,{n-1},\\
&\ol\omega_0^{n}=\sqrt 2\, \dbar r(=\frac{i}{\sqrt 2}d\ol z_{n}+\sqrt{2}\sum_{j=1}^{{n-1}}\lambda_jz_jd\ol z_j).
\end{aligned}
\end{equation}
We give $\C^{n}$ the invariant Hermitian metric $\langle\,\cdot\, |\,\cdot\,\rangle_{\C^n}$ for which $\set{Z_1,\cdots,Z_{n}}$  forms orthonormal basis for $T^{1,0}\mathbb C^n$ and $\set{\omega_0^1,\cdots,\omega_0^{n}}$ forms orthonormal basis for the cotangent bundle space $T^{*1,0}\mathbb C^n$. Let 
\begin{equation}\label{mphi0}
\begin{aligned}
\phi_0(z)=\sum_{j,l=1}^{{n-1}}\mu_{j,l}\xi_j\ol \xi_l,
\end{aligned}
\end{equation}
where $\mu_{j,l}\in\R$, $\mu_{j,l}=\overline{\mu_{l,j}}$, $j,l=1,\cdots, {n-1}$.

The Cauchy-Riemann operator $\dbar$ on $\C^{n}$ is given by 
\begin{equation}\label{mdbarbh}
\begin{aligned}
\dbar=\sum_{j=1}^{n}\ol\omega_0^j\wedge \ol Z_j:\omz^{0,q}(\C^{n})\to \omz^{0,q+1}(\C^{n}).
\end{aligned}
\end{equation}
Let $(\,\cdot \,, \,\cdot\,)_{\phi_0}$ be the  inner product on $\omz^{0,q}(\ol M_0)$ with weight $\phi_0$. Namely,
\begin{equation}\label{mbe-gue210506yyd}
\begin{aligned}
(\,f ,  g\,)_{\phi_0}=\int_{M_0}\langle\,f \mid  g\,\rangle_{\C^n}e^{-\phi_0}dv(\mathbf z), \quad f,g\in\omz^{0,q}(\ol M_0),
\end{aligned}
\end{equation}
where the volume form $dv(\mathbf z)=i^ndz_1d\overline z_1\cdots dz_nd\overline z_n$, is $2^{n}$ times the standard Euclidean volume form. Let $L^2_{(0,q)}(M_0,\phi_0)$ be the completion of $\omz^{0,q}(\ol M_0)$ with respect to $(\,\cdot \,, \,\cdot\,)_{\phi_0}$. 
%Let $\|\cdot\|_{\phi_0}$ be the corresponding norm. Let $W\subset M_0$ be an open set. For $f\in L^2_{(0,q)}(M_0,\phi_0)$, let 
%$$
%\|f\|^2_{\phi_0,W}:=\int_W\langle\,u ,  u\,\rangle_{M_0}e^{-\phi_0}dv(z).
%$$
We extend $\dbar$ to $L^2_{(0,q)}(M_0,\phi_0)$ as in \eqref{e-gue260130yyd} and still denote by $\dbar$. Let $\dbar^{*,\phi_0}$ be the adjoint of $\dbar$ with respect to $(\,\cdot \,, \,\cdot\,)_{\phi_0}$. The (Gaffney extension) of $\dbar$-Neumann Laplacian $\Box^q_{\phi_0}$ is given by
\[\begin{split}
&\Box^q_{\phi_0}=\dbar^{*,\phi_0}\dbar+\dbar\,\dbar^{*,\phi_0}: L^2_{(0,q)}(M_0,\phi_0)\to L^2_{(0,q)}(M_0,\phi_0)\\
& {\rm Dom\,}\Box^q_{\phi_0}=\{f\in L^2_{(0,q)}(M_0,\phi_0) ,  f\in{\rm Dom\,}\dbar\cap{\rm Dom\,}\dbar^{*,\phi_0}, \dbar f\in{\rm Dom\,}\dbar^{*,\phi_0}, \dbar^{*,\phi_0} f\in{\rm Dom\,}\dbar\}. 
\end{split}\]
Given a $(0,q)$-form 
$$
f=\sumprime_{J}f_J\ol\omega_0^J,
$$
we write its tangential part $f^\tau$ and normal part $f^\nu$ as:  
\begin{equation}\label{tannor}
f^\tau = \sumprime_{n \notin J} f_J \ol\omega^J \quad \text{and} \quad f^\nu = \sumprime_{n \in J} f_J \ol\omega^J,
\end{equation}
where $f^\tau$ consists of terms without $\ol\omega^{n}$, and $f^\nu$ includes terms containing $\ol\omega^{n}$. One can check that $f\in\omz^{0,q}(\ol M_0)\cap \Dom{\Box_{\phi_0}^q}$ if and only if
\begin{equation}\label{mbdc}
\begin{aligned}
f_J=0&\quad \text{on $X_0$, \  if $n\in J$},\\
\ol Z_{n} f_J=0 &\quad \text{on $X_0$, \ if $n\notin J$}.\\
%Z_j(f_J)=\ol Z_j(f_J)=0 &\quad \text{on $X_0$, if $j<n$ and $n\in J$}.
\end{aligned}
\end{equation}
Moreover, a direct calculation shows that
\begin{equation}\label{mboxrhok}
\begin{aligned}
\Box_{\phi_0}^{q}=&\sum_{j=1}^{{n-1}}\Big(-\frac{\pa}{\pa \xi_j}+i\lambda_j\ol \xi_j\frac{\pa}{\pa  \theta}+\sum_{l=1}^{n-1}\mu_{j,l}\ol  \xi_l\Big)\Big(\frac{\pa}{\pa \ol \xi_j}+i\lambda_j \xi_j\frac{\pa}{\pa \theta}\Big)-\frac{1}{2}\Big(\frac{\pa^2}{\pa\theta^2}+\frac{\pa^2}{\pa r^2}\Big)\\
&+\sum_{j,l=1}^{{n-1}}
\Big(2i\lambda_j\frac{\pa}{\pa \theta}+\mu_{l,j}\Big)d\ol \xi_j\wedge \big( d\ol \xi_l\wedge\big)^*.
\end{aligned}
\end{equation}
Let $e^{-t\Box^q_{\phi_0}}$, $t>0$, be the heat operator for $\Box^q_{\phi_0}$. Let 
$e^{-t\Box^q_{\phi_0}}(\mathbf z,\mathbf w)$
be the distribution kernel of $e^{-t\Box^q_{\phi_0}}$ with respect to $(\,\cdot \,, \,\cdot\,)_{\phi_0}$, i.e.,
$$(e^{-t\Box^q_{\phi_0}}u)(\mathbf z)=\int e^{-t\Box^q_{\phi_0}}(\mathbf z,\mathbf w)f(\mathbf w)dv(\mathbf w),\ \ f\in\Omega^{0,q}_c(M_0).$$
If condition $Z(q)$ holds on $M_0$, we have
$$e^{-t\Box^q_{\phi_0}}(\mathbf z,\mathbf w)\in\cali{C}^\infty(\mathbb R_+\times \ol M_0\times \ol M_0,T^{*0,q}\C^{n}\boxtimes(T^{*0,q}\C^{n})^*).$$
For notation convenience, we write $A_{\phi_0}(t,\mathbf z, \mathbf w):=e^{-t\Box^q_{\phi_0}}(\mathbf z,\mathbf w)$ and
$$
A_{\phi_0}(t,\mathbf z, \mathbf w):=\sumprime_{|I|=|J|=q}A_{\phi_0,I,J}(t,\mathbf z, \mathbf w)\,\ol\omega_0^I(\mathbf z)\otimes\ol\omega_0^J(\mathbf w).
$$
It is clear that 
\begin{equation}\label{heatmodel}
\begin{aligned}
&\Big(\frac{\pa}{\pa t}+\Box^q_{\phi_0}\Big)A_{\phi_0}(t)f= 0,
\mbox{\quad $A_{\phi_0}(t)f\in\dom \Box^q_{\phi_0}$},\\
&\mbox{\qquad$\lim_{t\to0}A_{\phi_0}(t)f=f$ in $L^2_{(0,q)}(M_0)$.}
\end{aligned}
\end{equation}

\subsection{The fundamental solution of \texorpdfstring{$\Box_{\phi_0}^q$}{Box}}  

In this subsection, we begin by finding a kernel $K(t, {\bf z}, {\bf w})$ that solves the following initial value problem (without imposing boundary conditions):  
\begin{equation}\label{hh1}  
\begin{cases}  
\left(\frac{\pa}{\pa t} + \Box^q_{\phi_0}\right) K(t) f = 0, \\  
\displaystyle\lim_{t \to 0} K(t) f = f, 
\end{cases}  
\end{equation}  
where $f\in\omz^{0,q}_c(M_0)$. We rewrite  \eqref{mboxrhok} as 
\begin{equation}
\Box^q_{\phi_0}=\Box_{b,\phi_0}^q-\frac{1}{2}\left(\frac{\pa^2}{\pa\theta^2}+\frac{\pa^2}{\pa r^2}\right),   
\end{equation}
where the operator $\Box_{b,\phi_0}^q$, called the Kohn Laplacian on $X_0$, is defined by 
\begin{equation*}
\begin{aligned}
\Box_{b,\phi_0}^q:=&\sum_{j=1}^{{n-1}}\Big(-\frac{\pa}{\pa \xi_j}+i\lambda_j\ol \xi_j\frac{\pa}{\pa  \theta}+\sum_{l=1}^{n-1}\mu_{j,l}\ol  \xi_l\Big)\Big(\frac{\pa}{\pa \ol \xi_j}+i\lambda_j \xi_j\frac{\pa}{\pa \theta}\Big)\\
&+\sum_{j,l=1}^{{n-1}}
\Big(2i\lambda_j\frac{\pa}{\pa \theta}+\mu_{l,j}\Big)d\ol \xi_j\wedge \big( d\ol \xi_l\wedge\big)^*.
\end{aligned}
\end{equation*}
The Hermitian metric $\langle\,\cdot\,|\,\cdot\,\rangle_{\C^n}$ induces Hermitian metrics $\langle\,\cdot\,|\,\cdot\,\rangle_{\mathbb H_{n-1}}$
on $\mathbb CT\mathbb H_{n-1}$ and on the bundle $\oplus^{2n-1}_{q=1}\Lambda^q(\mathbb CT^*\mathbb H_{n-1})$. Let $T^{*0,q}\mathbb H_{n-1}$ be the bundle of $(0,q)$-forms on $\mathbb H_{n-1}$ and let 
$\Omega^{0,q}(\mathbb H_{n-1}):=\cali{C}^\infty(\mathbb H_{n-1},T^{*0,q}\mathbb H_{n-1})$, $\Omega^{0,q}_c(\mathbb H_{n-1}):=\cali{C}^\infty_c(\mathbb H_{n-1},T^{*0,q}\mathbb H_{n-1})$. 
Let $dv_{\mathbb H_{n-1}}$ be the volume form on $\mathbb H_{n-1}$ induced by $\langle\,\cdot\,|\,\cdot\,\rangle_{\mathbb H_{n-1}}$. Let $(\,\cdot\,,\,\cdot\,)_{\mathbb H_{n-1},\phi_0}$ be the $L^2$ inner product on $\Omega^{0,q}_c(\mathbb H_{n-1})$ induced by $\langle\,\cdot\,|\,\cdot\,\rangle_{\mathbb H_{n-1}}$ and $e^{-\phi_0}dv_{\mathbb H_{n-1}}$. Let $L^2_{(0,q)}(\mathbb H_{n-1},\phi_0)$ be the completion of $\Omega^{0,q}_c(\mathbb H_{n-1})$ with respect to $(\,\cdot\,,\,\cdot\,)_{\mathbb H_{n-1},\phi_0}$.

We begin by calculating the heat kernel of $\Box_{b,\phi_0}^q$. According to \cite[Theorem 4.3]{HZ23},  we claim that the heat kernel of $\Box_{b,\phi_0}^q$ can be expressed as:
\begin{equation}\label{be-gue210523yydI}
\begin{split}
e^{-t\Box_{b,\phi_0}^q}(x,y)=\frac{1}{2\pi}\lim_{\delta\to \infty}\int_{-\delta}^{\delta} e^{i<\theta-\sigma,\eta>+\phi_0(\xi)/2-\phi_0(\zeta)/2}e^{-t\Box_\eta}(\xi,\zeta)d\eta,
\end{split}
\end{equation}
where $x=[\xi,\theta]\in\mathbb H_{n-1}$, %=(x_1,\ldots,x_{2n},x_{2n-1}) 
$y=[\zeta,\sigma]\in\mathbb H_{n-1}$, $\eta\in\mathbb R$,
%=(y_1,\ldots,y_{2n},y_{2n-1})
and $e^{-t\Box_\eta}(\xi,\zeta)$ is given by
\begin{equation}\label{be-gue210521yyd}
\begin{aligned}
e^{-t\Box_\eta}(\xi,\zeta)=&\frac{1}{(2\pi)^{n-1}}\dfrac{\det\dot R^\eta}{\det\big(1-e^{-t \dot R^\eta}\big)}
\exp\bigg\{-t\varTheta^\eta_0-\frac{1}{2}\Big\langle\,\frac{\dot R^\eta/2}{\tanh(t\dot R^\eta/2)}\xi\mid \xi\,\Big\rangle_{\mathbb C^{n-1}}\\
&-\frac{1}{2}\Big\langle\,\frac{\dot R^\eta/2}{\tanh(t\dot R^\eta/2)}\zeta\mid \zeta\,\Big\rangle_{\mathbb C^{n-1}}
+\Big\langle\,\frac{\dot R^\eta/2}{\sinh(t\dot R^\eta/2)}e^{t\dot R^\eta/2}\xi\mid \zeta\,\Big\rangle_{\mathbb C^{n-1}}\bigg\}.
\end{aligned}
\end{equation}
For $\eta\in\mathbb R$, $j, l=1,\ldots,{n-1}$, set
\begin{equation}\label{Phieta}
\Phi_{\eta}:=-2\eta\sum_{j=1}^{{n-1}}\lambda_j|\xi_j|^2+\sum_{j,l=1}^{{n-1}}\mu_{j,l}\xi_j\ol \xi_l.
\end{equation} 
From \eqref{mphi0}, we have that $\Phi_{0}=\phi_0$. In \eqref{be-gue210521yyd}, $\dot{R}^\eta:T^{1,0}\C^{{n-1}}\to T^{1,0}\C^{{n-1}}$ represents the linear map defined by
\begin{equation}\label{Reta}
\langle\,\dot{R}^\eta U\mid\ol V\,\rangle_{\mathbb C^{n-1}}=\pa\dbar\Phi_{\eta}(U,\ol V),\quad U,V\in T^{1,0}\C^{n-1}
\end{equation}
and 
\begin{equation}\label{omega}
\varTheta^\eta:=\sum_{j,l=1}^{n-1}\frac{\pa^2\Phi_{\eta}}{\pa\ol \xi_j\pa \xi_l}d\ol \xi_j\wedge (d\ol \xi_l\wedge)^*.
\end{equation}
For the computation of \eqref{be-gue210521yyd}, we refer the reader to \cite[Section 4]{HZ23} for details. Now, we provide a proof of \eqref{be-gue210523yydI}. %as we replace the condition $Y(q)$ used in \cite{HZ23} with the weaker condition $Z(q)$. 
%%%%%%%%%%
Let $\mathcal{S}^{0,q}(\mathbb H_{n-1})$ be the space of Schwartz test $(0,q)$-forms. Let 
\begin{equation}\label{e-gue210518yydI}
\begin{split}
G: \Omega^{0,q}_c(\mathbb H_{n-1})&\to\mathcal{S}^{0,q}(\mathbb H_{n-1}),\\
u(\xi,\theta)&\mapsto\int_{\mathbb R}u(\xi,\theta)e^{-i\theta\eta-\phi_0(z)/2}d\theta,
\end{split}
\end{equation}
where $\eta\in\mathbb R$ is the Fourier dual variable to $\theta$. 
By a straightforward calculation, we have 
$$
G(\Box^q_{b,\phi_0}u)=\Box_\eta(Gu),\ \mbox{for $u\in\Omega^{0,q}_c(\mathbb H_{n-1})$,}
$$
where $\Box_\eta$ is defined as in \cite[(4.2)]{HZ23}, with $n$ replaced by $n-1$. On $\mathbb H_{n-1}$, let $T=-\frac{\partial}{\partial\theta}$ and consider 
$$-iT: {\rm Dom\,}(-iT)\subset L^2_{(0,q)}(\mathbb H_{n-1},\phi_0)\to L^2_{(0,q)}(\mathbb H_{n-1},\phi_0),$$
where ${\rm Dom\,}(-iT)=\set{u\in L^2_{(0,q)}(\mathbb H_{n-1},\phi_0)\mid -iTu\in L^2_{(0,q)}(\mathbb H_{n-1},\phi_0)}$. It is not difficult to see that $-iT$ is self-adjoint. For $\delta_1<\delta_2$, $\delta_1, \delta_2\in\mathbb R$, let 
\begin{equation}\label{e-gue210604yydIx}
Q_{[\delta_1,\delta_2]}: L^2_{(0,q)}(\mathbb H_{n-1},\phi_0)\to E_{-iT}([\delta_1,\delta_2])
\end{equation}
be the orthogonal projection with respect to $(\,\cdot\,,\,\cdot\,)_{\mathbb H_{n-1},\phi_0}$, where $E_{-iT}([\delta_1,\delta_2])$ denotes the spectral measure of $-iT$.  For $\delta>0$, 
let 
\begin{equation}\label{e-gue210604yydu}
e^{-t\Box^{q,\delta}_{b,\phi_0}}:=e^{-t\Box^q_{b,\phi_0}}\circ Q_{[-\delta,\delta]}: L^2_{(0,q)}(\mathbb H_{n-1},\phi_0)\to L^2_{(0,q)}(\mathbb H_{n-1},\phi_0)
\end{equation}
and 
let $e^{-t\Box^{q,\delta}_{b,\phi_0}}(x,y)\in\mathscr D'(\mathbb R_+\times \mathbb H_{n-1}\times \mathbb H_{n-1}, T^{*0,q}\mathbb H_{n-1}\boxtimes(T^{*0,q}\mathbb H_{n-1})^*)$ be the 
distribution kernel of $e^{-t\Box^{q,\delta}_{b,\phi_0}}$. For every $\delta>0$ and $t>0$, set 
\begin{equation}\label{e-gue210521ycdII}
\begin{split}
P_\delta(t,x,y):&=\frac{1}{2\pi}\int_{-\delta}^{\delta} e^{i<\theta-\sigma,\eta>+\frac{\phi_0(z)-\phi_0(w)}{2}}e^{-t\Box_\eta}(z,w)(\eta)d\eta\\
&\in\cali{C}^\infty(\mathbb R_+\times \mathbb H_{n-1}\times \mathbb H_{n-1},T^{*0,q}\mathbb H_{n-1}\boxtimes(T^{*0,q}\mathbb H_{n-1})^*).
\end{split}
\end{equation}
%where $1_{[\delta_1,\delta_2]}(\eta)=1$ if $\eta\in[\delta_1,\delta_2]$, $1_{[\delta_1,\delta_2]}(\eta)=0$ if $\eta\notin[\delta_1,\delta_2]$.
Let
$$P_\delta(t): \Omega^{0,q}_c(\mathbb H_{n-1})\to\Omega^{0,q}(\mathbb H_{n-1})$$
be the continuous operator given by 
\begin{equation}\label{e-gue210521ycdIII}
(P_\delta(t)u)(x)=\int P_\delta(t,x,y)u(y)dv_{\mathbb H_{n-1}}(y),\ \ u\in\Omega^{0,q}_c(\mathbb H_{n-1}).
\end{equation} 
From \cite[(4.9)-(4.19)]{HZ23}, we deduce that
\begin{equation}\label{modelheat}
 e^{-t\Box^{q,\delta}_{b,\phi_0}}=P_\delta(t) \quad
\text{and}\quad
e^{-t\Box^{q}_{b,\phi_0}}=\lim_{\delta\to \infty}P_\delta(t).   
\end{equation}
Let $\ell\in\mathbb N$ and  $K\subset\mathbb R_+\times \mathbb H_{n-1}\times \mathbb H_{n-1}$ be a compact set, and $\delta_2>\delta_1\gg 1$. Note that
\begin{equation}\label{e-unib}
P_{\delta_2}-P_{\delta_1}=\frac{1}{2\pi}\int_{[-\delta_2,-\delta_1]\cup [\delta_1,\delta_2]} e^{i<\theta-\sigma,\eta>+\frac{\phi_0(z)-\phi_0(w)}{2}}e^{-t\Box_\eta}(z,w)d\eta.
\end{equation} 
\begin{rem}\label{rem:zq} The derivation of \eqref{modelheat} does not require condition $Z(q)$.
Assume now that $Z(q)$ holds. It can be verified that, for every $\ell \in \mathbb{N}$, there exist constants $C > 0$ and $\varepsilon > 0$, such that the integral on the right-hand side of \eqref{e-unib} over the negative region of $\eta$ remains uniformly bounded. Specifically, we have
\begin{equation}\label{e-unip}
\Big\|\int_{-\delta_2}^{-\delta_1} e^{i\langle\theta-\sigma,\eta\rangle}e^{-t\Box_\eta}(z,w)d\eta\Big\|_{\cali{C}^\ell(K,T^{*0,q}\mathbb H_{n-1}\boxtimes(T^{*0,q}\mathbb H_{n-1})^*)}\leq Ce^{-\varepsilon\delta_1},\qquad \delta_2>\delta_1\gg 1.
\end{equation}
\end{rem}
The estimate \eqref{e-unip} will be used in the proof of Theorem~\ref{main1} and in Remark~\ref{rem:modelasym}.
We include a sketch of the proof of \eqref{e-unip}. Since $|\eta|\gg1$, the dominant contribution to $\Phi_\eta$ in \eqref{e-unip} is given by $-2\eta\lambda|z|^2$, the terms involving $\phi_0$ are of lower order in $\eta$ and can be absorbed into the constants below. The leading term in $e^{-t\Box_\eta}(z, w)$ can be expressed as  
\begin{equation*}
\dfrac{\det\dot R^\eta}{\det\big(1-e^{-t \dot R^\eta}\big)}e^{-t\varTheta^\eta_0}\approx\dfrac{\det\dot R^\eta}{(1-e^{2t\eta\lambda_1})\cdots(e^{-2t\eta\lambda_{j_1}}-1)\cdots(e^{-2t\eta\lambda_{j_q}}-1)\cdots(1-e^{2t\eta\lambda_n})},
\end{equation*}
where the indices $j_1, \dots, j_q$ correspond to directions related to the degree $q$.
When $\eta \ll 0$, condition $Z(q)$ ensures that at least one of the following holds:

1. There exists $k \in J$ such that $\lambda_k > 0$, which implies $1/(e^{-2t\eta\lambda_k} - 1) \leq e^{-\varepsilon\delta_1}$ for some $\varepsilon > 0$.

2. There exists $k \notin J$ such that $\lambda_k < 0$, implying that $1/(1 - e^{2t\eta\lambda_k}) \leq e^{-\varepsilon\delta_1}$ for some $\varepsilon > 0$.\\
In either case, the integral in \eqref{e-unip} is bounded uniformly for $\delta_2 > \delta_1 \gg 1$, and the estimate follows as stated. This completes the proof of \eqref{e-unip}. 

%%%%%%%%%
We now proceed with the fundamental solution of $\Box_{\phi_0}^q$. Let $P:=\Box_{b,\phi_0}^q-\frac{1}{2}\frac{\pa^2}{\pa\theta^2}$, and rewrite the first line of \eqref{hh1} as
$$
\left(\frac{\pa}{\pa t}+P-\frac{1}{2}\frac{\pa^2}{\pa r^2}\right)K(t)f=0.
$$ 
Note that $P$ and $-\frac{1}{2}\frac{\partial^2}{\partial r^2}$ commute. We therefore expect that $K(t)$ can be written as the product of the two corresponding kernels, that is,
\begin{equation}\label{ker1}
K(t, \mathbf z, \mathbf w)=p(t,x,y)\cdot g(t,r,s),
\end{equation}
where $\mathbf z=[x,r]=[\xi,\theta,r]$, $\mathbf w=[y,s]=[\zeta,\sigma,s]$, $g(t,r,s)=\frac{1}{\sqrt{2\pi t}}e^{-|r-s|^2/(2t)}$ is the Gaussian kernel on $\R$, and $p(t)$ is the kernel for the fundamental solution of the initial value problem for $P$ on $\mathbb H_{n-1}$. 
Note that the coefficient of $-\frac{1}{2}\frac{\pa^2}{\pa \theta^2}$ is independent of $z$ and $\theta$. Let

\begin{equation}\label{con1}
\begin{aligned}
p(t,x,y)&:=\frac{1}{\sqrt{2\pi t}}\int e^{-t\Box_{b,\phi_0}^q}((\xi,\theta+\mu),(\zeta,\sigma))e^{-|\mu|^2/(2t)} d\mu\\
&=\frac{1}{2\pi\sqrt{2\pi t}}\lim_{\delta\to \infty}\int_{-\delta}^\delta\int e^{i<\theta+\mu-\sigma,\eta>+\phi_0(\xi)/2-\phi_0(\zeta)/2-|\mu|^2/(2t)}e^{-t\Box_\eta}(\xi,\zeta)d\eta d\mu\\
&=\frac{e^{\phi_0(\xi)/2-\phi_0(\zeta)/2}}{2\pi}\lim_{\delta\to \infty}\int_{-\delta}^\delta e^{i<\theta-\sigma,\eta>-t\eta^2/2}e^{-t\Box_\eta}(\xi,\zeta)d\eta ,
\end{aligned}
\end{equation}
where the last equality is obtained by the Fourier transform of a Gaussian: 
$$
\int e^{i\langle \mu,\eta\rangle-|\mu|^2/(2t)} d\mu=\sqrt{2\pi t}\,e^{-t\eta^2/2}.
$$
One can verify that 
\begin{equation}\label{hh3}
\begin{dcases}
\left(\frac{\pa}{\pa t}+\Box^q_{b,\phi_0}-\frac{1}{2}\frac{\pa^2}{\pa \theta^2}\right)p(t)f=0,\\
\displaystyle\lim_{t\to0}p(t)f=f.
\end{dcases}
\end{equation}
Combining \eqref{ker1} and \eqref{con1}, we have
\begin{equation}\label{ker2}
\begin{aligned}
K(t,\mathbf z,\mathbf w)&=p(t,x,y)\cdot g(t,r,s)\\
&=\frac{e^{\phi_0(\xi)/2-\phi_0(\zeta)/2}}{2\pi\sqrt{2\pi t}}e^{-|r-s|^2/(2t)}\lim_{\delta\to \infty}\int_{-\delta}^\delta e^{i<\theta-\sigma,\eta>-t\eta^2/2}e^{-t\Box_\eta}(\xi,\zeta)d\eta
\end{aligned}
\end{equation}
is the fundamental solution of the operator $\Box_{\phi_0}^q$ on $\mathbb H_{n-1}\times \mathbb R$.
In particular,
\begin{equation}\label{ker0}
\begin{aligned}
K(t,0,0)=\frac{1}{(2\pi)^{n}\sqrt{2\pi t}}\lim_{\delta\to \infty}\int_{-\delta}^\delta e^{-t\eta^2/2}\dfrac{\det\dot R^\eta}{\det\big(1-e^{-t \dot R^\eta}\big)}
e^{-t\varTheta^\eta_0}d\eta,
\end{aligned}
\end{equation}
where $\dot R^\eta$ and $\varTheta_0^\eta$ are defined by \eqref{Reta} and \eqref{omega}, respectively.

\subsection{Boundary correction terms} We now 
 proceed to construct the correction terms to ensure the associated heat kernel $A(t)$ satisfies the desired boundary conditions. 

Let $f=\sumprime_{\abs{J}=q}f_J\ol\omega_0^J\in\omz^{0,q}(\ol M_0)\cap \Dom{\Box_{\phi_0}^q}$. We can decompose the operator $\Box_{\phi_0}^q$ acting on $f$ as follows:
\begin{equation}
\begin{aligned}
\Box_{\phi_0}^qf=\sumprime_{n\notin J}\Box^\tau f_J\ol\omega_0^J+\sumprime_{n\in J}\Box^\nu f_J\ol\omega_0^J,
\end{aligned}
\end{equation}
where 
\begin{equation}\label{tna}
\Box^\tau=P^\tau-\frac{1}{2}\frac{\pa^2}{\pa r^2},\quad \Box^\nu=P^\nu-\frac{1}{2}\frac{\pa^2}{\pa r^2}.
\end{equation}
%Here, whether the effect is tangential ($\tau$) or normal ($\nu$) depends on the action of term  
The distinction between the tangential ($\tau$) and normal ($\nu$) action is determined by the term
\begin{equation}\label{acting}
\sum_{j,l=1}^{{n-1}}  \left( 2i \lambda_j \frac{\pa}{\pa \theta} + \mu_{l,j} \right)d \ol{\xi}_j \wedge \left( d \ol{\xi}_l \wedge \right)^*
\end{equation}
appearing in the expression for $\Box_{b,\phi_0}^q$. More precisely, the tangential (respectively, normal) action means that \eqref{tna} acts freely on the normal (respectively, tangential) part of a form. In other words, according to its action on the tangential and normal components of a form, the operator $\varTheta_{0}^\eta$ in $K(t, \mathbf{z}, \mathbf{w})$ can be decomposed into its tangential part $\varTheta_{0}^{\eta,\tau}$ and normal part $\varTheta_{0}^{\eta,\nu}$.

Write
$$
\begin{aligned}
A_{\phi_0}(t,\mathbf z, \mathbf w)&=A^\tau(t,\mathbf z, \mathbf w)+A^\nu(t,\mathbf z, \mathbf w)\\
:&=\Big (\sumprime_{\substack{|I|=|J|=q \\ n\notin J}}+\sumprime_{\substack{|I|=|J|=q \\ n\in J}}\Big)A_{\phi_0,I,J}(t,\mathbf z, \mathbf w)\,\ol\omega_0^I(\mathbf z)\otimes\ol\omega_0^J(\mathbf w).    
\end{aligned}
$$
From the boundary conditions \eqref{mbdc} for $\Box^q_{\phi_0}$, the heat equation can be split into the following two:
\begin{equation}\label{Di}
\begin{aligned}
\Big(\frac{\pa}{\pa t}+\Box^\nu\Big)A^{\nu}= 0
\mbox{ with $A^{\nu}|_{X_0}$=0}
\quad \mbox{and\quad$\lim_{t\to0}A^{\nu}(t)=\text{Id}$},
\end{aligned}
\end{equation}
\begin{equation}\label{Neu}
\begin{aligned}
\Big(\frac{\pa}{\pa t}+\Box^\tau\Big)A^{\tau}= 0
\mbox{ with $\ol Z_{n}A^{\tau}|_{X_0}$=0}
\quad \mbox{and\quad$\lim_{t\to0}A^{\tau}(t)=\text{Id}$}.
\end{aligned}
\end{equation}
As previously discussed, $K(t, \cdot, \mathbf{w})$ satisfies both heat equations, but without considering the boundary conditions. We now focus on determining the boundary correction terms $h^\nu(t)$ and $h^\tau(t)$ such that  
$$
A^\nu(t) = K(t) + h^\nu(t) \quad \text{and} \quad A^\tau(t) = K(t) + h^\tau(t).
$$

Note that the problem \eqref{Di}, involving the normal component, can essentially be treated as a Dirichlet problem. We then obtain $h^\nu(t)$ by reflection; namely, let
$$
h^\nu(t,\mathbf z, \mathbf w)=-K(t,(x,r),(y,-s))
$$
and
$$
A^\nu(t,\mathbf z, \mathbf w)=K(t,(x,r),(y,s))-K(t,(x,r),(y,-s)),
$$
where $\mathbf z=[x,r]$ and $\mathbf w=[y,s]$. Consequently, the operator $A^\nu(t)$, as defined by 
\begin{equation}\label{Nu}
\begin{aligned}
(A^\nu(t)f)(x,r)=\int_{\mathbb H_{n-1}\times \R}A^\nu(t,\mathbf z, \mathbf w)f(\mathbf w)dv(\mathbf w)
\end{aligned}
\end{equation}
is the desired operator for $(\frac{\pa}{\pa t}+\Box^\nu)$, satisfying
$$
\left(\frac{\pa}{\pa t}+\Box^\nu\right)A^{\nu}(t)f= 0
\mbox{ in $M_0$,$\quad A^{\nu}(t)f|_{X_0}=A^{\nu}(t)f|_{r=0}$=0}
\quad \mbox{and\quad$\lim_{t\to0}A^{\nu}(t)f=f$}.
$$
The proof is straightforward. In fact, it follows from $K(t,(x,r),(y,s))=K(t,(x,-r),(y,-s))$ that
$$
\begin{aligned}
(A^{\nu}(t)f)(x,0)&=\int K(t,(x,0),(y,s))f(y,s)dyds-\int K(t,(x,0),(y,-s))f(y,s)dyds\\
&=\int K(t,(x,0),(y,-s))f(y,s)dyds-\int K(t,(x,0),(y,-s))f(y,s)dyds\\
&=0.
\end{aligned}
$$
Moreover, $\lim_{t\to0}A^{\nu}(t)f=f$ since $f\in \omz^{0,q}_c(\mathbb H_{n-1}\times\R_-)$.

On the other hand, the problem \eqref{Neu}, involving the tangential component, is subject to the $\bar\partial$-Neumann boundary condition. We first construct a related operator $A^N$ satisfying the following Neumann boundary condition:
$$
\left(\frac{\pa}{\pa t}+\Box^\tau\right)A^{N}(t)f= 0,
\mbox{$\quad \frac{\pa}{\pa r}(A^{N}(t)f)|_{r=0}$=0}
\quad \mbox{and\quad$\lim_{t\to0}A^{N}(t)f=f$}.
$$
We claim that
$$
\begin{aligned}
A^N(t,\mathbf z, \mathbf w)=K(t,(x,r),(y,s))+K(t,(x,r),(y,-s)).
\end{aligned}
$$
Again, we notice that
$$
\left(\frac{\pa}{\pa r}K\right)(t,(x,r),(y,s))=-\left(\frac{\pa}{\pa r}K\right)(t,(x,-r),(y,-s)),
$$
which implies
\begin{equation*}
\begin{aligned}
&\frac{\pa}{\pa r}(A^{N}(t)f)(x,0)\\
&=\int \left(\frac{\pa}{\pa r}K\right)(t,(x,0),(y,s))f(y,s)dyds+\int \left(\frac{\pa}{\pa r}K\right)(t,(x,0),(y,-s))f(y,s)dyds\\
&=-\int \left(\frac{\pa}{\pa r}K\right)(t,(x,0),(y,-s))f(y,s)dyds+\int \left(\frac{\pa}{\pa r}K\right)(t,(x,0),(y,-s))f(y,s)dyds\\
&=0.
\end{aligned}
\end{equation*}
We then rewrite the solution of \eqref{Neu} as 
$$
A^{\tau}(t)=A^{N}(t)+h^{\tau}(t),
$$
where $h^{\tau}(t)$ is another correction term we want to find, and it satisfying
\begin{equation}\label{htau1}
\begin{aligned}
\left(\frac{\pa}{\pa t}+\Box^\tau\right)h^{\tau}(t)f= 0,
\quad \mbox{$\displaystyle\lim_{t\to0}h^{\tau}(t)f=0$},
\end{aligned}
\end{equation}
\begin{equation}\label{htau2}
\begin{aligned}
\mbox{and\quad }\ol Z_{n}h^{\tau}(t)f|_{X_0}=-\ol Z_{n}A^{N}(t)f|_{X_0}.
\end{aligned}
\end{equation}
Moreover, we aim to find its kernel function $h^\tau(t, (x, r), (y, s))$ that decays rapidly as $r \to -\infty$ or $s \to -\infty$, ensuring that the correction term primarily affects the boundary and does not introduce significant contributions away from it. From \eqref{htau2}, we have
\begin{equation}\label{htau3}
\begin{aligned}
\Big(\frac{\pa}{\pa r}-i\frac{\pa}{\pa \theta}\Big)h^{\tau}(t)f\Big |_{r=0}=i\frac{\pa}{\pa \theta}A^{N}(t)f\Big |_{r=0},
\end{aligned}
\end{equation}
which implies
\begin{equation*}
\begin{aligned}
&\int_{\mathbb H_{n-1}} \int_{-\infty}^0  \Big(\frac{\pa}{\pa r}-i\frac{\pa}{\pa \theta}\Big)h^{\tau}(t,(x,r),(y,s))f(y,s)dyds\bigg |_{r=0}\\
&=\int_{\mathbb H_{n-1}} \int_{-\infty}^0  i\frac{\pa}{\pa \theta}\Big[K(t,(x,r),(y,s))+K(t,(x,r),(y,-s))\Big]f(y,s)dyds\bigg |_{r=0}.
\end{aligned}
\end{equation*}
Note that $f\in\omz^{0,q}_c(\mathbb H_{n-1}\times\R_-)$ and $K(t,(x,r),(y,s))=K(t,(x,-r),(y,-s))$, the above equation holds if we can find 
a solution from the following:
\begin{equation}\label{htau4}
\begin{aligned}
\Big(\frac{\pa}{\pa r}-i\frac{\pa}{\pa \theta}\Big)h^{\tau}(t,(x,r),(y,s))=
2i\frac{\pa}{\pa \theta}K(t,(x,r),(y,-s))
\end{aligned}
\end{equation}
for all $x,y\in \mathbb H_{n-1}$, $r,s<0$, $t>0$.
 Taking the Fourier transform with respect to $\theta$ (where $x=(\xi,\theta)$) in \eqref{htau4}, we obtain
\begin{equation*}
\begin{aligned}
\Big(\frac{\pa}{\pa r}+u\Big)\hat h^{\tau}(t,(\xi,u,r),(y,s))=-2u \hat K(t,(\xi,u,r),(y,-s)),
\end{aligned}
\end{equation*}
which gives
\begin{equation*}
\begin{aligned}
\frac{\pa}{\pa r}\Big(e^{u r}\hat h^{\tau}(t,(\xi,u,r),(y,s))\Big)=-2u e^{u r}\hat K(t,(\xi,u,r),(y,-s)).
\end{aligned}
\end{equation*}
We will find $h^{\tau}(t,(x,r),(y,s))$ from \eqref{htau4} under the condition $\lim_{r\to-\infty}e^{u r}\hat h^{\tau}(t,(\xi,u,r),(y,s))=0$ for all $u\in\R$.
Taking integral to both sides from $-\infty$ to $r$ and then divided by $e^{u r}$, we obtain 
\begin{equation*}
\begin{aligned}
\hat h^{\tau}(t,(\xi,u,r),(y,s))&=e^{-u r}\int_{-\infty}^r2u e^{u \gamma}\hat K(t,(\xi,u,\gamma),(y,-s))d\gamma\\
&=2u\int_{-\infty}^0 e^{u \gamma}\hat K(t,(\xi,u,r+\gamma),(y,-s))d\gamma.
\end{aligned}
\end{equation*}
We then take the inverse Fourier transform with respect to $u$ and find that
\begin{equation*}
\begin{aligned}
h^{\tau}(t,(\xi,\theta,r),(y,s))=\frac{1}{\pi}\int_\R\int_{-\infty}^0 u e^{u (i\theta+\gamma)}\hat K(t,(\xi,u,r+\gamma),(y,-s))d\gamma du.
\end{aligned}
\end{equation*}
It follows from \eqref{ker2} that

\begin{equation*}
\begin{aligned}
&\hat K(t,(\xi,u,r),(y,-s) )\\
&=\frac{e^{\phi_0(\xi)/2-\phi_0(\zeta)/2}}{2\pi\sqrt{2\pi t}}e^{-|r+s|^2/(2t)}\lim_{\delta\to \infty}\int_{-\delta}^\delta e^{i<-\sigma,\eta>-t\eta^2/2}e^{-t\Box_\eta}(\xi,\zeta)\Big(\int_{\R} e^{i<\theta,\eta-u>}d\theta \Big)d\eta\\
&=\frac{e^{\phi_0(\xi)/2-\phi_0(\zeta)/2}}{\sqrt{2\pi t}}e^{-|r+s|^2/(2t)} e^{i<-\sigma,u>-tu^2/2}e^{-t\Box_{u}}(\xi,\zeta).
\end{aligned}
\end{equation*}
Hence we have
\begin{equation}\label{qker}
\begin{aligned}
&h^{\tau}(t,(\xi,\theta,r),(\zeta,\sigma,s))\\
&=\frac{e^{\phi_0(\xi)/2-\phi_0(\zeta)/2}}{\pi \sqrt{2\pi t}}\lim_{\delta\to \infty}\int_{-\delta}^\delta u e^{iu (\theta-\sigma)-tu^2/2}e^{-t\Box_u}(\xi,\zeta)\Big(\int_{-\infty}^0e^{u\gamma-|r+\gamma+s|^2/(2t)}d\gamma\Big) du\\
&=\frac{e^{\phi_0(\xi)/2-\phi_0(\zeta)/2}}{\pi \sqrt{2\pi t}}\lim_{\delta\to \infty}\int_{-\delta}^\delta u e^{iu (\theta-\sigma)-tu^2/2}e^{-t\Box_u}(\xi,\zeta)\Big(\int_{-\infty}^0e^{-(\gamma+r+s-tu)^2/(2t)-u(r+s)+tu^2/2}d\gamma\Big) du\\
&=\frac{e^{\phi_0(\xi)/2-\phi_0(\zeta)/2}}{\pi \sqrt{\pi }}\lim_{\delta\to \infty}\int_{-\delta}^\delta \eta e^{i\eta (\theta-\sigma)-\eta(r+s)}e^{-t\Box_{\eta}}(\xi,\zeta)\Big(\int_{-\infty}^{\frac{r+s-t\eta}{\sqrt{2t}}}e^{-\gamma^2}d\gamma\Big) d\eta.
\end{aligned}
\end{equation}
Combining the solutions to \eqref{Di} and \eqref{Neu}, we obtain the following theorem. In particular, it implies Theorem~\ref{hmd}.

\begin{thm}\label{heatkernelmodel}
Let $t\in\mathbb{R}_+$ and $f\in\Omega^{0,q}_c(M_0)$. Let $K(t,\mathbf z,\mathbf w)$ and $h^\tau(t,\mathbf z,\mathbf w)$ be as in \eqref{ker2} and \eqref{qker}, respectively.
Set
$$
A^\nu(t,\mathbf z, \mathbf w):=K(t,(x,r),(y,s))-K(t,(x,r),(y,-s))
$$
Then the operator $A^\nu(t)$ solves
\begin{equation}
\begin{aligned}
&\left(\frac{\pa}{\pa t}+\Box^\nu\right)A^{\nu}(t)f^\nu= 0,
\mbox{\quad $A^{\nu}(t)f^\nu|_{X_0}=0$},\\
&\mbox{\qquad$\lim_{t\to0}A^{\nu}(t)f^\nu=f^\nu$ in $L^2_{(0,q)}(M_0)$.}
\end{aligned}
\end{equation}
Set
$$
A^\tau(t,\mathbf z, \mathbf w):=K(t,(x,r),(y,s))+K(t,(x,r),(y,-s))+h^\tau(t,(x,r),(y,s))
$$
Then the operator $A^\tau(t)$ solves
\begin{equation}
\begin{aligned}
&\left(\frac{\pa}{\pa t}+\Box^\tau\right)A^{\tau}(t)f^\tau= 0,
\mbox{\quad $\ol Z_{n}(A^{\tau}(t)f^\tau)|_{X_0}=0$},\\
&\mbox{\qquad$\lim_{t\to0}A^{\tau}(t)f^\tau=f^\tau$ in $L^2_{(0,q)}(M_0)$.}
\end{aligned}
\end{equation}
Furthermore, 
$$e^{-t\Box^q_{\phi_0}}(\mathbf z,\mathbf w)=A^\nu(t,\mathbf z, \mathbf w)+A^\tau(t,\mathbf z, \mathbf w).$$
\end{thm}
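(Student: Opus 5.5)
We verify the three assertions separately and then identify the sum with the heat kernel via uniqueness. First, $K$ from \eqref{ker2} is the fundamental solution of $\frac{\pa}{\pa t}+\Box^q_{\phi_0}$ on $\mathbb H_{n-1}\times\R$. This follows from \eqref{hh3} and the fact that $g(t,r,s)$ is the heat kernel of $-\frac12\pa_r^2$. Because $P$ commutes with $\pa_r^2$, the product $p\cdot g$ solves the full equation, and $p(t)g(t)f\to f$ as $t\to0$. Acting on forms, $K$ splits according to \eqref{tna}: on normal components it carries $\varTheta^{\eta,\nu}$, on tangential components $\varTheta^{\eta,\tau}$. We therefore apply the normal and tangential parts of $K$ to $f^\nu$ and $f^\tau$ respectively.

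For $A^\nu$ the argument is the reflection computation already carried out. The coefficients of $\Box^\nu$ do not depend on $r$, so $K(t,(x,r),(y,-s))$ solves the heat equation in $(t,\mathbf z)$. The symmetry $K(t,(x,r),(y,s))=K(t,(x,-r),(y,-s))$ gives the vanishing at $r=0$. For the initial condition, $\operatorname{supp}f\subset\{s<0\}$, so the reflected term is evaluated at points $(y,-s)$ with $-s>0$, away from $r\le 0$. Gaussian decay of $g(t,r,-s)$ for $r\le0<-s$ then forces this term to tend to $0$ as $t\to0$, locally uniformly and in $L^2$.

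For $A^\tau=A^N+h^\tau$, we check \eqref{htau1} and \eqref{htau2} directly from \eqref{qker}.
\begin{itemize}
\item \emph{Heat equation.} For each fixed $\eta$, the integrand is $e^{i\eta\theta}e^{-t\Box_\eta}(\xi,\zeta)$ times $F_\eta(t,r)=e^{-\eta(r+s)}\int_{-\infty}^{(r+s-t\eta)/\sqrt{2t}}e^{-\gamma^2}d\gamma$. A direct computation shows $\pa_tF_\eta=\frac12\pa_r^2F_\eta-\frac12\eta^2F_\eta$, which matches the $-\frac12\pa_\theta^2$ term. Combined with the heat equation for $e^{-t\Box_\eta}$, this yields $(\pa_t+\Box^\tau)h^\tau=0$.
\item \emph{Boundary condition.} By construction, $\hat h^\tau$ solves $(\pa_r+u)\hat h^\tau=-2u\hat K(\cdot,(y,-s))$. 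Inverting the Fourier transform gives \eqref{htau4}, and integrating against $f$ together with the symmetry of $K$ gives \eqref{htau3}, i.e.\ \eqref{htau2}.
\item \emph{Initial condition.} For $r,s<0$ we have $r+s<0$, so the error-function factor is bounded. As $t\to0$ it tends to $0$ when $\eta\ge0$, and for $\eta<0$ it is controlled by $e^{-(r+s)^2/2t}$ through completing the square. Hence $h^\tau(t)f\to0$ for $f$ supported in $\{s<0\}$.
\end{itemize}

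The interchange of $\pa_t$, $\pa_r$ and $\lim_{\delta\to\infty}$ with the $\eta$-integral needs uniform convergence of the $\eta$-integrals in $\mathscr C^\ell$ on compacts. For $\eta\to+\infty$ this comes from the factors $e^{-t\eta^2/2}$ (in $K$) and the Gaussian tail in $h^\tau$. For $\eta\to-\infty$ it comes from \eqref{e-unip} under $Z(q)$, together with the at most polynomial growth of the factor $\eta e^{-\eta(r+s)}\,\mathrm{erfc}$-type term, which is again bounded by $e^{-t\eta^2/2}$ after completing the square.

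Finally, $e^{-t\Box^q_{\phi_0}}(\mathbf z,\mathbf w)=A^\nu+A^\tau$ follows from uniqueness. Set $B(t)=A^\nu(t)P^\nu+A^\tau(t)P^\tau$, where $P^\nu,P^\tau$ project onto normal and tangential components. Then $B(t)f$ solves \eqref{heatmodel}, and its $\dbar$-Neumann conditions \eqref{mbdc} hold by the two boundary statements. Let $v(t)=B(t)f-e^{-t\Box^q_{\phi_0}}f$, which lies in $\Dom\Box^q_{\phi_0}$. Then $\frac{d}{dt}\|v\|^2_{\phi_0}=-2Q(v,v)\le0$ and $v(0)=0$, so $v\equiv0$.

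\textbf{Main obstacle.} The delicate step is justifying $B(t)f\in\Dom\Box^q_{\phi_0}$ with enough decay at infinity on the unbounded $M_0$ for the integration by parts in the energy identity. This requires $L^2$ bounds on $B(t)f$, $\dbar B(t)f$ and $\dbar^{*,\phi_0}B(t)f$. We plan to obtain them from the Mehler-type Gaussian decay of $e^{-t\Box_\eta}(\xi,\zeta)$ in $\xi$, and from the decay in $\theta$ and $r$ supplied by the Fourier and Gaussian factors.
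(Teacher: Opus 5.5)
Your route is the paper's: odd/even reflection in $r$ for the normal and Neumann parts, the $\theta$-Fourier correction $h^\tau$ to enforce $\ol Z_n(\cdot)|_{X_0}=0$, and identification with $e^{-t\Box^q_{\phi_0}}$ by uniqueness of the heat semigroup. The paper only carries out the boundary computations and then says ``combining''. Several of your steps supply what it leaves implicit:
\begin{itemize}
\item your heat-equation check for $h^\tau$ is correct, since $F_\eta$ is $\sqrt\pi\,e^{-t\eta^2/2}$ times a superposition over $\gamma<0$, with weight $e^{\eta\gamma}$, of the Gaussians $(2\pi t)^{-1/2}e^{-(r+\gamma+s)^2/(2t)}$;
\item your Gaussian bound gives $h^\tau(t)f\to0$;
\item your energy argument is exactly the uniqueness statement the paper invokes later, in the proof of Theorem~\ref{t-gue210503yyd}.
\end{itemize}

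The genuine problem is the one step you take ``by construction'': $h^\tau$ as printed in \eqref{qker} does \emph{not} satisfy \eqref{htau4}. When the paper integrates $\partial_r(e^{ur}\hat h^\tau)=-2u\,e^{ur}\hat K$, it writes $\hat h^\tau=e^{-ur}\int_{-\infty}^r 2u\,e^{u\gamma}\hat K\,d\gamma$, dropping the minus sign, and \eqref{qker} inherits this. You can see it directly on the closed form. With $a=r+s$, the error-function terms cancel and
\[
(\partial_r-i\partial_\theta)\Big[\eta\, e^{i\eta(\theta-\sigma)-\eta a}\int_{-\infty}^{\frac{a-t\eta}{\sqrt{2t}}}e^{-\gamma^2}d\gamma\Big]=\frac{\eta}{\sqrt{2t}}\,e^{i\eta(\theta-\sigma)}\,e^{-\frac{a^2}{2t}-\frac{t\eta^2}{2}},
\]
hence
\[
(\partial_r-i\partial_\theta)h^\tau=\frac{e^{(\phi_0(\xi)-\phi_0(\zeta))/2}}{\pi\sqrt{2\pi t}}\,e^{-\frac{(r+s)^2}{2t}}\int_{\R}\eta\,e^{i\eta(\theta-\sigma)-\frac{t\eta^2}{2}}\,e^{-t\Box_\eta}(\xi,\zeta)\,d\eta=-2i\,\partial_\theta K(t,(x,r),(y,-s)).
\]
This is the opposite of what is needed. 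Thus $\ol Z_n(A^\tau f)|_{X_0}=-\sqrt2\, i\,\partial_\theta (A^Nf)|_{r=0}$, which is nonzero in general. Your $B(t)f$ then fails the $\dbar$-Neumann condition, so it is not in the domain of $\Box^q_{\phi_0}$ and the energy argument cannot start.

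The fix is $A^\tau=K(t,(x,r),(y,s))+K(t,(x,r),(y,-s))-h^\tau$. Your heat-equation and initial-value checks are linear in $h^\tau$, so the rest of your proposal then goes through. However, the middle terms of \eqref{modelasymoptptic} and \eqref{localasymoptptic} flip sign. A positivity check confirms this. For $q=0$ the diagonal heat kernel is nonnegative, yet with the printed sign its $t\to\infty$ limit is $\frac{2}{(2\pi)^n}\int_{\{\eta<0,\ \dot R^\eta>0\}}\eta\, e^{-2\eta r}\det\dot R^\eta\,d\eta<0$ whenever that set is nonempty. In the Morse computation this is compensated by a second slip: $\int_{-\infty}^0e^{-2\eta r}dr$ is written as $\frac1{2\eta}$, whereas for $\eta<0$ it equals $-\frac1{2\eta}$.

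A smaller point concerns your interchange paragraph. For $\eta\to-\infty$ the error-function factor tends to $\sqrt\pi$, so completing the square gives nothing there. The decay must come from $e^{-\eta(r+s)}$ when $r+s<0$, and from \eqref{e-unip}.
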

To conclude this section, we present the explicit formulas for the tangential and normal components of the heat kernel near the origin. They are given by
\begin{equation}\nonumber
\begin{aligned}
A^\nu(t,(0,r), (0,r))&=\frac{1-e^{-2r^2/t}}{2\pi\sqrt{2\pi t}}\lim_{\delta\to \infty}\int_{-\delta}^\delta e^{-t\eta^2/2}e^{-t\Box_\eta}(0,0)d\eta\\
&=\frac{1-e^{-2r^2/t}}{(2\pi)^{n}\sqrt{2\pi t}}\lim_{\delta\to \infty}\int_{-\delta}^\delta e^{-t\eta^2/2}\dfrac{\det\dot R^\eta}{\det\big(1-e^{-t \dot R^{\eta}}\big)}
e^{-t\varTheta^{\eta,\nu}}d\eta
\end{aligned}
\end{equation}
and 
\begin{equation}\nonumber
\begin{aligned}
A^\tau(t,(0,r), (0,r))=&\frac{1+e^{-2r^2/t}}{(2\pi)^{n}\sqrt{2\pi t}}\lim_{\delta\to \infty}\int_{-\delta}^\delta e^{-t\eta^2/2}\dfrac{\det\dot R^\eta}{\det\big(1-e^{-t \dot R^{\eta}}\big)}
e^{-t\varTheta^{\eta,\tau}}d\eta\\
&+\frac{2}{(2\pi)^{n} \sqrt{\pi}}\lim_{\delta\to \infty}\int_{-\delta}^\delta \eta e^{-2\eta r} \dfrac{\det\dot R^\eta}{\det\big(1-e^{-t \dot R^{\eta}}\big)}
e^{-t\varTheta^{\eta,\tau}}\Big(\int^{\frac{2r-t\eta}{\sqrt{2t}}}_{-\infty}e^{-\gamma^2}d\gamma\Big)d\eta.
\end{aligned}
\end{equation}
These expressions will be applied to prove our main theorems in the next section.

%%%%%%%%%%%%%%%%%%%%%%%%%%%

\section{Heat kernel asymptotics on complex manifolds with boundary}\label{heatasymptotic}
In this section, we prove the asymptotic formulas for the heat kernel of the $\dbar$-Neumann Laplacian with values in $L^k$ as $k\to+\infty$. The main issue is the boundary regime, where the natural scale is anisotropic, and the operator must be compared with the model kernel constructed in Section~\ref{bdd}. Throughout this section, we use the notation from Section~\ref{sec:prelim} and assume that condition $Z(q)$ holds.

We regard $M$ as a domain with a smooth boundary $X$ in $M'$. Let $r\in \cali{C}^\infty(M')$ be a defining function such that
$$
\ol M=\{{\bf z}\in M':r({\bf z})\le 0\},\qquad
X=\{{\bf z}\in M':r({\bf z})=0\},
$$
and $|dr|=1$ on $X$. For every $k\in\N$, we consider the Hermitian metric $\langle\,\cdot\, \mid\,\cdot\, \rangle_k$ on $\C T^*M'$ defined in \eqref{e-gue260125yyd}. As in the introduction, the role of this metric is to capture the boundary contribution at the scale relevant for Morse inequalities.

The argument splits naturally into boundary and interior parts. In the interior, away from $X$, the analysis is essentially the same as in the compact case without boundary; see Section~\ref{innestimate}. Near the boundary, however, one must rescale the kernel, prove uniform estimates for the scaled operators, and identify the resulting limit with the model heat kernel from Section~\ref{bdd}. The next theorem is the main result of this section and gives the precise form of this boundary scaling limit.

\begin{thm}\label{main1}
Let $\ol M$ be the closure of a relatively compact open subset $M$ of an $n$-dimensional complex manifold $M'$ in which $\ol M$ has a smooth boundary $X$, and $(L^k,h^{L^k})$ be the $k$-th tensor power of a holomorphic line bundle $(L,h^L)$ over $M'$. Let $q\in\set{0,1,\ldots,n}$. Suppose that $M$ satisfies condition $Z(q)$. Then, in local boundary coordinates and after the anisotropic scaling introduced in this section, the rescaled heat kernel converges to the heat kernel of the model weighted $\dbar$-Neumann Laplacian on $\ol M_0$. More precisely, with the notation introduced below, one has
\begin{equation}\label{e-gue260126yyda}
\lim_{k\to\infty}A_{(k)}(t,\mathbf z,\mathbf w)=e^{-t\Box^q_{\phi_0}}(\mathbf z,\mathbf w)
\end{equation}
in the $\mathscr C^\infty(I\times\ol B_R\times\ol B_R, \Lambda^\bullet(\mathbb CT^*\C^n)\boxtimes(\Lambda^\bullet(\mathbb CT^*\C^n))^*)$ topology, for every $I\Subset\R_+$.

In particular, near the boundary $X$, the diagonal heat kernel admits the asymptotic formula
\begin{equation}\label{localasymoptptic}
\begin{aligned}
&\lim_{k\to\infty}k^{-(n+1)}e^{-\frac{t}{k}\Box_{k}^q}\Big([x,\frac{r'}{k}],[x,\frac{r'}{k}]\Big)=\frac{1+e^{-2r'^2/t}}{(2\pi)^{n}\sqrt{2\pi t}}\int_{\R}  \dfrac{e^{-t\eta^2/2}\cdot\det(\dot{\mathcal{R}}^L_{b,x}-2\eta\dot{\mathcal L}_x)}{\det\big(1-e^{-t (\dot{\mathcal{R}}^L_{b,x}-2\eta\dot{\mathcal L}_x)}\big)}
e^{-t\varTheta^{\eta,\tau}_x}d\eta \\
&\qquad \qquad \qquad \qquad \quad   +\frac{2}{(2\pi)^{n} \sqrt{\pi}}\int_{\R} \dfrac{\eta e^{-2r'\eta} \cdot\det(\dot{\mathcal{R}}^L_{b,x}-2\eta\dot{\mathcal L}_x)}{\det\big(1-e^{-t (\dot{\mathcal{R}}^L_{b,x}-2\eta\dot{\mathcal L}_x)}\big)}e^{-t\varTheta^{\eta,\tau}_x}\Big(\int^{\frac{2r'-t\eta}{\sqrt{2t}}}_{-\infty}e^{-\gamma^2}d\gamma\Big) d\eta \\
&\qquad \qquad \qquad \quad \qquad  +\frac{1-e^{-2r'^2/t}}{(2\pi)^{n}\sqrt{2\pi t}}\int_{\R} \dfrac{e^{-t\eta^2/2}\cdot\det(\dot{\mathcal{R}}^L_{b,x}-2\eta\dot{\mathcal L}_x)}{\det\big(1-e^{-t (\dot{\mathcal{R}}^L_{b,x}-2\eta\dot{\mathcal L}_x)}\big)}
e^{-t\varTheta^{\eta,\nu}_x}d\eta,
\end{aligned}
\end{equation}
where $\mathcal R^L_{b,x}$ is defined by \eqref{curvature2}, $\dot{\mathcal R}^L_{b,x}$ is the associated endomorphism defined by \eqref{ecur}, and $\varTheta^\eta_x$ is given by \eqref{bTheta}.
\end{thm}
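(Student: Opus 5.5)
We follow the Ma–Marinescu scaling scheme \cite[\S1.6]{MM07}, adapted to the anisotropic boundary scale. Fix $p\in X$ and choose holomorphic coordinates $\mathbf z=(z,z_n)$ centered at $p$ in which the defining function is $r=\im z_n+\sum_{j<n}\lambda_j|z_j|^2+O(|\mathbf z|^3)$, with $\lambda_j$ the Levi eigenvalues at $p$. Choose a trivializing section $s$ with weight $\phi=\sum\mu_{j,l}z_j\ol z_l+(\text{pluriharmonic})+O(|\mathbf z|^3)$; the pluriharmonic part is removed by changing $s$. We pass to Heisenberg-type coordinates $[\xi,\theta,r]$ near $p$ and rescale by
\[
F_k:[\xi,\theta,r]\mapsto[\xi/\sqrt k,\theta/k,r/k].
\]
We set
\[
A_{(k)}(t,\mathbf z,\mathbf w):=k^{-(n+1)}A_{k,s}\bigl(t/k,F_k\mathbf z,F_k\mathbf w\bigr),
\]
expressed in the frame $\{d\ol\xi_j,\sqrt k\,\ol\omega^n\}$. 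On $\{-R/k\le r\le0\}$ one has $\chi^{R,\eps}_k=1/k$. This makes $\sqrt k\,\dbar r$ of unit length for $\langle\cdot|\cdot\rangle_k$, so the rescaled frame becomes orthonormal in the limit. Conjugating by $e^{-k\phi/2}$ and pulling back, the operator $\frac1k\Box_k^q$ becomes an operator $\Box_{(k)}$ on $F_k^{-1}(D)\supset\ol B_{R}\cap\ol M_0$. By Taylor expansion, the coefficients of $\Box_{(k)}$ converge, with all derivatives on compacts, to those of $\Box^q_{\phi_0}$ in \eqref{mboxrhok}, with error $O(k^{-1/2})$. The same holds for the $\dbar$-Neumann boundary conditions: $(\dbar r\wedge)^*$ and the normal component pass to \eqref{mbdc}, since the rescaled boundary converges to $X_0$. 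A straightening diffeomorphism of size $O(k^{-1/2})$ makes the boundary exactly $\{r=0\}$.

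\emph{Step 1: uniform Sobolev estimates.} We prove that the rescaled Kohn estimate of Theorem~\ref{t-gue210228yyd} holds uniformly in $k$. That is, for cutoffs $\chi_1\prec\chi_2$ supported in $B_{2R}$,
\[
\|\chi_1u\|_{m+1}\le C_m\bigl(\|\chi_2\Box_{(k)}u\|_m+\|\chi_2u\|_0\bigr)
\]
for $u$ in the domain, with $C_m$ independent of $k$. This step is the main obstacle, because the boundary condition is non-coercive and only $Z(q)$-subelliptic, so one cannot borrow elliptic estimates. To obtain it, we redo Kohn's basic estimate (Morrey–Kohn–Hörmander identity) for the rescaled operator. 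The Levi form term in the rescaled coordinates equals $\sum\lambda_j$ plus $O(k^{-1/2})$, and the curvature of $L^k$ contributes bounded terms. Under $Z(q)$ this gives the $\frac12$-subelliptic estimate with constants uniform in $k$. Bootstrapping via tangential derivatives and the equation for normal derivatives, as in \cite{FK72}, then yields all orders. The choice of $\chi^{R,\eps}_k$ matters exactly here: on $B_{2R}$ after rescaling the metric is the model one up to $O(k^{-1/2})$.

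\emph{Step 2: uniform bounds and passage to the limit.} Combining Step~1 with Lemma~\ref{l-gue210331yyd}, which gives $\|(\tfrac1k\Box_k)^Ne^{-\frac tk\Box_k}\|\le C_N(1+t^{-N})$, we bound $\Box_{(k)}^N$ of $A_{(k)}(t,\cdot,\mathbf w)$ uniformly in $k$, and likewise in the $\mathbf w$ variable by self-adjointness. Sobolev embedding then gives uniform $\mathscr C^\ell(I\times\ol B_R\times\ol B_R)$ bounds for every $\ell$, and Arzelà–Ascoli yields subsequential limits $A_\infty$. Any limit satisfies $(\partial_t+\Box^q_{\phi_0})A_\infty=0$ with the boundary conditions \eqref{mbdc}, and it has initial value $\mathrm{Id}$ because the rescaled heat semigroups are contractions. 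Finite propagation speed is unavailable here, so we handle the localization error via the $L^2$ contraction together with Gaussian/Duhamel off-diagonal bounds. Uniqueness of the heat semigroup of the self-adjoint $\Box^q_{\phi_0}$ follows by an energy argument with weights $e^{-\eps|\mathbf z|^2}$, using that $A_\infty(t)$ is bounded on $L^2$. Hence $A_\infty=e^{-t\Box^q_{\phi_0}}$, and the whole sequence converges, which proves \eqref{e-gue260126yyda}.

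\emph{Step 3: the diagonal formula.} Evaluate at $\mathbf z=\mathbf w=[0,r']$ and use Theorem~\ref{hmd}/Theorem~\ref{heatkernelmodel} with $\dot R^\eta=\dot{\mathcal R}^L_{b,x}-2\eta\dot{\mathcal L}_x$. This gives \eqref{localasymoptptic}. The factor $k^{n+1}$ is the Jacobian of $F_k$, and the $\lim_\delta$ becomes an absolutely convergent integral over $\R$ by Remark~\ref{rem:zq}.
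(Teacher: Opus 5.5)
Your outline follows the paper's proof step for step: normalized boundary coordinates and weight, the anisotropic scaling $F_k$, a $k$-uniform subelliptic estimate (Proposition~\ref{p-gue210511yyd}), uniform $\mathscr C^\ell$ bounds from Lemma~\ref{l-gue210331yyd} and Sobolev embedding (Proposition~\ref{uniformly}), Arzel\`a--Ascoli, identification of the limit, and then Theorem~\ref{hmd}. The step that fails as you state it is the identification of the limit operator. The paper's proof of Theorem~\ref{t-gue210503yyd} has the same flaw: it applies \eqref{bboxrhok}, which is valid only on $\ol B_R$, on arbitrary compact subsets of $\ol M_0$.

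The metric \eqref{newmetric} is the model one only in the layer $-R/k\le r\le 0$. With $r'=kr$ one has $\chi^{R,\eps}_k=1/k$ for $-R\le r'\le 0$, but $\chi^{R,\eps}_k=kr^2=r'^2/k$ for $-(1-\eps)\sqrt k\le r'\le -(R+\eps)$. So your claim that on $B_{2R}$ the rescaled metric is the model one up to $O(k^{-1/2})$ is false. On compact subsets of $\{r'<-R-\eps\}$ the rescaled $\langle\dbar r|\dbar r\rangle_k$ is $r'^2$ times its model value, and $\Box^q_{(k)}$ converges there to a different operator; for instance, on functions the normal energy term $\frac12|(\partial_{r'}-i\partial_\theta)u|^2$ acquires the factor $r'^2$.

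This does not hurt Step~1, since that region is interior and the limiting metric is smooth and $k$-independent. It does break Step~2, which needs the limit equation on all of $\ol M_0$ before uniqueness applies. With $R$ fixed, every subsequential limit solves the heat equation of this modified model operator $\Box'_{R,\eps}$, and by your own uniqueness argument equals $e^{-t\Box'_{R,\eps}}$. Heat kernels are not local, so this differs from $e^{-t\Box^q_{\phi_0}}$ on $\ol B_R\times\ol B_R$: substantially near $r'=-R$, and by an amount that tends to $0$ only away from $\{r'=-R\}$. The diagonal formula \eqref{localasymoptptic} at fixed $R$ is affected in the same way.

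The repair uses only your tools, once the metric parameter $R$ is decoupled from the ball radius and sent to infinity. Fix $R_0$. For $R\ge R_0+1$ the rescaled operator on $\ol B_{R_0+1}$ is $\Box^q_{\phi_0}$ up to $O(k^{-1/2})$ errors independent of $R$, and the constant in Lemma~\ref{l-gue210331yyd} is universal. Hence all your bounds on $\ol B_{R_0}$ are uniform in $(k,R)$. Along any sequence $(k_j,R_j)$ with $k_j,R_j\to\infty$, a subsequential limit solves the model problem on all of $\ol M_0$ and therefore equals $e^{-t\Box^q_{\phi_0}}$. This yields $\lim_{R\to\infty}\limsup_{k\to\infty}\|A_{(k)}-e^{-t\Box^q_{\phi_0}}\|_{\mathscr C^\ell(I\times\ol B_{R_0}\times\ol B_{R_0})}=0$. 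Alternatively, you get a single limit by taking $R=R_k\to\infty$ with $R_k=o(\sqrt k)$ in \eqref{newmetric}.

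Three smaller remarks.
\begin{itemize}
\item No localization error arises, and no Gaussian off-diagonal bounds are needed; you also have no such bounds for this non-coercive problem. $A_{(k)}(t)$ is the restriction of the global semigroup, and the initial condition follows, as in the paper, from $A_{(k)}(t)f_k-f_k=-\int_0^tA_{(k)}(s)\Box^q_{(k)}f_k\,ds$ together with the $L^2$ contraction.
\item Your weighted energy argument for uniqueness is a legitimate alternative to the paper's route. The paper uses radius-uniform bounds $\|(\Box^q_{\phi_0})^\ell Q(t)f\|_{\phi_0}\le C_\ell t^{-\ell}\|f\|_{\phi_0}$ to get $Q(t)f\in\Dom\Box^q_{\phi_0}$, then abstract uniqueness.
\item You build $A_{(k)}$ from $A_{k,s}$ rather than $A_{k\phi}$, so your limit is the conjugate $e^{-\phi_0(\mathbf z)/2}e^{-t\Box^q_{\phi_0}}(\mathbf z,\mathbf w)e^{\phi_0(\mathbf w)/2}$. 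This agrees with \eqref{e-gue260126yyda} only on the diagonal.
\end{itemize}
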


\begin{rem}\label{r-gue260127yyd}
(i) The dependence on $q$ enters the formula through the actions of $\varTheta^{\eta,\tau}$ and $\varTheta^{\eta,\nu}$ on $(0,q)$-forms; see the discussion around \eqref{acting}.

(ii) The convergence in \eqref{e-gue260126yyda} is in the $\mathscr C^\infty$ sense on compact subsets in $(t,\mathbf z,\mathbf w)$.

(iii) The specific metric \eqref{e-gue260125yyd} is chosen in order to capture the boundary contribution relevant for Morse inequalities. More generally, the conclusion of Theorem~\ref{main1} remains valid as long as 
$$
\langle\,\dbar r\mid\dbar r\,\rangle_k\sim \frac1k \langle\,\dbar r\mid\dbar r\,\rangle
\quad\text{on}\quad
\big\{{\bf z}\in\ol M:-k^{-1}\lesssim r({\bf z})\le 0\big\}.
$$

(iv) Away from the boundary, where the metric is independent of $k$, one has the interior asymptotic formula \eqref{htin}; see Section~\ref{innestimate}.
\end{rem}

\subsection{Local coordinates near the boundary}\label{localc}
We now introduce the local coordinates and scaled kernels used in the proof of Theorem~\ref{main1}. Fix a point $p\in X$, and let $s$ be a local holomorphic trivializing section of $L$ on an open neighborhood $U$ of $p$ in $M'$, with local weight $\phi$. In other words, $|s({\bf z})|^2_{h^L}=e^{-\phi(\bf z)}$ for all ${\bf z}\in U$. We choose a special boundary chart $\mathbf z=(z,z_{n})=(z_1,\cdots,z_{n-1},z_{n})$ on $U$ such that
\begin{equation}\label{local1}
 \mathbf z(p)=0,\quad r(\mathbf z)=\text{Im}z_{n}+\sum_{j=1}^{{n-1}}\lambda_j|z_j|^2+O(|{\bf z}|^3),
\end{equation}
where $\lambda_j\in\R$, $j=1,\cdots,{n-1}$, and the metric satisfies
\begin{equation}\label{local2}
\begin{aligned}
&\langle\,d\ol z_j\mid d\ol z_l\,\rangle_k=\delta_{j,l}+O(|\mathbf z|), \quad
\langle\,d\ol z_{n}\mid\,d\ol z_{n}\,\rangle_k=\frac{2}{k}+O(\abs{z}),
 \end{aligned}
\end{equation}
for $j,l=1,\cdots,{n-1}$. Moreover, the weight function can be expressed as
\begin{equation*}
\begin{aligned}
\phi(\mathbf z)=\sum_{j,l=1}^{n}\mu_{j,l}z_j\ol z_l+O(|(z,z_{n})|^3),
\end{aligned}
\end{equation*}
where $\mu_{j,l}\in\C$, $\mu_{j,l}=\ol{\mu_{l,j}}$, $j, l=1,\ldots,n$. We proceed using the Heisenberg coordinates $[\xi, \theta, r]$ introduced in Section \ref{M0}. Under these coordinates, $\phi(\mathbf{z})$ can be expressed as
\begin{equation}\label{bphi}
\phi(\mathbf z)=\sum_{j,l=1}^{{n-1}}\mu_{j,l}\xi_j\ol \xi_l+O(|\xi||(\theta,r)|+|(\theta,r)|^2+|\mathbf z|^3).
\end{equation}
Let forms $\ol\omega^1,\cdots,\ol\omega^{n}$ be an orthogonal basis for $T^{*0,1}M'$ on $U$ so that 
\begin{equation}
\ol\omega^{n}=\sqrt 2\,\dbar r, \quad \langle\,\ol\omega^j(p)\mid\ol\omega^l(p)\,\rangle_k=\delta_{j,l}\quad\mbox{for $j,l=1,\cdots,{n-1}$}.
\end{equation}
Remark that $\langle\,\ol\omega^{n}\mid\ol\omega^{n}\,\rangle_k=\frac{1}{k}$ at $p$. Moreover, let $\{\ol Z_1,\cdots, \ol Z_{n}\}$ be a smooth orthogonal frame for $(0,1)$ vector fields which is dual to $\{\ol \omega^1,\cdots,\ol \omega^{n}\}$. 
On $U$, we can write 
\begin{equation}\label{bLj}
\ol Z_j=\frac{\pa}{\pa\ol \xi_j}+i\lambda_j \xi_j\frac{\pa}{\pa\theta}+O(|\mathbf z|^2),\quad j=1,\cdots,{n-1},
\end{equation}
and
\begin{equation}\label{bLn}
\ol Z_{n}=\frac{1}{\sqrt 2}\Big(\frac{\pa}{\pa r}-i\frac{\pa}{\pa \theta}\Big)+O(|\mathbf z|^2).
\end{equation}
%Note that $Z_j$ is tangential to the level surface of $r$, while $Z_n$ is a complex normal vector field. %i.e., 
%$$
%\begin{dcases}
%\,\ol Z_j(\rho)=0, \quad\text{when $z\in X\cap U$},\, j=1,\cdots,{n-1}-1, \\
%\,\ol Z_n(\rho)=1, \quad\text{when $z\in X\cap U$}.\\
%\end{dcases}
%$$

%\vspace{0.25cm}
Let $D:=U\cap M$, $\partial D:=U\cap X$, and $\ol D:=U\cap \ol M$. For $f,g\in \omz^{0,q}(\ol D)$, let
$$
(f ,  g)_{k\phi,D}=(f ,  g)_{k\phi}:=\int_{\ol D}\langle\,f \mid  g\,\rangle_k \,e^{-k\phi}dv_{M'}(\mathbf z).
$$
Here, $(\,\cdot\, ,\, \cdot\,)_{k\phi}$ is called the $L^2$ inner product on $\omz^{0,q}(\ol D)$ with weight $k\phi$. Denote by $L_{(0,q)}^2(D,k\phi)$ the completion of $\omz^{0,q}(\ol D)$ with respect to $(\,\cdot\, ,\, \cdot\,)_{k\phi}$.
Let $\dbar^{*,k\phi}$ be the formal adjoint of $\dbar$ with respect to $(\,\cdot\, ,\, \cdot\,)_{k\phi}$, then
$$
\Dom(\dbar^{*,k\phi})=\Big\{f=\sumprime_{J} f_J\ol\omega^J\in L_{(0,q)}^2(D,k\phi)\big| f_{J}=0 \mbox{ on $\partial D$ when $n\in J$}\Big\}. 
$$
Write
\begin{equation}\label{bdbarrho}
\dbar=\sum_{j=1}^{n}\bigr(\ol\omega^j\wedge \ol Z_j+(\dbar \ol\omega^j)\wedge (\ol\omega^j\wedge)^*\bigr),
\end{equation}
we have
\begin{equation}\label{bdbarstar}
\dbar^{*,k\phi}=\sum_{j=1}^{n}\bigr((\ol\omega^j\wedge)^*(- Z_j+kZ_j\phi+\alpha_j(\mathbf z))+\ol\omega^j\wedge (\dbar \ol\omega^j\wedge)^*\bigr),
\end{equation}
where $\alpha_j(\mathbf z)$ is a smooth function on $\ol D$, independent of $k$, for every $j=1,\ldots,n$. Set
\begin{equation}\label{be-gue210531yydu}
\Box_{k\phi}^q=\dbar^{*,k\phi}\dbar+\dbar\,\dbar^{*,k\phi}: \Dom(\Box_{k\phi}^q)\subset L_{(0,q)}^2(D,k\phi)\to L_{(0,q)}^2(D,k\phi).
\end{equation}
Let $f=\sumprime_{\abs{J}=q} f_J\ol\omega^J\in\omz^{0,q}(\ol D)$, then $f\in\Dom(\Box^q_{k\phi})$ gives that
\begin{equation}
\begin{cases}
\ f_J=0&\quad \text{on $\partial D$, if $n\in J$},\\
\ \ol Z_{n} f_J=0 &\quad \text{on $\partial D$, if $n\notin J$}.
%,\\Z_j(f_J)=\ol Z_j(f_J)=0 &\quad \text{on $\partial D$, if $j<n$ and $n\in J$}.
\end{cases}
\end{equation}
For $\hat f\in\omz^{0,q}_c(D,L^k)$, there exists a $ f\in \omz^{0,q}_c( D)$ such that 
$\hat f=s^k f$  on $D$. %Note that $\hat f=s^k f\in \Dom(\Box_k^q)$ if and only if $ f\in\Dom(\Box^q_{k\phi})$.  
Recall that $\dbar_{k}(s^k f)=s^k\dbar f$, we have
\begin{equation}\label{be-gue210303yyda}
\Box_{k}^q\hat f=s^k\Box_{k\phi}^q f.
\end{equation}
%From now on, on $D$, we identify $u$ with $\hat f$ and $\Box_{k}^q$ with $\Box_{k\phi}^q$.
%Until further notice, we assume that $Y(q)$ holds on $M$. 
Let
\begin{equation}\label{be-gue210305yyd}
A_{k\phi}(t,\mathbf z, \mathbf w)
:=\sumprime_{|I|=|J|=q} A_{k\phi,I,J}(t,\mathbf z, \mathbf w)\,\ol\omega^I(\mathbf z)\otimes\ol\omega^J(\mathbf w),
\end{equation}
where
$$
A_{k\phi,I,J}(t,\mathbf z, \mathbf w):=e^{\frac{k\phi(\mathbf z)}{2}}A_{k,s,I,J}(t,\mathbf z, \mathbf w)e^{-\frac{k\phi(\mathbf w)}{2}}
$$
and $A_{k,s}(t,\mathbf z, \mathbf w)$ is as in \eqref{be-gue230306yydII}. From \eqref{be-gue210303yydIII}, we have
\begin{equation}\label{kphik}
A_{k\phi}(t,\mathbf z,\mathbf z)=A_{k}(t,\mathbf z,\mathbf z)    
\end{equation}
on $D$. For $t>0$, let 
$$
A_{k\phi}(t): \mathscr E'( \ol D,T^{*0,q}M')\to\Omega^{0,q}( \ol D) 
$$
be the continuous operator with distribution kernel $A_{k\phi}(t,\mathbf z, \mathbf w)$ with respect to $dv_{M'}$, i.e.,
\begin{equation}\label{kbe-gue210305yydI}
A_{k\phi}(t) f({\bf z})=\int A_{k\phi}(t,\mathbf z, \mathbf w) f(\mathbf w)dv_{M'}(\mathbf w),\ \ \ f\in\Omega^{0,q}_c(D).
\end{equation} 
It follows from \eqref{be-gue210531ycda} and \eqref{be-gue210305yyd} that $A_{k\phi}(t) f$ satisfies the $\dbar$-Neumann boundary conditions of $\Box^q_{k\phi}$ on $\partial D$ and
\begin{equation}\label{kbe-gue210305yydII}
\begin{cases}
\left(\frac{\pa}{\pa t}+\Box^q_{k\phi}\right)A_{k\phi}(t)f=0,\\
\lim_{t\to0^+}A_{k\phi}(t)f=f \ \ \mbox{in $L^2_{(0,q)}(D,k\phi)$}.
\end{cases}
\end{equation}

\subsection{The scaling method}\label{scaledL} The primary goal of this section is to construct several scaled objects using scaling techniques and to study their properties. By doing so, we establish a local connection between the smooth kernel of the heat operators $e^{-\frac{t}{k}\Box_{k}^q}$ and $e^{-t\Box_{\phi_0}^q}$.

Let $B_R:=\{\mathbf z=[\xi,\theta,r]\in\C^{n}, |\xi_j|<R,|\theta|<R, -R<r<0, j=1,\cdots,n-1\}$, $\ol B_R:=\{\mathbf z=[\xi,\theta,r]\in\C^{n}, |\xi_j|<R,|\theta|<R, -R< r\le 0, j=1,\cdots,n-1\}$, and $\hat B_R:=\{\mathbf z=[\xi,\theta,r]\in\C^{n}, |\xi_j|<R,|\theta|<R, -R<r<R, j=1,\cdots,n-1\}$. 
$B_R$ and $\hat B_R$ can be identified with the subsets $D$ and $U$ respectively,  via the following holomorphic scaling map:
\begin{equation}\label{scalinging}
 \begin{split}
F_k: \mathbb C^{n}&\longrightarrow\mathbb C^{n},\\
[\xi,\theta,r]&\longmapsto\Big[\frac{\xi}{\sqrt{k}},\frac{\theta}{k},\frac{r}{k}\Big].
\end{split}   
\end{equation}
In fact, we can choose sufficiently large $k$ such that $F_k(B_{R})\subset D$ and $F_k(\hat B_R)\subset U$. %Note that the boundary of $ B_{\log k} $, denoted as $ bB_{\log k} $, is non-empty because of $ 0 \in \partial D \subset X $. 
 %In ambient coordinates $(z,z_{n})$, for $f=\sumprime f_Jd\ol z_J\in \omz^{0,q}(D)$, we define 
 %\begin{equation}
%f_{(k)}(\mathbf z):=\sumprime_{J}f_J\left(F_k\mathbf z\right)d\ol z_{J,(k)},
%\end{equation}
For $z\in\hat B_R$, set
\begin{equation}\label{psca}
\begin{split}
\ol \omega^j_{(k)}({\bf z})&:=\ol \omega^j(F_k {\bf z}),\quad j=1,\cdots,{n-1},\\
\ol \omega^{n}_{(k)}({\bf z})&:=\frac{1}{\sqrt k}\ol \omega^{n}(F_k {\bf z}).
\end{split}
\end{equation}
For $\mathbf z\in\hat B_R$, let 
\begin{equation}\label{scalfc}
T^{*0,q}_{(k),\mathbf z}M':=\left\{\sumprime_{|J|=q}a_J\ol\omega^J_{(k)}(\mathbf z)\,\bigg|\, a_J\in\C\right\}.
\end{equation}
Let $T_{(k)}^{*0,q}M'$ be the vector bundle over $\hat B_R$ with fiber $T^{*0,q}_{(k),\mathbf z}M'$ at ${\bf z}\in\hat B_R$. 
Define $\omz^{0,q}_{(k)}(\hat B_R)$ as the space of smooth sections of $T^{*0,q}_{(k)}M'$ over $\hat B_R$, and $\Omega^{0,q}_{(k),c}(\hat B_R)$ as a subspace of $\omz^{0,q}_{(k)}(\hat B_R)$ consisting of elements with compact support in $\hat B_R$.

Define 
\[\begin{split}
&\omz^{0,q}_{(k)}(\ol{B}_R)
:=\set{u|_{\ol B_R}: u\in\omz^{0,q}_{(k)}(\hat B_R)},\\
&\omz^{0,q}_{(k),c}(\ol{B}_R)
:=\set{u|_{\ol B_R}: u\in\omz^{0,q}_{(k),c}(\hat B_R)}.
\end{split}\]
For $f=\sumprime_{|J|=q}f_J\ol\omega^J\in \omz^{0,q}(F_k(\ol B_{R}))$, we define the scaled form $f_{(k)}(\mathbf z)$ by setting
 \begin{equation}
f_{(k)}(\mathbf z):=\sumprime_{J}f_J\left(F_k\mathbf z\right)\ol\omega^J_{(k)}({\bf z})\in \omz^{0,q}_{(k)}(\ol B_{R}).
\end{equation}
For a fixed $q\in\set{0,1,\ldots,n}$,  
we let $\langle\,\cdot\, \mid \,\cdot\,\rangle_{(k)}$  
be the scaled Hermitian metric on $T^{*0,q}_{(k)}M'
$ over $\hat B_{R}$, such that
\begin{equation*}
\begin{split}
&\langle\,\ol\omega_{(k)}^j\mid\ol\omega_{(k)}^l\,\rangle_{(k)}=\delta_{j,l},\quad j,l=1,\cdots,{n-1},\\
&\langle\,\ol\omega_{(k)}^{n}\mid\ol\omega_{(k)}^{n}\,\rangle_{(k)}=\frac{1}{k}.
\end{split}    
\end{equation*} 
Let $|\cdot|_{(k)}$ be the corresponding pointwise norm. On $\ol D$, we write the volume form  as
$dv_X(\mathbf z):=m(F_k\mathbf z)d\sigma(\mathbf z)$, where $m(\mathbf z)\in\cali{C}^\infty(D)$ and $d\sigma(\mathbf z):=2^{n}d x_1\wedge\cdots\wedge dx_{2n-2}\wedge d\theta\wedge dr$. Note that $k\phi_{(k)}\to \phi_0$ as $k\to\infty$, where $\phi_0$ is as in \eqref{mphi0}. Let $(\,\cdot \,, \,\cdot\,)_{(k)}$ be the $L^2$-inner product with weight $k\phi_{(k)}$ on $\Omega^{0,q}_{(k)}(\ol{B}_R)$, given by 
$$
(\,f ,  g\,)_{(k)}=\int_{\ol B_{R}}\langle\,f \mid g\,\rangle_{(k)}e^{-k\phi_{(k)}}m(F_k\mathbf z)d\sigma(\mathbf z),\ \ f, g\in \Omega^{0,q}_{(k)}(\ol B_{R}).
$$
Similarly, let $f, g\in\Omega^{0,q}_{(k)}(\hat B_R)$, we define 
$$
(\,f ,  g\,)_{(k),\hat B_R}:=\int_{\hat B_R}\langle\,f \mid g\,\rangle_{(k)}e^{-k\phi_{(k)}}m(F_k\mathbf z)d\sigma(\mathbf z).
$$  
Let 
$$ 
\ol Z_{j,(k)}=\frac{\pa}{\pa \ol \xi_j}+i\lambda_j\xi_j\frac{\pa}{\pa\theta}+\frac{1}{\sqrt k}O(|F_k\mathbf z|^2 ),\quad j=1,\cdots,{n-1},
$$
and
$$
\ol Z_{n,(k)}= \sqrt{\frac{k}{2}}\Big(\frac{\pa}{\pa r}-i\frac{\pa}{\pa \theta}\Big)+\frac{1}{\sqrt k}O(|F_k\mathbf z|^2 ).
$$
Let
$\dbar_{(k)}:\omz^{0,q}_{(k)}(\ol B_{R})\to \omz^{0,q+1}_{(k)}(\ol B_{R})$ be the scaled differential operator, given by
\begin{equation}\label{bdbarrhok}
\begin{aligned}
\dbar_{(k)}&=\sum_{j=1}^{n}\ol\omega^j_{(k)}\wedge \ol Z_{j,(k)}+\frac{1}{\sqrt k}\sum_{j=1}^{n}(\dbar\ol\omega^j)_{(k)}\wedge \big(\ol\omega^j_{(k)}\wedge\big)^*.
\end{aligned}
\end{equation}
Compare \eqref{bdbarrho} with \eqref{bdbarrhok}, one can check that
\begin{equation}\label{bk1}
\dbar_{(k)}f_{(k)}=\frac{1}{\sqrt k}(\dbar f)_{(k)},\quad \mbox{ for all }f\in\omz^{0,q}(F_k(\ol B_{R})).
\end{equation}
Let $\dbar_{(k)}^*$ be the formal adjoint of $\dbar_{(k)}$ with respect to $(\,\cdot\, , \,\cdot\,)_{(k)}$.
%Then
%$$
%\Dom(\dbar_{(k)}^*)=\{f\in \omz^{0,q}_{(k)}(\ol B_{\log k})\mid f_J=0 \mbox{ on $\partial B_{\log k}$ when $n\in J$}\},
%$$
We have 
\begin{equation}\label{bdbarstarrhok}
\dbar_{(k)}^*=\sum_{j=1}^{n}\big(\ol\omega^j_{(k)}\wedge\big)^*\Big(-Z_{j,(k)}+\sqrt k(Z_j\phi)_{(k)}+\frac{1}{\sqrt k}(\alpha_j)_{(k)}\Big)+\frac{1}{\sqrt k}\sum_{j=1}^{n}\ol\omega^j_{(k)}\wedge \left((\dbar \ol\omega^j)_{(k)}\wedge\right)^*, 
\end{equation}
where $\alpha_j=\alpha_j(\mathbf z)$ is a smooth function as in \eqref{bdbarstar}. Moreover, we have
\begin{equation}\label{bk2}
\dbar^{*}_{(k)}f_{(k)}=\frac{1}{\sqrt k}\big(\dbar^{*,k\phi}f\big)_{(k)},\quad\mbox{ for all } f\in\Omega^{0,q}(F_k(\ol B_{R}))\cap\dom(\dbar^{\star,k\phi}).
\end{equation}
We now define the scaled Kodaira Laplacian as
\begin{equation}\label{be-gue210614yydI}
\Box^q_{(k)}=\dbar_{(k)}^*\dbar_{(k)}+\dbar_{(k)}\dbar_{(k)}^*
\end{equation}
on $\Omega_{(k)}^{0,q}({\ol B}_{R})$.  
Then \eqref{bk1} and \eqref{bk2} yield that
\begin{equation}\label{k(k)}
\Box_{(k)}^qf_{(k)}=\frac{1}{k}(\Box_{k\phi}^qf)_{(k)},\quad\mbox{ for all } f\in\Omega^{0,q}(F_k(\ol B_{R})). 
\end{equation} 
For $t>0$, $\mathbf z,\mathbf w\in {\ol B}_{R}$, let 
\begin{equation}\label{be-gue210325yyd}
\begin{split}
&A_{(k)}(t,\mathbf z, \mathbf w)
:=\sumprime_{|I|=|J|=q} A_{(k),I,J}(t,\mathbf z, \mathbf w)\,\ol\omega^I_{(k)}(\mathbf z)\otimes\ol\omega^J_{(k)}(\mathbf w),\\
&\mbox{where\ \ }A_{(k),I,J}(t,\mathbf z, \mathbf w):=k^{-(n+1)}A_{k\phi,I,J}(\frac{t}{k},F_k\mathbf z, F_k\mathbf w),
\end{split}
\end{equation}
where $F_k\mathbf z=\big(\frac{z}{\sqrt k}, \frac{z_{n}}{k} \big) $, $F_k\mathbf w=\big(\frac{w}{\sqrt k}, \frac{w_{n}}{k} \big)$, $z,w\in\mathbb C^{n-1}$, $A_{k\phi}(t,\mathbf z, \mathbf w)$ is as in
\eqref{be-gue210305yyd}. 
Let 
$$A_{(k)}(t): \Omega^{0,q}_{(k),c}(B_{R})\to \Omega^{0,q}_{(k)}({\ol B}_{R})$$ 
be the continuous operator given by 
\begin{equation}\label{be-gue210325yydI}
(A_{(k)}(t)f)(\mathbf z)=\int A_{(k)}(t,\mathbf z, \mathbf w)f(\mathbf w)m(F_k\mathbf w)dv(\mathbf w),\ \ f\in \Omega^{0,q}_{(k),c}(B_{R}).
\end{equation}
Let $f\in \Omega^{0,q}_{(k),c}(B_{R})$ and $g\in\Omega^{0,q}_c(F_k(B_{R}))$ be such that $f=g_{(k)}$ on $B_{R}$, we have
\begin{equation}\label{be-gue230325yydI}
\begin{aligned}
&(A_{(k)}(t)f)(\mathbf z)\\
&=\frac{1}{k^{n+1}} \sumprime_{I,J}\int_{\ol B_{R}} A_{k\phi,I,J}(\frac{t}{k},F_k\mathbf z, F_k\mathbf w)g_J(F_k\mathbf w)\langle\ol\omega_{(k)}^J(\mathbf w)\mid\ol\omega_{(k)}^J(\mathbf w)\rangle_{(k)}m(F_k\mathbf w)d\sigma(\mathbf w)\,\ol\omega^I_{(k)}(\mathbf z)\\ 
&=\Big(\sumprime_{\substack{I,J \\ n\notin J}}+\frac{1}{k}\sumprime_{\substack{I,J \\ n\in J}}\Big)\int_{F_k(\ol B_{R})} A_{k\phi,I,J}(\frac{t}{k},F_k\mathbf z, \mathbf w)g_J(\mathbf w)\, m(\mathbf w)d\sigma(\mathbf w)\,\ol\omega^I_{(k)}(\mathbf z)\\
&=\sumprime_{I,J}\int_{F_k(\ol B_{R})} A_{k\phi,I,J}(\frac{t}{k},F_k\mathbf z, \mathbf w)g_J(\mathbf w)\langle\ol\omega^J(\mathbf w)\mid\ol\omega^J(\mathbf w)\rangle_k\, m(\mathbf w)d\sigma(\mathbf w)\,\ol\omega^I_{(k)}(\mathbf z)\\ 
&=\sumprime_{|I|=q}\big (A_{k\phi}(\frac{t}{k})g\big )_I(F_k\mathbf z)\,\ol\omega^I_{(k)}(\mathbf z)=(A_{k\phi}(\frac{t}{k})g\big )_{(k)}(\mathbf z).
\end{aligned}
\end{equation}
It then follows from \eqref{k(k)} that
\begin{equation}\label{shk}
 \Big(\frac{\pa}{\pa t}+\Box^q_{(k)}\Big)A_{(k)}(t)f=\Big(\frac{\pa}{\pa t}+\Box^q_{(k)}\Big)(A_{k\phi}(t/k)g)_{(k)}=\Big(\frac{\pa}{\pa t}+\frac{1}{k}\Box^q_{k\phi}\Big)A_{k\phi}(t/k)g=0.
\end{equation}
Recalling \eqref{bdbarrhok}, we observe that
\begin{equation}\label{bstaru3}
\ol Z_{j,(k)}=\frac{\pa}{\pa \ol \xi_j}+i\lambda_j\xi_j\frac{\pa}{\pa\theta}+\epsilon_kU_{j,k},\ \ j=1,\ldots,{n-1},
\end{equation}
and
\begin{equation}\label{bstaru3n}
\ol Z_{n,(k)}= \sqrt{\frac{k}{2}}\Big(\frac{\pa}{\pa r}-i\frac{\pa}{\pa \theta}\Big)+\epsilon_kU_{n,k}.
\end{equation}
on $\ol B_{R}$, where $\epsilon_k$ is a sequence tending to zero as $k\to\infty$, and $U_{j,k}$ is a first-order differential operator whose coefficients are uniformly bounded in $k$ on $\ol B_{R}$.
On the other hand, from \eqref{bphi}, we see that in \eqref{bdbarstarrhok},
\begin{equation}\label{bstaru4}
\begin{aligned}
-Z_{j,(k)}+\sqrt k(Z_j\phi)_{(k)}+\frac{1}{\sqrt k}(\alpha_j)_{(k)}=-\frac{\pa}{\pa \ol \xi_j}+i\lambda_j\xi_j\frac{\pa}{\pa\theta}+\sum_{l=1}^{n-1}\mu_{j,l}\ol  \xi_l+\delta_kV_{j,k},\ \ j=1,\ldots,{n-1}, 
\end{aligned}
\end{equation}
and
\begin{equation}\label{bstaru4n}
\begin{aligned}
-Z_{n,(k)}+\sqrt k(Z_{n}\phi)_{(k)}+\frac{1}{\sqrt k}(\alpha_{n})_{(k)}=-\sqrt{\frac{k}{2}}\Big(\frac{\pa}{\pa r}+i\frac{\pa}{\pa \theta}\Big)+\delta_kV_{n,k}
\end{aligned}
\end{equation}
where $\delta_k$ approaches zero as $k\to\infty$, and $V_{j,k}$ is a first-order differential operator with coefficients uniformly bounded in $k$ on $\ol B_{R}$. Combining this with \eqref{bdbarrhok}, \eqref{bdbarstarrhok}, and \eqref{bstaru3}, we obtain the following proposition:
\begin{prop}\label{boxsk}
We have that
\begin{equation}\label{bboxrhok}
\begin{aligned}
\Box_{(k)}^{q}=&\sum_{j=1}^{{n-1}}\left(-\frac{\pa}{\pa \xi_j}+i\lambda_j\ol \xi_j\frac{\pa}{\pa\theta}+\sum_{l=1}^{n-1}\mu_{j,l}\ol  \xi_l\right)\left(\frac{\pa}{\pa \ol \xi_j}+i\lambda_j \xi_j\frac{\pa}{\pa\theta}\right)-\frac{1}{2}\left(\frac{\pa^2}{\pa\theta^2}+\frac{\pa^2}{\pa r^2}\right)\\
&+\sum_{j,l=1}^{{n-1}}
\left(2i\lambda_j\frac{\pa}{\pa\theta}+\mu_{l,j}\right)\ol\omega^j_{(k)}\wedge \big( \ol\omega^l_{(k)}\wedge\big)^*+\varepsilon_kP_k^q
\end{aligned}
\end{equation}
on $\ol B_{R}$, where $\varepsilon_k$ is a sequence tending to zero as $k\to\infty$ and $P_k^q$ is a second order differential operator and all the 
derivatives of the coefficients of $P_k^q$ are uniformly bounded in $k$ on $\ol B_{R}$.
\end{prop}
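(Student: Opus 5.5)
Expanding $\Box^q_{(k)}=\dbar_{(k)}^*\dbar_{(k)}+\dbar_{(k)}\dbar_{(k)}^*$ directly from \eqref{bdbarrhok} and \eqref{bdbarstarrhok} gives \eqref{bboxrhok}. We insert the asymptotic forms \eqref{bstaru3}, \eqref{bstaru3n}, \eqref{bstaru4}, \eqref{bstaru4n}, and put every term carrying a factor $\epsilon_k$, $\delta_k$ or $k^{-1/2}$ into $\varepsilon_kP^q_k$. Write $\dbar_{(k)}=\sum_j e_j\ol Z_{j,(k)}+k^{-1/2}E$ and $\dbar^*_{(k)}=\sum_j i_jW_{j,(k)}+k^{-1/2}E'$. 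Here $e_j=\ol\omega^j_{(k)}\wedge$, $i_j=e_j^*$ with respect to $\langle\cdot|\cdot\rangle_{(k)}$, $W_{j,(k)}$ is the first-order operator on the left of \eqref{bstaru4}, \eqref{bstaru4n}, and $E,E'$ are zeroth-order with uniformly bounded derivatives. The leading part is
\[
\sum_{j,l}\big(i_je_lW_{j,(k)}\ol Z_{l,(k)}+e_li_j\ol Z_{l,(k)}W_{j,(k)}\big).
\]
With the anticommutation relations $i_je_l+e_li_j=c_j\delta_{jl}$, this equals
\[
\sum_j c_jW_{j,(k)}\ol Z_{j,(k)}+\sum_{j,l}e_li_j\,[\ol Z_{l,(k)},W_{j,(k)}].
\]
The norms give $c_j=1$ for $j<n$ and $c_n=k$, because $|\ol\omega^n_{(k)}|^2_{(k)}=1/k$. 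The cross terms between the leading parts and $k^{-1/2}E$, $k^{-1/2}E'$ are first order with coefficients $O(k^{-1/2})$, so they also go into $\varepsilon_kP^q_k$.

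For $j,l\le n-1$ the computation is the same as for the model operator \eqref{mboxrhok}. The diagonal part gives $\big(-\partial_{\xi_j}+i\lambda_j\ol\xi_j\partial_\theta+\sum_l\mu_{j,l}\ol\xi_l\big)\big(\partial_{\ol\xi_j}+i\lambda_j\xi_j\partial_\theta\big)$ up to $\epsilon_k,\delta_k$ errors. The commutators $[\ol Z_l,-Z_j+\sum\mu\ol\xi]$ give $(2i\lambda_j\partial_\theta+\mu_{l,j})\delta$-type terms via \eqref{mlocal2}, which yields the curvature term $\sum(2i\lambda_j\partial_\theta+\mu_{l,j})\ol\omega^j_{(k)}\wedge(\ol\omega^l_{(k)}\wedge)^*$.

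The normal index $n$ needs the most care, because of the large factors $\sqrt{k/2}$ in \eqref{bstaru3n}, \eqref{bstaru4n} and the mismatched norm $c_n=k$. First I would note that $i_n=(\ol\omega^n_{(k)}\wedge)^*$ taken with respect to $\langle\cdot|\cdot\rangle_{(k)}$ equals $\frac1k$ times the contraction with respect to the unit-normalized frame. Each product $i_n\cdot\sqrt k\cdot\sqrt k$ is therefore $O(1)$, and each product with one $\sqrt k$ and one $i_n$ is $O(k^{-1/2})$. With this bookkeeping the diagonal $n$-term $c_nW_{n,(k)}\ol Z_{n,(k)}\cdot k^{-1}$-normalized becomes
\[
-\tfrac12(\partial_r+i\partial_\theta)(\partial_r-i\partial_\theta)=-\tfrac12(\partial_r^2+\partial_\theta^2),
\]
plus errors $\sqrt k\,\epsilon_k$. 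The main obstacle is to show that these $\sqrt k\,\epsilon_k$, $\sqrt k\,\delta_k$ errors still tend to zero. For this I would go back to \eqref{bLn} and \eqref{bphi}. The remainder in $\ol Z_n$ is $O(|\mathbf z|^2)$ in unscaled variables, so after $F_k$ and the prefactor it is $\sqrt k\cdot O(|F_k\mathbf z|^2)\,\partial$-type. In the scaled variables, $\partial_{\text{unscaled}}$ is at most $k\,\partial$ in the normal direction, which gives $O(k^{-1/2})$ on $\ol B_R$ once each coordinate is tracked with its homogeneous weight ($\xi$ of weight $1/2$, $\theta,r$ of weight $1$). Likewise $\sqrt k(Z_n\phi)_{(k)}$ is $O(k^{-1/2})$, because $Z_n\phi$ contains no term linear in $\xi$ alone of weight below $1$; this uses \eqref{bphi} and the fact that $\phi$ has no $\xi$-linear cross terms with $\theta,r$ at order $2$ after the Heisenberg change.

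The mixed commutators $[\ol Z_{n,(k)},W_{j,(k)}]$ and $[\ol Z_{j,(k)},W_{n,(k)}]$ vanish at leading order because $\lambda_n=0$ in \eqref{mlocal2}. What remains carries a factor $i_n$ or $e_n$ with at most one $\sqrt k$, so it is $O(k^{-1/2})$ after the same accounting. Collecting all the errors gives $\varepsilon_kP^q_k$. The uniform bounds on all derivatives of its coefficients follow because every error coefficient is a smooth function of $F_k\mathbf z$ multiplied by a nonnegative power of $k^{-1/2}$, and on the bounded set $\ol B_R$ each derivative only produces further factors $k^{-1/2}$ or $k^{-1}$.
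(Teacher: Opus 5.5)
You follow the paper's route. You expand $\dbar^*_{(k)}\dbar_{(k)}+\dbar_{(k)}\dbar^*_{(k)}$ and use the anticommutation relations to get $\sum_j c_jW_{j,(k)}\ol Z_{j,(k)}+\sum_{j,l} e_li_j[\ol Z_{l,(k)},W_{j,(k)}]$. You then insert \eqref{bstaru3}--\eqref{bstaru4n}, and your tangential block and tangential commutators are fine. The normal block, however, is mis-normalized, and that is where the content of the proposition lies. Since $|\ol\omega^n_{(k)}|^2_{(k)}=1/k$, one has $i_ne_n+e_ni_n=|\ol\omega^n_{(k)}|^2_{(k)}=1/k$, not $k$. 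Your own later remark already gives this: $i_n=k^{-1}\iota_n$, where $\iota_n$ is the algebraic contraction with $\iota_ne_n+e_n\iota_n=1$. With $c_n=k$ the normal diagonal term would be $-\frac{k^2}{2}(\pa_r^2+\pa_\theta^2)$. The ``$k^{-1}$-normalization'' you invoke is not a consistent repair. It is also what produces the spurious errors of size $\sqrt k\,\epsilon_k$, $\sqrt k\,\delta_k$ that you call the main obstacle.

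The argument you give for that obstacle would fail. First, \eqref{bphi} explicitly allows $O(|\xi||(\theta,r)|)$ terms, so your claim that $\phi$ has no $\xi$-linear cross terms with $(\theta,r)$ is false. A term $\mu_{n,l}z_n\ol z_l$ in $\phi$ contributes $\mu_{n,l}(\theta+ir)\ol\xi_l$. This gives the order-one term $\sqrt2\,i\,\mu_{n,l}\ol\xi_l$ in $\sqrt k(Z_n\phi)_{(k)}$, so that quantity is not $O(k^{-1/2})$. Second, an $O(|\mathbf z|^2)$ remainder in \eqref{bLn}, e.g. $|z_1|^2\pa_\theta$, rescales to $k^{-1/2}|\xi_1|^2\pa_\theta$. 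So $\epsilon_k$ is only of size $k^{-1/2}$, and $\sqrt k\,\epsilon_k$ need not tend to zero.

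With the paper's constant $c_n=1/k$, none of this is needed:
\[
\frac1k\,\ol Z^*_{n,(k)}\ol Z_{n,(k)}=-\frac12\big(\pa_r^2+\pa_\theta^2\big)+O(k^{-1/2}),
\]
where $O(k^{-1/2})$ means $k^{-1/2}$ times a second order operator with uniformly bounded coefficients. For this, a mere $O(1)$ bound on the zeroth-order part $\sqrt k(Z_n\phi)_{(k)}$ suffices. In the diagonal term it only enters with the factor $k^{-1}\sqrt k$. In the mixed commutators it only enters next to $e_n$ or $i_n$, whose operator norms for $\langle\cdot|\cdot\rangle_{(k)}$ are $k^{-1/2}$. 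Replace your normal-direction paragraph with this bookkeeping and drop the appeal to the structure of $\phi$.
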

\begin{proof}
Let $\ol Z_{j,(k)}^*:=-Z_{j,(k)}+\sqrt k(Z_j\phi)_{(k)}+\frac{1}{\sqrt k}(\alpha_j)_{(k)}$ for $j=1,\cdots,n$, we have
$$
\begin{aligned}
\Box_{(k)}^{q}&=\dbar_{(k)}^*\dbar_{(k)}+\dbar_{(k)}\dbar_{(k)}^*=\sum_{j,l=1}^{n}\left(\ol\omega^j_{(k)}\wedge\big(\ol\omega^l_{(k)}\wedge\big)^*+\big(\ol\omega^l_{(k)}\wedge\big)^*\ \ol\omega^j_{(k)}\wedge \right)\ol Z_{l,(k)}^*\ol Z_{j,(k)}\\
&\quad+\sum_{j=1}^{n}\ol\omega^j_{(k)}\wedge\big(\ol\omega^l_{(k)}\wedge\big)^*\big[\,\ol Z_{j,(k)},\ol Z_{l,(k)}^*\,\big]+\hat\eps_k\big(\,\ol Z_{(k)}+\ol Z_{(k)}^*+\mbox{zero order term}\big),
\end{aligned}
$$
where $\hat\varepsilon_k$ is a sequence tending to zero as $k\to\infty$.
Note that
$$
\ol\omega^j_{(k)}\wedge\big(\ol\omega^l_{(k)}\wedge\big)^*+\big(\ol\omega^l_{(k)}\wedge\big)^*\ \ol\omega^j_{(k)}\wedge=
\Big\{\begin{array}{lr}
        0, \quad \text{for } j\neq l \text{ and } j,l=1,\cdots, n\\
        1,  \quad \text{for } j=l\text{ and } j,l=1,\cdots, {n-1}
        \end{array}
$$
and 
$$
\ol\omega^{n}_{(k)}\wedge\big(\ol\omega^{n}_{(k)}\wedge\big)^*+\big(\ol\omega^{n}_{(k)}\wedge\big)^*\ \ol\omega^{n}_{(k)}\wedge=\frac{1}{k}.
$$
Using these results, we simplify the expression further:
$$
\begin{aligned}
\Box_{(k)}^{q}
&=\sum_{j=1}^{{n-1}}\ol Z_{j,(k)}^*\ol Z_{j,(k)}+\frac{1}{k}\ol Z_{n,(k)}^*\ol Z_{n,(k)}+\sum_{j,l=1}^{n}\ol\omega^j_{(k)}\wedge\big(\ol\omega^l_{(k)}\wedge\big)^*\big[\,\ol Z_{j,(k)},\ol Z_{l,(k)}^*\,\big]\\
&\quad+\eps_k\big(\,\ol Z_{(k)}+\ol Z_{(k)}^*+\mbox{zero order term}\big).
\end{aligned}
$$
The desired expression \eqref{bboxrhok} is then follows from \eqref{bstaru3}-\eqref{bstaru4n}.
\end{proof} 

Let $s\in\mathbb Z$. We denote by
$W^s_{(k)}\big(\hat B_R,T_{(k)}^{*0,q}M'\big)$
the Sobolev space of order $s$ of sections of
$T_{(k)}^{*0,q}M'$ over $\hat B_R$.
We use the same convention as in \eqref{sobolevnorm} in the rescaled
coordinates. The corresponding metric
and connection will be
denoted by $\langle\,\cdot\,,\,\cdot\,\rangle_{(k)}$ and $\nabla_{(k)}$, respectively. If $s\in\mathbb N_0$, we define
\begin{equation}
\begin{aligned}
\|f\|^2_{(k),s,\hat B_R}
:=\sum_{\ell=0}^s
\int_{\hat B_R}
\left|\nabla_{(k)}^\ell f({\bf z})\right|_{(k)}^2
e^{-k\phi_{(k)}({\bf z})}
m(F_k{\bf z})\,d\sigma({\bf z}),
\quad
f\in W^s_{(k)}\bigl(\hat B_R,T_{(k)}^{*0,q}M'\bigr).
\end{aligned}
\end{equation}
If $s\leq0$, the Sobolev norm of $f\in W^s_{(k)}\big(\hat B_R, T_{(k)}^{*0,q}M'\big)$ is given by
$$\|f\|_{(k),s,\hat B_R}:=\sup\set{\abs{(\,f\,,\,g\,)_{(k),\hat B_R}}: g\in W^{-s}_{(k)}\big(\hat {B}_R, T_{(k)}^{*0,q}M'\big), \norm{g}_{(k),-s,\hat B_R}=1}.$$
For $s\in\mathbb Z$, let 
$$W^s_{(k)}\big(\ol{B}_R, T_{(k)}^{*0,q}M'\big):=\set{u|_{\ol{B}_R}: u\in W^s_{(k)}\big(\hat B_R, T_{(k)}^{*0,q}M'\big)}.$$
The Sobolev norm of $f\in W^s_{(k)}\big(\ol{B}_R, T_{(k)}^{*0,q}M'\big)$ is given by
$$
\|f\|_{(k),s,\ol{B}_R}:=\inf\set{\|\tilde f\|_{(k),s\hat B_R}: \tilde f\in W^s_{(k)}\big(\hat B_R, T_{(k)}^{*0,q}M'\big), \tilde f|_{\ol{B}_R}=f}.
$$
%$W^s_{(k)}\big(\ol B_R, %T_{(k)}^{*0,q}B_{R}\big)$
%as the $L^2$-Sobolev space of order $s$ on the sections of $T_{(k)}^{*0,q}B_{R}$ over $\ol B_R$ with respect to $(\,\cdot \,, \,\cdot\,)_{(k)}$. Here the Sobolev norm of $f=\sumprime_{|J|=q}f_J\ol\omega^J_{(k)}\in W^s_{(k)}\big(\ol B_R, T_{(k)}^{*0,q}B_{R}\big)$ is given by
%\begin{equation}
%\begin{aligned}
%\|f\|^2_{(k),s,\ol B_R}=\sum_{|\alpha|\le s}\int_{\ol B_R}|\pa_{x}^\alpha f|_{(k)}^2e^{-k\phi_{(k)}}m(F_k{\bf z})d\sigma({\bf z}).
%\end{aligned}
%\end{equation}
%Let $s\in\mathbb N_0$. We define $W^s_{(k)}\big(\ol B_R, T_{(k)}^{*0,q}B_{R}\big)$
%as the $L^2$-Sobolev space of order $s$ on the sections of $T_{(k)}^{*0,q}B_{R}$ over $\ol B_R$ with respect to $(\,\cdot \,, \,\cdot\,)_{(k)}$. Here the Sobolev norm of $f=\sumprime_{|J|=q}f_J\ol\omega^J_{(k)}\in W^s_{(k)}\big(\ol B_R, T_{(k)}^{*0,q}B_{R}\big)$ is given by
%\begin{equation}
%\begin{aligned}
%\|f\|^2_{(k),s,\ol B_R}=\sum_{|\alpha|\le s}\int_{\ol B_R}|\pa_{z}^\alpha f|_{(k)}^2e^{-k\phi_{(k)}}m(F_k{\bf z})d\sigma({\bf z}).
%\end{aligned}
%\end{equation}
We also write $\|\cdot\|_{(k),\ol B_R}:=\|\cdot\|_{(k),0,\ol B_R}$.
From Proposition \ref{boxsk} and Kohn's $L^2$ estimate, we have
	
\begin{prop}\label{p-gue210511yyd}
Assume that condition $Z(q)$ holds. Let $s\in\mathbb N_0$. 
Fix $0<\delta\ll1$. There exists a constant $C_{R,s}>0$ that is independent of $k$ such that for all $f\in \omz_{(k)}^{0,q}(\ol {B}_{R+\delta})$, we have
\begin{equation}\label{subellipticest}
\begin{aligned}
\|f\|^2_{(k),s+1,\ol{B}_R}\le C_{R,s}\left(\|f\|^2_{(k), \ol {B}_{R+\delta}}+\|\Box_{(k)}^qf\|^2_{(k),s, \ol {B}_{R+\delta}}\right).
\end{aligned}
\end{equation}
\end{prop}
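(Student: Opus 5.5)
The plan is to reduce \eqref{subellipticest} to a uniform version of Kohn's basic estimate for the scaled operators, and then to bootstrap to higher derivatives with tangential difference quotients, in the standard way of \cite{K65,FK72}. The point to check at each stage is that every constant depends only on $R$, $s$, and finitely many derivatives of the coefficients in Proposition~\ref{boxsk}; those coefficients are uniformly bounded in $k$ on $\ol B_{R+\delta}$.

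\textbf{Step 1: uniform basic estimate.} For $f\in\omz^{0,q}_{(k)}(\ol B_{R+\delta})$ in the domain of $\dbar^*_{(k)}$ (so $f_J=0$ on $r=0$ when $n\in J$), set $Q_{(k)}(f,f)=\|\dbar_{(k)}f\|^2_{(k)}+\|\dbar^*_{(k)}f\|^2_{(k)}+\|f\|^2_{(k)}$. I would run the Morrey--Kohn--H\"ormander integration by parts with the frame $\ol Z_{j,(k)}$. By \eqref{bstaru3}--\eqref{bstaru4n}, the boundary term that appears is the scaled Levi form $\sum\lambda_j(\cdot)$ plus $o(1)$ as $k\to\infty$. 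The normal part is exactly the model one because of the factor $\frac1k$ in $\langle\ol\omega^n_{(k)},\ol\omega^n_{(k)}\rangle_{(k)}$ combined with the factor $\sqrt{k}$ in $\ol Z_{n,(k)}$. Condition $Z(q)$ at $p$ fixes the signature of $(\lambda_j)$. The standard $Z(q)$ argument, with a weight adapted to the negative eigenvalues when needed (the weight only changes lower-order terms), should then give
\[
\|f\|^2_{(k),\frac12,\ol B_R}\le C\,Q_{(k)}(\chi f,\chi f)
\]
for a cutoff $\chi$ equal to $1$ on $\ol B_R$ and supported in $\hat B_{R+\delta}$, with $C$ independent of $k$ for all $k$ large. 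Because this is the $\frac12$-subelliptic estimate, it holds uniformly: the perturbation $\varepsilon_kP^q_k$ and the $o(1)$ boundary terms are absorbed once $k$ is large. The finitely many small $k$ are covered by Theorem~\ref{t-gue210228yyd}, localized.

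\textbf{Step 2: higher regularity.} Next I would apply the usual machinery. Commute tangential derivatives and tangential difference operators $\Lambda_t^{s}$ with $\Box^q_{(k)}$ and with cutoffs. The commutators $[\dbar_{(k)},\Lambda^s_t]$ and $[\dbar^*_{(k)},\Lambda^s_t]$ have order $s$ with uniformly bounded coefficients, so they are controlled by the previous step of the induction. This yields tangential Sobolev control of order $s+\frac12$. The normal derivatives $\partial_r$ are then recovered from the equation $\Box^q_{(k)}f=g$: by Proposition~\ref{boxsk}, $\frac12\partial_r^2$ appears with coefficient $1+O(\varepsilon_k)$, so the operator is uniformly non-characteristic with respect to $r=0$. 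Iterating in half-steps, with nested cutoffs between $B_R$ and $B_{R+\delta}$, gives control of $\|f\|_{(k),s+1,\ol B_R}$ by $\|\Box^q_{(k)}f\|_{(k),s,\ol B_{R+\delta}}+\|f\|_{(k),\ol B_{R+\delta}}$. This is the gain of one derivative claimed in \eqref{subellipticest}; Kohn's theory in fact gives the stronger interior-type estimate for $\Box$, since $\frac12$-subellipticity for the form yields a gain of $1$ for the Laplacian. One caveat is that a general $f\in\omz^{0,q}_{(k)}(\ol B_{R+\delta})$ need not satisfy the $\dbar$-Neumann conditions. I would therefore state and prove the estimate for $f$ satisfying the boundary conditions \eqref{mbdc} on $r=0$, which is the case needed later, since $A_{(k)}(t)f$ satisfies them.

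\textbf{Main obstacle.} The crux is the uniformity in Step 1. The normal vector field carries the large factor $\sqrt{k}$, and the metric on $\ol\omega^n$ degenerates like $\frac1k$. One has to check that in the scaled frame these effects cancel exactly, so that no $k$-dependent terms survive in the boundary integral or in the commutators $[\ol Z_{j,(k)},\ol Z^*_{l,(k)}]$ beyond the $O(\varepsilon_k)$ errors. I would verify this by writing everything through \eqref{bk1}, \eqref{bk2}, and \eqref{k(k)} in terms of the limiting model operator \eqref{mboxrhok} plus perturbations, and then treating the estimate as a perturbation of the model $\frac12$-estimate on $\ol M_0$.
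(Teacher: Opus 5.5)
Your proposal follows the paper's route: the paper justifies this proposition only by citing Kohn's subelliptic estimates together with the uniform structure of $\Box^q_{(k)}$ from Proposition~\ref{boxsk}. Your restriction to forms satisfying the scaled $\dbar$-Neumann boundary conditions is a correct and necessary fix, since without them the estimate fails (consider $\Box^q_{(k)}$-harmonic forms with highly oscillating boundary values on $r=0$), and it costs nothing because the proposition is only applied to $A_{(k)}(t)f$ and its $\Box^q_{(k)}$-powers, which lie in the domain.

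One caution on the uniformity in $k$. Get it by running the Morrey--Kohn--H\"ormander/$Z(q)$ argument for each $k$ and noting that its constants depend only on finitely many derivatives of the scaled coefficients and on a uniform $Z(q)$ margin for $(\lambda_j+o(1))$. Do not get it by literally perturbing the model $\frac12$-estimate: generic $O(\varepsilon_k)$ first- and second-order error terms are not controlled by the non-coercive $\dbar$-Neumann form.
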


Our next goal is to apply the subelliptic estimates derived above to establish the following proposition: %It asserts that, for every compact interval $I\Subset\mathbb R_+$, the coefficients of $A_{(k)}(t,\mathbf z,\mathbf w)$ and all their derivatives on $I\times \ol B_R\times \ol B_R$ are uniformly bounded in $k$ and $t\in I$, with constants depending on $I$.

\begin{prop}\label{uniformly}
Let $I$ be a compact interval in $\mathbb R_+$. Assume that condition $Z(q)$ holds on $X$. Then, for every $\ell \in \mathbb{N}_0$, there exists a constant $C_{I,\ell,R} > 0$, independent of $k$ and $t\in I$, such that 
$$
\|A_{(k)}(t, \mathbf z, \mathbf w)\|_{\cali{C}^\ell(I \times \ol B_R \times \ol B_R, T_{(k)}^{*0,q}M' \boxtimes (T_{(k)}^{*0,q}M')^*)} \leq C_{I,\ell,R}.
$$
\end{prop}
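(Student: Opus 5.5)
We follow the standard scheme of \cite[\S1.6]{MM07} and \cite{HZ23}: combine the $k$-uniform subelliptic estimate of Proposition~\ref{p-gue210511yyd} with the $L^2$ bounds of Lemma~\ref{l-gue210331yyd}, and then pass from Sobolev bounds to pointwise $\mathscr C^\ell$ bounds.

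\textbf{Step 1: Uniform $L^2$ bounds on $B_{R+1}$.} Fix $f\in\Omega^{0,q}_{(k),c}(B_{R+1})$ and write $f=g_{(k)}$. By \eqref{be-gue230325yydI}, $A_{(k)}(t)f=(A_{k\phi}(t/k)g)_{(k)}$. The change of variables $F_k$ identifies $(\,\cdot\,,\,\cdot\,)_{(k)}$ with $k^{n+1}(\,\cdot\,,\,\cdot\,)_{k\phi}$; this factor is the same on both sides, and it is the reason $\ol\omega^n_{(k)}$ carries the factor $k^{-1/2}$. Hence
\[
\|A_{(k)}(t)f\|_{(k),\ol B_{R+1}}\le\|f\|_{(k)}.
\]
Using \eqref{k(k)} and Lemma~\ref{l-gue210331yyd}, for every $N\in\mathbb N_0$ we get
\[
\|(\Box^q_{(k)})^NA_{(k)}(t)f\|_{(k)}=k^{-N}\|(\Box^q_{k})^Ne^{-\frac tk\Box_k^q}g\|\le C_N\bigl(1+t^{-N}\bigr)\|f\|_{(k)}.
\]
The powers of $k$ cancel exactly, and $t^{-N}$ is bounded for $t\in I$. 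The same bound holds with $\partial_t^m$ inserted, because $\partial_t^m$ acts as $(-\Box^q_{(k)})^m$.

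\textbf{Step 2: Sobolev bounds in $\mathbf z$.} Iterate Proposition~\ref{p-gue210511yyd} on a nested family of boxes $B_{R+1}\supset B_{R+1-\delta}\supset\cdots\supset B_R$. This gives, uniformly in $k$ and $t\in I$,
\[
\|A_{(k)}(t)f\|_{(k),s,\ol B_{R'}}\le C_s\sum_{j\le s}\|(\Box^q_{(k)})^jA_{(k)}(t)f\|_{(k),\ol B_{R+1}}\le C'_{s,I}\|f\|_{(k)}.
\]
The step I expect to be the main obstacle is this one, specifically checking that the constants in Proposition~\ref{p-gue210511yyd} really are $k$-independent when the metric, the weight $k\phi_{(k)}$ and the frame all depend on $k$. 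I would rely on Proposition~\ref{boxsk}: $\Box^q_{(k)}$ converges to $\Box^q_{\phi_0}$ with all coefficients uniformly bounded. By $Z(q)$ the model operator satisfies Kohn's $\tfrac12$-subelliptic estimate, and this estimate is stable under perturbations of size $\varepsilon_k$. The weight $e^{-k\phi_{(k)}}m(F_k\mathbf z)$ is uniformly comparable to $1$ on $\ol B_{R+1}$. The Sobolev embedding with uniform constants then yields
\[
\sup_{\ol B_R}\bigl|\partial^\alpha_{\mathbf z}\partial_t^m A_{(k)}(t)f\bigr|\le C\|f\|_{(k)}.
\]

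\textbf{Step 3: Duality and the $\mathbf w$ variable.} The operator $A_{(k)}(t)$ is self-adjoint with respect to $(\,\cdot\,,\,\cdot\,)_{(k)}$, since it is a rescaling of the self-adjoint operator $e^{-\frac tk\Box^q_k}$. Taking adjoints, the bound from Step 2 extends to $f\in W^{-s}_{(k)}$ with support in $B_R$:
\[
\|A_{(k)}(t)f\|_{(k),s}\le C\|f\|_{(k),-s}.
\]
Write $A_{(k)}(t)=A_{(k)}(t/2)\circ A_{(k)}(t/2)$; the midpoint $t/2$ lies in a compact subinterval of $\mathbb R_+$. Combining the $W^{-s}\to L^2$ and $L^2\to W^{s}$ bounds for the two factors gives a uniform $W^{-s}\to W^s$ bound. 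Apply it to $f=\partial^\beta_{\mathbf w}\delta_{\mathbf w}\,e$, where $e$ is a frame vector. Its $W^{-s}$ norm is uniformly bounded for $s>n+|\beta|$, and this test includes $\mathbf w$ on the boundary $r=0$, because the Sobolev norms on $\ol B_R$ are defined through extensions to $\hat B_R$. This controls $\partial^\alpha_{\mathbf z}\partial^\beta_{\mathbf w}\partial^m_tA_{(k)}(t,\mathbf z,\mathbf w)$ uniformly on $I\times\ol B_R\times\ol B_R$, which proves Proposition~\ref{uniformly}.
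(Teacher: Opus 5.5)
Your Steps 1--2 and your treatment of $\partial_t$ through the heat equation follow the paper's sketch: a global spectral bound from Lemma~\ref{l-gue210331yyd}, the iterated $k$-uniform estimate of Proposition~\ref{p-gue210511yyd}, and Sobolev embedding. The $k$-uniformity you worry about is exactly the content of Proposition~\ref{p-gue210511yyd}, so you may cite it. Note that it applies to $(\Box^q_{(k)})^jA_{(k)}(t)f$ only because these forms satisfy the $\dbar$-Neumann conditions, which holds since $e^{-\frac tk\Box^q_k}$ maps $L^2$ into $\bigcap_N\Dom\big((\Box^q_k)^N\big)$.

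The gap is in Step 3. $A_{(k)}(t)$ is the rescaled kernel restricted to $\ol B_R\times\ol B_R$. The semigroup law
\[
A_k\big(\tfrac tk,\mathbf z,\mathbf w\big)=\int_{\ol M}A_k\big(\tfrac t{2k},\mathbf z,\mathbf u\big)\,A_k\big(\tfrac t{2k},\mathbf u,\mathbf w\big)\,dv_{M'}(\mathbf u)
\]
integrates the middle variable over all of $\ol M$, and after rescaling by $F_k$ this lies in no fixed box. Hence $A_{(k)}(t)\neq A_{(k)}(t/2)\circ A_{(k)}(t/2)$ as operators on $B_R$ or $B_{R+1}$. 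The $L^2$ space in the middle of your composition would have to be the global $L^2_{(0,q)}(M,L^k)$. However, your Step 2 proves the $L^2\to W^s$ bound only for inputs supported in $B_{R+1}$. Your duality bound $W^{-s}\to L^2$, obtained by testing against forms supported in the box, controls the output only on the box. As written, these two bounds cannot be composed to recover $A_{(k)}(t)$.

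Either of two repairs works.

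(a) Run Steps 1--2 for arbitrary $g\in L^2_{(0,q)}(M,L^k)$. Lemma~\ref{l-gue210331yyd} is global and Proposition~\ref{p-gue210511yyd} is local, so the rescaled restriction of $e^{-\frac{t}{2k}\Box^q_k}g$ to $\ol B_R$ has all scaled derivatives bounded by $Ck^{(n+1)/2}\|g\|_{k,\ol M}$. By duality, the global $L^2$ norms of $\mathbf u\mapsto\partial^\alpha_{\mathbf z}A_k(\frac t{2k},F_k\mathbf z,\mathbf u)$ and $\mathbf u\mapsto\partial^\beta_{\mathbf w}A_k(\frac t{2k},\mathbf u,F_k\mathbf w)$ (scaled derivatives) are $O(k^{(n+1)/2})$. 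Cauchy--Schwarz in the semigroup identity then bounds $\partial^\alpha_{\mathbf z}\partial^\beta_{\mathbf w}A_{(k)}(t,\mathbf z,\mathbf w)$ by $k^{-(n+1)}\cdot O(k^{n+1})=O(1)$.

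(b) Alternatively, avoid composition altogether, as the paper does. Take $f\in W^{-s}_{(k),c}$ supported in $\ol B_R$. Kernel symmetry, $\partial_t^jA_{(k)}(t)=(-\Box^q_{(k)})^jA_{(k)}(t)$, and your Step 2 give
\[
\|(\Box^q_{(k)})^jA_{(k)}(t)f\|_{(k),\ol B_{R+1}}\le C\|f\|_{(k),-s},
\]
testing only against forms supported in $B_{R+1}$. Then apply Proposition~\ref{p-gue210511yyd} directly to $u=A_{(k)}(t)f$. This is what the paper's ``repeat the argument for $A_{(k)}(t)(\Box^q_{(k)})^{\ell_1}$ and take the adjoint'' amounts to, and it stays entirely local.

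With either repair, your test with $\partial^\beta_{\mathbf w}\delta_{\mathbf w}e$ goes through, including for $\mathbf w$ on $r=0$.
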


\begin{proof}[Sketch of Proof]
The proof follows the approach in \cite[Proposition 3.6]{HZ23}, where a similar uniform estimate is derived for related operators. We outline the main steps:\\
Step 1. Let $f\in \Omega^{0,q}_{(k),c}(\ol{B}_R)$ and $g\in\Omega^{0,q}_c(F_k(\ol B_R))$ be such that $f=g_{(k)}$ on $\ol{B}_R$. Using the structure of $\Box^q_{(k)}$ and Lemma \ref{e-gue210331yyd}, we derive
\begin{equation}\label{e-gue210412yyd}
\begin{split}
&\|(\Box^q_{(k)})^\ell A_{(k)}(t)f\|^2_{(k),\ol{B}_R}=\Big\|k^{-\ell}\Bigr((\Box^q_{k\phi})^\ell A_{k\phi}(\frac{t}{k})g\Bigr)_{(k)}\Big\|^2_{(k),\ol{B}_R}\\
&=k^{-2\ell+n+1}\Big\|(\Box^q_{k\phi})^\ell A_{k\phi}(\frac{t}{k})g\Big\|^2_{k\phi,F_k(\ol B_R)}
= k^{-2\ell+n+1}\big\|( \Box^q_{k})^\ell e^{-\frac{t}{k} \Box^q_{k}}(s^kg)\big\|^2_{k}\\
&\leq\frac{k^{n+1}}{t^{2\ell}}C_\ell\|s^kg\|^2_{k}=\frac{k^{n+1}}{t^{2\ell}}C_\ell\|g\|^2_{k\phi,F_k(\ol B_R)}
=\frac{\hat C_{I,\ell}}{t^{2\ell}}\|f\|^2_{(k),\ol{B}_R},
\end{split}
\end{equation}
where $C_\ell > 0$ is independent of $k$ and $t$, and $\hat C_{I,\ell}>0$ is independent of $k$ and $t\in I$. Here we use the fact that $I\Subset\mathbb R_+$, so that $t^{-2\ell}$ is uniformly bounded on $I$. \\
Step 2. We define the following Sobolev spaces:
\[\begin{split}
&W^s_{(k),c}\big(\hat B_R, T_{(k)}^{*0,q}M'\big) := \{f \in W^s_{(k)}(\hat B_R, T_{(k)}^{*0,q}M') : {\rm supp\,}f \Subset\hat B_R\},\\
&W^s_{(k),c}\big(\ol{B}_R, T_{(k)}^{*0,q}M'\big) := 
\set{u|_{\ol{B}_R}: u\in W^s_{(k),c}\big(\hat B_R, T_{(k)}^{*0,q}M'\big)},\\
&W^s_{(k),{\rm loc}}\big(\hat B_R, T_{(k)}^{*0,q}M'\big) := \{f \in \mathscr{D}'(\hat B_R, T_{(k)}^{*0,q}M') : \chi f \in W^s_{(k)}(\hat B_R, T_{(k)}^{*0,q}M'), \, \forall\chi \in \cali{C}^\infty_c(\hat B_R)\},\\
&W^s_{(k),{\rm loc}}\big(\ol{B}_R, T_{(k)}^{*0,q}M'\big):=\set{u|_{\ol{B}_R}: u\in W^s_{(k),{\rm loc}}\big(\hat B_R, T_{(k)}^{*0,q}M'\big)}.
\end{split}
\]
Step 3. Using Proposition~\ref{p-gue210511yyd} and the bounds derived in Step 1, we show that: 
\begin{equation}\label{e-gue210412yydI}
A_{(k)}(t): W^0_{(k),{c}}(\ol{B}_R,T_{(k)}^{*0,q}M')\to W^{s}_{(k),{\rm loc}}(\ol{B}_R,T_{(k)}^{*0,q}M')
\end{equation}
is continuous for every $s\in\mathbb Z$, with the continuity being uniform in $t\in I$. By repeating the argument for $A_{(k)}(t)(\Box^q_{(k)})^{\ell_1}$ for any $\ell_1 \in \mathbb{N}_0$, and taking the adjoint, we further establish that $A_{(k)}(t)$ extends continuously to $W^{-\ell}_{(k),c}(\ol{B}_R, T_{(k)}^{*0,q}M')$, and the map
$$
A_{(k)}(t): W^{-\ell}_{(k),c}(\ol{B}_R, T_{(k)}^{*0,q}M') \to W^s_{(k),{\rm loc}}(\ol {B}_R, T_{(k)}^{*0,q}M')
$$
is continuous for every $s \in \mathbb{Z}$, with the continuity being uniform in $t \in I$.\\
Step 4. It remains to explain the estimates involving derivatives in $t$. From the heat equation \eqref{shk}, we have, for every $a\in\mathbb N_0$,
\[
\frac{\partial^a}{\partial t^a}A_{(k)}(t)=(-\Box^q_{(k)})^aA_{(k)}(t).
\]
Thus the estimates in Step 3, applied to $(\Box^q_{(k)})^aA_{(k)}(t)$, together with the same adjoint argument for the $\mathbf w$-variable, give uniform bounds for all mixed derivatives in $(t,\mathbf z,\mathbf w)$ on $I\times \ol B_R\times \ol B_R$. The constants depend on $I,\ell$ and $R$, but are independent of $k$ and $t\in I$.\\
Step 5. Combining steps 3 and 4 with the Sobolev embedding theorem,  we complete the proof. For further details, refer to \cite{HZ23}.
\end{proof}

We now turn to the convergence of $A_{(k)}(t)$. Recall that, for fixed $q$, $A_{\phi_0}(t,\mathbf z,\mathbf w)=e^{-t\Box_{\phi_0}^q}(\mathbf z,\mathbf w)$ denotes the heat kernel of the model Laplacian; see Section~\ref{M0}. To formulate the convergence precisely, we regard $A_{\phi_0}(t,\mathbf z,\mathbf w)$ and $A_{(k)}(t,\mathbf z,\mathbf w)$ as smooth sections over $\mathbb R_+\times\overline B_R\times\overline B_R$ with values in $T^{*0,q}\mathbb C^n\boxtimes (T^{*0,q}\mathbb C^n)^*$.
The precise statement is as follows.

\begin{thm}\label{t-gue210503yyd}
Assume that condition $Z(q)$ holds. Let $I\Subset\mathbb R_+$ be a compact interval. Then we have 
\begin{equation}\label{limits}
\lim_{k\to\infty}A_{(k)}(t,\mathbf z, \mathbf w)=A_{\phi_0}(t,\mathbf z, \mathbf w)  \end{equation}
in $\cali{C}^\infty(I\times \ol{B}_R\times \ol{B}_R,\Lambda^\bullet(\mathbb CT^*\mathbb C^n)\boxtimes(\Lambda^\bullet(\mathbb CT^*\mathbb C^n))^*)$ topology.
\end{thm}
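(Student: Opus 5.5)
The plan is the standard compactness-plus-uniqueness argument in the spirit of \cite[Theorem 3.12]{HZ23} and \cite[\S1.6]{MM07}.

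\textbf{Step 1: compactness.} By Proposition~\ref{uniformly}, for every $\ell$ the family $A_{(k)}$ is bounded in $\cali{C}^\ell(I'\times\ol B_{R'}\times\ol B_{R'})$ uniformly in $k$, for any slightly larger compact interval $I'\supset I$ and any $R'>R$. The frames $\ol\omega^J_{(k)}$ converge smoothly to $\ol\omega^J_0$, so we may regard the $A_{(k)}$ as sections of the fixed bundle. By Arzel\`a--Ascoli and a diagonal argument in $R'\to\infty$ and $I'\uparrow\R_+$, every subsequence has a further subsequence converging in $\cali{C}^\infty_{\rm loc}(\R_+\times\ol M_0\times\ol M_0)$ to some limit $B(t,\mathbf z,\mathbf w)$. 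The theorem then follows once we show $B=A_{\phi_0}$ for every such limit.

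\textbf{Step 2: properties of the limit.} By Proposition~\ref{boxsk}, $\Box^q_{(k)}\to\Box^q_{\phi_0}$ with smooth coefficient convergence, so passing to the limit in \eqref{shk} gives $(\pa_t+\Box^q_{\phi_0})B(t)=0$ in $\mathbf z$. The same holds in $\mathbf w$ by self-adjointness, since $A_{(k)}(t,\mathbf z,\mathbf w)^*=A_{(k)}(t,\mathbf w,\mathbf z)$ up to the weights, which converge.

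The boundary conditions \eqref{mbdc} pass to the limit as well. For $n\in J$, the component $A_{(k),I,J}$ vanishes on $r=0$. For $n\notin J$, the scaled normal field $k^{-1/2}\ol Z_{n,(k)}$ annihilates the component on $r=0$, and $k^{-1/2}\ol Z_{n,(k)}\to\ol Z_n$ by \eqref{bstaru3n}.

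Next, we need $\lim_{t\to0}B(t)f=f$ and an $L^2$ bound. Since $e^{-\frac tk\Box_k}$ is a contraction, unitarity of the scaling gives $\|A_{(k)}(t)f\|_{(k)}\le C\|f\|_{(k)}$ for $f$ supported in $B_R$, with weights $k\phi_{(k)}\to\phi_0$. Hence $B(t)$ is a contraction on $L^2_{(0,q)}(M_0,\phi_0)$ when restricted to compactly supported data, after a limit over $R$. For the initial condition we use a Duhamel-type identity with test forms $g\in\Omega^{0,q}_c(M_0)\cap\Dom\Box_{\phi_0}$:
$$(A_{(k)}(t)f,g)_{(k)}-(f,g)_{(k)}=-\int_0^t(A_{(k)}(s)f,\Box^q_{(k)}g)_{(k)}\,ds.$$
Letting $k\to\infty$ and then $t\to0$ gives weak convergence; together with the contraction bound this yields $B(t)f\to f$.

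\textbf{Step 3: uniqueness.} Set $v(t)=B(t)f-e^{-t\Box^q_{\phi_0}}f$. It solves the heat equation, satisfies the $\dbar$-Neumann conditions, has zero initial data, and is bounded in $L^2$. Formally $\frac{d}{dt}\|v\|^2_{\phi_0}=-2Q_{\phi_0}(v,v)\le0$, so $v\equiv0$.

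The main obstacle is justifying this energy identity on the unbounded domain $M_0$. We need $v(t)\in\Dom\Box^q_{\phi_0}$, or at least control of the cutoff error terms. I would multiply by a cutoff $\chi(\mathbf z/\rho)$, integrate by parts, and use the $\dbar$-Neumann boundary conditions to kill the boundary terms on $X_0$. The commutator terms involve $|\nabla\chi_\rho|\lesssim\rho^{-1}$ together with $L^2$ bounds on $\ol Z v$ and $\ol Z^*v$. These bounds come from the $k$-uniform energy bounds $\int_0^t Q_{(k)}(A_{(k)}f)\,ds\le\|f\|^2$, which survive in the limit on every ball by lower semicontinuity; letting $\rho\to\infty$ then finishes the argument.

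An alternative is to test $v$ against $e^{-(t-s)\Box_{\phi_0}}g$ and differentiate in $s$. That approach requires a similar cutoff argument, and it is where condition $Z(q)$ enters, through the explicit Gaussian and $\eta$-decay of the model kernel from Theorem~\ref{heatkernelmodel} and \eqref{e-unip}.
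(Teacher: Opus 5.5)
Your proposal is correct. Steps 1 and 2 match the paper's proof: compactness from Proposition~\ref{uniformly}, passing the heat equation and the $\dbar$-Neumann conditions to the limit, and the Duhamel argument for $t\to0$. On the last point the paper proves the strong bound $\|Q(t)f-f\|\le tC$, while you use weak convergence plus the contraction bound; these are equivalent.

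The genuine difference is in how you identify the limit. The paper passes \eqref{e-gue210412yyd} to the limit for every $\ell$ to obtain \eqref{e-gue210504yydIII}. It then asserts $Q(t)f\in\Dom\Box^q_{\phi_0}$ and invokes abstract uniqueness of solutions of $u'+\Box u=0$ for a nonnegative self-adjoint operator \cite[Proposition 2.17]{BGV}. You instead use only two inputs:
\begin{itemize}
\item the $\ell=0$ contraction bound;
\item the time-integrated energy bound $\int_0^t\big(\|\dbar_{(k)}A_{(k)}(s)f\|^2+\|\dbarstar_{(k)}A_{(k)}(s)f\|^2\big)\,ds\le\tfrac12\|f\|^2$, which survives in the limit by Fatou.
\end{itemize}
You then run a cutoff energy argument directly on $v=B(t)f-e^{-t\Box^q_{\phi_0}}f$. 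The paper's route is shorter on the page because all the work is hidden in the implication ``$L^2$ bounds on $(\Box^q_{\phi_0})^\ell Q(t)f$ plus pointwise boundary conditions give membership in the Gaffney domain.'' On the unbounded $M_0$ that implication needs exactly the cutoff and integration-by-parts argument you write out. Your route is therefore more self-contained and uses weaker a priori information; the paper's route is cleaner once domain membership is granted. In both routes $Z(q)$ enters through Proposition~\ref{uniformly} and the smoothness of the model kernel up to $X_0$, not through \eqref{e-unip}.

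A few small corrections.
\begin{itemize}
\item Since $|\ol\omega^n_{(k)}|^2_{(k)}=1/k$, it is $\sqrt k\,\ol\omega^n_{(k)}$, not $\ol\omega^n_{(k)}$, that converges to $\ol\omega^n_0$. This is why the paper rescales the components with $n\in I$ or $n\in J$ by powers of $k^{-1/2}$.
\item The $\dbar$-Neumann conditions are carried by the $\mathbf z$-index $I$ of $A_{(k),I,J}$, not by $J$.
\item With an isotropic cutoff $\chi(\mathbf z/\rho)$ you only get $|\dbar\chi_\rho|=O(1)$, not $O(\rho^{-1})$, because $\ol Z_j$ contains $i\lambda_j\xi_j\partial_\theta$. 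This is harmless. You can use a Heisenberg-adapted cutoff in $|\xi|/\rho$ and $\theta/\rho^2$. Alternatively, the error term is bounded by $C\int_\epsilon^t\|v(s)\|_{L^2(\{\nabla\chi_\rho\neq0\})}\big(\|\dbar v(s)\|+\|\dbarstar v(s)\|\big)\,ds$, which tends to $0$ as $\rho\to\infty$ by dominated convergence.
\end{itemize}
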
 
	
\begin{proof}
By combining Proposition~\ref{uniformly} with the Arzela-Ascoli theorem and the Cantor diagonalization argument, we can select a subsequence $\{k_1 < k_2 < \cdots\} \subset \mathbb{N}$ with $\lim_{j\to+\infty} k_j = +\infty$ such that
$$
\lim_{j\to+\infty} A_{(k_j)}(t,\mathbf z, \mathbf w) = Q(t,\mathbf z, \mathbf w)
$$
locally uniformly on compact subsets of $\mathbb{R}_+ \times \overline{M}_0 \times \overline{M}_0$ with respect to the $\cali{C}^\infty$ topology.   Set 
$$Q(t,\mathbf z, \mathbf w):=\sumprime_{|I|=|J|=q}Q_{I,J}(t,\mathbf z, \mathbf w)\ol\omega_0^I(\mathbf z)\otimes \ol\omega_0^J(\mathbf w),
$$ 
where $\ol\omega_0^j=d\ol \xi_j$ for $1\le j\le {n-1}$ and $\ol\omega_0^{n}=\sqrt 2\,\dbar r_0$ with $r_0=\text{Im}z_{n}+\sum_{j=1}^{{n-1}}\lambda_j|z_j|^2.
$
Then
$Q(t,\mathbf z, \mathbf w)\in\cali{C}^\infty(\mathbb R_+\times \ol M_0\times \ol M_0,T^{*0,q}\C^n\boxtimes(T^{*0,q}\C^n)^*)$. Consider the continuous operator
$Q(t): \Omega^{0,q}_c(M_0)\to\Omega^{0,q}(M_0)$ given by 
$$(Q(t)f)(\mathbf z)=\int Q(t,\mathbf z, \mathbf w)f(\mathbf w)dv(\mathbf w),\ \ f\in\Omega^{0,q}_c(M_0).$$
Observe that 
\begin{equation*}
Q_{I,J}(t,\mathbf z, \mathbf w)=\begin{dcases}
\displaystyle\lim_{j\to+\infty}A_{(k_j),I,J}(t,\mathbf z, \mathbf w)\quad&\text{when $n\notin I\cap J$},\\
\displaystyle\lim_{j\to+\infty}\frac{1}{\sqrt{k_j}}A_{(k_j),I,J }(t,\mathbf z, \mathbf w)&\text{when $n\in (I\cup J)/(I\cap J)$},\\
\displaystyle\lim_{j\to+\infty}\frac{1}{k_j}A_{(k_j),I,J}(t,\mathbf z, \mathbf w)&\text{when $n\in I\cap J$}.
\end{dcases}    
\end{equation*}
We claim that  for every $f\in\Omega^{0,q}_c(M_0)$ and $t>0$, we have
\begin{equation}\label{e-gue210503ycd}
\begin{split}
Q(t)f\in{\rm Dom\,}\Box^q_{\phi_0},\quad Q'(t)f+\Box^q_{\phi_0}Q(t)f=0, \quad \mbox{and } \lim_{t\to0^{+}} Q(t)f=f.
\end{split}
\end{equation}
By \eqref{shk}, we have
\begin{equation}\label{e-gue210504yyd}
A'_{(k)}(t)+\Box^q_{(k)}A_{(k)}(t)=0\ \ \mbox{on $\ol B_{R}$}.
\end{equation}
Using \eqref{mboxrhok}, \eqref{bboxrhok}, and passing to the limit $k\to\infty$ in \eqref{e-gue210504yyd} on compact subsets of $\mathbb R_+\times \ol M_0\times \ol M_0$, we obtain 
$$
Q'(t)f+\Box^q_{\phi_0}Q(t)f=0. 
$$
Write $f\in\Omega^{0,q}_c(M_0)$ as $f=\sum'_{\abs{J}=q}f_J({\bf z})\,\ol\omega_0^J$ with $f_J\in\cali{C}^\infty_c(M_0)$. Choose $R\gg1$ so that ${\rm supp\,}f\subset B_R$. We set
\begin{equation}\label{e-gue210504yydbc}
f_k:=\sideset{}{'}\sum_{n\notin J}f_J(\mathbf z)\overline\omega^J_{(k)}+\sqrt k\sideset{}{'}\sum_{n\in J}f_J(\mathbf z)\overline\omega^J_{(k)}, \quad k=1,2,\cdots.
\end{equation}
Then $f_k\in \Omega^{0,q}_{(k),c}(B_R)$ for sufficiently large $k$. %Since $\Box^q_{(k)}$ is a differential operator, we also have ${\rm supp\,}\Box^q_{(k)}f_k\subset B_R$ for all sufficiently large $k$. 
It can be verified that $\|f\|_{\phi_0,\ol B_R}^2=\displaystyle\lim_{k\to\infty}\|f_{k}\|_{(k),\ol B_R}^2$.
 Note that 
\begin{equation*}
A_{(k)}(t)f_{k}=\sumprime_{I}\int_{B_R} \Big(\sumprime_{n\notin J}+\frac{1}{\sqrt k}\sumprime_{n\in J}\Big) A_{(k),I,J}(t,\mathbf z, \mathbf w)f_J(\mathbf w)\, m(F_{k}\mathbf w)d\sigma(\mathbf w)\,\ol\omega^I_{(k)}(\mathbf z).
\end{equation*}
For every $t>0$, from the definition of $A_{(k)}(t)$, we have that
$$
\int_{B_R} \Big(\sumprime_{n\notin J}+\frac{1}{\sqrt k}\sumprime_{n\in J}\Big) A_{(k),I,J}(t,\mathbf z, \mathbf w)f_J(\mathbf w)\, m(F_{k}\mathbf w)d\sigma(\mathbf w)=0
$$
on $\hat B_R\cap X_0$ when $n\in I$, and
$$
\ol Z_{n,(k)}\int_{B_R} \Big(\sumprime_{n\notin J}+\frac{1}{\sqrt k}\sumprime_{n\in J}\Big) A_{(k),I,J}(t,\mathbf z, \mathbf w)f_J(\mathbf w)\, m(F_{k}\mathbf w)d\sigma(\mathbf w)=0
$$
on $\hat B_R\cap X_0$ when $n\notin I$. Observe that
\begin{equation*}
\begin{aligned}
 &Q(t)f=\sumprime_{I}\int_{B_R} \sumprime_{J} Q_{I,J}(t,\mathbf z, \mathbf w)f_J(\mathbf w)\, dv(\mathbf w)\,\ol\omega^I_{0}(\mathbf z)\\
&=\lim_{j\to+\infty}\Big(\sumprime_{n\notin I}+\frac{1}{\sqrt {k_j}}\sumprime_{n\in I}\Big)\int_{B_R} \Big(\sumprime_{n\notin J}+\frac{1}{\sqrt{k_j}}
\sumprime_{n\in J}\Big) A_{(k_j),I,J}(t,\mathbf z, \mathbf w)f_J(\mathbf w)\, dv(\mathbf w)\,\ol\omega^I_{0}(\mathbf z)   
\end{aligned}
\end{equation*}
Hence $Q(t)f$ satisfies the $\dbar$-Neumann boundary condition on $X_0$. Moreover, for each $\ell\in\mathbb N_0$, we have 
\begin{equation}\label{e-gue210504yydI}
\|(\Box^q_{\phi_0})^\ell Q(t)f\|^2_{\phi_0,\ol B_R}=\lim_{j\to+\infty}\|(\Box^q_{(k_j)})^\ell A_{(k_j)}(t)f_{k_j}\|^2_{(k_j),\ol B_R}.
\end{equation}
From \eqref{e-gue210412yyd} and \eqref{e-gue210504yydI}, we conclude that there is a constant $C_\ell>0$ independent of $t$ and $R$ such that 
\begin{equation}\label{e-gue210504yydII}
\|(\Box^q_{\phi_0})^\ell Q(t)f\|_{\phi_0,\ol B_R}\leq\frac{C_\ell}{t^\ell}\|f\|_{\phi_0,\ol B_R}, 
\end{equation}
which implies
$\|(\Box^q_{\phi_0})^\ell Q(t)f\|_{\phi_0,\ol B_R}\leq\frac{C_\ell}{t^\ell}\|f\|_{\phi_0}$ for every $R\gg1$. Let $R\to+\infty$, we get 
\begin{equation}\label{e-gue210504yydIII}
\|(\Box^q_{\phi_0})^\ell Q(t)f\|_{\phi_0}\leq\frac{C_\ell}{t^\ell}\|f\|_{\phi_0},
\end{equation}
which gives  $Q(t)f\in{\rm Dom\,}\Box^q_{\phi_0}$.

To complete the claim \eqref{e-gue210503ycd}, it remains to show $\lim_{t\to0^{+}} Q(t)f=f$. Let $f\in\Omega^{0,q}_c(M_0)$ and let $f_k\in \Omega^{0,q}_{(k),c}(B_R)$ be as defined in \eqref{e-gue210504yydbc}. For every $t>0$, we have
\begin{equation}\label{e-gue210504yydb}
A_{(k)}(t)f_k-f_k=\int^t_0A'_{(k)}(s)f_kds=-\int^t_0\Box^q_{(k)}A_{(k)}(s)f_kds=-\int^t_0A_{(k)}(s)\Box^q_{(k)}f_kds.
\end{equation}
and \begin{equation}\label{cset}
\|A_{(k)}(s)\Box^q_{(k)}f_k\|_{(k)}
\leq\|\Box^q_{(k)}f_k\|_{(k)},
\quad 0\leq s\leq t.
\end{equation}
Here, \eqref{cset} and the last equality of \eqref{e-gue210504yydb} follow from $A_{k\phi}(s/k)\Box_{k\phi}^q=\Box_{k\phi}^qA_{k\phi}(s/k)$, \eqref{k(k)}, and \eqref{be-gue210325yyd}. Moreover,
from \eqref{mboxrhok}, \eqref{bboxrhok}, and \eqref{e-gue210504yydbc}, we have $\Box^q_{(k)}f_k\to\Box^q_{\phi_0}f$
in $\cali C^\infty$ on compact subsets as $k\to+\infty$. In particular, there exists a constant $C>0$, independent of $k$, such that
\begin{equation*}
\|\Box^q_{(k)}f_k\|_{(k)}\leq C.
\end{equation*}
Therefore, from \eqref{e-gue210504yydb} and \eqref{cset}, we get
\begin{equation*}
\|A_{(k)}(t)f_k-f_k\|_{(k)}
\leq \int^t_0\|A_{(k)}(s)\Box^q_{(k)}f_k\|_{(k)}ds
\leq tC.
\end{equation*}
Let $\hat R>0$ be arbitrary. Passing to the limit $k_j\to+\infty$ on $\ol B_{\hat R}$, we get
\begin{equation*}
\|Q(t)f-f\|_{\phi_0,\ol B_{\hat R}}
\leq tC.
\end{equation*}
Letting $\hat R\to+\infty$, we have $\|Q(t)f-f\|_{\phi_0}\leq tC$, and hence
\begin{equation*}
\lim_{t\to0+}Q(t)f=f.
\end{equation*}
We now complete the proof of the claim \eqref{e-gue210503ycd}.
Applying the uniqueness of the heat kernel for non-negative self-adjoint operators, see for example \cite[Proposition 2.17]{BGV}, we get that
\[
Q(t)=e^{-t\Box^q_{\phi_0}}.
\] 

Based on the preceding discussion, it is clear that for any subsequence of $A_{(k)}(t)$, we can always find a further sub-subsequence that converges locally uniformly to the same $e^{-t\Box^q_{\phi_0}}$. As a result, we conclude that $\lim_{k\to\infty}A_{(k)}(t,\mathbf z, \mathbf w)=e^{-t\Box^q_{\phi_0}}(\mathbf z, \mathbf w)$ exhibits local uniform convergence on $\mathbb R_+\times \ol M_0\times \ol M_0$ in the $\cali{C}^\infty$ topology. Hence, we have successfully established the desired result.

\end{proof}

%\noindent Recall that for $(\mathbf z,\mathbf w)\in \ol M_0\times \ol M_0$, we have
\subsection{Heat kernel asymptotics on the boundary region}\label{ddd}
In this section, we study the asymptotic behavior of $e^{-\frac{t}{k}\Box^q_{k}}({\bf z},{\bf z})$  as $k \to +\infty$ for ${\bf z}\in \ol M$ near the boundary, and prove Theorem \ref{main1}. Our analysis assumes that condition $Z(q)$ holds, which guarantees subelliptic estimates for $\Box^q_{k}$.

Before proceeding to do so, we shall digress for the moment to i
llustrate the relationship between $\dot R^\eta$ (see \eqref{Reta}), 
the curvature of the line bundle $L$, and the Levi form on the boundary $X$. 

%\begin{defin}\label{d-gue210524yyd}
%Let $L$ be a holomorphic line bundle over $M'$ with a Hermitian metric $h^L$. 
%In a local trivialization, let $\phi$ be the weight of $h^L$. 
%The Chern curvature of $(L, h^L)$ is the Hermitian form 
%$\mathcal R_{\bf z}^{L}$ on $T_{\bf z}^{1,0}M$ given by  
%\begin{equation}\label{curvature1}
%\mathcal{R}_{\bf z}^{L}(U,\ol V)=\frac{1}{2}
%\left\langle d(\dbar\phi-\pa\phi)({\bf z}), U\wedge\ol V\right\rangle,
%\quad U,V\in T^{1,0}_{\bf z} M,
%\end{equation}
%This definition is independent of the choice of $\phi$.  
%If ${\bf z} = [x,0] \in X$ with $x$ a local coordinate on $\partial D$, 
%we define CR curvature of $(L, h^L)$ by the Hermitian form 
%$\mathcal{R}_{x}^{\phi}$ on $T_{x}^{1,0}X$:
%\begin{equation}\label{curvature2}
%\mathcal R_{x}^{\phi}(U,\ol V)=\frac{1}{2}\left\langle d(\dbar_b\phi-\pa_b\phi)(x), 
%U\wedge\ol V\right\rangle,\quad U,V\in T^{1,0}_xX,
%\end{equation}
%where $\dbar_b$ is the tangential Cauchy–Riemann operator.  
%\end{defin}

\begin{defin}\label{d-gue210524yyd}
Let $L$ be a holomorphic line bundle over $M'$ endowed with a Hermitian metric $h^L$. 
In a local trivialization, let $\phi$ be the weight of $h^L$. 
The Chern curvature of $(L,h^L)$ is the Hermitian form 
$\mathcal R_{\bf z}^L$ on $T_{\bf z}^{1,0}M$ given by
\begin{equation}\label{curvature1}
\mathcal R_{\bf z}^{L}(U,\ol V)=\frac{1}{2}
\left\langle d(\dbar\phi-\pa\phi)({\bf z}),U\wedge \ol V\right\rangle,
\quad U,V\in T_{\bf z}^{1,0}M.
\end{equation}
This definition is independent of the choice of the local weight $\phi$.

If ${\bf z}=[x,0]\in X$, we denote by
\(\mathcal R^L_{b,x}\)
the Hermitian form on $T_x^{1,0}X$ induced by the curvature of $(L,h^L)$ 
on the CR tangent space $T_x^{1,0}X$. Equivalently, if $\phi$ is a local weight of $h^L$ near $x$, then
\begin{equation}\label{curvature2}
\mathcal R^L_{b,x}(U,\ol V)
=\frac{1}{2}\left\langle d(\dbar_b\phi-\pa_b\phi)(x),
U\wedge \ol V\right\rangle,\quad U,V\in T_x^{1,0}X,
\end{equation}
where $\pa_b$ and  $\dbar_b$ are tangential Cauchy-Riemann operators. $\mathcal R^L_{b,x}$ is intrinsic and independent of the choice of the local trivialization. 
%When a local weight $\phi$ is fixed, we may also write\(\mathcal R^\phi_x:=\mathcal R^L_{b,x}\)\,.
\end{defin}

For ${\bf z} \in M$, define $\dot{\mathcal{R}}_{\bf z}^L \in 
\text{End}(T^{1,0}_{\bf z}M)$ and $\varTheta_{\bf z}$ by 
\begin{equation}\label{icur}
\langle\,\dot{\mathcal{R}}^L_{\bf z}U\mid V\,
\rangle=\mathcal{R}^L_{\bf z}(U,\ol V),
\quad \varTheta_{\bf z} = \sum_{i,j=1}^{n} 
\mathcal{R}_{\bf z}^L(Z_i, \overline{Z}_j) \overline{w}^i \wedge 
(\overline{w}^j \wedge)^*,
\end{equation}
where $U, V \in T^{1,0}_{\bf z}M$, $\{Z_j\}_{j=1}^{n}$ 
is a local orthonormal frame for $T^{1,0}_{\bf z}M$, 
$\{w^j\}_{j=1}^{n}$ is the dual frame, and 
$(\overline{w}^j \wedge)^* = i_{\overline{Z}_j}$ denotes contraction. 
Similarly, for $x\in X$, we let $\dot{\mathcal{R}}^L_{b,x}$, 
$\dot{\mathcal{L}}_x\in \text{End}(T^{1,0}_xX)$ be linear maps given by 
\begin{equation}\label{ecur}
\langle\,\dot{\mathcal{R}}^L_{b,x}U\mid V\,\rangle=\mathcal{R}^L_{b,x}(U,\ol V),
\quad \langle\,\dot{\mathcal{L}}_xU\mid V\,\rangle=\mathcal{L}_{x}(U,\ol V),  \end{equation}
where $U, V \in T^{1,0}_xX$,  $\mathcal{L}_x$ is the Levi form of $X$ at $x$. 
We now translate the model quantities $\dot R^\eta$ and $\varTheta^\eta$
defined in \eqref{Reta} and \eqref{omega} into intrinsic geometric terms.
Under the identification induced by the scaling coordinates, the model
endomorphism $\dot R^\eta$ corresponds to
$\dot{\mathcal{R}}^L_{b,x}-2\eta\dot{\mathcal{L}}_x$, and
$\varTheta^\eta$ corresponds to $\varTheta^\eta_x$
defined below. For every $\eta \in \mathbb{R}$, let  
$$
\det(\dot{\mathcal{R}}^L_{b,x} - 2\eta \dot{\mathcal{L}}_x) = 
\mu_1(x) \cdots \mu_{n-1}(x),
$$  
where $\mu_j(x)$, $j=1,\ldots,{n-1}$, are the eigenvalues of 
$\dot{\mathcal{R}}^L_{b,x} - 2\eta \dot{\mathcal{L}}_x$
with respect to $\langle\,\cdot\mid\cdot\,\rangle$. Let  
\begin{equation}\label{bTheta}
\varTheta_x^\eta := \sum_{i,j=1}^{n-1} (\mathcal{R}^L_{b,x} - 
2\eta \mathcal{L}_x)(Z_i, \overline{Z}_j) 
\overline{\omega}^i \wedge (\overline{\omega}^j \wedge)^*: 
T^{*0,q}_xX \to T^{*0,q}_xX,    
\end{equation} 
where $\{Z_j\}_{j=1}^{n-1}$ is an orthonormal frame for $T^{1,0}_xX$, 
and $\{\omega^j\}_{j=1}^{n-1}$ is the dual frame in $T^{*1,0}_xX$.

Note that from \eqref{be-gue210325yyd}, we have
\begin{equation}\label{(k)kcon}
\begin{aligned}
 & A_{(k)}(t,\mathbf z, \mathbf w)=\sumprime_{|I|=|J|=q} 
 k^{-(n+1)}A_{k\phi,I,J}(\frac{t}{k},F_k\mathbf z, F_k\mathbf w)\,
 \ol\omega^I_{(k)}(\mathbf z)\otimes\ol\omega^J_{(k)}(\mathbf w)\\
&=\Big(\sumprime_{n\notin I}+\frac{1}{\sqrt{k}}
\sumprime_{n\in I}\Big)\Big(\sumprime_{n\notin J}+\frac{1}{\sqrt k}
\sumprime_{n\in J}\Big)\,k^{-(n+1)}A_{k\phi,I,J}(\frac{t}{k},F_k\mathbf z, 
F_k\mathbf w)\,\ol\omega^I(F_k \mathbf z)\otimes\ol\omega^J(F_k \mathbf w)
\end{aligned}   
\end{equation}
on $\mathbb R_+\times \ol{B}_{R}\times \ol B_{R}$. 
Let $p\in \ol M\cap\{r\ge -R/k\}$ be near to $X$. 
Choose local coordinates $[x,r]$ such that $p=[0,r_p]$ and 
\eqref{local1}-\eqref{bphi} hold on $\ol M\cap\{r\ge -R/k\}$. 
We can write $r_p=\dfrac{r}{k}$ with $r\ge -R$. 
It then follows from \eqref{limits} and \eqref{(k)kcon} that
\begin{equation*}
A_{\phi_0,I,J}(t,(0,r), (0,r))=\displaystyle\lim_{k\to\infty}\begin{dcases}
k^{-(n+1)}A_{k\phi,I,J}(t,p,p)\quad&\text{when $n\notin I\cap J$},\\
k^{-({n-1}+5/2)}A_{k\phi,I,J}(t,p,p)&\text{when $n\in (I\cup J)/(I\cap J)$},\\
k^{-({n-1}+3)}A_{k\phi,I,J}(t,p,p)&\text{when $n\in I\cap J$}.
\end{dcases}    
\end{equation*}
Applying this procedure for each point $\mathbf z\in \ol M\cap U$ by replacing $[0,\dfrac{r}{k}]$ with $[x,\dfrac{r}{k}]$, we obtain
\begin{equation}\label{htbdn}
 \begin{aligned}
  &A_{\phi_0,I,J}(t,(x,r), (x,r))\\
&=\displaystyle\lim_{k\to\infty}\begin{dcases}
k^{-(n+1)}A_{k\phi,I,J}\big(t,[x,\frac{r}{k}],[x,\frac{r}{k}]\big)\quad&\text{when $n\notin I\cap J$},\\
k^{-({n-1}+5/2)}A_{k\phi,I,J}\big(t,[x,\frac{r}{k}],[x,\frac{r}{k}]\big)&\text{when $n\in (I\cup J)/(I\cap J)$},\\
k^{-({n-1}+3)}A_{k\phi,I,J}\big(t,[x,\frac{r}{k}],[x,\frac{r}{k}]\big)&\text{when $n\in I\cap J$}.
\end{dcases}    
 \end{aligned}   
\end{equation}

%\begin{rem}\label{rem7}
 %   Let $z = [x, r] \in \ol M \cap U$. By choosing local coordinates such that $x = 0$, and using the mapping in \eqref{scalinging}, we observe that for sufficiently large $k$, the radius $R_k$ for which $F_k(B_{R_k}) \subset M \cap U$ can be taken not only as $B_{\log k}$, but can also be relaxed to $R_k = k^{1-\eps}$ for any $\eps>0$. Hence, the $R$ in \eqref{htbdn} can be extended to $k^{1-\eps}$ for $\eps>0$.
%end{rem}

\begin{proof}[Proof of Theorem \ref{main1}] 

The result follows directly from Theorem \ref{hmd} in combination with \eqref{kphik}, \eqref{psca}, and \eqref{htbdn}. The only point that requires clarification is how to drop the limit involving $\delta$ in front of the integral in \eqref{modelasymoptptic}. Since 
$$\lim_{\eta \to -\infty} \int^{\frac{2r-t\eta}{\sqrt{2t}}}_{-\infty} e^{-\tau^2} \, d\tau = \sqrt{\pi}$$ for fixed $t>0$ and $r$, it follows from Remark \ref{rem:zq} that, when condition $Z(q)$ is satisfied, it suffices to consider the uniform boundedness of the integrand for $\eta \gg 0$. 

Note that both the first and the third terms on the right-hand side 
of \eqref{modelasymoptptic} contain the factor $e^{-t\eta^2}$, 
which decays rapidly as $\eta\to+\infty$. 
Hence, these terms cause no difficulty. 
For the second term, observe that 
$$
\int_{-\infty}^{\frac{2r-t\eta}{\sqrt{2t}}} e^{-\tau^2}\, d\tau
\sim
-\frac{\sqrt{t/2}}{2r-t\eta}\,
e^{-\frac{(2r-t\eta)^2}{2t}}
\qquad \text{as }\eta\to +\infty.
$$
Therefore, the second term also does not pose any problems, as it decays to zero when multiplied by the other $\eta$-dependent terms for $\eta \gg 0$.
\end{proof}

\subsection{Heat kernel asymptotics in the inner region}\label{innestimate}
We now consider the inner region $\{r<-\frac{1}{\sqrt{k}}\}$. Our asymptotic estimate \eqref{htin} below is essentially the same as in the case where $M$ is a compact manifold without boundary (see \cite[Theorem~1.5]{Bi87}, \cite[Theorem~1.6.1]{MM07}). 

Let $I\subset\mathbb \R_+$  be a compact interval and let $K\subset M\cap\{r<-\frac{1}{\sqrt{k}}\}$ be a compact set. Then, there is a constant $C>0$ independent of $k$ such that, for all $\mathbf z\in K$, $t\in I$, $0\leq q\leq n$, we have
\begin{equation}\label{eq_ub}
\abs{k^{-n}e^{-\frac{t}{k} \Box^q_{k\phi}}(\mathbf z,\mathbf z)}_{\mathscr L(T^{*0,q}_{\mathbf z}M',T^{*0,q}_{\mathbf z}M')}\leq C.
\end{equation} 
To analyze the asymptotic behavior, let $R\in \mathbb{R}$. Define the ball $D_R := \{{\bf z} \in \mathbb{C}^{n} : |{\bf z}| < R\}$, which can be identified as a subset of $M$ via the scaling map:
\begin{equation}\label{eq_scaling}
\begin{split}
F_k: \mathbb C^{n}\to\mathbb C^{n},\quad
F_k({\bf z})=\frac{{\bf z}}{\sqrt{k}}.
\end{split}
\end{equation} 
For any point $\mathbf{z} \in M$ that can be expressed as $\mathbf{z} = \hat{\mathbf{z}}/\sqrt{k}$ with $\hat{\mathbf{z}} \in D_{R}$, we have 
\begin{equation}\label{htin}
\lim_{k\to\infty}k^{-n}
e^{-\frac{t}{k}\square_{k\phi}^q}(\mathbf z,\mathbf z)=e^{-t\mathring{\Box}^q_{\phi_0}}(\mathbf z,\mathbf z) 
=\frac{1}{(2\pi)^n}\frac{\det(\dot{\mathcal{R}}_{\mathbf z}^L)\exp(-t\varTheta_{\mathbf z})}{\det(1-\exp(-t\dot{\mathcal{R}}_{\mathbf z}^L)),}
\end{equation}   		
Here, $\mathring{\Box}^q_{\phi_0}$ is the model Laplacian introduced in \eqref{modelLa}, while $\dot{\mathcal{R}}_{\mathbf z}^L$ and $\varTheta_{\mathbf z}$ are defined in \eqref{icur}.  %Here if an eigenval

\section{Morse inequalities and semi-classical Weyl law}\label{morseinequality}

In this section, we apply the heat kernel asymptotics established in Section~\ref{heatasymptotic} to derive holomorphic Morse inequalities for complex manifolds with boundary. The final formulas contain the usual interior contribution coming from the curvature of $L$, together with an additional boundary term expressed in terms of the Levi form and the boundary weight. This yields a heat-kernel proof of the weak and strong Morse inequalities in the present setting. Finally, we explain how the same heat trace asymptotics lead to a semi-classical Weyl law for the $\dbar$-Neumann Laplacian.

\begin{thm}\label{thmmorse}
Let $M$ be a relatively compact open subset with a smooth boundary $X$ of $n$-dimensional complex manifold $M'$. 
Let $(L,h^{L})$ be a holomorphic Hermitian line bundle over $M'$. Let $0\le q\le n$. Suppose $Z(q)$ holds. Then, as $k\to\infty$, we have 
\begin{equation}\label{weak}
\begin{aligned}
\dim H^{q}(\ol M,L^k) &\le\frac{k^{n}}{(2\pi)^{n}}\Big(\int_{M(q)}\big|\det(\dot{\mathcal{R}}^L_{\mathbf z})\big|dv_{M'}(\mathbf z)+\int_X\int_{\R_{x}(q)}\big|\det(\dot{\mathcal{R}}^L_{b,x}-2\eta\dot{\mathcal L}_x)\big|\,d\eta\,dv_X(x)\Big)\\
&\quad+o(k^{n}).
\end{aligned}
\end{equation}
%where
%\begin{equation}\label{rjz}
%M(q)=\{\mathbf z\in M \mid \dot{\mathcal R}^L_{\mathbf z}\,\text{is non-degenerate and has exactly $q$ negative eigenvalues}\},
%\end{equation}
%and
%\begin{equation}\label{rj}
%\begin{aligned}
%\R_x(q)=\{\eta<0 \mid &\dot{\mathcal R}^L_{b,x}-2\eta\dot{\mathcal L}_x\,\text{has exactly $q$ negative eigenvalues} \\
%&\text{and $n-q-1$ positive eigenvalues}\}.
%\end{aligned}
%\end{equation}
Suppose that condition $Z(j)$ holds for all $j=0,1,\cdots, q$. Then, as $k\to\infty$, we have 
\begin{equation}\label{strong}
\begin{split}
\sum_{j=0}^q(-1)^{q-j}\dim H^{j}(\ol M,L^k)
&\le\frac{k^{n}}{(2\pi)^{n}}\sum_{j=0}^q(-1)^{q-j}\Big(\int_{M(j)}\big|\det(\dot{\mathcal{R}}^L_{\mathbf z})\big|dv_{M'}(\mathbf z)\\
&\quad+\int_X\int_{\R_{x}(j)}\big|\det(\dot{\mathcal{R}}^L_{b,x}-2\eta\dot{\mathcal L}_x)\big|\,d\eta\,dv_X(x)\Big)+o(k^{n}).
\end{split}
\end{equation}
\end{thm}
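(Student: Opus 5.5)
The plan is to follow the heat-kernel proof of Demailly's inequalities in \cite[\S1.6--1.7]{MM07}, with the boundary layer handled by Theorem~\ref{main1}. By the spectral theorem, and the isomorphism $H^q(\ol M,L^k)\cong\Ker\Box^q_k$ (valid under $Z(q)$ by Theorem~\ref{t-gue210228yyd}), we have for every $t>0$
\[
\dim H^q(\ol M,L^k)\le \Tr_q\big(e^{-\frac tk\Box^q_k}\big)=\int_{\ol M}\Tr\, e^{-\frac tk\Box^q_k}(\mathbf z,\mathbf z)\,dv_{M'}(\mathbf z).
\]
For the strong inequalities, assume $Z(j)$ for $0\le j\le q$. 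The operator $\dbar_k$ commutes with the heat semigroups, so nonzero eigenspaces pair off between consecutive degrees. This gives the standard alternating inequality
\[
\sum_{j\le q}(-1)^{q-j}\dim H^j\le\sum_{j\le q}(-1)^{q-j}\Tr_j\big(e^{-\frac tk\Box^j_k}\big)
\]
for all $t>0$. Both inequalities then reduce to computing $\limsup_k k^{-n}\Tr_j(e^{-\frac tk\Box^j_k})$ and letting $t\to\infty$.

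We split $\ol M$ using the layers built into $\chi_k^{R,\eps}$.
\begin{itemize}
\item \textbf{Interior region} $\{r\le -(1-\eps)/\sqrt k\}$. The metric is $k$-independent there, so \eqref{eq_ub} and \eqref{htin} apply. Dominated convergence gives the contribution
\[
(2\pi)^{-n}\int_M\Tr\frac{\det\dot{\mathcal R}^L e^{-t\varTheta}}{\det(1-e^{-t\dot{\mathcal R}^L})}\,dv_{M'}.
\]
As $t\to\infty$ this tends to $(2\pi)^{-n}\int_{M(q)}|\det\dot{\mathcal R}^L|$, as in \cite[Thm.~1.7.1]{MM07}.
\item \textbf{Boundary layer} $\{-R/k\le r\le0\}$. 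Write $r=r'/k$, so the volume element contributes $k^{-1}dr'\,dv_X$. Theorem~\ref{main1} gives $k^{-(n+1)}$ times the diagonal kernel converging to the right side of \eqref{localasymoptptic}. Proposition~\ref{uniformly} supplies uniform bounds, so dominated convergence yields a contribution of $k^n\int_X\int_{-R}^0(\cdots)\,dr'\,dv_X$.
\item \textbf{Transition region} $-1/\sqrt k\lesssim r\lesssim -R/k$. The metric there is $kr^2$ in the normal direction. Here I would rescale at the local scale $|r|$ and obtain a uniform bound $O(k^n|r|^{-1}\cdot(\text{decaying factor}))$. The aim is to show this region contributes $k^n\,o_R(1)$, up to the term that matches the interior one as $R\to\infty$.
\end{itemize}

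Next, we let $R\to\infty$ in the boundary term and integrate \eqref{localasymoptptic} in $r'\in(-\infty,0]$. The first and third terms have $\int_{-\infty}^0(1\pm e^{-2r'^2/t})\,dr'$. The constant parts give the interior-type integral and are cancelled against the transition region; the Gaussian parts $\pm e^{-2r'^2/t}$ cancel between the $\tau$ and $\nu$ pieces only partially, leaving $O(\sqrt t)$ against the prefactor $1/\sqrt{t}$. The second term carries the real boundary contribution. Using $\int_{-\infty}^0\eta e^{-2r'\eta}(\cdots)\,dr'$ and Fubini, the $\eta<0$ part produces
\[
(2\pi)^{-n}\int_X\int_{\R}\mathbf 1_{\eta<0}\Tr\frac{\det\dot R^\eta e^{-t\varTheta^{\eta,\tau}}}{\det(1-e^{-t\dot R^\eta})}\,d\eta\,dv_X
\]
plus terms vanishing as $t\to\infty$. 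Finally we let $t\to\infty$. Pointwise, $\Tr\big[\det\dot R^\eta e^{-t\varTheta^\eta}/\det(1-e^{-t\dot R^\eta})\big]$ tends to $|\det\dot R^\eta|\mathbf 1_{\R_x(q)}(\eta)$. The domination in $\eta$ comes from Remark~\ref{rem:zq} and $Z(q)$, which give exponential decay for $\eta\ll0$. The constraint $\eta<0$ comes from the $e^{-2r'\eta}$ integral. This yields \eqref{weak} and, summing alternately, \eqref{strong}.

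The main obstacle is the transition region together with the exact bookkeeping of the $r'$-integral, in particular showing that the constant-in-$r'$ parts of the first and third terms are absorbed rather than producing a divergent $R/\sqrt t$ term. I would first try to prove a uniform diagonal bound $k^{-n}|e^{-\frac tk\Box_k}(\mathbf z,\mathbf z)|\le C$ on the transition region by scaling at the intermediate scale. I would then show, via the $kr^2$ profile, that the rescaled model there is the interior model $\mathring\Box_{\phi_0}$, so this region is simply part of the interior integral. If that fails, the fallback is to estimate its trace contribution by $C\eps k^n$ and use the freedom $\eps\to0$.
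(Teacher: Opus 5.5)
You follow the paper's architecture. That means the trace inequalities of \cite[Lemma~1.7.2]{MM07}, the zones dictated by $\chi_k^{R,\eps}$, Theorem~\ref{main1} on $\{r\ge -R/k\}$ and \eqref{htin} inside. It also means rescaling at the normal scale $\sqrt{\chi_k^{R,\eps}/k}$ (which is $|r|$ on the $kr^2$ zone) to get \eqref{e-gue260205yydb}, and a final $t\to\infty$. The gap is exactly at the step you flag, and the mechanism you propose there cannot work.

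\textbf{Why the proposed mechanism fails.} Every contribution to $k^{-n}\Tr_q(e^{-\frac tk\Box^q_k})$ is nonnegative, being the diagonal of a positive operator. So nothing can cancel the constant-in-$r'$ parts of the first and third terms of \eqref{localasymoptptic}.

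The transition zones cannot absorb them either. Integrating $Ck^n/\chi_k^{R,\eps}$ over these zones gives at most $C\eps+\hat C/R$ in the limit (e.g.\ $\frac Ck\int_{R/k}^{1/\sqrt k}s^{-2}\,ds\le C/R$), so they are simply negligible. Your fallback of bounding them crudely is what the paper does and is fine there. It does not touch the real issue, which sits in the layer $\{r\ge -R/k\}$ itself.

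Nor are those constant parts ``interior-type''. Together they form the trace of the full-space diagonal $K(t,0,0)$ of \eqref{ker0} for the model at $x$. This is built from $\dot{\mathcal R}^L_{b,x}-2\eta\dot{\mathcal L}_x$, not from $\dot{\mathcal R}^L_{\mathbf z}$, and it is strictly positive. Hence at fixed $t$ the layer contributes $R\int_X\Tr_q K_x(t,0,0)\,dv_X(x)+O_t(1)$. Sending $R\to\infty$ at fixed $t$, as you plan, therefore gives $+\infty$.

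\textbf{The missing idea.} This bulk term is killed by $t$, not by $R$.

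Each term of $\frac{\det\dot R^\eta}{\det(1-e^{-t\dot R^\eta})}\Tr_q e^{-t\varTheta^\eta}$ is a product of factors $x/(1-e^{-tx})$, with $x=\pm$ an eigenvalue of $\dot R^\eta$. These satisfy $0<x/(1-e^{-tx})\le\max(x,0)+1/t$. So the $\eta$-integrands are $O((1+|\eta|)^{n-1})$ uniformly for $t\ge1$. Hence $\Tr_qK_x(t,0,0)=O\big(t^{-1/2}\int_{\R}e^{-t\eta^2/2}(1+|\eta|)^{n-1}d\eta\big)=O(1/t)$, and the first and third terms contribute only $O(R/t)$.

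So reverse the order of limits:
\begin{itemize}
\item $\dim H^q(\ol M,L^k)$ does not depend on $(t,R,\eps)$.
\item The zone bounds are uniform in $t\ge1$, because $t\mapsto\Tr_q e^{-\frac tk\Box^q_k}(\mathbf z,\mathbf z)=\sum_j e^{-t\lambda_j/k}|u_j(\mathbf z)|_k^2$ is nonincreasing.
\item After $k\to\infty$, let $t\to\infty$ with $R$ fixed (or at least $R/t\to0$). The first and third terms vanish.
\item The second term converges by dominated convergence ($Z(q)$ handles $\eta\to-\infty$, the Gaussian tail handles $\eta\to+\infty$) to $(2\pi)^{-n}\int_X\int_{\R_x(q)}(1-e^{-2|\eta|R})\,|\det(\dot{\mathcal R}^L_{b,x}-2\eta\dot{\mathcal L}_x)|\,d\eta\,dv_X(x)$. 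Positivity of the kernel fixes the sign here; the printed $h^\tau$ in \eqref{qker} has lost a minus sign.
\item Only then send $R\to\infty$ and $\eps\to0$.
\end{itemize}

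The paper's own write-up passes to $\int_{-\infty}^0$ in \eqref{weakmorsez} before letting $t\to\infty$ through \eqref{III1}--\eqref{III2}. It thus glosses over the same divergence, so following it literally would not repair your argument.
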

Recall that the index sets $M(j)$ and $\R_{x}(j)$ were defined in 
\eqref{rjz} and \eqref{rj}.
\begin{rem}
The inequalities above were previously obtained by Berman \cite{Be05} using Bergman kernel methods. The interior term is the usual Demailly contribution, written here in determinant notation. The boundary term is also the same as Berman's: his set $T(q)_{\rho,x}$ \cite[p.~1056]{Be05} corresponds to our $\mathbb R_x(q)$ after the linear change of parameter determined by $\dot{\mathcal R}^L_{b,x}-2\eta\dot{\mathcal L}_{x}$. Thus \eqref{weak} and \eqref{strong} give a heat-kernel proof of Berman's holomorphic Morse inequalities in our notation.
\end{rem}
\subsection{Proof of Theorem \ref{thmmorse}}
We now prove Theorem~\ref{thmmorse}. The argument follows 
the standard heat-kernel strategy: we compare the dimensions 
of the cohomology groups with traces of the heat operators, 
and then insert the interior and boundary asymptotic formulas obtained earlier.

Fix $q\in\set{0,1,\ldots,{n}}$ and assume that $Z(q)$ holds on $M$. Let
\begin{equation}\label{e-gue210529yydI}
\Tr_q\left(e^{-\frac{t}{k}\Box_{k}^q}(\mathbf z,\mathbf z)\right):=
\sum^d_{j=1}\big\langle\,e^{-\frac{t}{k}\Box_{k}^q}(\mathbf z,\mathbf z)v_j(\mathbf z) 
\mid v_j(\mathbf z)\,\big\rangle_k,
\end{equation}
where $\set{v_j}^d_{j=1}$ is an orthonormal basis for $T^{*0,q}_{\mathbf z}M$. Let 
\begin{equation}\label{trace}
\Tr_q\left(e^{-\frac{t}{k}\Box_{k}^q}\right):=\int_{\ol M} 
\Tr_q\left(e^{-\frac{t}{k}\Box_{k}^q}(\mathbf z,\mathbf z)\right)dv_{M'}(\mathbf z).
\end{equation}
It is well-known (cf. \cite[Lemma 1.7.2]{MM07}) that 
\begin{equation}\label{e-gue210614yyd}
\dim H^{q}(M,L^k) \le\Tr_q\left(e^{-\frac{t}{k}\Box_{k}^q}\right),\ \ 
\mbox{for every $t>0$};
\end{equation}
and
\begin{equation}\label{morse}
\sum_{j=0}^q(-1)^{q-j}\dim H^{j}(\ol M,L^k) 
\le\sum_{j=0}^q(-1)^{q-j}\Tr_j\left(e^{-\frac{t}{k}\Box_{k}^j}\right),\ \ 
\mbox{for every $t>0$}. 
\end{equation}
Note that
\begin{equation}\label{htbd}
\begin{aligned}
&\Tr_q\big(e^{-\frac{t}{k}\square_{k}^{q}}(\mathbf z, \mathbf z)\big)=
\sumprime_{n\notin I}A_{k,I,I}(\frac{t}{k},\mathbf z, \mathbf z)+
\chi^{R,\eps}_k(r)\sumprime_{n\in I}A_{k,I,I}(\frac{t}{k},\mathbf z, \mathbf z),
\end{aligned}
\end{equation}
since $\langle\,\ol\omega^{n}\mid\ol\omega^{n}\,\rangle_k=
\chi^{R,\eps}_k(r)$ with ${\bf z}=[\xi,\theta,r]$.
From \eqref{morse} and \eqref{htbd},  
for every $t>0$, we have 

\begin{equation}\label{weakmorse}
\begin{split}
&\limsup_{k\to\infty}k^{-n}\dim H^{q}(\ol M,L^k)
\leq\limsup_{k\to\infty}k^{-n}\Tr_q(e^{-\frac{t}{k}\Box^q_k})\\%1
&=\limsup_{k\to\infty}\int_{\ol M} k^{-n}\Tr_q(e^{-\frac{t}{k}
\Box^q_k}(\mathbf z,\mathbf z))dv_{M'}(\mathbf z)\\%2
&=\limsup_{k\to\infty}k^{-n}\Big(\int_{\ol M\cap\{r\le-\frac{1}{\sqrt{k}}\}}+
\int_{\ol M\cap\{-\frac{1}{\sqrt{k}}\le r\le \frac{\varepsilon-1}{\sqrt{k}}\}}+
\int_{\ol M\cap\{ \frac{\varepsilon-1}{\sqrt{k}} \le r\le-\frac{R+\varepsilon}{k}\}}\\
&\quad
+\int_{\ol M\cap\{-\frac{R+\varepsilon}{k}\leq r\le-\frac{R}{k}\}}+
\int_{\ol M\cap\{r\ge-\frac{R}{k}\}}\Big)
\Tr_q(e^{-\frac{t}{k}\Box^q_k}(\mathbf z,\mathbf z))dv_{M'}(\mathbf z),
\end{split}
\end{equation}
where $R\gg1$, $0<\varepsilon\ll1$ are as in \eqref{e-gue260125yyd}. 
Note that in \eqref{weakmorse}, according to the construction of the 
Hermitian metric in \eqref{newmetric}, we decompose the domain of 
integration into five regions in the normal direction.
The reason for this decomposition is that, in Sections~\ref{heatasymptotic}, 
we established the heat kernel asymptotics only in the boundary region 
$\{r\ge -\frac{R}{k}\}$ and the interior region $\{r\le -\frac{1}{\sqrt{k}}\}$, 
whereas the integral is taken over the whole set $\{r\le 0\}$. 
For the three transition regions, we shall show that their contributions tend 
to zero as $R\to\infty$ and $\varepsilon\to0$.
To do this, we use the scaling map
\begin{equation}
\begin{split}
F_k: \mathbb C^{n}&\longrightarrow\mathbb C^{n},\\
(z,z_n)&\longmapsto \Big(\dfrac{z}{\sqrt k},\dfrac{z_n}{\sqrt{k/\chi_k(r)}}\Big),
\end{split}   
\end{equation}
which coincides with \eqref{scalinging} when $r\ge- \frac{R}{k}$ and with \eqref{eq_scaling} when $r\le- \frac{1}{\sqrt{k}}$. We now repeat the scaling argument used in Section~\ref{scaledL}. It follows that there exists a constant $C>0$, independent of $R$, $\varepsilon$, and $k$, such that
\begin{equation}\label{e-gue260205yydb}
\Tr_q\big(e^{-\frac{t}{k}\Box^q_k}(\mathbf z,\mathbf z)\big)\leq C k^n/\chi_k^{R,\varepsilon}(r).
\end{equation}
Applying Fatou's lemma, we obtain
\begin{equation}\label{e-gue260205yydI}
\begin{aligned}
&\limsup_{k\to\infty}k^{-n}\int_{\ol M\cap\{-\frac{R+\varepsilon}{k}\leq r\le-\frac{R}{k}\}}\Tr_q(e^{-\frac{t}{k}\Box^q_k}(\mathbf z,\mathbf z))dv_{M'}(\mathbf z)\leq C\varepsilon,\\
&\limsup_{k\to\infty}k^{-n}\int_{\ol M\cap\{\frac{\varepsilon-1}{\sqrt{k}}\leq r\le-\frac{R+\varepsilon}{k}\}}\Tr_q(e^{-\frac{t}{k}\Box^q_k}(\mathbf z,\mathbf z))dv_{M'}(\mathbf z)\leq\frac{\hat C}{R},\\
&\limsup_{k\to\infty}k^{-n}\int_{\ol M\cap\{-\frac{1}{\sqrt{k}}\leq r\le\frac{\varepsilon-1}{\sqrt{k}}\}}\Tr_q(e^{-\frac{t}{k}\Box^q_k}(\mathbf z,\mathbf z))dv_{M'}(\mathbf z)\leq \lim_{k\to\infty} C\eps/\sqrt k=0,
\end{aligned}
\end{equation}
where $\hat C>0$ is a constant independent of $R$. 

Let $R\to+\infty$ and $\varepsilon\to0$ in the right side of \eqref{weakmorse}, and from \eqref{localasymoptptic}, \eqref{htin}, \eqref{e-gue260205yydI}, it is straightforward to see that 
\begin{equation}\label{weakmorsez}
\begin{split}
&\limsup_{k\to\infty}k^{-n}\dim H^{q}(\ol M,L^k)\\
&\le \frac{1}{(2\pi)^{n}}\int_{M}\frac{\det\dot{\mathcal{R}}_{\mathbf z}^L}{\det(1-e^{-t\dot{\mathcal{R}}_{\mathbf z}^L})}\Tr_qe^{-t\varTheta_{\mathbf z}}dv_{M'}(\mathbf z)
\\
&\quad+\frac{1}{(2\pi)^{n}}\int_X\int^0_{-\infty}\frac{1+e^{-2{r}^2/t}}{\sqrt{2\pi t}}\Big(\int_{\R} \dfrac{e^{-t\eta^2/2}\cdot\det(\dot{\mathcal{R}}^L_{b,x}-2\eta\dot{\mathcal L}_x)}{\det\big(1-e^{-t (\dot{\mathcal{R}}^L_{b,x}-2\eta\dot{\mathcal L}_x)}\big)}
\Tr_qe^{-t\varTheta^{\eta,\tau}_x}d\eta\Big) dr dv_X(x)\\%2
&\quad +\frac{2}{(2\pi)^{n} \sqrt{\pi}}\int_X\int^0_{-\infty}\int_{\R} \dfrac{\eta e^{-2\eta r} \cdot\det(\dot{\mathcal{R}}^L_{b,x}-2\eta\dot{\mathcal L}_x)}{\det\big(1-e^{-t (\dot{\mathcal{R}}^L_{b,x}-2\eta\dot{\mathcal L}_x)}\big)}\Tr_qe^{-t\varTheta^{\eta,\tau}_x}\Big(\int^{\frac{2r-t\eta}{\sqrt{2t}}}_{-\infty}e^{-\gamma^2}d\gamma\Big) d\eta drdv_X(x)\\%3
&\quad +\frac{1}{(2\pi)^{n}}\int_X\int^0_{-\infty}\frac{1-e^{-2{r}^2/t}}{\sqrt{2\pi t}}\Big(\int_{\R} \dfrac{e^{-t\eta^2/2}\cdot\det(\dot{\mathcal{R}}^L_{b,x}-2\eta\dot{\mathcal L}_x)}{\det\big(1-e^{-t (\dot{\mathcal{R}}^L_{b,x}-2\eta\dot{\mathcal L}_x)}\big)}
\Tr_qe^{-t\varTheta^{\eta,\nu}_x}d\eta\Big) drdv_X(x).
\end{split}
\end{equation}

For every $j=0,1,\ldots,q$, set
\begin{equation}\label{rqz}
\begin{aligned}
M(j)=\{\mathbf z\in M &\mid \dot{\mathcal R}^L_{\mathbf z}\,\text{has exactly $j$ negative eigenvalues} \\&\text{and $n-j$ positive eigenvalues}\}.
\end{aligned}
\end{equation}
and
\begin{equation}\label{rq}
\begin{aligned}
\R_x(j)=\{\eta<0 &\mid \dot{\mathcal R}^L_{b,x}-2\eta\dot{\mathcal L}_x\,\text{has exactly $j$ negative eigenvalues} 
	\\&\text{and $n-j-1$ positive eigenvalues}\}.
\end{aligned}
\end{equation}
Note that since $Z(q)$ holds at each point of $X$, $\R_{x}(q)$ is bounded for all local weights of $L$. In fact, for $\eta<0$,
$$
\dot{\mathcal R}^L_{b,x}-2\eta\dot{\mathcal L}_x
=(-2\eta)\left(\dot{\mathcal L}_x-\frac{1}{2\eta}\dot{\mathcal R}^L_{b,x}\right),
$$
so as $\eta\to-\infty$ its index is the same as that of $\dot{\mathcal L}_x$.
The condition $Z(q)$ excludes the possibility that $\dot{\mathcal L}_x$ 
has exactly $q$ negative and $n-q-1$ positive eigenvalues. Hence $\mathbb R_x(q)$ is bounded.
It is straightforward to check that 
\begin{equation}\label{I1}
\lim_{t\to+\infty}\frac{\det\dot{\mathcal{R}}_{\mathbf z}^L}{\det(1-e^{-t\dot{\mathcal{R}}_{\mathbf z}^L})}\Tr_qe^{-t\varTheta_{\mathbf z}}
=(-1)^q\chi_{M(q)}(\mathbf z)\det(\dot{\mathcal{R}}^L_{\mathbf z})=\chi_{M(q)}(\mathbf z)\big|\det(\dot{\mathcal{R}}^L_{\mathbf z})\big|,
\end{equation}
\begin{equation}\label{III1}
\begin{split}
&\lim_{t\to+\infty}\frac{(1+e^{-2{r}^2/t})e^{-t\eta^2/2}}{\sqrt{2\pi t}}
\dfrac{\det(\dot{\mathcal{R}}^L_{b,x}-2\eta\dot{\mathcal L}_x)}{\det\big(1-e^{-t (\dot{\mathcal{R}}^L_{b,x}-2\eta\dot{\mathcal L}_x)}\big)}\Tr_qe^{-t\varTheta^{\eta,\tau}_x}=0,
\end{split}
\end{equation}
\begin{equation}\label{III2}
\begin{split}
&\lim_{t\to+\infty}\frac{(1-e^{-2{r}^2/t})e^{-t\eta^2/2}}{\sqrt{2\pi t}}
\dfrac{\det(\dot{\mathcal{R}}^L_{b,x}-2\eta\dot{\mathcal L}_x)}{\det\big(1-e^{-t (\dot{\mathcal{R}}^L_{b,x}-2\eta\dot{\mathcal L}_x)}\big)}\Tr_qe^{-t\varTheta^{\eta,\nu}_x}=0,
\end{split}
\end{equation}
and
\begin{equation}\label{III3}
\begin{split}
&\lim_{t\to+\infty}\dfrac{\det(\dot{\mathcal{R}}^L_{b,x}-2\eta\dot{\mathcal L}_x)\Tr_qe^{-t\varTheta^{\eta,\tau}_x}}{\det\big(1-e^{-t (\dot{\mathcal{R}}^L_{b,x}-2\eta\dot{\mathcal L}_x)}\big)}\cdot\eta e^{-2\eta r} \int^{\frac{2r-t\eta}{\sqrt{2t}}}_{-\infty}e^{-\gamma^2}d\gamma \\
&= (-1)^q\chi_{\R_x(q)}(\eta)\det(\dot{\mathcal{R}}^L_{b,x}-2\eta\dot{\mathcal L}_x)\cdot\begin{cases}
 \sqrt\pi \eta e^{-2\eta r}\quad &\eta< 0,\\
 0\quad &\eta\ge 0,\\
\end{cases}
\end{split}
\end{equation}
where $\chi_{M(q)}(\mathbf z)$ and $\chi_{\R_x(q)}(\eta)$ are the characteristic functions of $M(q)$ and $\R_x(q)$, respectively. Let $t\to+\infty$ in \eqref{weakmorsez} and using \eqref{weakmorse}-\eqref{III3}, we obtain the following weak Morse inequality \eqref{weak}: 
\begin{equation}\label{e-gue210530yydI}
\begin{split}
&\limsup_{k\to\infty}k^{-n}\dim H^{q}(\ol M,L^k)\\
&\le\frac{1}{(2\pi)^{n}}\Big(\int_{M(q)}\big|\det(\dot{\mathcal{R}}^L_{\mathbf z})\big|dv_{M'}(\mathbf z)+\int_X\int_{\R_{x}(q)}\big|\det(\dot{\mathcal{R}}^L_{b,x}-2\eta\dot{\mathcal L}_x)\big|\,d\eta\,dv_X(x)\Big),
\end{split}
\end{equation}
where the second term in the final expression is obtained by
$$
\int^{0}_{-\infty}e^{-2\eta r}dr=\dfrac{1}{2\eta} \quad\text{for $\eta <0$}.
$$

For the proof of the strong Morse inequalities, starting from \eqref{morse} and repeating the proof of \eqref{e-gue210530yydI} with minor changes, we get \eqref{strong}, and hence Theorem \ref{thmmorse} follows.

\subsection{Morse inequalities for \texorpdfstring{$q$}{q}-convex 
and \texorpdfstring{$q$}{q}-concave manifolds}\label{5.2}
In this section, we establish Morse inequalities for $q$-convex and 
$q$-concave manifolds as a direct consequence of Theorem \ref{thmmorse} and its proof. 
The concept of $q$-convexity and $q$-concavity, originally introduced 
by Andreotti and Grauert in \cite{AG:62}, is among the 
fundamental tools used in analyzing the geometry of non-compact complex spaces. 
We begin by introducing their definitions and discussing their relation to condition $Z(q)$.

\begin{defin}[{\cite{AG:62}}]
Let $M$ be an $n$-dimensional connected complex manifold
and $q\in\{1,\ldots,n\}$.

(i) The manifold $M$ 
is called \emph{$q$-convex} if there exists a smooth function 
$\varphi:M\to[a,b)$, where $a\in\R$, $b\in\R\cup\{+\infty\}$, 
such that the \emph{sublevel set}
$M_c=\{x\in M: \varphi(x)<c\}$ is relatively compact in $M$ for all $c\in[a,b)$ and 
$\sqrt{-1}\partial\bar{\partial} \varphi$ has at least $n-q+1$ positive eigenvalues outside 
a compact set $K$ (called an exceptional set). 

(ii) The manifold $M$ is called \emph{$q$-concave}
if there exists a smooth function 
$\varphi:M\to(a,b]$, where $\,a\in\R\cup\{-\infty\}$ and $b\in\R$, 
such that the \emph{superlevel set}
$M_c=\{x\in M: \varphi(x)>c\}$ is relatively compact in $M$ for all $c\in(a,b]$ and 
$\sqrt{-1}\partial\bar{\partial}\varphi$ has at least $n-q+1$ positive eigenvalues 
outside a compact set $K\subset M$, called
an exceptional compact set. In the sequel, we also use $\rho:=-\varphi$ as 
the sublevel exhaustion function and then $M_c=\{x\in M: \rho(x) < c\} \Subset M$ for all $c$.
\end{defin}  

The connection to the condition $Z(q)$ is as follows.

(1) Let $M$ be an $n$-dimensional $q$-convex manifold ($n\geq1$ and $1\leq q\leq n$) 
with exhaustion function $\varphi$
and an exceptional set $K$. For any regular value $c>\sup_K\varphi$
of $\varphi$, the sublevel set $M_c=\{\varphi<c\}$ is a relatively compact open set
with smooth boundary satisfying condition $Z(j)$ for all $j\in\{q,\ldots,n\}$.

(2) Let $M$ be an $n$-dimensional $q$-concave manifold ($n\geq2$ and $1\leq q\leq n-1$)
with exhaustion function $\varphi$
and an exceptional set $K$. For any regular value $c<\inf_K\varphi$
of $\varphi$, the superlevel set $M_c=\{\varphi>c\}$ is a relatively compact open set
with smooth boundary satisfying condition $Z(j)$ for all $j\in\{0,\ldots,n-q-1\}$.

A fundamental result about $q$-convex and $q$-concave manifolds is the 
finiteness theorem of Andreotti-Grauert \cite[Th\'eor\`eme 14]{AG:62}. 
Namely, if $M$ is an $n$-dimensional $q$-convex manifold for $1\leq q\leq n$ 
(resp.\ $q$-concave for $n\geq2$ and $1\leq q\leq n-1$), 
then for every coherent analytic sheaf $\mathscr{F}$ on $M$, 
the cohomology groups $H^j(M,\mathscr{F})$ are finite dimensional
for $j\ge q$ (resp.\ $j\le n-q-1$).
Moreover, under the same assumptions, the following isomorphism theorem holds: 
for any $c \in \mathbb{R}$ such that $M_c$ contains the exceptional compact set, 
the restriction morphisms
$
H^j(M,\mathscr{F}) \to H^j(M_c,\mathscr{F}),
$
are isomorphisms for $j \ge q$ if $M$ is $q$-convex
(resp.\ $j \le n - q - 1$ if $M$ is $q$-concave).

On a compact complex manifold $M$ of dimension $n$, Demailly's holomorphic
Morse inequalities \cite{De85} give asymptotic bounds for the sheaf cohomology $H^j(M,L^k)$,
$j=0,1,\ldots,n$,
as $k\to\infty$ in terms of curvature integrals of $L$.
The natural question of extending this result for $q$-convex and $q$-concave
manifolds for the range of $j$ where the cohomology is finite dimensional
($j\ge q$ resp.\ $j\le n-q-1$) was carried out  under various hypotheses
in \cite{Bou:89,Mar96,Be05,MM07,LMW25}.
The results presented in \cite{Bou:89,Mar96, MM07} were derived under
certain positivity assumptions on the curvature of $L$. 
Consequently, the curvature bounds employed in the Morse inequalities 
were restricted to integration within the interior of $M$.
By contrast, in \cite{Be05,LMW25}, no positivity assumption was imposed, 
resulting in bounds that also incorporated a boundary curvature integral.
We will now derive holomorphic Morse inequalities for
$q$-convex and $q$-concave manifolds modeled on Theorem~\ref{thmmorse},
where we thus exhibit a boundary curvature integral.

To deal with sheaf cohomology, we recall the following facts.
Let $M$ be a complex manifold and let $E$ be a holomorphic
vector bundle on $M$. Let us denote by $L^2_{0,j}(M,E,\textrm{loc})$
the space of locally $L^2$ $(0,j)$-forms with values in $E$.
We denote by $H^{0,j}_{(2)}(M,E,\textrm{loc})$
the quotient space of $\{f\in L^2_{0,j}(M,E,\textrm{loc}):\dbar f=0\}$
by $L^2_{0,j}(M,E,\mathrm{loc})\cap 
\{\dbar g:g \in L^2_{0,j-1}(M,E,\mathrm{loc})\}$.
By the Dolbeault theorem, there is a natural isomorphism
between the space $H^{0,j}_{(2)}(M,E,\textrm{loc})$ and the $j$-th cohomology group $H^{j}(M,E)$ of $M$ with values in the
sheaf of holomorphic sections of $E$.
Assume now that $M$ is a relatively compact open set with a smooth boundary in a complex manifold of dimension $n$, and it satisfies the conditions
$Z(j)$ and $Z(j+1)$ for some $j\in\{0,\ldots,n-1\}$. By \cite[(3.1.14), (4.3.1)]{FK72} and 
\cite[Theorem 3.4.9]{Hor:65}, the restriction morphism 
$H^{j}(\ol M,E)\to H^{0,j}_{(2)}(M,E,\textrm{loc})$ is an isomorphism,
and thus the canonical morphism $H^{j}(\ol M,E)\to H^{j}(M,E)$
is an isomorphism. This also holds for $j=n$ by the same arguments,
since any $(0,n+1)$-form vanishes.

We first state the holomorphic Morse inequalities for $q$-convex manifolds.
To write only one interior integral, we introduce the
following notation: For $r\in\{0,\ldots,n\}$, we set $M(\geq r)=\bigcup_{j=r}^n M(j)$.
We also set $\R_x(n)=\varnothing$.
Let us denote by $c_1(L,h^L)=\frac{i}{2\pi}\mathcal R^L$ the Chern curvature form of $(L,h^L)$.
%===
\begin{thm}\label{miqconv}
Let $M$ be a $q$-convex manifold of dimension $n$.
Let $c$ be a regular value of the exhaustion function of $M$
such that $M_c$ contains the exceptional compact set. Set $X_c=\partial M_c$.
Let $(L,h^L)$ be a holomorphic Hermitian line bundle over $M$.
Then we have for all $r\ge q$, as $k\to\infty$,
\begin{equation}\label{strongq}
\begin{split}
\sum_{j=r}^{n}(-1)^{j-r}&\dim{}  H^{j}(M_c,L^k)
\le\frac{k^{n}}{n!}
\int_{M_c(\geq r)}(-1)^r c_1(L,h^L)^n\\
&\quad+\frac{k^{n}}{(2\pi)^{n}}\sum_{j=r}^{n}(-1)^{j-r}\int_{X_c}\int_{\R_{x}(j)}
\big|\det(\dot{\mathcal{R}}^L_{b,x}-
2\eta\dot{\mathcal L}_x)\big|\,d\eta\, dv_{X_c}(x)+o(k^{n}),
\end{split}
\end{equation}
%\begin{equation}\label{strongq}
%\begin{split}
%\sum_{j=r}^{n}(-1)^{j-r}\dim H^{j}(M_c,L^k)
%&\le\frac{k^{n}}{(2\pi)^{n}}\sum_{j=r}^{n}(-1)^{j-r}
%\Big(\int_{M_c(s)}\big|\det(\dot{\mathcal{R}}^L_{\mathbf z})\big|dv_{M'}(\mathbf z)\\
%&\quad+\int_{X_c}\int_{\R_{x}(s)}\big|\det(\dot{\mathcal{R}}^L_{b,x}-
%2\eta\dot{\mathcal L}_x)\big|\,d\eta\,dv_{X_c}(x)\Big)+o(k^{n}),
%\end{split}
%\end{equation}
Moreover, we have for all $j\ge q$, as $k\to\infty$,
\begin{equation}\label{weakq}
\begin{aligned}
\dim H^{j}(M_c,L^k) &\le\frac{k^{n}}{n!}\int_{M_c(j)}
(-1)^jc_1(L,h^L)^n+
\frac{k^{n}}{(2\pi)^{n}}\int_{X_c}\int_{\R_{x}(j)}\big|\det(\dot{\mathcal{R}}^L_{b,x}-
2\eta\dot{\mathcal L}_x)\big|\,d\eta\,dv_{X_c}(x)\\
&\quad+o(k^{n}).
\end{aligned}
\end{equation}
%\begin{equation}\label{weakq}
%\begin{aligned}
%\dim H^{j}(M_c,L^k) &\le\frac{k^{n}}{(2\pi)^{n}}\Big(\int_{M(j)}
%\big|\det(\dot{\mathcal{R}}^L_{\mathbf z})\big|dv_{M'}(\mathbf z)+
%\int_X\int_{\R_{x}(j)}\big|\det(\dot{\mathcal{R}}^L_{b,x}-
%2\eta\dot{\mathcal L}_x)\big|\,d\eta\,dv_X(x)\Big)\\
%&\quad+o(k^{n}).
%\end{aligned}
%\end{equation}
The same estimates hold if on the left-hand side of \eqref{strongq}
and \eqref{weakq} we replace 
$H^{j}(M_c,L^k)$ by $H^{j}(M,L^k)$, for $j\ge q$.
\end{thm}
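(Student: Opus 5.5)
The plan is to apply Theorem~\ref{thmmorse} to the relatively compact domain $M_c\Subset M$ with smooth boundary $X_c$. Because $c$ is a regular value and $M_c$ contains the exceptional set, fact (1) above shows that $M_c$ satisfies $Z(j)$ for every $j\in\{q,\ldots,n\}$. Then the Andreotti--Grauert isomorphism theorem, together with the Folland--Kohn/H\"ormander identification $H^j(\ol M_c,L^k)\cong H^j(M_c,L^k)$ (valid because $Z(j)$ and $Z(j+1)$ hold for $j\ge q$, and trivially for $j=n$), lets us replace $H^j(\ol M_c,L^k)$ by $H^j(M_c,L^k)$ and then by $H^j(M,L^k)$. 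This gives the final assertion.

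For the weak inequality \eqref{weakq}, I apply \eqref{weak} with $q$ replaced by $j\ge q$ and rewrite the interior term. On $M_c(j)$, the form $\dot{\mathcal R}^L$ has exactly $j$ negative eigenvalues, so $(-1)^j\det\dot{\mathcal R}^L=|\det\dot{\mathcal R}^L|$. The standard identity $c_1(L,h^L)^n=\frac{n!}{(2\pi)^n}\det(\dot{\mathcal R}^L)\,dv_{M'}$ (with the normalization $c_1=\frac{i}{2\pi}\mathcal R^L$) then gives
\[
\frac{1}{(2\pi)^n}\int_{M_c(j)}|\det\dot{\mathcal R}^L|\,dv_{M'}=\frac1{n!}\int_{M_c(j)}(-1)^jc_1(L,h^L)^n .
\]
I will check the normalization constants carefully here, since the curvature convention \eqref{curvature1} carries a factor $\tfrac12$.

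The main obstacle is the strong inequality \eqref{strongq}. Theorem~\ref{thmmorse} gives alternating sums over the range $0,\ldots,q$, which is the wrong direction, and it needs $Z(j)$ for all small $j$, which fails here. So I will rerun its proof with the opposite alternating sum. The linear-algebra input is the "upper" version of \eqref{morse}: for $r\ge q$ and every $t>0$,
\[
\sum_{j=r}^n(-1)^{j-r}\dim H^j(\ol M_c,L^k)\le\sum_{j=r}^n(-1)^{j-r}\Tr_j\big(e^{-\frac tk\Box^j_k}\big).
\]
To prove it, I use that $\dbar_k$ commutes with the heat semigroups on the nonzero spectrum. Each $\Box^j_k$ with $j\ge r\ge q$ has discrete spectrum by $Z(j)$. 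The same argument as \cite[Lemma~1.7.2]{MM07}, applied to the finite complex of spectral spaces $E^j_\lambda$ truncated from below at degree $r$, then yields the inequality: the spectral complexes are exact for $\lambda>0$ in degrees $j>r$, and the degree-$r$ defect has the right sign.

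I then divide by $k^n$ and pass to $\limsup_{k\to\infty}$. I split the integrals into the five normal regions as in \eqref{weakmorse} and control the transition regions by \eqref{e-gue260205yydI}. Theorem~\ref{main1} and \eqref{htin} give the pointwise limits; dominated convergence is justified by the uniform bound \eqref{e-gue260205yydb} and the $Z(j)$-decay of Remark~\ref{rem:zq}. Finally I let $t\to\infty$ using \eqref{I1}--\eqref{III3}.

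With the convention $\R_x(n)=\varnothing$, the boundary terms produce exactly the stated sum. The interior alternating sum collapses to a single integral: the terms $(-1)^{j-r}\chi_{M_c(j)}(-1)^j\det\dot{\mathcal R}^L$ all equal $(-1)^r\det\dot{\mathcal R}^L$ on $M_c(j)$. Their sum over $j\ge r$ is therefore $(-1)^r\det\dot{\mathcal R}^L$ on $M_c(\ge r)$, which by the identity above is $\frac1{n!}\int_{M_c(\ge r)}(-1)^rc_1(L,h^L)^n$.
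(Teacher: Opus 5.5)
Your proposal is correct. Its skeleton matches the paper's proof: $Z(j)$ for $j\ge q$ on $M_c$, the isomorphisms $H^j(\ol M_c,L^k)\cong H^j(M_c,L^k)\cong H^j(M,L^k)$, the weak bound read off from \eqref{weak} in degree $j$, and the rewriting of $\frac{1}{(2\pi)^n}|\det\dot{\mathcal R}^L|\,dv_{M'}$ as $\frac{1}{n!}(-1)^jc_1(L,h^L)^n$ on $M_c(j)$.

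Where you go beyond the paper is the strong inequality. The paper only asserts that \eqref{strongq} follows immediately from Theorem~\ref{thmmorse}; the section introduction adds ``and its proof''. But that theorem, as stated, controls alternating sums over degrees $0,\ldots,q$ under $Z(0),\ldots,Z(q)$. These conditions are not available below $q$ on a $q$-convex sublevel set; already $Z(0)$ fails when $X_c$ is strictly pseudoconvex. Your reversed trace inequality is exactly the missing ingredient. For $\lambda>0$ the truncated spectral complex $E^r_\lambda\to\cdots\to E^n_\lambda$ is exact in degrees $j>r$, since $u=\dbar(\lambda^{-1}\dbar^*u)$ with $\dbar^*u\in E^{j-1}_\lambda$ and $j-1\ge r$. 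Hence the alternating sum of dimensions equals $\dim\ker(\dbar|_{E^r_\lambda})\ge0$. This needs discrete spectrum only in degrees $\ge r$, which $Z(j)$ for $j\ge q$ supplies. Your sign bookkeeping is also right: the interior terms collapse to $(-1)^r\det\dot{\mathcal R}^L$ on $M_c(\ge r)$, and $\R_x(n)=\varnothing$.

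When you rerun the limits, fix the order explicitly, because with alternating signs $\infty-\infty$ is a real danger. Take $k\to\infty$, then $t\to\infty$ at fixed $R,\varepsilon$, and only then $R\to\infty$ and $\varepsilon\to0$. The reason is as follows.

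\begin{itemize}
\item For fixed $t$, the sum of the first and third terms in \eqref{localasymoptptic} tends, as $r'\to-\infty$, to the trace of the free kernel $K(t,0,0)$ of \eqref{ker0}. That trace is positive.
\item So integrating the boundary density over $r'\in(-\infty,0]$ at fixed $t$, as written in \eqref{weakmorsez}, gives $+\infty$.
\item Over $[-R,0]$, by contrast, these two terms are $O(R/t)$ and vanish as $t\to\infty$.
\end{itemize}

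The transition-region bounds \eqref{e-gue260205yydI} stay uniform in $t\ge t_0$, because the pointwise heat trace $\sum_\ell e^{-t\lambda_\ell/k}|u_\ell(\mathbf z)|_k^2$ is nonincreasing in $t$. Together with the nonnegativity of these contributions, they give the two-sided control of each $k^{-n}\Tr_j$ that the negatively signed terms in your alternating sum require.
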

%===
\begin{proof}
A sublevel set $M_c$ as in the statement satisfies the conditions
$Z(j)$ for all $j\in\{q,\ldots,n\}$; thus, for any holomorphic
vector bundle $E$ on $M$, we have canonical isomorphisms
$$H^{j}(\ol M_c,E)\xrightarrow{\sim} H^{j}(M_c,E), \quad j\in\{q,\ldots,n\}.$$ 
%for all $j\in\{q,\ldots,n\}$.
Thus, the inequalities \eqref{strongq} and \eqref{weakq} follow
immediately from Theorem \ref{thmmorse}. 
\end{proof}
%===

%===
\begin{cor}\label{miqconvsp}
Let $M$ be a $q$-convex manifold of dimension $n$.
Let $c$ be a regular value of the exhaustion function of $M$
such that $M_c$ contains the exceptional compact set. %Set $X_c=\partial M_c$.
Let $(L,h^L)$ be a holomorphic Hermitian line bundle over $M$ that is 
semipositive on $\overline{M}_c$.
Then we have for all 
$j\ge q$, as $k\to\infty$,
\begin{equation}\label{weakqsp1}
\begin{aligned}
\dim H^{j}(M,L^k)=\dim H^{j}(M_c,L^k)=o(k^{n}).
\end{aligned}
\end{equation}
\end{cor}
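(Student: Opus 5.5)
The plan is to apply the weak inequality \eqref{weakq} of Theorem~\ref{miqconv} directly. We show that, under the semipositivity assumption, both leading-order integrals vanish identically for $j\ge q$. The identification $\dim H^j(M,L^k)=\dim H^j(M_c,L^k)$ for $j\ge q$ is the Andreotti--Grauert isomorphism recalled before Theorem~\ref{miqconv}. So it suffices to prove $\dim H^j(M_c,L^k)=o(k^n)$.

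\emph{Interior term.} Since $q\ge1$, we have $j\ge1$. Because $(L,h^L)$ is semipositive on $\ol M_c$, the endomorphism $\dot{\mathcal R}^L_{\mathbf z}$ has no negative eigenvalues for $\mathbf z\in\ol M_c$. By \eqref{rqz}, every point of $M_c(j)$ must have exactly $j\ge1$ negative eigenvalues. Hence $M_c(j)=\varnothing$ and $\int_{M_c(j)}(-1)^jc_1(L,h^L)^n=0$.

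\emph{Boundary term.} For $j=n$ we have $\R_x(n)=\varnothing$ by convention, so assume $q\le j\le n-1$. Take $\eta<0$ and write
\[
\dot{\mathcal R}^L_{b,x}-2\eta\dot{\mathcal L}_x=\dot{\mathcal R}^L_{b,x}+2|\eta|\,\dot{\mathcal L}_x .
\]
The first summand is semipositive on $T^{1,0}_xX_c$. For the second, recall that $X_c$ is a regular level set of the $q$-convex exhaustion function, with defining function $r=\varphi-c$, and that $i\partial\dbar\varphi$ has at least $n-q+1$ positive eigenvalues there. Restricting to the complex hyperplane $T^{1,0}_xX_c$ leaves at least $n-q$ positive eigenvalues of $\dot{\mathcal L}_x$. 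This is exactly the fact behind remark~(1) that $M_c$ satisfies $Z(j)$ for $j\ge q$, and with the paper's sign convention \eqref{e-gue260131yyda} these are positive eigenvalues of $\dot{\mathcal L}_x$. Consequently $\dot{\mathcal L}_x$ has at most $(n-1)-(n-q)=q-1$ negative eigenvalues.

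By the min--max principle, adding a semipositive form cannot increase the number of negative eigenvalues. Indeed, on any subspace where $\dot{\mathcal R}^L_{b,x}+2|\eta|\dot{\mathcal L}_x$ is negative definite, $\dot{\mathcal L}_x$ is also negative definite. Hence $\dot{\mathcal R}^L_{b,x}-2\eta\dot{\mathcal L}_x$ has at most $q-1<j$ negative eigenvalues for every $\eta<0$. It follows that $\R_x(j)=\varnothing$ for all $x\in X_c$, and the boundary integral in \eqref{weakq} vanishes.

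Inserting both vanishings into \eqref{weakq} gives $\dim H^j(M_c,L^k)\le o(k^n)$, which is the claim. The only delicate point is the sign bookkeeping: one must check that the Levi form \eqref{e-gue260131yyda} of a sublevel set $\{\varphi<c\}$ inherits \emph{positive} eigenvalues from $i\partial\dbar\varphi$, rather than negative ones. I would verify this on the model case $r=\im z_n+\sum\lambda_j|z_j|^2$, where $\lambda_j>0$ corresponds to convexity. This is consistent with the paper's statement that such $M_c$ satisfies $Z(j)$ for $j\ge q$.
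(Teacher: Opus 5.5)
Your proposal is correct and follows the paper's own argument. Both apply the weak inequality \eqref{weakq} of Theorem~\ref{miqconv} and observe that $M_c(j)=\varnothing$ and $\R_x(j)=\varnothing$ for $j\ge q$, because $c_1(L,h^L)$ is semipositive on $\ol M_c$ and $\dot{\mathcal L}_x$ has at least $n-q$ positive eigenvalues on $X_c$. Your min--max argument, that adding the semipositive form $\dot{\mathcal R}^L_{b,x}$ to $2|\eta|\dot{\mathcal L}_x$ leaves at most $q-1$ negative eigenvalues, just makes explicit a step the paper states without proof. Your sign convention for the Levi form also agrees with the paper's.
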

%===
\begin{proof}
We apply the inequalities \eqref{weakq} and observe that for 
all $j\geq q$ we have $M_c(j)=\varnothing$, and for all $j\geq q$ and $x\in X_c$
we have $\R_x(j)=\varnothing$,
since $c_1(L,h^L)$ is semipositive on $\overline{M}_c$ and
$\dot{\mathcal L}_x$ has at least $n-q$ positive eigenvalues.
\end{proof}
%===
Corollary \ref{miqconvsp} extends the following vanishing theorem 
of Andreotti-Tomassini \cite[Theorem 1]{AnTo:69} (see also \cite{LMW25}
for a differential-geometric proof): If $M$ is a $q$-convex manifold 
and $L\to M$ is a positive line bundle, then there exists an integer 
$k_0$ such that for all $k\geq k_0$ and $j\geq q$
we have $H^j(M,L^k)=0$.
%===

In the case of $1$-convex manifolds, there are more precise vanishing theorems,
due to the special structure of these manifolds.
For this purpose, we recall the following
analytic characterization of $1$-convex manifold $X$ 
(see e.\,g.\ \cite{AG:62}): There exists a Stein space $Y$, 
a proper holomorphic surjective map $\rho:X\to Y$ 		
satisfying $\rho_*\mathcal{O}_X = \mathcal{O}_Y$, 		
and a finite set $A\subset Y$ such that the induced map 		
$X\setminus\rho^{-1}(A) \to Y\setminus A$ is biholomorphic. 		
The Stein space $Y$ is called the Remmert reduction of $X$ and 		
$\Sigma\coloneqq\rho^{-1}(A)$ the exceptional analytic set of $X$.
We have the following vanishing result of Schneider \cite[Corollar 2.12]{Schn78}.
Let $M$ be a $1$-convex manifold
%complex space with bounded embedding dimension. 
and a holomorphic line bundle $L$ on $M$,
whose restriction to the exceptional set $\Sigma$ of $M$ is
positive in the sense of Grauert, then there exists an integer 
$k_0$ such that for all $k\geq k_0$ and $j\geq 1$
we have $H^j(M,L^k)=0$.
This is certainly the case if the holomorphic line bundle $L$ admits
a Hermitian metric $L$ such that $c_1(L,h^L)$ is positive in the
neighborhood of $\Sigma$.
The analog of this result for semipositive bundles is as follows.
%===
\begin{cor}\label{mi1convsp}
Let $M$ be a $1$-convex manifold of dimension $n$.
Let $(L,h^L)$ be a holomorphic Hermitian line bundle over $M$ which is 
semipositive in the neighborhood of the exceptional analytic set $\Sigma$
of $M$.
Then we have for all 
$j\ge 1$, as $k\to\infty$,
\begin{equation}\label{weakqsp2}
\begin{aligned}
\dim H^{j}(M,L^k)=o(k^{n}).
\end{aligned}
\end{equation}
\end{cor}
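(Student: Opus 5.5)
The plan is to reduce Corollary \ref{mi1convsp} to Corollary \ref{miqconvsp} with $q=1$ by choosing a suitable exhaustion function. This function should make the exceptional compact set lie inside the neighborhood $V$ of $\Sigma$ on which $c_1(L,h^L)$ is semipositive. Since the exhaustion is then strictly plurisubharmonic away from $\Sigma$, any sublevel set containing $\Sigma$ can serve as $M_c$. We will take such an $M_c$ with $\overline M_c\subset V$.

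To construct the exhaustion, use the Remmert reduction $\rho:M\to Y$. Here $Y$ is Stein, $A\subset Y$ is finite, and $M\setminus\Sigma\cong Y\setminus A$. Embed $Y$ properly into some $\C^N$. Then $\psi:=|\cdot|^2\circ\rho$ is a smooth plurisubharmonic exhaustion of $M$, and it is strictly plurisubharmonic on $M\setminus\Sigma$ because $\rho$ is biholomorphic there. Fix $a\in A$, choose $0<\epsilon\ll1$, and replace $\psi$ by $\varphi:=\sum_{a\in A}|\cdot-a|^2\circ\rho$, or more simply by $|\cdot|^2\circ\rho$ composed with a translation so that the minimum value is attained on $\Sigma$. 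For $\epsilon$ small, the sublevel set $M_\epsilon:=\{\varphi<\epsilon\}$ is then a relatively compact neighborhood of $\Sigma$ contained in $V$. This uses that $\rho$ is proper, so the preimages $\rho^{-1}(B(A,\delta))$ shrink to $\Sigma$.

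By Sard's theorem, pick a regular value $c\in(0,\epsilon)$ of $\varphi$. Then $M_c\supset\Sigma$, the form $i\partial\dbar\varphi$ is positive definite on $M\setminus\Sigma$, so the exceptional set can be taken to be $\Sigma\subset M_c$, and $(L,h^L)$ is semipositive on $\overline M_c\subset V$. Corollary \ref{miqconvsp} with $q=1$ now gives $\dim H^j(M,L^k)=\dim H^j(M_c,L^k)=o(k^n)$ for all $j\ge1$. The identification of $H^j(M,L^k)$ with $H^j(M_c,L^k)$ is the Andreotti--Grauert isomorphism recalled above.

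The main obstacle is the construction of the exhaustion. It must have small sublevel sets around $\Sigma$ and must be strictly plurisubharmonic outside $\Sigma$, which the exceptional-set requirement needs. If $A$ has several points, the squared distance to $A$ is not smooth where points of $A$ are equidistant. To avoid this, I would use $\sum_{a\in A}\chi_a\,|\cdot-a|^2$ with cutoffs $\chi_a$ supported in disjoint small balls, and glue it to a large multiple of $|\cdot|^2$ away from $A$ by a regularized maximum. This keeps strict plurisubharmonicity on $Y\setminus A$, since pullbacks of strictly psh functions under the biholomorphism remain strictly psh off $\Sigma$.
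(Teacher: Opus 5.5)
Your reduction is the paper's argument. You pull back to $M$ a smooth strictly psh exhaustion of the Remmert reduction $Y$ whose minimum set is $A$. You use properness of $\rho$ (the paper uses the equivalent sequence argument) to find a regular value $c>0$ with $\Sigma\subset M_c$ and $\overline M_c$ inside the semipositivity neighborhood $V$. Then you apply Corollary~\ref{miqconvsp} with $q=1$. The paper does not build the function $\varphi_Y\ge 0$ with $\{\varphi_Y=0\}=A$; it cites Coltoiu for it. Your explicit construction of that function is the one step that fails as written once $A$ has at least two points.

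\textbf{Why the construction fails.} First, $\sum_{a\in A}|y-a|^2$ is minimized at the barycenter of $A$, not on $A$. Second, consider the regularized maximum $\max_\eta(u,v)$ with $u=|y-a|^2$ near each $a$. The outer strictly psh function $v$ must satisfy $v(a)<-\eta$ at every $a\in A$, so that the glued function equals $u$ there. It must also satisfy $v>u+\eta$ on the annuli where the cutoffs $\chi_a$ vary and outside the balls, where $u$ is not psh or vanishes. But $\{C|y|^2-C'<0\}$ is a single ball centered at $0$. If it contains two points of $A$, it contains the segment joining them, and that segment leaves the disjoint small balls around them. So no choice of $C,C'$ works. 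With $C'=0$ one even has $v(a)>u(a)=0$ for every $a\neq 0$.

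\textbf{Two ways to repair it.}
\begin{enumerate}
\item Use $v=C\prod_{a\in A}|y-a|^2+\epsilon|y|^2-\epsilon'$ with $\epsilon\max_A|a|^2<\epsilon'$, $\eta$ small and $C$ large. This $v$ is strictly psh because $\prod_a|y-a|^2=\exp\bigl(\sum_a\log|y-a|^2\bigr)$ is psh.
\item Drop the requirement that the zero set be exactly $A$. The function $\psi_\epsilon:=\prod_{a\in A}|y-a|^2+\epsilon|y|^2$ is a smooth strictly psh exhaustion of $\C^N$. Take any regular value $c$ of $\psi_\epsilon\circ\rho$ in $(\epsilon\max_A|a|^2,\,\epsilon(2\max_A|a|^2+1))$. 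Then $\Sigma\subset\{\psi_\epsilon\circ\rho<c\}$, and its closure lies in $\rho^{-1}\{\prod_a|y-a|^2\le\epsilon(2\max_A|a|^2+1)\}$. That set is contained in $V$ once $\epsilon$ is small, by your properness argument.
\end{enumerate}
With either repair the rest of your proof goes through unchanged.
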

%===
\begin{proof}
For the proof, we need the following construction of
a special strictly plurisubharmonic exhaustion function for $M$.
Consider a strictly plurisubharmonic smooth exhaustion function $\varphi_Y$ of $Y$, 
such that $\varphi_Y\geq0$ and $\{\varphi_Y=0\}=A$ (see \cite[p. 563]{Col98}). 
This is constructed by embedding $Y$ into a Euclidean space
$\C^N$ and constructing such a strictly psh exhaustion function
on $\C^N$.
Then $\varphi=\varphi_Y\circ\rho$ is a smooth plurisubharmonic exhaustion 
function of $X$, such that $\varphi\geq0$, $\{\varphi=0\}=\Sigma$ 
and $\varphi$ is strictly plurisubharmonic on $X\setminus\Sigma$.

Let $U$ be an open set containing $\Sigma$, such that $(L,h^L)$
is semipositive on $U$.
For the exhaustion function $\varphi$ above, there exist $c>0$
such that $\overline{M}_c\subset U$. Otherwise, there exists a sequence
$(x_m)$ in $M$ such that $\varphi(x_m)\leq\frac1m$ and $x_m\notin U$.
By extracting a convergent subsequence to $x\in M$, we have $\varphi(x)=0$,
thus $x\in\Sigma$, which leads to a contradiction to $x_m\notin U$.

The conclusion follows now immediately from Corollary \ref{miqconvsp}.
\end{proof}
%===

\noindent
Next, we consider the case of $q$-concave manifolds.
For $r\in\{0,\ldots,n\}$ set $M(\leq r)=\bigcup_{j=0}^r M(j)$.
%===
\begin{thm}\label{miqconc}
Let $M$ be a $q$-concave manifold of dimension $n$.
Let $c$ be a regular value of the exhaustion function of $M$
such that $M_c$ contains the exceptional compact set. Set $X_c=\partial M_c$.
Let $(L,h^L)$ be a holomorphic Hermitian line bundle over $M$.
Then we have for all $r\le n-q-1$, as $k\to\infty$,
\begin{equation}\label{strongqcconc}
\begin{split}
\sum_{j=0}^{r}(-1)^{r-j}&\dim{}  H^{j}(M_c,L^k)
\le\frac{k^{n}}{n!}
\int\limits_{M_c(\leq r)}(-1)^r c_1(L,h^L)^n\\
&+\frac{k^{n}}{(2\pi)^{n}}\sum_{j=0}^{r}(-1)^{r-j}\int\limits_{X_c}\,
\int\limits_{\R_{x}(j)}\big|\det(\dot{\mathcal{R}}^L_{b,x}-2\eta\dot{\mathcal L}_x)\big|
\,d\eta\,dv_{X_c}(x)+o(k^{n}),
\end{split}
\end{equation}
Moreover, we have for all $j\leq n-q-1$, as $k\to\infty$,
\begin{equation}\label{weakqconc}
\begin{aligned}
\dim H^{j}(M_c,L^k) \le&\frac{k^{n}}{n!}\int\limits_{M_c(j)}
(-1)^jc_1(L,h^L)^n+
\frac{k^{n}}{(2\pi)^{n}}\int\limits_{X_c}\,\int\limits_{\R_{x}(j)}\!
\big|\det(\dot{\mathcal{R}}^L_{b,x}-
2\eta\dot{\mathcal L}_x)\big|\,d\eta\,dv_{X_c}(x)\\ &+o(k^{n}).
\end{aligned}
\end{equation}
The same estimates hold if on the left-hand side of \eqref{strongqcconc}
and \eqref{weakqconc} we replace 
$H^{j}(M_c,L^k)$ by $H^{j}(M,L^k)$, for $j\leq n-q-1$.
\end{thm}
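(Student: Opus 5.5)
The plan mirrors the $q$-convex case (Theorem~\ref{miqconv}): reduce to Theorem~\ref{thmmorse} applied to the relatively compact domain $M_c$, and then identify the interior curvature integrals with Chern form integrals.

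First, by fact (2) preceding the Andreotti--Grauert finiteness theorem, for a regular value $c$ with $M_c\supset K$, the superlevel set $M_c=\{\varphi>c\}$ is relatively compact with smooth boundary $X_c$. It satisfies condition $Z(j)$ for every $j\in\{0,\ldots,n-q-1\}$. Since $n\ge2$, we may regard $M_c$ as a domain in $M':=M$ and apply Theorem~\ref{thmmorse} with $\overline M=\overline M_c$. For each $r\le n-q-1$, all $Z(j)$ with $0\le j\le r$ hold, so both \eqref{weak} (with $q$ replaced by $j$) and \eqref{strong} (with $q$ replaced by $r$) are available. This gives bounds for $\dim H^j(\overline M_c,L^k)$ and for the alternating sums.

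Next, I rewrite the interior terms. On $M_c(j)$ the endomorphism $\dot{\mathcal R}^L_{\mathbf z}$ is nondegenerate with exactly $j$ negative eigenvalues. With the normalization $c_1(L,h^L)=\frac{i}{2\pi}\mathcal R^L$, the standard pointwise identity is
\[
\frac{1}{(2\pi)^n}\bigl|\det\dot{\mathcal R}^L_{\mathbf z}\bigr|\,dv_{M'}=\frac{(-1)^j}{n!}\,c_1(L,h^L)^n ,
\]
the same identity used in Demailly's inequalities. In the strong inequality, the signs $(-1)^{r-j}(-1)^j=(-1)^r$ combine, so the interior sum collapses to $\frac{k^n}{n!}\int_{M_c(\le r)}(-1)^rc_1(L,h^L)^n$. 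The boundary terms are carried over verbatim.

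It remains to pass from $H^j(\overline M_c,L^k)$ to $H^j(M_c,L^k)$ and then to $H^j(M,L^k)$. Because $Z(j)$ and $Z(j+1)$ both hold for $j\le n-q-2$, the Folland--Kohn/H\"ormander argument recalled just before Theorem~\ref{miqconv} makes $H^j(\overline M_c,L^k)\to H^j(M_c,L^k)$ an isomorphism in that range. The Andreotti--Grauert isomorphism theorem then identifies $H^j(M_c,L^k)$ with $H^j(M,L^k)$ for $j\le n-q-1$.

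The main obstacle is the top degree $j=n-q-1$, where $Z(j+1)=Z(n-q)$ need not hold for the concave boundary. There I would use the weaker statement that the restriction $H^{j}(\overline M_c,E)\to H^j(M_c,E)$ is injective, or at least that $\dim H^j(M_c,E)\le\dim H^j(\overline M_c,E)$. This should still follow from $Z(j)$ alone via the $L^2$-loc regularity of $\dbar$-closed forms in bidegree $j$ together with the Hodge decomposition in degree $j-1$. If even this fails, I would shrink $c$ slightly and compare two superlevel sets, both containing $K$, via the Andreotti--Grauert isomorphisms.
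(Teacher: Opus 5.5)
Your route is the paper's. Apply Theorem~\ref{thmmorse} to $\ol M_c$ using $Z(j)$ for $j\le n-q-1$. Use $Z(j)$ and $Z(j+1)$ to identify $H^j(\ol M_c,L^k)\cong H^j(M_c,L^k)$ for $j\le n-q-2$. Handle the top degree $j=n-q-1$ by a one-sided bound; the paper takes $\dim H^{n-q-1}(M_c,L^k)\le\dim H^{n-q-1}(\ol M_c,L^k)$ from H\"ormander's Theorem~3.4.9. Finally, pass to $H^j(M,L^k)$ by Andreotti--Grauert. Your explicit conversion $(2\pi)^{-n}|\det\dot{\mathcal R}^L_{\mathbf z}|\,dv_{M'}=(-1)^jc_1(L,h^L)^n/n!$ on $M_c(j)$ is a step the paper leaves implicit.

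One correction concerns the top degree. Injectivity of $H^{j}(\ol M_c,L^k)\to H^j(M_c,L^k)$ would give $\dim H^j(\ol M_c,L^k)\le\dim H^j(M_c,L^k)$. That is the reverse of what you need, so it is not a weaker substitute for your inequality. What is needed is surjectivity.

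Your fallback idea supplies surjectivity without H\"ormander's theorem. Restrict smooth forms from $M$ to $\ol M_c$, and then to $M_c$. This shows that the Andreotti--Grauert isomorphism $H^j(M,L^k)\to H^j(M_c,L^k)$ factors through $H^j(\ol M_c,L^k)$. Hence $H^j(\ol M_c,L^k)\to H^j(M_c,L^k)$ is onto for $j\le n-q-1$. This relies on the Andreotti--Grauert isomorphism holding in the top degree, as recalled in the paper.

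For $r=n-q-1$, the term $j=r$ enters the alternating sum with sign $+1$. So this upper bound, together with the isomorphisms in lower degrees, is all that \eqref{strongqcconc} and \eqref{weakqconc} require.
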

%===
\begin{proof}
A sublevel set $M_c$ as above satisfies the conditions
$Z(j)$ for all $j\in\{0,\ldots,n-q-1\}$; thus, for any holomorphic
vector bundle $E$ on $M$, we have canonical isomorphisms
$H^{j}(\ol M_c,E)\to H^{j}(M_c,E)$ for all $j\in\{0,\ldots,n-q-2\}$.
Moreover, by \cite[Theorem 3.4.9]{Hor:65}, for $E=L^k$ we have
\begin{equation}
	\dim H^{n-q-1}(M_c,E)\leq \dim H^{n-q-1}(\ol M_c,L^k).
    \label{eq:5.26}
    \end{equation}
Thus, the inequalities \eqref{strongqcconc} and \eqref{weakqconc} 
follow from Theorem \ref{thmmorse}.
\end{proof}

%===
\begin{cor}\label{miqconcsn}
Let $M$ be a $q$-concave manifold of dimension $n$.
Let $c$ be a regular value of the exhaustion function of $M$
such that $M_c$ contains the exceptional compact set. %Set $X_c=\partial M_c$.
Let $(L,h^L)$ be a holomorphic Hermitian line bundle over $M$
which is seminegative on $\overline{M}_c$.
Then we have for all $j\le n-q-1$, as $k\to\infty$,
\begin{equation}\label{weakqconcsn}
\begin{aligned}
\dim H^{j}(M,L^k)=\dim H^{j}(M_c,L^k)=o(k^{n}).
\end{aligned}
\end{equation}
\end{cor}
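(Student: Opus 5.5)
The plan is to mirror the proof of Corollary~\ref{miqconvsp}, with Theorem~\ref{miqconc} in the role of Theorem~\ref{miqconv}. First, since $M_c$ contains the exceptional compact set, the Andreotti--Grauert isomorphism theorem gives that the restriction $H^j(M,L^k)\to H^j(M_c,L^k)$ is an isomorphism for every $j\le n-q-1$. This yields the first equality in \eqref{weakqconcsn}. It then remains to prove $\dim H^j(M_c,L^k)=o(k^n)$, and for this we use the weak inequality \eqref{weakqconc}. It suffices to show that both leading terms on its right-hand side vanish for every $j\le n-q-1$.

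For the interior term, I will use that seminegativity of $c_1(L,h^L)$ on $\ol M_c$ means $\dot{\mathcal R}^L_{\mathbf z}$ has no positive eigenvalues there. A point of $M_c(j)$ must have exactly $n-j$ positive eigenvalues, and $n-j\ge q+1\ge 2>0$, so $M_c(j)=\varnothing$. The integral over $M_c(j)$ therefore vanishes.

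For the boundary term, fix $x\in X_c$ and $\eta<0$. Write $X_c$ as the boundary of the superlevel set $\{\varphi>c\}$, equivalently the sublevel set of $\rho=-\varphi$. Its Levi form, computed with the defining function $c-\varphi$, is the restriction of $-i\partial\dbar\varphi$ to $T^{1,0}X_c$. Since $i\partial\dbar\varphi$ has at least $n-q+1$ positive eigenvalues outside $K$, its restriction to the hyperplane $T^{1,0}_xX_c$ has at least $n-q$ positive eigenvalues. Hence $\dot{\mathcal L}_x$ has at least $n-q$ negative eigenvalues. For $\eta<0$ the operator $-2\eta\dot{\mathcal L}_x$ is a positive multiple of $\dot{\mathcal L}_x$, so it is negative definite on an $(n-q)$-dimensional subspace $W$. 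Seminegativity gives $\dot{\mathcal R}^L_{b,x}\le 0$, since it is the restriction of $\dot{\mathcal R}^L_x$. Therefore $\dot{\mathcal R}^L_{b,x}-2\eta\dot{\mathcal L}_x$ is negative definite on $W$, and by min-max it has at least $n-q$ negative eigenvalues. But $\R_x(j)$ requires exactly $j\le n-q-1$ negative eigenvalues, so $\R_x(j)=\varnothing$ and the boundary integral vanishes.

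Substituting these two facts into \eqref{weakqconc} gives $\dim H^j(M_c,L^k)=o(k^n)$ for $j\le n-q-1$, and the isomorphism transfers this to $M$. The main point of care is the sign convention in the Levi form: checking that the boundary of a superlevel set of a $q$-concave exhaustion function has Levi form with at least $n-q$ \emph{negative} eigenvalues, consistent with remark (2) that $Z(j)$ holds for $j\le n-q-1$. If the sign were reversed, I would revisit the orientation of $\omega_0=J(dr)$ in \eqref{e-gue260131yyda} with $r=c-\varphi$. The remaining steps are direct eigenvalue counting.
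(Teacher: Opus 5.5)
Your proposal is correct and follows the paper's proof: apply the weak inequality \eqref{weakqconc} from Theorem~\ref{miqconc}, and note that $M_c(j)=\varnothing$ and $\R_x(j)=\varnothing$ for $j\le n-q-1$, because $c_1(L,h^L)\le 0$ on $\ol M_c$ while $\dot{\mathcal L}_x$ has at least $n-q$ negative eigenvalues. Your min-max argument and Levi-form sign check spell out the eigenvalue counting that the paper leaves implicit, and, as in the paper, the first equality comes from the Andreotti--Grauert isomorphism.
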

%===
\begin{proof}
We apply the inequalities \eqref{weakqconc} and observe that for 
all $j\leq n-q-1$ we have $M_c(j)=\varnothing$ and for all $j\leq n-q-1$ and $x\in X_c$
we have $\R_x(j)=\varnothing$,
since $c_1(L,h^L)$ is seminegative on $\overline{M}_c$ and
$\dot{\mathcal L}_x$ has at least $n-q$ negative eigenvalues.
\end{proof}
%===
Corollary \ref{miqconcsn} extends the following vanishing theorem 
of Andreotti-Tomassini \cite[Theorem 2]{AnTo:69} (see also \cite{LMW25}
for a differential-geometric proof): If $M$ is a $q$-concave manifold 
and $L\to M$ is a negative line bundle, then there exists an integer 
$k_0$ such that for all $k\geq k_0$ and $j\leq n-q-1$
we have $H^j(M,L^k)=0$.
%===

We now apply the 
holomorphic Morse inequalities on manifolds with boundary 
to obtain asymptotic lower bounds for the
dimension of the spaces of holomorphic sections
of a semipositive line bundle over a $1$-concave manifold
(cf.\ also \cite{Be05,LMW25,Mar96, Mar16}). 
This will also yield a compactification result for a $1$-concave manifold
to a Moishezon manifold.
Recall that by a classical result of Moishezon, any compact Moishezon manifold $X$ 
admits a proper modification $\widetilde{X}\to X$, obtained as a finite 
sequence of blow-ups along smooth centers, such that the resulting manifold 
$\widetilde{X}$ is projective algebraic (see \cite[Theorem 2.2.16]{MM07}).
%===
\begin{cor}\label{cor:one-concave-positive}
Let $M$ be a $1$-concave complex manifold of dimension $n\geq 3$, and
let $(L,h^L)$ be a semipositive holomorphic Hermitian line bundle over 
$M$.
Let $c$ be a regular value of the exhaustion function of $M$
such that $M_c$ contains the exceptional compact set. 
Set $X_c=\partial M_c$.
Assume that along $X_c$, the curvature of $L$ and the Levi form of $X_c$
are conformally equivalent in the sense that there exists a smooth 
positive function $g\in C^\infty(X_c)$ such that
\begin{equation}\label{eq:conformal}
\dot{\mathcal R}^L_{b,x}
=-g(x)\dot{\mathcal L}_{x},
\qquad x\in X_c,
\end{equation}
as Hermitian forms on \(T^{1,0}X_c\cap T^{1,0}M\). 
Then, as $k\to\infty$,
\begin{equation}\label{eq:h0-one-concave}
\begin{split}
\dim H^0(M_c,L^k)
&=
\frac{k^n}{n!}\int_{M_c}c_1(L,h^L)^n+\frac{k^n}{(2\pi)^n}
\int_{X_c}\int_{\mathbb R_x(0)}
\big|\det(\dot{\mathcal R}^L_{b,x}
-2\eta\dot{\mathcal L}_{x})\big|\,d\eta\,dv_{X_c}(x)\\
&\qquad+o(k^n),
\end{split}
\end{equation}
\begin{equation}\label{eq:hj-one-concave}
\dim H^j(M_c,L^k)=o(k^n), \quad\text{for any $j\in\{1,\ldots,n-2\}$}.
\end{equation}
The asymptotics \eqref{eq:h0-one-concave} and \eqref{eq:hj-one-concave} 
also hold with $H^j(M_c,L^k)$ replaced by $H^j(M,L^k)$. 
Furthermore, the manifold $M$ can be compactified to
a Moishezon manifold; that is, there exists a compact Moishezon 
$\widehat{M}$ such that $M$ is biholomorphic to an open set of $\widehat{M}$.
\end{cor}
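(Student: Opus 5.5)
The plan is to apply Theorem~\ref{miqconc} with $q=1$ to the superlevel set $M_c$, after computing the index sets $M_c(j)$ and $\mathbb R_x(j)$ explicitly under semipositivity and the conformal condition \eqref{eq:conformal}.

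\textbf{Step 1: the sets $M_c(j)$ and $\mathbb R_x(j)$.} Since $M$ is $1$-concave and $c$ lies below the exceptional set, $X_c=\partial M_c$ is strictly pseudoconcave. Thus $\dot{\mathcal L}_x$ is negative definite on $T^{1,0}_xX_c$, with $n-1$ negative eigenvalues. Consequently $Z(j)$ holds for $j\in\{0,\ldots,n-2\}$, and this range contains $\{0,1\}$ because $n\ge3$.
\begin{itemize}
\item In the interior, semipositivity gives $M_c(j)=\varnothing$ for $j\ge1$.
\item On $M_c(0)$ one has $\frac{1}{(2\pi)^n}\det\dot{\mathcal R}^L\,dv_{M'}=\frac{1}{n!}c_1(L,h^L)^n$.
\item On the complement of $M_c(0)$, $\dot{\mathcal R}^L$ is degenerate, so $c_1(L,h^L)^n=0$ there. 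Hence the interior term is $\frac{k^n}{n!}\int_{M_c}c_1(L,h^L)^n$.
\end{itemize}
On the boundary, \eqref{eq:conformal} gives
\[
\dot{\mathcal R}^L_{b,x}-2\eta\dot{\mathcal L}_x=-(g(x)+2\eta)\,\dot{\mathcal L}_x .
\]
For $\eta<0$ this endomorphism is positive definite when $-g(x)/2<\eta<0$, negative definite when $\eta<-g(x)/2$, and zero at $\eta=-g(x)/2$. Therefore $\mathbb R_x(0)=(-g(x)/2,0)$, $\mathbb R_x(n-1)=(-\infty,-g(x)/2)$, and $\mathbb R_x(j)=\varnothing$ for $1\le j\le n-2$. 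The boundary integral over $\mathbb R_x(0)$ equals
\[
|\det\dot{\mathcal L}_x|\int_{-g/2}^{0}(g+2\eta)^{n-1}\,d\eta>0 .
\]

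\textbf{Step 2: the asymptotics.} For $1\le j\le n-2$, the weak inequality \eqref{weakqconc} has both terms equal to zero, which gives \eqref{eq:hj-one-concave}. For $H^0$, the weak inequality with $j=0$ gives the upper bound in \eqref{eq:h0-one-concave}. For the lower bound I use the strong inequality \eqref{strongqcconc} with $r=1$; this is allowed since $Z(0)$ and $Z(1)$ hold. It yields
\[
\dim H^1-\dim H^0\le -(\text{interior}+\text{boundary term for }j=0)\,k^n+o(k^n),
\]
because the $j=1$ contributions vanish. Combined with $\dim H^1=o(k^n)$, this gives the reverse inequality, so \eqref{eq:h0-one-concave} holds with equality.

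\textbf{Step 3: passing from $M_c$ to $M$.} Theorem~\ref{miqconc} already states that the estimates remain valid with $H^j(M,L^k)$ in place of $H^j(M_c,L^k)$ for $j\le n-2$; this gives the upper bounds on $M$. The Andreotti--Grauert isomorphism $H^j(M,L^k)\cong H^j(M_c,L^k)$ for $j\le n-2$ then transfers the equality in \eqref{eq:h0-one-concave}, which requires the lower bound as well.

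\textbf{Step 4: the Moishezon compactification.} By Step 1 the leading coefficient in \eqref{eq:h0-one-concave} is strictly positive, so $\dim H^0(M,L^k)\ge Ck^n$ with $C>0$, and $L$ is big. By the Siegel--Andreotti argument on $1$-concave manifolds, quotients of sections of $L^k$ then produce $n$ algebraically independent meromorphic functions. Hence the transcendence degree of the field of meromorphic functions is $n$, and $M$ is Moishezon in the sense recalled before Corollary~\ref{cor:one-concave-positive0}. Finally, I will invoke the known compactification theorem for $1$-concave Moishezon manifolds of dimension $n\ge3$, as used in \cite{Mar96,Mar16} (Andreotti--Siu type), to embed $M$ as an open subset of a compact Moishezon manifold $\widehat M$.

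I expect this last step to be the main obstacle: it rests on external results about $1$-concave manifolds and is where the hypothesis $n\ge3$ is genuinely needed. By contrast, Steps 1--3 are direct bookkeeping on top of Theorem~\ref{miqconc}.
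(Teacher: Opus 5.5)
Your proposal is correct and follows essentially the same route as the paper. You use the weak inequality in degree $0$ for the upper bound and the strong inequality with $r=1$ (valid since $Z(0)$, $Z(1)$ hold for $n\ge3$) for the lower bound. Your explicit computation $\mathbb R_x(0)=(-g(x)/2,0)$, $\mathbb R_x(j)=\varnothing$ for $1\le j\le n-2$ makes precise the paper's terse remark that the two boundary terms coincide, and you then pass to $M$ via Andreotti--Grauert and conclude via Rossi/Andreotti--Siu together with \cite[Theorem 5.2]{Mar96}, which the paper applies in the order ``compactify, then use $\dim H^0(M,L^k)\sim k^n$'' rather than your ``Moishezon, then compactify''. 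Note that $n\ge3$ is already essential in your Step~2, where it gives $Z(1)$ and $\mathbb R_x(1)=\varnothing$, and not only in Step~4.
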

%===
\begin{proof}
Let us first remark that since $(L,h)$ is semipositive, we have 
$M_c(j)=\emptyset$ for $j\geq1$
and $\int_{M_c}c_1(L,h^L)^n=\int_{M_c(0)}c_1(L,h^L)^n$.

Since $M$ is $1$-concave, $M_c$ satisfies condition $Z(j)$ for
$j=0,\ldots,n-2$. In particular, the preceding Morse inequalities apply in
degrees $0$ and $1$. The weak Morse inequality in degree $0$ gives the upper
bound in \eqref{eq:h0-one-concave}. For the lower bound, 
we apply the strong Morse inequality with $r=1$.
Since $L$ is semipositive, the interior contribution of index $1$ is empty. Moreover,
under the conformal equivalence assumption
$\dot{\mathcal R}^L_{b,x}=-g(x)\dot{\mathcal L}_{x}$, 
the boundary contribution
appearing in this lower bound coincides with the boundary term in the weak degree $0$ upper bound.
%, as in Berman's conformally equivalent boundary curvature case \cite[Corollary~6.7]{Be05}. 
Hence, the upper and lower bounds
coincide, and \eqref{eq:h0-one-concave} follows. 
The asymptotic formula for $H^0(M,L^k)$ follows
from the Andreotti-Grauert isomorphism theorem.

The leading factor in the asymptotic formula \eqref{eq:h0-one-concave}
satisfies 
$$\frac{1}{n!}\int_{M_c}c_1(L,h^L)^n
+
\frac{1}{(2\pi)^n}
\int_{X_c}\int_{\mathbb R_x(0)}
\big|\det(\dot{\mathcal R}^L_{b,x}
-2\eta\dot{\mathcal L}_{x})\big|\,d\eta\,dv_{X_c}(x)>0,$$
since the interior integral is non-negative and the boundary integral is positive. 
Hence, the dimension of the space $H^0(M,L^k)$ grows like $k^n$,
as $k\to\infty$ by \eqref{eq:h0-one-concave}.
By a theorem of Rossi \cite[Theorem 3, p.\,245]{Ro:65} and Andreotti-Siu \cite[Proposition 3.2]{AS:70}, any $1$-concave manifold of dimension greater than three can be compactified. 
Since $\dim H^0(M,L^k)\sim k^n$, as $k\to\infty$, \cite[Theorem 5.2]{Mar96} shows that $M$ can be compactified to a Moishezon manifold.
\end{proof}

For a compact complex manifold $M$ of dimension $n$ 
and a holomorphic line bundle
$L$ over $M$, the volume of $L$ is defined by 
\begin{equation}\label{vol-def}
{\rm vol}(L):=\limsup_{k\to\infty} n!\,k^{-n}\dim  H^0 (M, L^k).
\end{equation}
If $(L,h^L)$ is positive, we have
\begin{equation}\label{vol-ample}
\operatorname{vol}(L)=\int_Mc_1(L,h^L)^n.
\end{equation}
The line bundle $L$ is called big if its Kodaira-Iitaka dimension
equals the dimension of $M$ (see \cite[Definition 2.2.5]{MM07}). 
The line bundle $L$ is big if and only if ${\rm vol}(L)>0$ (see 
\cite[Theorem 2.2.7]{MM07}). 
The concepts of Kodaira–Iitaka dimension, bigness of a line bundle, 
and the volume of a line bundle extend in a natural 
manner to the setting of \(1\)-concave complex manifolds 
(see \cite[\S 3.4]{MM07}).
Thus, under the hypotheses of Corollary \ref{cor:one-concave-positive}, 
we obtain the following expression for the volume of $L$.
\begin{cor} 
Under the hypotheses of Corollary \ref{cor:one-concave-positive},
we have 
\begin{equation}\label{term1}
{\rm vol}(L)=\int_{M_c}c_1(L,h^L)^n
+\frac{n!}{(2\pi)^n}
\int_{X_c}\int_{\mathbb R_x(0)}
\big|\det(\dot{\mathcal R}^L_{b,x}
-2\eta\dot{\mathcal L}_{x})\big|\,d\eta\,dv_{X_c}(x).
\end{equation}
In particular, if $g\equiv 1$, then
\begin{equation}\label{term2}
{\rm vol}(L)=\int_{M_c}c_1(L,h^L)^n
+\frac{(n-1)!}{2(2\pi)^n}\int_{X_c}\big|\det(\dot{\mathcal R}^L_{b,x})\big|dv_{X_c}(x).
\end{equation}
\end{cor}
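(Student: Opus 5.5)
The first identity \eqref{term1} follows directly from the asymptotics in Corollary~\ref{cor:one-concave-positive}; the second, \eqref{term2}, is an explicit evaluation of the boundary integral when $g\equiv1$.

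\emph{Step 1: formula \eqref{term1}.} Under the hypotheses of Corollary~\ref{cor:one-concave-positive}, the asymptotic formula \eqref{eq:h0-one-concave} holds with $H^0(M_c,L^k)$ replaced by $H^0(M,L^k)$. This replacement comes from the Andreotti--Grauert isomorphism, as stated in that corollary. Multiplying \eqref{eq:h0-one-concave} by $n!\,k^{-n}$ shows that $n!\,k^{-n}\dim H^0(M,L^k)$ converges, not merely has a $\limsup$. Its limit is
\[
\int_{M_c}c_1(L,h^L)^n+\frac{n!}{(2\pi)^n}\int_{X_c}\int_{\mathbb R_x(0)}\big|\det(\dot{\mathcal R}^L_{b,x}-2\eta\dot{\mathcal L}_{x})\big|\,d\eta\,dv_{X_c}(x).
\]
Here I use that the definition \eqref{vol-def} extends verbatim to $1$-concave $M$, as recalled from \cite[\S3.4]{MM07}. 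This gives \eqref{term1}.

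\emph{Step 2: sign of the Levi form.} $M_c=\{\varphi>c\}$ has defining function $c-\varphi$, and $\sqrt{-1}\partial\dbar\varphi$ has at least $n-q+1=n$ positive eigenvalues near $X_c$. Hence the Levi form $\dot{\mathcal L}_x$ is negative definite on the $(n-1)$-dimensional space $T^{1,0}_xX_c$. This is consistent with $\dot{\mathcal R}^L_{b,x}=-g\dot{\mathcal L}_x$ being positive definite, which agrees with semipositivity of $L$.

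\emph{Step 3: locating $\mathbb R_x(0)$ when $g\equiv1$.} With $g\equiv1$ we have
\[
\dot{\mathcal R}^L_{b,x}-2\eta\dot{\mathcal L}_x=-(1+2\eta)\dot{\mathcal L}_x=(1+2\eta)\dot{\mathcal R}^L_{b,x}.
\]
For $\eta<0$ this endomorphism has no negative and $n-1$ positive eigenvalues exactly when $1+2\eta>0$. So, by \eqref{rj}, $\mathbb R_x(0)=(-\tfrac12,0)$, and on this set
\[
\big|\det(\dot{\mathcal R}^L_{b,x}-2\eta\dot{\mathcal L}_x)\big|=(1+2\eta)^{n-1}\big|\det\dot{\mathcal R}^L_{b,x}\big|.
\]

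\emph{Step 4: the $\eta$-integral.} We have $\int_{-1/2}^0(1+2\eta)^{n-1}\,d\eta=\frac{1}{2n}$. Therefore the boundary term in \eqref{term1} equals
\[
\frac{n!}{(2\pi)^n}\cdot\frac{1}{2n}\int_{X_c}\big|\det\dot{\mathcal R}^L_{b,x}\big|\,dv_{X_c}(x)=\frac{(n-1)!}{2(2\pi)^n}\int_{X_c}\big|\det\dot{\mathcal R}^L_{b,x}\big|\,dv_{X_c}(x),
\]
which is \eqref{term2}.

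The only point needing care is Step 2: confirming that the sign conventions for $\mathcal L$ in \eqref{e-gue260131yyda} and for $\mathcal R^L_b$ make $\dot{\mathcal L}_x$ negative definite on a $1$-concave boundary. This is what makes $\mathbb R_x(0)$ exactly the interval $(-\tfrac12,0)$; the other steps are direct computations.
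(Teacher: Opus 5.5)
Your proposal is correct and follows the paper's proof: you read off \eqref{term1} from \eqref{eq:h0-one-concave}, and for $g\equiv 1$ you use $\dot{\mathcal R}^L_{b,x}-2\eta\dot{\mathcal L}_x=(1+2\eta)\dot{\mathcal R}^L_{b,x}$ to get $\mathbb R_x(0)=(-\tfrac12,0)$ and $\int_{-1/2}^{0}(1+2\eta)^{n-1}\,d\eta=\frac{1}{2n}$. Your Step 2 only makes explicit something the paper uses without comment, namely that $\dot{\mathcal R}^L_{b,x}$ is positive definite because the Levi form of the $1$-concave boundary is negative definite and $g>0$.
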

\begin{proof}
Formula \eqref{term1} follows directly from Corollary~\ref{cor:one-concave-positive}. We prove \eqref{term2} in the case $g\equiv1$. It follows that $\dot{\mathcal R}^L_{b,x}-2\eta\dot{\mathcal L}_{x}=(1+2\eta)\dot{\mathcal R}^L_{b,x}$.
Since $\dot{\mathcal R}^L_{b,x}$ is positive, the Hermitian form
$(1+2\eta)\dot{\mathcal R}^L_{b,x}$ has no negative eigenvalues precisely when
$1+2\eta>0$. By the definition of \(\mathbb R_x(0)\) in \eqref{rj}, we have
$$
\mathbb R_x(0)=\left(-1/2,0\right).
$$
Therefore
$$
\begin{aligned}
\int_{\mathbb R_x(0)}
\big|\det(\dot{\mathcal R}^L_{b,x}-2\eta\dot{\mathcal L}_{x})\big|\,d\eta
=\int_{-1/2}^{0}(1+2\eta)^{n-1}\big|\det(\dot{\mathcal R}^L_{b,x})\big|\,d\eta
=\frac{1}{2n}\big|\det(\dot{\mathcal R}^L_{b,x})\big|.
\end{aligned}
$$
Substituting this into \eqref{term1}, we obtain \eqref{term2}.
\end{proof}
The formula for ${\rm vol}(L)$ consists of an interior term 
given by the integral of $c_1(L,h^L)^n$, as in the compact case, 
together with an additional boundary term that reflects the specific 
geometric features of the $1$-concave manifold under consideration. 
Moreover, the line bundle $L$ is big.

\subsection{Geometric examples of the \texorpdfstring{$1$}{1}-concave setting}
We conclude with two geometric applications of the preceding results. 
On the one hand, we construct natural \(1\)-concave examples arising from the 
regular locus of compact normal complex spaces with isolated singularities. 
On the other hand, we use the positivity of the leading coefficient, 
which is stable under small deformations, leading to 
pseudoconcave Moishezon manifolds in the deformed setting.

We now describe a natural class of examples to which 
Corollary~\ref{cor:one-concave-positive} applies.
This will be constructed by starting with a compact normal complex space $X$ 
with at most isolated singularities, and letting $(L,h^L)$ be a holomorphic Hermitian 
line bundle on $X$.
We first recall some necessary notions for this context.

Let $Z$ be a (reduced) normal complex space and denote by $Z_\reg$, 
resp.\ $Z_\sing$, the set of regular, resp.\ singular points of $X$.
A chart $(U,\iota,V)$ on $Z$  is a triple consisting of an open set 
$U \subset Z$, a closed complex space 
$V \subset G \subset \C^N$ in an open set $G$ of $\C^N$, and a biholomorphic 
map $\iota:\ U \to V$. We call $\iota:\ U \to G \subset \C^N$ a local 
embedding of $X$.
A strictly plurisubharmonic (psh) function on $Z$ 
is a function $\varphi:Z\to[-\infty,\infty)$ such that for every 
$x\in Z$, there exists a local embedding $\tau:U\to G\subset\C^N$ 
with $x\in U$ and a strictly psh function 
$\widetilde\varphi:G\to[-\infty,\infty)$ such that 
$\varphi|_U=\widetilde\varphi\circ\tau$. If $\widetilde\varphi$ 
can be chosen smooth, then $\varphi$ is 
called a smooth strictly psh function. 
The definition is independent of the chart. 
%===
\begin{prop}\label{prop:isolated-sing-example}
Let $Z$ be a compact normal complex space of dimension $n\geq 3$ 
with only isolated singularities, and let $(L,h^L)$ be a holomorphic Hermitian 
line bundle on $Z$. Assume that
\begin{enumerate}
\item $c_1(L,h^L)\geq 0$ on $Z_{\reg}$,
\item $c_1(L,h^L)>0$ in a neighborhood of $Z_{\sing}$,
\item for every singular point $p\in Z_{\sing}$ there exists an open neighborhood 
$U$ of $p$ such that $L|_{U}$ is trivial and a local
 weight $\varphi\in \mathscr C^\infty(U)$ of $h^L$ 
that attains a minimum at $p$ and $\{\varphi_p<d\}\Subset U$
for $d>\varphi(p)$ in a neighborhood of $\varphi(p)$.
%which is a strictly psh exhaustion function that attains a minimum at $p$.
%
%for some local weight 
%$\varphi_p\in \mathscr C^\infty(U_p)$ of $h^L$, the strictly psh
%function $\varphi_p$ attains a minimum at $p$.
\end{enumerate}
Consider the $1$-concave manifold \(M:=Z_{\reg}\). 
Then there exists a smooth superlevel exhaustion
function $\psi:M\to[0,\infty)$ and $d>0$ such that $\psi$ is
strictly psh on $\{\psi<d\}$ and for any regular value $c\in(0,d)$, 
the superlevel set \(M_c=\{\psi>c\}\Subset M\)
has a smooth boundary \(X_c:=\partial M_c\), and along \(X_c\) 
the curvature of \(L\) and the Levi form of \(X_c\) are conformally equivalent
in the sense of \eqref{eq:conformal}, 
in fact with \(g\equiv 1\). Consequently, Corollary~\ref{cor:one-concave-positive} 
applies to \(M\) and \(L\).
\end{prop}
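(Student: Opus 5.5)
The plan is to build $\psi$ directly from the local weights $\varphi_p$ of $h^L$ near the singular points, so that the boundary $X_c$ becomes a level set of a weight of $h^L$. Once that holds, the Levi form of $X_c$ is, up to sign and normalization, the restriction of $\partial\dbar\varphi_p$ to $T^{1,0}X_c$. That restriction is exactly $\mathcal R^L_{b,x}$, and this is what gives $g\equiv1$.

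First I set up the local picture. Since $Z_{\sing}$ is finite, I fix for each $p\in Z_{\sing}$ the neighborhood $U_p$ and weight $\varphi_p$ from hypothesis (3). I shrink the $U_p$ so that they are pairwise disjoint and so that $c_1(L,h^L)>0$ on $U_p$ by hypothesis (2). Then $\varphi_p$ is strictly psh on $U_p\cap Z_{\reg}$, with minimum $\varphi_p(p)$, and the sublevel sets $\{\varphi_p<\varphi_p(p)+d\}\Subset U_p$ for small $d>0$.

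Next I define $\psi$. I set $\psi:=\varphi_p-\varphi_p(p)$ on $\{\varphi_p<\varphi_p(p)+2d\}\setminus\{p\}$, choosing $d$ so small that these sets lie in $U_p$. I then extend $\psi$ smoothly to the rest of $M$ so that $\psi\ge d$ there, for instance by gluing $\min$-type smoothings of $\varphi_p-\varphi_p(p)$ with a constant $\ge d$. Then:
\begin{itemize}
\item $\psi$ is strictly psh on $\{\psi<d\}$;
\item $\psi\ge0$ on $M$;
\item $\psi\to0$ exactly as one approaches $Z_{\sing}$, so each superlevel set $\{\psi>c\}$ with $c>0$ is relatively compact in $M=Z_{\reg}$;
\item this also exhibits $M$ as $1$-concave, since $\rho=-\psi$ is the sublevel exhaustion and $i\partial\dbar\psi>0$ near the ends.
\end{itemize}
Here I must check that $\psi$ does not vanish on $Z_{\reg}$. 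Since $\varphi_p$ is strictly psh on the punctured neighborhood, its minimum is attained only at $p$; if necessary I adjust $\varphi_p$ by a small multiple of $|\iota(\cdot)-\iota(p)|^2$, which changes the weight by a trivial factor. By Sard, regular values $c\in(0,d)$ exist, and for such $c$ the boundary $X_c=\{\psi=c\}$ is a finite union of level sets $\{\varphi_p=\varphi_p(p)+c\}$.

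Then I compute the Levi form. On $X_c$ near $p$, $M_c=\{\psi>c\}$ has defining function $r=(c-\psi)/|d\psi|$, and $\psi$ coincides with the local weight $\varphi_p$ up to a constant. By \eqref{e-gue260131yyda} and $\omega_0=J(dr)$, for $U,V\in T^{1,0}X_c$ we get $\mathcal L_x(U,\ol V)=\partial\dbar r(U,\ol V)$ up to the paper's normalization constant. This equals $-\frac{1}{|d\psi|}\partial\dbar\varphi_p(U,\ol V)$, since the terms involving derivatives of $|d\psi|^{-1}$ are killed by $U,\ol V$ being tangent to the level set. Comparing with \eqref{curvature2}, which expresses $\mathcal R^L_{b,x}$ through the same form $\partial\dbar\varphi_p$ restricted to $T^{1,0}X_c$, gives $\dot{\mathcal R}^L_{b,x}=-|d\psi|(x)\,\dot{\mathcal L}_x$. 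This is the conformal equivalence \eqref{eq:conformal} with $g=|d\psi|>0$.

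The main obstacle is the normalization: reconciling the constants in \eqref{e-gue260131yyda} and \eqref{curvature2} and the unit normalization $|dr|=1$ to obtain literally $g\equiv1$ rather than $g=|d\psi|$. I would first try to absorb the factor into the choice of Hermitian metric near $X_c$, declaring $|d\psi|=1$ on $X_c$. The statement allows this, since the metric on $M'$ is at our disposal and $g$ only enters through $\dot{\mathcal L}$, which depends on that metric. The second delicate point is the smooth gluing of $\psi$ away from the singular neighborhoods while keeping $\{\psi<d\}$ equal to the union of the local sublevel sets. Finally, $n\ge3$, semipositivity from (1), and $g>0$ are exactly the hypotheses of Corollary~\ref{cor:one-concave-positive}, which therefore applies to $M$ and $L$.
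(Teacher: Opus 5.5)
Your argument is the paper's. $\psi$ is a cut-off of the normalized local weight $\varphi_p-\varphi_p(p)$ near each singular point and is $\ge d$ elsewhere, $X_c$ is a level set of a weight of $h^L$, and the Levi form of $X_c$ is the restriction of $-\partial\bar\partial\varphi_p$, i.e.\ minus the boundary curvature.

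Your normalization bookkeeping is more accurate than the paper's. The paper goes directly from $\partial\bar\partial\rho_c=-\partial\bar\partial\varphi$ for the unnormalized $\rho_c=c-\psi$ to $g\equiv1$. With $|dr|=1$ and the convention \eqref{trivia}, one actually gets $\dot{\mathcal R}^L_{b,x}=-|d\psi|(x)\,\dot{\mathcal L}_x$. So $g\equiv1$ needs a Hermitian metric with $|d\psi|=1$ along $X_c$, which a conformal change near $X_c$ provides. Corollary~\ref{cor:one-concave-positive} itself only needs some smooth $g>0$; the literal $g\equiv1$ matters only for \eqref{term2}. The gluing is routine: take $\psi=\chi\circ(\varphi_p-\varphi_p(p))$ with $\chi(s)=s$ for $s\le d'$, $\chi$ nondecreasing and constant for $s\ge 2d'$, and extend by that constant.

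One step is wrong. Strict plurisubharmonicity does not force the minimum of $\varphi_p$ to be attained only at $p$: $(\operatorname{Im}z_1)^2+|z_2|^2+\dots+|z_n|^2$ is strictly psh on $\C^n$, and its minimum set is a real line. Replacing $\varphi_p$ by $\varphi_p+\epsilon|\iota-\iota(p)|^2$ is also not a change of trivialization. A weight changes only by $\log|f|^2$ with $f$ holomorphic and nonvanishing, i.e.\ by a pluriharmonic function. Your modification therefore changes $h^L$ and its curvature, and it destroys exactly the identity between $\mathcal L$ and $\mathcal R^L_b$ you are proving.

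For relative compactness of $M_c=\{\psi>c\}$, $c>0$, you need none of this. It follows from $\psi$ extending continuously to $Z$ with value $0$ at the singular points, which is all the paper uses.

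The minimum set does matter in your shrinking of $U_p$ and in the claim that $\psi$ is strictly psh on $\{\psi<d\}$. There you need $\{\varphi_p<\varphi_p(p)+d\}$ to lie inside the neighborhood of hypothesis (2) where $c_1(L,h^L)>0$. For small $d$ this holds exactly when the minimum set of $\varphi_p$ lies in that neighborhood. Separately, $\psi>0$ on $M$ needs $p$ to be a strict minimum, as for $\log(1+|z|^2)$ in Corollary~\ref{cor:projective-isolated-sing}. These conditions have to be read into hypothesis (3), as the paper's proof tacitly does; they cannot be derived from plurisubharmonicity.
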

%===
\begin{proof}
For simplicity, we first assume that \(Z\) has only one singular point \(p\). 
Let \(|e_U|_{h^L}^2=e^{-2\varphi}\)
where \(\varphi\in\mathscr C^\infty(U)\) is as in hypothesis (3) 
%strictly plurisubharmonic
%local weight of \(h^L\) on \(U\) which attains a minimum at \(p\). 
After adding a constant, we may assume that
\(\varphi(p)=0.\)
Choose \(d>0\) so small that
\(U_d:=\{x\in U:\varphi(x)<d\}\) is relatively compact in \(U\).

We now construct a smooth function
\(
\psi:M=Z_{\reg}\to[0,\infty)
\)
such that
\[
\psi=\varphi \quad \text{on } U_d,
\qquad
\psi\ge d \quad \text{on } M\setminus U_d.
\]
Since \(Z\) is compact and \(p\) is the unique singular point, 
it follows that for every \(c\in(0,d)\), the superlevel set 
\(M_c:=\{x\in M:\psi(x)>c\}\)
is relatively compact in \(M\). 
Moreover, \(\psi\) is strictly plurisubharmonic on 
\(\{\psi<d\}\). Thus \(\psi\) is a smooth superlevel exhaustion function 
of \(M\) having the required properties.

Let \(c\in(0,d)\) be a regular value of \(\psi\), and set
\(
X_c:=\partial M_c=\{x\in M:\psi(x)=c\}.
\)
Since \(X_c\subset U_d\), we have \(\psi=\varphi\) in a neighborhood of \(X_c\). 
Therefore, the defining function
\(
\rho_c:=c-\psi
\)
satisfies
\[
\partial\bar\partial \rho_c=-\partial\bar\partial \psi
=-\partial\bar\partial \varphi
\]
near \(X_c\). On the other hand,
\(
\sqrt{-1}\partial\bar\partial \varphi_p=c_1(L,h^L)
\)
on \(U\cap Z_{\reg}\). Hence, along \(Z_c\), the Levi form of \(X_c\) 
and the curvature of \(L\) are conformally equivalent in the sense of 
\eqref{eq:conformal}, in fact with \(g\equiv 1\).
By assumption \(c_1(L,h^L)\ge0\) on \(M=Z_{\reg}\), \((L,h^L)\)
is semipositive on \(M\). Therefore, all hypotheses of Corollary~\ref{cor:one-concave-positive} 
are satisfied, and the conclusion follows.

If \(Z\) has finitely many isolated singular points, the proof is the same: 
one chooses pairwise disjoint neighborhoods \(U_p\) of the singular points, 
corresponding strictly plurisubharmonic local weights \(\varphi_p\), 
and patches them together into a smooth function \(\psi\) on \(Z_{\reg}\) 
which agrees with \(\varphi_p\) near each singular point and is bounded 
below away from these neighborhoods.
\end{proof}

\begin{cor}\label{cor:projective-isolated-sing}
Let $Z\subset \mathbb P^N$ be a compact normal complex space of dimension 
$n\geq 3$ with only isolated singularities, and let
\(L=\mathcal{O}(1)|_Z\)
be endowed with the induced Fubini-Study metric. 
Then the assumptions of Proposition~\ref{prop:isolated-sing-example} 
are satisfied. In particular, the regular locus $Z_{\reg}$ is a \(1\)-concave 
complex manifold to which Corollary~\ref{cor:one-concave-positive} applies.
\end{cor}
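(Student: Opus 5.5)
We need to verify the three hypotheses of Proposition~\ref{prop:isolated-sing-example} for $Z\subset\mathbb P^N$ with $L=\mathcal O(1)|_Z$ and the Fubini--Study metric $h^{FS}$, and then invoke that proposition.

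\textbf{Hypotheses (1) and (2).} These are immediate. The Fubini--Study form $\omega_{FS}=c_1(\mathcal O(1),h^{FS})$ is positive on $\mathbb P^N$. Its pull-back to $Z_{\reg}$ is therefore a Kähler form, so $c_1(L,h^L)>0$ everywhere on $Z_{\reg}$. Moreover, near each singular point it is the restriction of a strictly psh ambient form. So positivity holds in a neighborhood of $Z_{\sing}$, in the sense of complex spaces via local embeddings.

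\textbf{Hypothesis (3).} This is the step needing an actual choice of trivialization. Fix $p\in Z_{\sing}$. After a unitary change of homogeneous coordinates, take $p=[1:0:\cdots:0]$. On the chart $\{Z_0\neq0\}\cong\C^N$ with affine coordinates $w$, trivialize $\mathcal O(1)$ by the section $e=Z_0$. Then
\[
|e|^2_{h^{FS}}=\frac{1}{1+|w|^2}.
\]
With the paper's convention $|e|^2=e^{-2\varphi}$, this gives
\[
\varphi=\tfrac12\log(1+|w|^2).
\]
This $\varphi$ is smooth and strictly psh on $\C^N$, and its restriction to $U:=Z\cap\{|w|<1\}$ is a local weight of $h^L$. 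It attains its strict minimum $0$ at $w=0$, i.e.\ at $p$. Since $\varphi$ is a radial increasing function of $|w|$, the set $\{\varphi<d\}\cap Z$ equals $Z\cap\{|w|^2<e^{2d}-1\}$. For $0<d<\tfrac12\log2$ this set is relatively compact in $U$.

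\textbf{Two conventions to check.} First, the normalization of $\varphi$ (factor $\tfrac12$ or not, and the $2\pi$ in $c_1$) only rescales the weight by a positive constant. This does not affect the conformality argument in the proof of Proposition~\ref{prop:isolated-sing-example}, which needs only that $\partial\dbar\varphi$ be a positive multiple of the curvature. Second, to handle several singular points, choose the neighborhoods $U_p$ pairwise disjoint by shrinking $d$, as in that proof.

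\textbf{Conclusion.} With (1)--(3) verified, Proposition~\ref{prop:isolated-sing-example} applies. It shows that $Z_{\reg}$ is $1$-concave, with exhaustion $\psi$ equal to $\varphi$ near each singular point, and that Corollary~\ref{cor:one-concave-positive} applies with $g\equiv1$.

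The main obstacle is not analytic. It is making sure that, in the given coordinates, the weight is globally defined on a neighborhood of $p$ in $Z$ and has compact sublevel sets there. The computation above handles this by placing $p$ at the origin of an affine chart, where the Fubini--Study potential is an exhaustion-type function.
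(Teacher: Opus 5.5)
Your proof is correct and follows the paper's argument. You move $p$ to $[1:0:\cdots:0]$, read off the Fubini--Study weight $\log(1+|w|^2)$ (up to your factor $\tfrac12$) in the affine chart, and check that it is a strictly psh local weight with a strict minimum at $p$ and relatively compact sublevel sets. Your use of a unitary change of coordinates is in fact the right refinement of the paper's appeal to an arbitrary element of $\operatorname{PGL}(N+1,\C)$: only unitary automorphisms preserve $h^{FS}$, and hence keep the weight of the given metric in the form $\log(1+|w|^2)$.
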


\begin{proof}
The bundle $\mathcal O(1)|_Z$ is positive everywhere. 
It remains to verify condition~(3) in 
Proposition~\ref{prop:isolated-sing-example}. Let $p\in Z_{\sing}$. 
Since $\operatorname{Aut}(\mathbb P^N)=\operatorname{PGL}(N+1,\mathbb C)$, 
after composing with a projective automorphism we may assume that $p=[1,0,\ldots,0]$; 
moreover, such an automorphism preserves the hyperplane bundle, so 
$f^*\mathcal O(1)\simeq \mathcal O(1)$.
In the affine chart $\{Z_0\neq 0\}\simeq \C^N$, with coordinates 
$z=(z_1,\ldots,z_N)$, the Fubini--Study metric on $\mathcal O(1)$ has local weight
\(\varphi(z)=\log(1+|z|^2),\)
which is smooth and strictly plurisubharmonic and attains its minimum at $z=0$, 
corresponding to the point $p$. Hence, all assumptions of 
Proposition~\ref{prop:isolated-sing-example} are fulfilled.
\end{proof}

The deformation theory of complex structures on \(1\)-concave manifolds is 
closely related to the deformation and embeddability theory of the induced 
CR structures on their pseudoconvex boundaries. This point of view, developed 
in particular in the work of Epstein, Lempert, Epstein–Henkin, and others, 
shows that small deformations of the boundary CR structure can often 
be studied by extending them to the pseudoconcave side, where positivity of divisors or line bundles provide an effective tool for proving 
embeddability, stability, and Moishezon-type results; see for instance 
\cite{Eps98,EpHe00,Le3,Le2}. 

In \cite{Mar16}, a similar strategy was used 
in the presence of a positive divisor, following ideas of Lempert. 
In the present setting, the key observation is slightly different: 
although the conformality assumption along the boundary need not 
be stable under deformation, the leading coefficient in the Morse inequality 
depends continuously on the complex structure and on the corresponding 
Hermitian data. Hence strict positivity of this coefficient is an open condition, 
and this suffices to deduce the persistence of the Moishezon property under 
sufficiently small deformations.
%====
\begin{thm}
Let $M$ be a $1$-concave complex manifold of dimension $n\ge 3$, and let
$(L,h^L)$ be a positive holomorphic Hermitian line bundle over $M$
given by a smooth divisor $Z\subset M$ with $L=[Z]$.
Let $c$ be a regular value of the exhaustion function of $M$ such that
$M_c\Subset M$ contains the exceptional compact set and $Z\subset M_c$.
Set $X_c=\partial M_c$.
Assume that the 
leading coefficient in the asymptotic formula \eqref{eq:h0-one-concave},
\[
\frac{1}{n!}\int_{M_c}c_1(L,h^L)^n
+
\frac{1}{(2\pi)^n}\int_{X_c}\int_{\mathbb R_x(0)}
\big|\det(\dot{\mathcal R}^{L}_{b,x}-2\eta\dot{\mathcal L}_{x})\big|\,d\eta\,dv_{X_c}(x)
\]
is strictly positive.
Then any sufficiently small deformation of the complex structure of $M_c$
leaving $T(Z)$ invariant is a pseudoconcave Moishezon manifold.
If $K_M$ is positive, the same result remains valid under any sufficiently small deformation of the complex structure of $M_c$.
\end{thm}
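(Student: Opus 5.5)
Given a deformation $J_s$ of the complex structure of $M_c$ ($s$ small, $J_0$ the original one), the plan is to carry along Hermitian data on $L$ and the boundary $X_c$, show that the leading coefficient of the degree-$0$ Morse inequality varies continuously in $s$, and then rerun the lower-bound argument of Corollary~\ref{cor:one-concave-positive} for $s$ small.

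\textbf{Step 1: the bundle $L_s$.} Since the deformation leaves $T(Z)$ invariant, $Z$ stays a complex hypersurface of $(M_c,J_s)$. Hence $L_s:=[Z]_s$ is a holomorphic line bundle for $J_s$, and I will use the smooth identification $L_s\simeq L$. I then build metrics $h^{L}_s$ depending smoothly on $s$ with $h^L_0=h^L$. Concretely, keep the canonical section $s_Z$ and glue the local weights: the defining functions of $Z$ can be chosen $J_s$-holomorphic and varying smoothly. This gives curvatures $c_1(L_s,h^L_s)\to c_1(L,h^L)$ in $\mathscr C^\infty(\ol M_c)$.

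If $K_M>0$, I would instead use $L_s=K_{M_c,s}$. This bundle is canonically defined for every $J_s$, so no invariance of $Z$ is needed. I would read the hypothesis as applying to $L=K_M$ in that case.

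\textbf{Step 2: openness of $1$-concavity and $Z(j)$.} Nondegeneracy of eigenvalues is an open condition, so for $s$ small $X_c$ remains strongly pseudoconvex for $J_s$, the Levi form $\dot{\mathcal L}_{x,s}$ has $n-1$ negative eigenvalues, and $Z(0),Z(1)$ hold. Consequently Theorem~\ref{thmmorse} applies to $(\ol M_c,J_s,L_s)$. Its strong form with $q=1$ gives
\[
\dim H^0(\ol M_c,L_s^k)\ge \frac{k^n}{(2\pi)^n}\Big(I_0(s)-I_1(s)\Big)+o(k^n),
\]
where $I_j(s)$ denotes the interior plus boundary integrals of index $j$.

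\textbf{Step 3: continuity in $s$ (main obstacle).} This is the step I expect to be hardest. I need $I_0(s)-I_1(s)\to I_0(0)-I_1(0)$, and I need the right-hand side to be positive. At $s=0$ this is where the conformality assumption entered: it made the boundary part of $I_1(0)$ cancel exactly. After deformation that cancellation is lost, so I must control everything by continuity.

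The interior term is handled by dominated convergence, since $|\det\dot{\mathcal R}|$ is continuous and the sets $M(j)$ change only near the degeneracy locus $\{\det=0\}$, whose integrand vanishes there. For the boundary term, note that $\eta\mapsto|\det(\dot{\mathcal R}_{b,x,s}-2\eta\dot{\mathcal L}_{x,s})|\chi_{\R_x(j)}$ has integrand vanishing at the endpoints of $\R_x(j)$. Moreover $\R_x(j)$ stays bounded uniformly in $s$, because $\dot{\mathcal L}_{x,s}$ is uniformly definite, so the $\eta$-integral is continuous in $(x,s)$. In fact $\R_x(1)$ is empty at $s=0$, so $I_1(s)\to0$.

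Hence $I_0(s)-I_1(s)\to I_0(0)$, which equals $(2\pi)^n$ times the assumed strictly positive coefficient. So $\dim H^0(M_c,L_s^k)\ge ck^n$ for large $k$.

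\textbf{Step 4: conclusion.} For small $s$, $(M_c,J_s)$ is pseudoconcave of dimension $n\ge3$, with a line bundle whose sections grow like $k^n$. By Rossi and Andreotti–Siu it compactifies, and \cite[Theorem 5.2]{Mar96} shows the compactification is Moishezon, exactly as in Corollary~\ref{cor:one-concave-positive}.
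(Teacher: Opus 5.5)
Your argument breaks at Step 3, where you use that $\mathbb R_x(1)=\varnothing$ at $s=0$ ``by the conformality assumption''. The theorem makes no such assumption. Its only extra hypothesis is positivity of the degree-$0$ coefficient, and that is automatic here, since $L>0$ already gives $\frac{1}{n!}\int_{M_c}c_1(L,h^L)^n>0$.

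Without conformality the index-$1$ boundary term is generically nonzero. Let $a_1(x)\le a_2(x)\le\cdots\le a_{n-1}(x)$ be the eigenvalues of $\dot{\mathcal R}^L_{b,x}>0$ relative to $-\dot{\mathcal L}_x>0$. Then $\mathbb R_x(0)=(-a_1/2,0)$ and $\mathbb R_x(1)=(-a_2/2,-a_1/2)$. The latter is nonempty as soon as $a_1(x)<a_2(x)$, and for $a_1\ll a_2$ the index-$1$ boundary integral dominates the index-$0$ one. So continuity only yields $I_0(s)-I_1(s)\to I_0(0)-I_1(0)$, and the hypotheses do not control its sign. The same problem affects your $K_M$ case.

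What you have actually proved is the statement under the extra hypothesis $\dot{\mathcal R}^L_{b,x}=-g(x)\dot{\mathcal L}_x$ of Corollary~\ref{cor:one-concave-positive}. In that setting your Step 3 is correct.

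For comparison, the paper follows the same outline. It takes the perturbed bundle and metric from \cite[Lemmas~4.1--4.2]{Mar16}; this is the Lempert-type input behind your unproved claim in Step 1 that $Z$ has local $J_s$-holomorphic defining functions depending smoothly on $s$. It then argues by continuity of the degree-$0$ coefficient $\mathcal A$. Its last step, however, extracts the lower bound $\dim H^0\ge\mathcal A\,k^n+o(k^n)$ from the weak degree-$0$ inequality, which in Theorem~\ref{thmmorse} is an upper bound. Your use of the strong inequality with $q=1$ is the correct way to obtain a lower bound, and it is exactly what exposes the term $I_1$. So the gap is not an artifact of your route; the paper's argument leaves it open too.

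For $L=[Z]$ the gap can be closed inside Theorem~\ref{thmmorse}, with no continuity in $s$ needed.
\begin{itemize}
\item Since $Z\Subset M_c$, the section $s_Z$ trivializes $L_s$ near $X_c$. Its weight $\psi_s=-\log|s_Z|^2_{h_s}$ is strictly psh on $\ol M_c\setminus Z$ and tends to $+\infty$ along $Z$.
\item Replace $\psi_s$ by a regularized maximum of $\psi_s$ and $\epsilon\psi_s+C$, with $C$ large and $\epsilon$ small. This gives a positive metric that agrees with $h_s$ near $Z$ and has boundary curvature $\epsilon\dot{\mathcal R}^{L_s}_{b}$.
\item By the substitution $\eta=\epsilon\eta'$, both boundary integrals are multiplied by $\epsilon^n$.
\item The interior integral stays bounded below by its part near $Z$, independently of $\epsilon$. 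Hence $I_0-I_1>0$ for small $\epsilon$.
\end{itemize}
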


\begin{proof}
Let $J$ be the given complex structure on $M_c$, and let $J'$ 
be a sufficiently small deformation of $J$ leaving $T(Z)$ invariant. 
Denote by $M_c'$ the manifold $M_c$ endowed with the complex structure $J'$.
Since $T(Z)$ is $J'$-invariant, the divisor $Z$ is also a smooth complex 
divisor in $M_c'$, and hence determines a holomorphic line bundle
\(L'=[Z]\to M_c'\).
By Lemma~4.1 and Lemma~4.2 in \cite{Mar16}, 
which are based on the corresponding construction of Lempert \cite[Lemma~4.1]{Le1}, 
the bundle $L'$ may be represented near $M_c$ by transition functions arbitrarily close 
to those of $L$, and it admits a Hermitian metric $h^{L'}$ whose curvature form 
is arbitrarily close to $c_1(L,h^L)$ in the $\mathscr C^\infty$ topology, provided that
 $J'$ is sufficiently close to $J$. Since $L$ is positive, positivity being an open condition, 
 it follows that $L'$ is again positive for every sufficiently small deformation.

Consider now the leading coefficient in the weak Morse inequality in degree \(0\):
\[
\mathcal A(J,L,h^L)
:=
\frac{1}{n!}\int_{M_c}c_1(L,h^L)^n
+
\frac{1}{(2\pi)^n}
\int_{X_c}\int_{\mathbb R_x(0)}
\bigl|\det(\dot{\mathcal R}^{L}_{b,x}-2\eta\dot{\mathcal L}_{x})\bigr|\,d\eta\,dv_{X_c}(x).
\]
By hypothesis, this quantity is strictly positive. Since the curvature form, the Levi form, the 
set $\mathbb R_x(0)$, and the induced boundary volume element all depend continuously on the complex structure and the metric, the corresponding coefficient
\(
\mathcal A(J',L',h^{L'})
\)
for the deformed structure remains positive for $J'$ sufficiently close to $J$.
Applying the weak Morse inequality in degree \(0\) to the deformed manifold \(M_c'\), we obtain
\[
\dim H^0(M_c',L'^k)\geq \mathcal A(J',L',h^{L'})\,k^n+o(k^n),
\qquad k\to\infty.
\]
In particular,
\[
\limsup_{k\to\infty} \, k^{-n}\dim H^0(M_c',L'^k)>0,
\]
so \(L'\) is a big line bundle on \(M_c'\). Hence \(M_c'\) is Moishezon.

Since \(M_c\) is a relatively compact domain with pseudoconcave boundary, 
the same holds for the nearby deformed manifold \(M_c'\), so \(M_c'\) is a pseudoconcave Moishezon manifold.

If \(K_M\) is positive, then the same argument applies to the canonical bundle. 
Indeed, by Lemma~4.3 of \cite{Mar16}, again relying on the local deformation 
result of Lempert \cite{Le1}, any sufficiently small deformation \(J'\) induces 
a holomorphic canonical bundle \(K_{M_c'}\) carrying a Hermitian metric 
whose curvature is arbitrarily close to that of \(K_{M_c}\). 
Since positivity is open, \(K_{M_c'}\) remains positive for all sufficiently 
small deformations, and the preceding argument shows that \(M_c'\) is Moishezon.
\end{proof}

\subsection{Semi-classical Weyl law}\label{5.3}

As another application of the heat kernel asymptotics obtained above, we derive a
semi-classical Weyl law for the $\dbar$-Neumann Laplacian $P_k^q:=\frac{1}{k}\Box_k^q$, where $\Box_k^q$ is defined with respect to the $k$-dependent Hermitian metric introduced in \eqref{e-gue260125yyd}. Throughout this section, we assume that $\ol M$ is compact and that condition $Z(q)$ holds. Then $\Box^q_k$ has a discrete spectrum.
For $\lambda\ge0$, we define the counting function
$$
N^q_k(\lambda):=\Tr_q\mathbf 1_{[0,\lambda]}(P_k^q)=
\sum_{\mu\in\operatorname{Spec}(P_k^q)\cap[0,\lambda]}
\dim E_\mu(P_k^q),
$$
where $E_\mu(P_k^q)$ denotes the eigenspace of $P_k^q$ corresponding to the eigenvalue $\mu$. Thus $N^q_k(\lambda)$ counts the eigenvalues of $\frac{1}{k}\Box^q_k$ not exceeding $\lambda$, counted with multiplicities.

We first write the heat trace asymptotics in a form suitable for the Weyl law. For $t>0$, put
\begin{equation}\label{inweyl}
H^M_q(t,\mathbf z):=\frac{1}{(2\pi)^n}\frac{\det\dot{\mathcal R}^L_{\mathbf z}}{\det(1-e^{-t\dot{\mathcal R}^L_{\mathbf z}})}\Tr_q e^{-t\varTheta_{\mathbf z}},
\qquad \mathbf z\in M.
\end{equation}
For $x\in X$ and $r\le0$, define the boundary heat density by
\begin{equation}\label{bweyl}
\begin{aligned}
H^X_q(t,x,r):=&\frac{1+e^{-2r^2/t}}{(2\pi)^n\sqrt{2\pi t}}\int_{\R}\frac{e^{-t\eta^2/2}\det(\dot{\mathcal R}^L_{b,x}-2\eta\dot{\mathcal L}_x)}{\det\big(1-e^{-t(\dot{\mathcal R}^L_{b,x}-2\eta\dot{\mathcal L}_x)}\big)}\Tr_q e^{-t\varTheta^{\eta,\tau}_x}\,d\eta
\\&+\frac{2}{(2\pi)^n\sqrt{\pi}}\int_{\R}\frac{\eta e^{-2r\eta}\det(\dot{\mathcal R}^L_{b,x}-2\eta\dot{\mathcal L}_x)}{\det\big(1-e^{-t(\dot{\mathcal R}^L_{b,x}-2\eta\dot{\mathcal L}_x)}\big)}\Tr_q e^{-t\varTheta^{\eta,\tau}_x}
\left(\int^{\frac{2r-t\eta}{\sqrt{2t}}}_{-\infty}e^{-\gamma^2}\,d\gamma\right)d\eta
\\&+\frac{1-e^{-2r^2/t}}{(2\pi)^n\sqrt{2\pi t}}\int_{\R}\frac{e^{-t\eta^2/2}\det(\dot{\mathcal R}^L_{b,x}-2\eta\dot{\mathcal L}_x)}{\det\big(1-e^{-t(\dot{\mathcal R}^L_{b,x}-2\eta\dot{\mathcal L}_x)}\big)}\Tr_q e^{-t\varTheta^{\eta,\nu}_x}\,d\eta.
\end{aligned}
\end{equation}
In this context, the terms $\mu/(1-e^{-t\mu})$ occurring in \eqref{inweyl} and \eqref{bweyl} are defined at $\mu=0$ by taking their continuous extension. Set
\begin{equation}\label{Thetaq}
\Theta_q(t):=\int_M H^M_q(t,\mathbf z)\,dv_{M'}(\mathbf z)+\int_X\int_{-\infty}^{0}H^X_q(t,x,r)\,dr\,dv_X(x).
\end{equation}
Integrating the diagonal heat kernel asymptotics and using the same cut-off estimates as in the proof of Theorem~\ref{thmmorse}, we obtain, for every $t>0$,
\begin{equation}\label{heat-trace}
\Theta_q(t)=\lim_{k\to\infty}k^{-n}
\Tr_q\left(e^{-\frac{t}{k}\Box^q_k}\right).
\end{equation}
Let $\nu^M_{q,\mathbf z}$ and $\nu^X_{q,x,r}$ be the local spectral measures characterized, respectively, by
\begin{equation}\label{nu-MX}
\int_0^\infty e^{-t\lambda}\,d\nu^M_{q,\mathbf z}(\lambda)=H^M_q(t,\mathbf z)\text{\ \  and \  }
\int_0^\infty e^{-t\lambda}\,d\nu^X_{q,x,r}(\lambda)=H^X_q(t,x,r).    
\end{equation}

\begin{thm}[Semi-classical Weyl law]\label{thm:scwl}
Assume that $\ol M$ is compact and that condition $Z(q)$ holds. Then there exists a unique positive Borel measure $\mu^q$ on $[0,\infty)$ such that
\begin{equation}\label{mu-q}
\int_0^{\infty}e^{-t\lambda}\,d\mu^q(\lambda)=\Theta_q(t),\qquad t>0.
\end{equation}
Moreover, the measure $\mu^q$ is given by
\begin{equation}\label{mu-l}
\mu^q([0,\lambda])=\int_M \nu^M_{q,\mathbf z}([0,\lambda])\,dv_{M'}(\mathbf z)
+\int_X\int_{-\infty}^{0}\nu^X_{q,x,r}([0,\lambda])\,dr\,dv_X(x)    
\end{equation}
at every continuity point $\lambda$ of the distribution function $\lambda\mapsto\mu^q([0,\lambda])$. In particular,
for every $\lambda\ge0$ such that $\mu^q(\{\lambda\})=0$, we have
\begin{equation}\label{scweyl}
N^q_k(\lambda)=k^n\mu^q([0,\lambda])+o(k^n),\qquad k\to\infty.
\end{equation}
\end{thm}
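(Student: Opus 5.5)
The plan is the standard Laplace-transform route, as in the heat-kernel proof of Demailly's formula in \cite[\S3.2.2]{MM07}. Define the positive Borel measures $\mu^q_k$ on $[0,\infty)$ by $\mu^q_k([0,\lambda]):=k^{-n}N^q_k(\lambda)$, i.e.\ $\mu^q_k=k^{-n}\sum_{\mu\in\operatorname{Spec}(P^q_k)}\dim E_\mu(P^q_k)\,\delta_\mu$. Since $\Box^q_k$ has discrete spectrum under $Z(q)$ and $e^{-\frac tk\Box^q_k}$ is trace class (its kernel is smooth on the compact $\ol M$), we have
$$\int_0^\infty e^{-t\lambda}\,d\mu^q_k(\lambda)=k^{-n}\Tr_q\big(e^{-\frac tk\Box^q_k}\big).$$
By \eqref{heat-trace} this converges to $\Theta_q(t)$ for every $t>0$. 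The first step is to check that $\Theta_q(t)<\infty$ for each $t>0$. This needs the integrability in $r\in(-\infty,0]$ of $H^X_q(t,x,r)$, which I would get from the Gaussian factors and from the bound on the $\eta$-integrals coming from Remark~\ref{rem:zq}. It is the same computation that was used when passing to the limit in \eqref{weakmorsez}.

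The second step is compactness and uniqueness. For a fixed $t_0>0$, $\mu^q_k([0,\lambda])\le e^{t_0\lambda}k^{-n}\Tr_q(e^{-\frac{t_0}{k}\Box^q_k})$, and the right-hand side is bounded uniformly in $k$. So the family $\{\mu^q_k\}$ is locally bounded. Moreover $e^{-t_0\lambda}d\mu^q_k$ are finite measures with bounded mass, and their tails are controlled: for $t>t_0$,
$$\int_{\lambda>\Lambda}e^{-t_0\lambda}\,d\mu^q_k\le e^{-(t-t_0)\Lambda}\sup_k k^{-n}\Tr_q\big(e^{-\frac tk\Box^q_k}\big).$$
By Helly's selection theorem, every subsequence has a further subsequence converging vaguely to some measure $\mu$. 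Tightness of $e^{-t_0\lambda}d\mu^q_k$ lets us pass to the limit in the Laplace transforms for every $t>t_0$, and since $t_0$ is arbitrary, for all $t>0$. This gives $\int e^{-t\lambda}d\mu=\Theta_q(t)$. A positive measure is determined by its Laplace transform on $(0,\infty)$; for instance, $\lambda\mapsto e^{-\lambda}$ maps $[0,\infty)$ to $(0,1]$, the measures become finite, and they are determined by their moments. Hence $\mu$ is the same for every subsequence, we set $\mu^q:=\mu$, and the whole sequence converges. This proves existence and uniqueness in \eqref{mu-q}. The standard portmanteau argument then gives $\mu^q_k([0,\lambda])\to\mu^q([0,\lambda])$ whenever $\mu^q(\{\lambda\})=0$, which is exactly \eqref{scweyl}.

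The third step is the formula \eqref{mu-l}. For each fixed $\mathbf z$, and each $(x,r)$, one must first know that $H^M_q(t,\mathbf z)$ and $H^X_q(t,x,r)$ are Laplace transforms of positive measures, so that $\nu^M_{q,\mathbf z}$ and $\nu^X_{q,x,r}$ in \eqref{nu-MX} are well defined.
\begin{itemize}
\item For $H^M_q$, the measure is explicit: it is the Landau-level distribution $(2\pi)^{-n}|\det\dot{\mathcal R}^L_{\mathbf z}|$ times the sum of Dirac masses at the eigenvalues of the flat harmonic-oscillator model $\mathring\Box^q_{\phi_0}$. This follows by expanding $\mu/(1-e^{-t\mu})$ as a geometric series.
\item For $H^X_q$, I would argue abstractly. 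By Theorem~\ref{main1}, $H^X_q(t,x,r)$ is the diagonal value at $[x,r]$ of $e^{-t\Box^q_{\phi_0}}$ for the model on $\ol M_0$. By the spectral theorem for the nonnegative self-adjoint $\Box^q_{\phi_0}$, the diagonal heat kernel equals $\int e^{-t\lambda}\,d\langle E_\lambda\delta_{\mathbf z},\delta_{\mathbf z}\rangle$. Equivalently, it is a completely monotone function of $t$, since it is a pointwise limit of the positive traces of completely monotone kernels. Bernstein's theorem then supplies the positive measure.
\end{itemize}
Then I would integrate \eqref{nu-MX} over $M$ and over $X\times(-\infty,0]$. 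Tonelli's theorem applies because all quantities are positive, and it shows that the right-hand side of \eqref{mu-l} defines a positive measure whose Laplace transform is $\Theta_q(t)$. Uniqueness from the second step then identifies it with $\mu^q$.

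I expect the main obstacle to be the positivity and measurability of the boundary densities $\nu^X_{q,x,r}$, together with the finiteness of $\int_{-\infty}^0 H^X_q\,dr$. The explicit formula \eqref{bweyl} contains the term with $\eta e^{-2r\eta}$, which is not manifestly positive. So positivity has to come from the model operator, either through the spectral theorem or through complete monotonicity as a limit of heat traces, rather than from the formula itself. I would try the limit argument first: each $k^{-(n+1)}\Tr_q e^{-\frac tk\Box^q_k}([x,r/k],[x,r/k])$ is completely monotone in $t$, and pointwise limits of completely monotone functions remain completely monotone.
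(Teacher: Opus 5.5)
Your overall skeleton matches the paper's proof step for step: the measures $\mu^q_k$, local boundedness, vague compactness with tail control, uniqueness of Laplace transforms, portmanteau, and identification of \eqref{mu-l} via Tonelli. Deriving the existence and positivity of $\nu^X_{q,x,r}$ from the spectral theorem for $\Box^q_{\phi_0}$ and Bernstein's theorem is a genuine addition, since the paper simply posits these measures. Use the same abstract argument for $\nu^M_{q,\mathbf z}$ at points where $\dot{\mathcal R}^L_{\mathbf z}$ is degenerate: there $\mu/(1-e^{-t\mu})\to 1/t$ produces Lebesgue measure, not Dirac masses.

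\textbf{The gap.} Your first step fails, and the paper has the same gap: $\Theta_q(t)$ is not finite. In \eqref{bweyl} the first and third terms carry the factors $1\pm e^{-2r^2/t}$. These tend to $1$, not to $0$, as $r\to-\infty$, and the accompanying $\eta$-integrals are independent of $r$ and strictly positive. Only the $h^\tau$ term decays. For $\eta>0$ one has $e^{-2r\eta}\int_{-\infty}^{(2r-t\eta)/\sqrt{2t}}e^{-\gamma^2}d\gamma\lesssim e^{-2r^2/t-t\eta^2/2}$, and for $\eta<0$ there is the factor $e^{-2|r||\eta|}$. In fact
\[
\lim_{r\to-\infty}H^X_q(t,x,r)=\Tr_q K(t,0,0)>0,
\]
where $K$ is the boundaryless fundamental solution \eqref{ker0} built from the data at $x$. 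In other words, the half-space model kernel forgets the boundary at large depth. Consequently:
\begin{itemize}
\item $\int_{-\infty}^0H^X_q(t,x,r)\,dr=+\infty$ for every $t>0$, so $\Theta_q\equiv+\infty$;
\item the right-hand side of \eqref{mu-l} is $+\infty$ for all sufficiently large $\lambda$;
\item the computation you cite is affected the same way: the right-hand side of \eqref{weakmorsez} is $+\infty$ for every finite $t$.
\end{itemize}

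\textbf{Why this breaks the argument.} You, like the paper, take \eqref{heat-trace} as input, and it cannot hold as stated. $\Box^q_k$ depends on $(R,\eps)$ through $\chi^{R,\eps}_k$, and for fixed $(R,\eps)$ the quantity $k^{-n}\Tr_q(e^{-\frac tk\Box^q_k})$ is $O(1+R)$ by \eqref{e-gue260205yydb}. By Theorem~\ref{main1}, the layer $\{r\ge-R/k\}$ contributes $\int_X\int_{-R}^0H^X_q\,dr\,dv_X$ in the limit. This grows linearly in $R$: the diagonal heat kernel is of order $k^{n+1}$ on a layer of width $R/k$ and does not decay in the scaled depth. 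The three transition layers are only bounded, by $O(\eps+1/R)$ in \eqref{e-gue260205yydI}; they are never computed. So for fixed $(R,\eps)$ neither the existence nor the value of $\lim_k k^{-n}\Tr_q(e^{-\frac tk\Box^q_k})$ is established. One cannot send $R\to\infty$ either, because the operator itself changes with $R$. Without a limit of the Laplace transforms, your Helly argument gives only subsequential vague limits, so the uniqueness step and the identification with \eqref{mu-l} have nothing to rest on.

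\textbf{What is missing.} You would need a genuine asymptotic analysis of the heat kernel in the transition layers for fixed $(R,\eps)$, where $\chi^{R,\eps}_k$ varies on the scale of the local rescaling. The statement also has to be reformulated: $\Theta_q$, $\mu^q$ and \eqref{mu-l} should use $\int_{-R}^0$ in $r$ together with the resulting transition contribution. Given that input, your compactness, Laplace-uniqueness and portmanteau steps go through as written.
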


\begin{proof}
For each $k$, define the normalized spectral measure of $P_k^q$ by
\begin{equation*}
\mu^q_k:=k^{-n}\sum_{\mu\in\operatorname{Spec}(P_k^q)}\dim E_\mu(P_k^q)\,\delta_\mu,
\end{equation*}
where $\delta_\mu$ denotes the Dirac measure at $\mu$. Then $\mu^q_k$ is a positive Borel measure on $[0,\infty)$ and
$$
\mu^q_k([0,\lambda])=k^{-n}N^q_k(\lambda).
$$
Moreover, by the spectral theorem and the definition of $\mu^q_k$, for every $t>0$ we have
\begin{equation*}
\begin{aligned}
\int_0^{\infty}e^{-t\lambda}\,d\mu^q_k(\lambda)
=k^{-n}\operatorname{Tr}_q(e^{-tP_k^q})
=k^{-n}\operatorname{Tr}_q\left(e^{-\frac{t}{k}\Box^q_k}\right).
\end{aligned}
\end{equation*}
Hence, by \eqref{heat-trace},
\begin{equation}\label{conweyl}
\lim_{k\to\infty}\int_0^{\infty}e^{-t\lambda}\,d\mu^q_k(\lambda)
=\Theta_q(t),\qquad t>0.
\end{equation}
We first note that the measures $\mu^q_k$ are locally uniformly bounded. Indeed, fix $\Lambda>0$ and $t_0>0$. Since $e^{-t_0\lambda}\ge e^{-t_0\Lambda}$ for $0\le\lambda\le\Lambda$, we have
$$
\mu^q_k([0,\Lambda])\le e^{t_0\Lambda}\int_0^{\infty}e^{-t_0\lambda}\,d\mu^q_k(\lambda)
= e^{t_0\Lambda}k^{-n}\operatorname{Tr}_q\left(e^{-\frac{t_0}{k}\Box^q_k}\right).
$$
The right-hand side is uniformly bounded in $k$ by the heat trace asymptotics \eqref{heat-trace}. Hence, the family $\{\mu^q_k\}_{k\ge1}$ is locally uniformly bounded. Therefore, after passing to a subsequence, there exists a positive Borel
measure $\nu$ on $[0,\infty)$ such that
$$
\int_0^\infty f(\lambda)\,d\mu^q_k(\lambda)\to
\int_0^\infty f(\lambda)\,d\nu(\lambda)
$$
for every continuous function $f$ with compact support in $[0,\infty)$. 

We claim that the Laplace transform of $\nu$ is $\Theta_q(t)$. Since $e^{-t\lambda}$
is not compactly supported, we use a standard tail estimate. For $R>0$, we have
$$
\int_R^\infty e^{-t\lambda}\,d\mu^q_k(\lambda)
\le e^{-\frac{tR}{2}}\int_0^{\infty}e^{-\frac{t}{2}\lambda}\,d\mu^q_k(\lambda).
$$
The last integral is uniformly bounded in $k$ by \eqref{conweyl} with $t$ replaced by $t/2$. Hence, the tail is uniformly small as $R\to\infty$. Together with the convergence of measures on compact subsets, this implies
$$
\int_0^{\infty}e^{-t\lambda}\,d\nu(\lambda)
=\Theta_q(t),\qquad t>0.
$$
By the uniqueness theorem for Laplace transforms of positive Borel measures on $[0,\infty)$, the subsequential limit $\nu$ is unique. We denote this limit by $\mu^q$. Hence, every subsequence has the same limit $\mu^q$, and $\mu^q$ is uniquely characterized by \eqref{mu-q}.

We next identify $\mu^q$ in terms of the local model spectral measures. Let $\widetilde\mu^q$ be the positive Borel measure defined by
$$
\widetilde\mu^q([0,\lambda])=\int_M \nu^M_{q,\mathbf z}([0,\lambda])\,dv_{M'}(\mathbf z)+\int_X\int_{-\infty}^{0}\nu^X_{q,x,r}([0,\lambda])\,dr\,dv_X(x).
$$
where $\nu^M_{q,\mathbf z}$ and $\nu^X_{q,x,r}$ are given by \eqref{nu-MX}. For every $t>0$, the Laplace transform of $\widetilde\mu^q$ is
$$
\begin{aligned}
\int_0^\infty e^{-t\lambda}\,d\widetilde\mu^q(\lambda)
&=\int_M H^M_q(t,\mathbf z)\,dv_{M'}(\mathbf z)+\int_X\int_{-\infty}^{0}H^X_q(t,x,r)\,dr\,dv_X(x)=\Theta_q(t).
\end{aligned}
$$
Thus $\widetilde\mu^q$ has the same Laplace transform as $\mu^q$. By the uniqueness of Laplace transforms, $\widetilde\mu^q=\mu^q$. This proves \eqref{mu-l}.
	
Now, let $\lambda\ge0$ be such that $\mu^q(\{\lambda\})=0$. Then the convergence of measures on compact subsets gives
$$
\lim_{k\to\infty}\mu^q_k([0,\lambda])=\mu^q([0,\lambda]).
$$
Since $\mu^q_k([0,\lambda])=k^{-n}N^q_k(\lambda)$, we obtain \eqref{scweyl}. This proves the theorem.
\end{proof}

\begin{rem}
The argument is in the same spirit as the proof of the asymptotic distribution of eigenvalues in \cite[\S3.2.2]{MM07} for the
Kodaira Laplacian with Dirichlet boundary conditions. In that case, see
\cite[(3.2.15)]{MM07}, contains only the interior contribution:
\begin{equation*}\label{mu-q1}
\int_0^{\infty}e^{-t\lambda}\,d\mu^q(\lambda)=
\int_M H^M_q(t,\mathbf z)\,dv_{M'}(\mathbf z)=
\frac{1}{(2\pi)^n}\int_M\frac{\det\dot{\mathcal R}^L_{\mathbf z}}{\det(1-e^{-t\dot{\mathcal R}^L_{\mathbf z}})}\Tr_q e^{-t\varTheta_{\mathbf z}}\,dv_{M'}(\mathbf z),\quad t>0.
\end{equation*}
For the interior term, the distribution function 
$\nu^M_{q,\mathbf z}([0,\lambda])$ can be written explicitly 
in terms of the eigenvalues of the curvature of $L$, 
whose expression first appeared in 
Demailly \cite[Th\'eor\`eme~0.6]{De85}; 
see also Ma–Marinescu \cite[(3.2.37)]{MM07}.
In the present $\dbar$-Neumann setting, the semi-classical Weyl formula 
\eqref{scweyl} contains, in addition, the contribution of the boundary 
model operator.
\end{rem}

\begin{rem}
Letting $\lambda\to 0+$, we have 
\begin{equation}\label{mu-limit}
\lim_{\lambda\to0+}\mu^q([0,\lambda])
=\mu^q(\{0\})=\lim_{t\to+\infty}\Theta_q(t).    
\end{equation}
In fact, since $\Theta_q(t)=\int_0^\infty e^{-t\lambda}\,d\mu^q(\lambda)$ and the integrand $e^{-t\lambda}$ converges pointwise to the indicator function $\mathbf 1_{\{0\}}(\lambda)$ as $t\to+\infty$, the last equality in \eqref{mu-limit} follows directly from the monotone convergence theorem.
By the computation of the large-time limit in the proof of Theorem~\ref{thmmorse}, the quantity in \eqref{mu-limit} is given explicitly by
\begin{equation*}
\mu^q(\{0\})=\frac{1}{(2\pi)^{n}}\Big(\int_{M(q)}\big|\det(\dot{\mathcal{R}}^L_{\mathbf z})\big|dv_{M'}(\mathbf z)+\int_X\int_{\R_{x}(q)}\big|\det(\dot{\mathcal{R}}^L_{b,x}-2\eta\dot{\mathcal L}_x)\big|\,d\eta\,dv_X(x)\Big).
\end{equation*}
This is exactly the coefficient of $k^n$ in the weak Morse inequality \eqref{weak}. For fixed $\lambda>0$, however, $\mu^q([0,\lambda])$ in general does not simplify to the curvature-sign determinant expression appearing in the Morse inequalities: the latter only detects the zero-energy part of the model operators, while the Weyl law involves their full spectral distribution below $\lambda$.
\end{rem}

\bibliographystyle{alpha}
\bibliography{heat}
\setlength{\itemsep}{5pt}

\end{document}